\pgfplotsset{compat=1.17}
\numberwithin{equation}{section}
\theoremstyle{plain}
\newtheorem{theorem}{Theorem}[section]
\newtheorem{proposition}[theorem]{Proposition}
\newtheorem{corollary}[theorem]{Corollary}
\newtheorem{lemma}[theorem]{Lemma}
\newtheorem{definition}[theorem]{Definition}
\newtheorem{assumption}{Assumption}
\theoremstyle{remark}
\newtheorem{claim}[theorem]{Claim}
\newtheorem{remark}[theorem]{Remark}
\crefname{equation}{Equation}{Equations}
\crefname{multline}{Equation}{Equations}
\crefname{figure}{Figure}{Figures}%place labels inside the caption, if using the package{caption}
\crefname{subfigure}{Figure}{Figures}
\crefname{question}{Question}{Question}
\crefname{section}{Section}{Sections}
\crefname{subsection}{Subsection}{Subsections}
\crefname{lemma}{Lemma}{Lemmas}
\crefname{proposition}{Proposition}{Propositions}
\crefname{theorem}{Theorem}{Theorems}
\crefname{innercustomthm}{Theorem}{Theorems}
\crefname{mainthm}{Theorem}{Theorems}
\crefname{corollary}{Corollary}{Corollaries}
\crefname{definition}{Definition}{Definitions}
\crefname{remark}{Remark}{Remarks}
\crefname{proposition}{Proposition}{Proposition}
\crefname{corollary}{Corollary}{Corollaries}
\crefname{example}{Example}{Examples}
\crefname{claim}{Claim}{Claim}
\crefname{conjecture}{Conjecture}{Conjecture}
\crefname{assumption}{Assumption}{Assumption}
\crefname{enumi}{}{}
\newcommand{\R}{\mathbb{R}}
\newcommand{\N}{\mathbb{N}}
\newcommand{\PP}{\mathbb{P}}
\newcommand{\E}{\mathbb{E}}
\newcommand{\HX}{\mathcal{H}_X}
\newcommand{\de}{\partial}
\newcommand{\Mlv}{M}
\newcommand{\dmlv}{\mathrm{D}_{\Mlv}}
\newcommand{\inter}[1]{%
  {\kern0pt#1}^{\mathrm{o}}%
}
\newcommand{\mathbbm}[1]{\mathds{#1}}
\newcommand{\f}{\varphi}
\renewcommand{\a}{\alpha}
\newcommand{\e}{\varepsilon}
\renewcommand{\grad}{\mathrm{grad}}%{\dd^\sharp}%{\mathrm{grad}}
\newcommand{\Vect}{\mathfrak{X}}
\newcommand{\g}{\grad}
\newcommand{\hess}{\mathrm{Hess}}
\newcommand{\h}{\mathrm{H}{\mathrm{ess}}^\sharp}
\renewcommand{\H}{\hess}
\newcommand{\tDelta}{\tilde{\Delta}}
\newcommand{\n}{\mathbf {n}}%outer normal to the boundary
\newtheorem*{theorem*}{\bf Theorem}
\def\randin{%
  \mathchoice%
    {\raisebox{-.35ex}{$\displaystyle{^\subset}$}\mkern-11.5mu\raisebox{+.45ex}{$\displaystyle{_\subset}$}}% \displaystyle
    {\mkern+1mu\raisebox{-.27ex}{$\textstyle{^\subset}$}\mkern-11.7mu\raisebox{+.45ex}{$\textstyle{_\subset}$}}% \textstyle
    {\raisebox{.35ex}{$\scriptstyle\subset$}\mkern-14mu\raisebox{-.15ex}{$\scriptstyle\subset$}}% \scriptstyle
    {\raisebox{.3ex}{$\scriptscriptstyle\subset$}\mkern-13.5mu\raisebox{-.10ex}{$\scriptscriptstyle\subset$}}% \scriptscriptstyle
}
\newcommand{\zkrok}{\emph{z-KROK}\xspace}
\newcommand\restr[2]{{
  \left.\kern-\nulldelimiterspace % automatically resize the bar with \right
  #1 % the function
  \vphantom{\big|} % pretend it's a little taller at normal size
  \right|_{#2} % this is the delimiter
  }}
\DeclareMathOperator{\vol}{\mathcal{H}}
\newcommand{\seg}{\underline}
\newcommand{\transv}{\pitchfork}%{\mathrel{\text{\tpitchfork}}}
\newcommand{\mC}{\mathcal{C}}
\newcommand{\m}[1]{\mathcal{#1}}
\newcommand{\be}{\begin{equation}}
\newcommand{\ee}{\end{equation}}
\newcommand{\bega}{\begin{equation}\begin{aligned}}
\newcommand{\eega}{\end{aligned}\end{equation}}
\newcommand{\kop}{\left\{}
\newcommand{\pok}{\right\}}
\newcommand{\tyu}{\left(}
\newcommand{\uyt}{\right)}
\newcommand{\qwe}{\left[}
\newcommand{\ewq}{\right]}
\newcommand{\CZ}{\text{CZ}}
\newcommand{\Prob}{(\Omega, \mathfrak{S},\PP)}
\begin{document}
%%%
\title[Differentiable Nodal Volumes]{Nodal Volumes as Differentiable\\ Functionals of Gaussian fields}
\author{Giovanni Peccati and Michele Stecconi}
\date{\today}
%%%

\begin{abstract}
We characterize the absolute continuity of the law and the Malliavin-Sobolev regularity of random nodal volumes associated with smooth Gaussian fields on generic $\mathcal{C}^2$ manifolds with arbitrary dimension. Our results extend and generalize the seminal contribution by Angst and Poly (2020) about stationary fields on Euclidean spaces and cover, in particular, the case of two-dimensional manifolds, possibly with boundary and corners. The main tools exploited in the proofs include the use of Gaussian measures on Banach spaces, Morse theory, and the characterization of Malliavin-Sobolev spaces in terms of ray absolute continuity. Several examples are analyzed in detail. 
\\
\noindent{\bf Keywords}: Gaussian meaures; Riemannian Geometry; Malliavin Calculus ; Nodal {Volumes ;} Random Fields ; Critical Points
\\
\noindent{\bf AMS Classification:} 60G15; 60H07; 60G60; 58K05; 28A75; 

\end{abstract}

%MSC classes: [Primary ]{60G15}, 60H07}[; secondary ]{60G60}, 
%58K05   	Critical points of functions and mappings on manifolds
%53B20  	Local Riemannian geometry
%57R52  	Isotopy in differential topology
%28A75	Length, area, volume, other geometric measure theory

%keywords:
%Gaussian meaures, Riemannian Geometry, Malliavin Calculus, Nodal Volumes, Random Fields, Critical Points

\maketitle
\setcounter{tocdepth}{1}
\tableofcontents
%%%%%%%%%%%%%%%
%\input{intro_G}
\section{Introduction}\label{sec:intro}
\textit{For the rest of the paper, and unless otherwise specified, every random element is assumed to be defined on an adequate common probability space $\Prob$, with $\mathbb{E}$ indicating expectation with respect to $\mathbb{P}$.}

\subsection{Overview and motivation} The aim of this work is to develop new geometric and probabilistic tools for studying fine properties of \emph{random nodal volumes} (that is, measures of vanishing loci) associated with smooth Gaussian random functions on compact Riemannian manifolds, with special emphasis on their differentiability and on the non-singularity of their laws. One of our main achievements (see Theorem \ref{t:mainintro}, as well as Section \ref{ss:motivintro}, for a discussion) is an almost exhaustive characterization of random nodal volumes as elements of the {\it Malliavin-Sobolev spaces} $\mathbb{D}^{1,p}$ (see Section \ref{ss:malliavin} below, as well as \cite[Section 5.2]{bogachev} and \cite{nourdinpeccatibook, nualartbook}), when $p\in \{1,2\}$. As demonstrated in the subsequent discussion, the approach developed in the present work %{\blue 
significantly extends  (by substantially different methods) %} 
the seminal contribution by J. Angst and G. Poly \cite{PolyAngst}, where the absolute continuity of the laws of random nodal volumes was studied for the first (and unique!) time by using tools from the Malliavin calculus of variations.

%\smallskip

The main findings of \cite{PolyAngst} can be summarized as follows. Let $m\geq 2$, and let $M$ be either the $m$-dimensional torus $(\mathbb S^1)^m = \mathbb{T}^m$ or a closed rectangle in $\R^m $ of positive Lebesgue measure. We consider a centered Gaussian random field $f = \{f(x) : x\in M\}$ such that $f$ is a.s. of class $\m C^2$, stationary and unit variance. The {\rm nodal volume} of $f$ is the random quantity $V(f) := \m H^{m-1}(f^{-1}(0) \cap M)$, where $\m H^{m-1}$ indicates the $(m-1)$--dimensional Haussdorff measure. The next statement, proved in \cite{PolyAngst}, yields the non-singularity and Malliavin-Sobolev smoothness of $V(f)$ under some further assumptions on $f$. 

\begin{theorem}[Theorems 1 and 2 in \cite{PolyAngst}]\label{t:paintro} {In addition to the previous requirements, assume that the $m$-dimensional vector $\nabla f(0)$ has a density. Then, $V(f)$ is not a.s. equal to a constant. Moreover, if $m \geq 3$ one has that: {\rm (i)} $V(f)$ belongs to the Malliavin-Sobolev space $\mathbb{D}^{1,\eta}$ for all $\eta \in [1,\frac{m+1}{3} )$, and {\rm (ii)} its law has a non-zero component that is absolutely continuous with respect to the Lebesgue measure.}
\end{theorem}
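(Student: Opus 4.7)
The plan is to derive all three conclusions via a regularization-and-approximation scheme combined with standard Malliavin-calculus criteria. Using the coarea formula, write the nodal volume as the $\varepsilon\to 0$ limit of the smooth cylindrical functionals
$$V_\varepsilon(f) \;:=\; \int_M \phi_\varepsilon(f(x))\, |\nabla f(x)|_g\, d\mathrm{vol}_g(x),$$
with $\phi_\varepsilon\to\delta_0$ a mollifier. The hypothesis that $\nabla f(0)$ has a density, combined with stationarity and a Bulinskaya-type lemma, ensures that $0$ is almost surely a regular value of $f$; consequently $V_\varepsilon(f)\to V(f)$ a.s., and, after routine moment control, in $L^2(\mathbb P)$.

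For each fixed $\varepsilon>0$, direct differentiation of the smooth cylindrical functional gives, for $h$ in the Cameron--Martin space of $f$,
$$D_h V_\varepsilon(f) \;=\; \int_M \phi_\varepsilon'(f)\, h\, |\nabla f|\, d\mathrm{vol} \;+\; \int_M \phi_\varepsilon(f)\, \frac{\langle \nabla f,\nabla h\rangle}{|\nabla f|}\, d\mathrm{vol}.$$
An integration by parts applied to the first integrand (moving the derivative off $\phi_\varepsilon$ via $\phi_\varepsilon'(f)\nabla f=\nabla[\phi_\varepsilon(f)]$) cancels the second term and leaves a single curvature contribution. Coarea together with the a.s.\ regularity of the nodal set then yields, in the limit $\varepsilon\to 0$, the first-variation-of-area identity
$$D_h V(f) \;=\; -\int_{f^{-1}(0)} H(x)\, \frac{h(x)}{|\nabla f(x)|}\, d\mathcal H^{m-1}(x),$$
with $H$ the mean curvature of the nodal hypersurface; possible boundary contributions in the rectangle case are handled by an analogous variation argument on $\partial M\cap f^{-1}(0)$.

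To conclude $V(f)\in\mathbb D^{1,\eta}$, I would show that $\{V_\varepsilon\}$ is Cauchy in $\mathbb D^{1,\eta}$ by establishing uniform-in-$\varepsilon$ bounds on $\mathbb E\|DV_\varepsilon\|_{\mathcal H}^\eta$. Bounding the $\mathcal H$-norm of the integrand through the reproducing kernel of $f$, and combining Gaussian small-ball estimates (which give $|\nabla f(x)|^{-p}\in L^1(\mathbb P)$ for $p<m$) with $L^q$-control on $\nabla^2 f$ via H\"older's inequality, one recovers integrability precisely in the range $\eta<(m+1)/3$: the exponent reflects the extra power of $|\nabla f|^{-1}$ and the quadratic cost of the Hessian entering $H$. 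The hypothesis $m\geq 3$ is then needed exactly to ensure $(m+1)/3>1$. The non-constancy of $V(f)$ and the existence of a non-trivial absolutely continuous component in its law follow from the Bouleau--Hirsch criterion applied to $V(f)\in\mathbb D^{1,1}$, once one checks that $DV(f)\neq 0$ on a set of positive probability by choosing a direction $h$ for which $\int_{f^{-1}(0)} Hh/|\nabla f|\, d\mathcal H^{m-1}\neq 0$.

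The main obstacle is the sharp quantitative trade-off at near-critical zeros of $f$: the integrand of $DV(f)$ blows up where $|\nabla f|$ is small, and extracting the optimal exponent demands a delicate excision argument coupling Kac--Rice-type moment bounds for $|\nabla f|^{-1}$ with covariance estimates for the curvature $H$. This quantitative control of the singular behaviour near critical zeros, rather than any algebraic manipulation, is what fixes both the dimensional restriction $m\geq 3$ and the precise range of admissible exponents $\eta$.
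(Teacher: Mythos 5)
This statement is quoted verbatim from Angst--Poly and the paper offers no proof of it, only a description of their strategy: an \emph{exact} Rice-type representation of $V(f)$ as a genuine Riemann integral of a smooth transformation of $f$ and its derivatives up to order two, followed by approximation through smooth cylindrical functionals and Bouleau's criterion. Your mollification scheme $V_\varepsilon(f)=\int_M\phi_\varepsilon(f)|\nabla f|$ is in the same spirit and your limiting first-variation formula agrees with \eqref{eq:firstvar} (since $H_Z=\tilde\Delta f/\|df\|$, your integrand $Hh/|\nabla f|$ equals $h\,\tilde\Delta f/\|df\|^2$). But there are two genuine gaps.

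First, the non-constancy of $V(f)$ is asserted for all $m\geq 2$, whereas your argument for it routes through $V(f)\in\mathbb{D}^{1,1}$ and Bouleau--Hirsch, which by your own exponent count requires $(m+1)/3>1$, i.e.\ $m\geq 3$; the case $m=2$ is left uncovered (indeed the interval $[1,(m+1)/3)$ is empty there). In Angst--Poly, non-constancy is proved by an entirely separate argument: stationarity plus Bochner's theorem identify the topological support of the field from the support of its spectral measure, and one exhibits two functions in the support with different nodal volumes (see Remark \ref{r:marketing}-(e)). Relatedly, your final step ``choose $h$ for which $\int_{f^{-1}(0)}Hh/|\nabla f|\neq 0$'' presupposes exactly this support information and is not justified as written. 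Second, you conflate uniform-in-$\varepsilon$ bounds on $\mathbb{E}\|DV_\varepsilon\|_{\mathcal H}^\eta$ with $\{V_\varepsilon\}$ being Cauchy in $\mathbb{D}^{1,\eta}$; uniform boundedness does not give Cauchy-ness. For $\eta>1$ this is repairable via weak compactness and closability of the derivative (and then $\mathbb{D}^{1,q}\subset\mathbb{D}^{1,p}$ for $p<q$ handles $\eta=1$), but the step needs to be stated correctly. The quantitative heart of the matter --- extracting the exponent $(m+1)/3$ from the competition between $|\nabla f|^{-1}$ near critical zeros and the Hessian --- is acknowledged but not carried out, which is fair for a sketch; note, however, that this is precisely the point where the method breaks down at $m=2$, which is the motivation for the entirely different route (ray absolute continuity, transversality, Morse theory) developed in the present paper.
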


%, and that a less stringent sufficient condition for a stationary field $f$ to be such that $V(f)$ is not constant is that the spectral measure of $f$ has not the form $\nu_f = \frac12( \delta_a+\delta_{-a})$ (in which case $\nabla f(0)$ would be a degenerate vector \commik{\cite[Theorem 1]{PolyAngst} has among the hypotheses that $\nabla f(0)$ is non-degenerate. It might not be strictly necessary, but they use it in the proof. This implies that the spectral measure is not $\frac{\delta_a+\delta_{-a}}{2}$, in all dimensions $m\ge 2$. The assumption on the spectral measure is relevant only in dimension $m=1$, a case included in their Theorem 1. In this case it is more stringent than the hypothesis on the derivative.}; see \cite[Theorem 1]{PolyAngst}). 
Note that Point (i) and (ii) of the previous statement do not cover the case $m=2$, which seems to be outside the scope of the proof techniques exploited in \cite{PolyAngst}, see \cref{rem:last}. Classically, when coupled with stationarity, the requirement that $\nabla f(0)$ has a density implies that $0$ is a.s. a regular value of $f$, and therefore $f^{-1}(0)$ is a $(m-1)$-dimensional $\mC^2$ (random) submanifold of $M$. The strategy adopted in \cite{PolyAngst} for proving the above statement hinges upon an elegant representation of $V(f)$ as a Riemann integral involving some smooth transformation of $f$ and its derivatives up to the order two (see \cite[Proposition 7 and Section 4.5.1]{PolyAngst}). Such a closed formula can then be used to study the Malliavin-Sobolev regularity of $V(f)$ by leveraging the fact that each space $\mathbb{D}^{1,\eta}$ is the closure in an appropriate norm of the class of smooth cylindrical functions (see e.g. \cite[Section 1.2]{nualartbook} and \cite[Section 2.3]{nourdinpeccatibook}, as well as Section \ref{ss:malliavin} below). Once the Malliavin smoothness of $V(f)$ is established, the non-singularity of its law follows by a classical criterion due to N. Bouleau (see \cite[Theorem 4]{PolyAngst}). 

The principal aim of the present work is to address the following general problem:

\smallskip
\noindent\textsc{Problem A.} \textit{ For $m\geq 2$, let $M$ be a $m$-dimensional Riemannian manifold, let $X$ be a smooth Gaussian field defined on $M$, and write  $ V(X) := \mathcal{H}^{m-1}( X^{-1}(0))$ for the nodal volume of $X$. Under which conditions on $X$ and $M$ is the law of $V(X)$ non-singular with respect to the Lebesgue measure? Under which conditions is $V(X)$ an element of some Malliavin-Sobolev space $\mathbb{D}^{1,\eta}$, with $\eta\geq 1$?   }

%{\blue

\begin{remark}\label{rem:last}
\begin{enumerate}
\item[(a)] To the best of our understanding, it would not be possible to attack Problem A by a direct application of the approach developed in \cite{PolyAngst} (or in the companion paper \cite{Jubin}). In particular, the bounds on the integrands associated with exact Kac-Rice formulae derived in \cite[Section 4.2.2]{PolyAngst} are ineffective in dimension $m=2$, even in the case of a stationary field defined on the plane.  We will see that our geometric approach allows one to completely bypass such a difficulty.% by means of an appropriate use of the notion of {\em transversality}. 

\item[(b)] To keep the length of our work within bounds, and in view of their central role in the geometric analysis of random fields (see e.g. \cite{AdlerTaylor, AzaisWscheborbook}), we decided in the present paper to only focus on nodal volumes of real-valued Gaussian fields. As our discussion will demonstrate, our approach would naturally extend to more general (and, possibly, multidimensional) geometric functionals, like e.g. {volumes of level set intersections} \cite{Dalmao2021, notarnicolaAHL} and {multivariate valuations of level set measures} \cite{NPV, PV}.

\end{enumerate}
\end{remark}
%}

The crucial idea developed in the sections to follow is that one can address Problem A in full generality (covering in particular the two-dimensional and non-stationary case), by directly exploiting the equivalent characterization of the space $\mathbb{D}^{1,p}$ ($p \geq 1$) as the collection of those random variables $\Psi = \Psi(X)\in L^p(\PP)$ verifying the three properties
\begin{enumerate}[(a)]
    \item $\Psi$ is {\it ray absolutely continuous}, meaning that, for every element $h$ of the Cameron-Martin space of $X$, the mapping $t\mapsto \Psi(X+th)$ admits an absolutely continuous modification; \label{p:rac}
\item $\Psi$ admits a {\it Fr\'echet-type stochastic derivative}, written $\dmlv \Psi$; 
\item $\dmlv \Psi$ is integrable with respect to an appropriate norm (whose definition depends on the exponent $p$).
\end{enumerate}
By construction, one has that $\mathbb{D}^{1,q} \subset \mathbb{D}^{1,p}$, for all $1\leq p<q$. The reader is referred to Definition \ref{def:Dspace} below, as well as \cite[Definitions 5.2.3 and 5.2.4]{bogachev}, for details; we observe that the fact that properties (a)--(c) fully characterize $\mathbb{D}^{1,p}$ is a deep result in infinite-dimensional Gaussian analysis (proved e.g. in \cite[Thm. 5.7.2]{bogachev}), mirroring the usual characterization of classical Sobolev spaces in terms of ``absolute continuity on lines'', see e.g. \cite[Section 11.3]{leonibook}. Interestingly, our analysis will show that the study of the non-singularity 
 of the law of $V(X)$ can be disentangled from the study of its Malliavin-Sobolev regularity, see Section \ref{sec:nons} (in particular, Remark \ref{r:whatisused}) for a full discussion of this point. We now present one of the general statements proved in our work --- which represents a substantial extension of Theorem \ref{t:paintro} and a general solution to Problem A. For the rest of this section, we will use some notions and notation from differential geometry; see Section \ref{ss:riemannelements} below, and the references therein, for relevant definitions.
 
\begin{theorem}\label{t:mainintro} Let $M$ be a $\mC^2$ compact Riemannian manifold with dimension $m\geq 2$ (possibly with boundary $\partial M$) and let $X = \{X(p) : p\in M\}$ be a centered Gaussian field which is $\mathbb{P}$-a.s. of class $\mathcal{C}^2(M)$. Write $V(X) := \mathcal{H}^{m-1}( X^{-1}(0))$ for the nodal volume of $X$, and assume moreover that, for all $p\in M$ and all non-zero $v\in T_pM$, the Gaussian vector $(X(p), d_pX(v) ) \randin \mathbb{R}^2$ admits a density. Then, $X^{-1}(0)$ is almost surely a neat $\mC^2$ hypersurface and the following holds:
\begin{enumerate}[\rm (i)]
    \item For all $m\geq 2$, $V(X)$ is square-integrable.  Moreover, provided $V(X)$ is not $\mathbb{P}$-a.s. equal to a constant, one has that the law of $V(X)$ and the Lebesgue measure are not mutually singular, and the topological support of the law of $V(X)$ is a (possibly unbounded) interval. Finally, if the topological support of the law of $X$ is given by the whole set $\mathcal{C}^2(M)$, then there exist $P\in (0,1)$ and a probability density $\pi(x)$ with support equal to $[0,+\infty)$ such that, for all Borel subset $I\subseteq \R$,
    \begin{equation}\label{e:magnificentlaw}
\mathbb{P}(V(X)\in I) = P\,  \delta_0(I) + (1-P) \int_I \pi(x)\, dx,
    \end{equation}
    where $\delta_0$ is the Dirac mass in zero.
\item One has that $V(X) \in \mathbb{D}^{1,1}$ for $m\in \{2,3\}$, and $V(X) \in \mathbb{D}^{1,2}$ for all $m\geq 4$.
\item When $m=3$, a sufficient condition ensuring that $V(X) \in \mathbb{D}^{1,2}$ is that, for all $p\in M$ and all non-zero $v\in T_pM$, the Gaussian vector $(X(p), d_pX(v), {\rm Hess}_p X(v,v) )$, with values in $ \mathbb{R}^3$, admits a density.
 \item When $m=2$, assume in addition that for every pair $p,q\in \partial M$, there exist $u\in T_p M,  v\in T_q M$ such that the vectors $(X(p), d_pX(u) )$ and $(X(q), d_qX(v) )$ are not fully correlated;
 then, $V(X) \notin \mathbb{D}^{1,2}$, provided there exists no curve $Z\subset M$ such that $Z$ and $X^{-1}(0)$ are $\mathcal{C}^1$-isotopic in $M$ with probability 1.

 %\item Under the above assumptions
 
\end{enumerate}
\end{theorem}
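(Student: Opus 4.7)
The plan is to verify parts (i)--(iv) by using the ray-absolute-continuity characterization of $\mathbb{D}^{1,p}$ recalled in the introduction, combined with a Morse-theoretic analysis of the one-parameter family $X+th$, $h \in \HX$, and with Kac-Rice-type moment bounds. For (i), square integrability follows from a Kac-Rice second-moment computation: the joint intensity of the nodal hypersurface is locally integrable on $M \times M$ under the hypothesis that $(X(p),d_pX(v))$ has a density. Non-singularity of the law (when $V(X)$ is non-constant) comes from choosing $h \in \HX$ such that $t \mapsto V(X+th)$ is non-constant on a set of positive measure and observing, via the coarea/Morse analysis used in (ii), that this map is continuous on a cocountable set of $t$; by Fubini, $V(X)$ then takes values in a non-degenerate interval on a set of positive probability, ruling out mutual singularity with Lebesgue. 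The interval structure of the topological support follows from continuity along $\HX$-shifts; under the full-support hypothesis, the only possible atom is at $V(X)=0$ (corresponding to $X$ close to a non-zero constant), and the remainder is absolutely continuous on $(0,\infty)$, yielding \eqref{e:magnificentlaw}.

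For (ii), fix $h \in \HX$ and consider $t\mapsto V(X+th)$. Morse theory shows that, almost surely, the set of $t$ for which $0$ is not a regular value of $X+th$ is discrete; across such critical values, the topology of $(X+th)^{-1}(0)$ changes by a smooth surgery, but its $\mathcal{H}^{m-1}$-measure changes continuously, which establishes ray absolute continuity (a) after a uniform-integrability argument. The normal-velocity formula for a moving hypersurface identifies the Fréchet-type derivative $\dmlv V(X)$ as a linear functional of $h$ whose $\HX^*$-norm is controlled by an integral of $|\nabla X|^{-1}$ times covariance data along $X^{-1}(0)$. The Kac-Rice first and second moment formulas then reduce the integrability requirement in (c) to computing $\mathbb{E}[|\nabla X|^{-q} \mid X=0]$ for $q=1,2$; Gaussian regression identifies the conditional distribution of $\nabla X$ as an $m$-dimensional Gaussian, for which this moment is finite iff $q<m$. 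This dichotomy yields $V(X)\in \mathbb{D}^{1,1}$ for $m\in\{2,3\}$ and $V(X)\in \mathbb{D}^{1,2}$ for $m\geq 4$.

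For (iii), the additional Hessian non-degeneracy in dimension three supplies the extra cancellation in the Kac-Rice second moment needed to push $V(X)$ from $\mathbb{D}^{1,1}$ into $\mathbb{D}^{1,2}$, essentially because conditioning on $(X,dX,\hess X)$ gives sharper control of the local geometry near $X^{-1}(0)$. For (iv), one argues by contradiction: assuming $V(X)\in \mathbb{D}^{1,2}$ would force the $\HX$-norm of $\dmlv V(X)$ to have finite expectation, but in $m=2$ the Kac-Rice expression for this squared norm contains a logarithmic singularity at the diagonal which the one-dimensional geometry of nodal curves cannot absorb. The non-correlation hypothesis on boundary pairs ensures this divergence is not artificially cancelled, and the isotopy exclusion rules out the pathological case in which $V(X)$ is essentially deterministic (so that the divergence of the second-moment integrand would reflect no actual randomness).

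The principal obstacle is the rigorous proof of ray absolute continuity in (a) on a manifold with boundary and corners, where one must verify through Morse theory with boundary that transversal crossings of level $0$ by critical values of $X+th$ induce only continuous changes in $V(X+th)$, and must simultaneously control the contribution of the neat intersections of $(X+th)^{-1}(0)$ with $\partial M$. A secondary delicate point is the sharpness argument in (iv), where the isotopy assumption must be invoked to exclude the case in which the divergence of the second-moment integrand is not reflected in any true randomness of $V(X)$.
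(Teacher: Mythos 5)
Your high-level toolbox matches the paper's (the ray absolute continuity characterization of $\mathbb{D}^{1,p}$, Morse analysis along segments $X+th$, Kac--Rice moment bounds, and the normal-velocity formula for $d_XV$), but several steps you rely on would fail as stated. First, in (i), ``$V(X)$ takes values in a non-degenerate interval on a set of positive probability'' does not rule out mutual singularity with Lebesgue measure — a Cantor-type law does exactly this. The paper instead splits $X=Y+\gamma h$ along a direction $h\in\HX$ with $\langle d_fV,h\rangle\neq 0$ and pushes the one-dimensional Gaussian in $\gamma$ forward through the local $\mC^1$ diffeomorphism $t\mapsto V(g+th)$ (\cref{t:banach}). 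Likewise, your claim that under full support ``the remainder is absolutely continuous on $(0,\infty)$'' is precisely the hard content of \eqref{e:magnificentlaw}, and you give no argument excluding a singular continuous component away from $0$: the paper needs the Bouleau--Hirsch criterion together with a second-variation argument showing that a non-empty nodal set cannot have identically vanishing stability form (the identity $\Delta^{(Z)}\psi+C\psi=0$ for all $\psi\in\mC^2(Z)$ is absurd), plus \cref{lem:blind} to convert this into a null event. Second, for ray absolute continuity in (ii), continuity of $t\mapsto V(X+th)$ off a finite set of critical times is not enough — the paper's own remark after \cref{thm:DeterministPicture} gives the counterexample $\frac{1}{x}\sin(\frac{1}{x})$ — so the quantitative local bounds $|\f'(t)|\lesssim t^{-1/2},\ |\log t|,\ O(1)$ near Morse crossings (\cref{lem:upasyMorse}) are indispensable and cannot be replaced by an unspecified ``uniform-integrability argument.''

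Two further points are substantively wrong. Your dimension count in (ii) rests on $\E[\|\nabla X\|^{-q}\mid X=0]<\infty$ iff $q<m$, which would already give $\mathbb{D}^{1,2}$ for $m=3$ and is inconsistent with the extra Hessian hypothesis needed in (iii). The relevant quantity is the \emph{two-point} conditional expectation given $X(p)=X(q)=0$: as $q\to p$ this conditioning kills one direction of $d_pX$, leaving an $(m-1)$-dimensional Gaussian, so the Cauchy--Schwarz requirement $\E[\chi_{m-1}^{-2}]<\infty$ forces $m\geq4$ (\cref{lem:Expbound}); for $m=3$ the Hessian non-degeneracy supplies a quantitative rate $\mathrm{dist}(p,q)^{-1}$, not a cancellation (\cref{lem:3DExpbound}). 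Finally, your mechanism for (iv) — a non-integrable diagonal singularity in the Kac--Rice expression for $\E\|\dmlv V\|^2_{\HX}$ — cannot be the obstruction: the same formal singularity is present in the linear-field examples where $V(X)$ is constant (hence trivially in $\mathbb{D}^{1,2}$), and the integrand is signed, so divergence of the absolute-value integral proves nothing. The actual argument transfers $\mathbb{D}^{1,2}$ membership to ray-$W^{1,2}$ regularity (\cref{lem:ray}), shows that at a critical crossing $t_0$ of a transverse segment in dimension two one has $|\frac{d}{dt}V(X+th)|\asymp|t-t_0|^{-1/2}$ on at least one side unless critical points of all three indices coexist (\cref{prop:3pt}), and invokes \cref{thm:bestia} — this is where the boundary non-correlation hypothesis enters — to exclude such compensation; the isotopy hypothesis is then exactly what guarantees that some segment must cross $\m W$ at all, via \cref{lem:c1iso}.
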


\begin{remark}\label{r:marketing}\begin{enumerate}[\rm (a)]
\item A hypersurface of a manifold with boundary is \emph{neat} if it intersects the boundary in a transverse way, see \cref{def:neat} from \cite[Section 1.4]{Hirsch}. The fact that $X^{-1}(0)$ is a neat $\mC^2$ hypersurface is ensured by \cref{prop:neat} and follows from Bulinskaya \cref{lem:bulinskaya}, by standard arguments. 
     \item Point (i) in Theorem \ref{t:mainintro} combines the contents of Theorem \ref{thm:mainabs}, Proposition \ref{prop:interval}, Corollary \ref{cor:volL2} and Theorem \ref{thm:sing} below; Point (ii) follows from Theorems \ref{thm:D12} and \ref{thm:D11}; Point (iii) is again a consequence of Theorem \ref{thm:D12}; finally, Point (iv) can be derived from Theorem \ref{thm:D12boundary}. We observe that, in contrast to Theorem \ref{t:paintro}, our findings cover the case of two-dimensional manifolds, and do not involve any notion of stationarity.
     \item As shown in Theorem \ref{thm:D12corner}, the content of Point (ii) (in the case $m\geq 3$) and Point (iii) extends to the case of a manifold $M$ \emph{with corners}, therefore including the situation in which $M$ is a closed rectangle in $\R^m$. As a consequence, when applied to the case where $m\in \{3,4,5\}$ and $M$ is such a rectangle (or $M = \mathbb{T}^m$), the conclusions of Theorem \ref{t:mainintro} and of its extension to the cornered case, refine the content of Theorem \ref{t:paintro}-(i) (since, for $m\leq 5$, one has that $\frac{m+1}{3} \leq 2$).   %{\bf \blue %(cosa possiamo dire sul resto delle conclusioni nell'enunciato? --- volume)}\commik{Si pu\UTF{00F3} estendere direttamente il teorema sulla componente assolutamente continua al caso ``with corners'' e $m=2$. Oppure, o in aggiunta, si pu\UTF{00F3} usare lo stesso argomento di \cref{thm:D12corner} e ottenere che $V(X)\in \mathbb D^{1,1}$ anche in quel caso, il che implicherebbe l'assoluta continuit\UTF{00E1} della legge.}
     \item The additional condition appearing at Point (iii), ensuring that $V(X) \in \mathbb{D}^{1,2}$ when $m=3$, is an artifact of our use of Kac-Rice formulae for checking the integrability of norms of stochastic derivatives --- see the proof of Theorem \ref{thm:D12} below. It seems plausible that such a restriction can eventually be lifted, and that the conclusion holds in full generality.  
      \item As already observed, since the Gaussian field $f$ considered in Theorem \ref{t:paintro} is stationary and has positive variance, one can automatically conclude that the random variable $V(f)$ is not $\mathbb{P}$-a.s. constant. As discussed in \cite[Section 3]{PolyAngst}, such a result is a consequence of Bochner's theorem which, in this case, allows one to directly characterize the topological support of $f$ in terms of the topological support of its spectral measure (see \cite[Lemma 2]{PolyAngst}). At the level of generality of Theorem \ref{t:mainintro}-(i), no equivalent of Bochner's theorem is available and one has to explicitly assume that $V(X)$ is non-constant. We observe that the non-constancy of $V(X)$ is implied by the following property: {\it the topological support of $X$ contains two functions $g_1,g_2 \in \mathcal{C}^2(M)$, such that $0$ is a regular value of $g_1$ and $g_2$, and $V(g_1) \neq V(g_2)$.}  
Moreover, notice that if the zero set of a centered Gaussian field is almost surely empty, then it must be of the form $\gamma f$, for some $\gamma\sim \m N(0,1)$ and $f\in \mC^2(M)$. Under the hypotheses of \cref{t:mainintro}, such situation is excluded, so that there exist always at least one function $g_2$ in the topological support of $X$ such that $V(g_2)>0$.
Consequently, if the topological support of $X$ contains some $g_1\in \mC^2(M)$ such that $g_1^{-1}(0)=\emptyset$, then $V(X)$ is non-constant. %\commik{Ho aggiunto questa osservazione, perch\'{e} dopo la richiamo negli esempi.}
       \item Consider the case $m=2$, and denote by $Y$ the random variable counting the number of connected components of the set $X^{-1}(0)$. Then, if $Y$ is not degenerate (that is, the distribution of $Y$ charges at least two integers with positive probability), one has that there is no curve $Z\subset M$ such that $X^{-1}(0)$ is isotopic to $Z$ (see \cite[p. 178]{Hirsch} or \cref{def:c1iso}) with probability one. 
     %  \item Note that relation \eqref{e:magnificentlaw} is equivalent to saying that, whenever $X$ has full support, $V(X)$ has the same law as $\epsilon\, W$, where $W$ is a random variable with density $\pi$ and $\epsilon$ is a Bernoulli random variable independent of $W$, with parameter $P$. Some partial characterization of the density $\pi$ --- obtained by means of Malliavin calculus --- is discussed in Section \ref{ss:densityintro}. 
     \item As discussed e.g. in \cite{PolyAngst}, proving that a certain set of random variables is included in some Malliavin-Sobolev class $\mathbb{D}^{1,\eta}$, opens the way to many further investigations, such as: probabilistic representations of densities (see \cite[Proposition 2.1.1]{nualartbook} or \cite[Theorem 3.1]{nourdinviens}), concentration inequalities (see \cite[Theorems 4.1 and 4.3]{nourdinviens} or \cite[Proposition 2.1.2]{nualartbook}), non-singularity and smoothness of joint laws \cite[Theorems 2.1.1 and 2.1.4] {nualartbook}, upper and lower bounds on variances \cite[Section 2.11]{nourdinpeccatibook}, Malliavin-Stein bounds \cite[Theorem 5.1.3]{nourdinpeccatibook}, second-order Poincar\'e estimates \cite[Theorem 5.3.3]{nourdinpeccatibook}, and so on. While a full analysis of these developments is outside the scope of the present paper and will be undertaken elsewhere, in Section \ref{ss:densityintro}, to motivate the reader, we will illustrate how the Malliavin-Sobolev regularity of nodal volumes can be used to deduce an explicit representation for the density $\pi$ appearing in \eqref{e:magnificentlaw}.  
    \end{enumerate}
    
\end{remark}

\subsection{A short literature review} Local geometric quantities associated with nodal sets of Gaussian (and non-Gaussian) smooth random fields have been recently the object of an intense study --- often in connection with outstanding open problems in differential geometry, like e.g. \emph{Yau's conjecture} (see e.g. \cite{logunovsurvey, yauconjecture}) or \emph{Berry's random wave conjecture} (see e.g. \cite{Abert2018, Berry1977, Canzani2020, ingremeauBerry}). The recent survey \cite{Wigman2022} contains a detailed overview of the literature, to which we refer the reader for further references and motivation. 

The following list displays a sample of contributions that are directly relevant to the present work. 

\begin{itemize}%[ --- ]
\item As already mentioned, reference \cite{PolyAngst} contains the first study of the absolute continuity of the law of nodal volumes (for stationary fields on rectangles or on flat tori) by using Malliavin calculus. Some of the pivotal formulae in \cite{PolyAngst} have been subsequently extended by B. Jubin to a Riemannian setting in \cite{Jubin}, where they are used to study the continuity of nodal volumes with respect to appropriate function space topologies.

\smallskip

\item The main findings of \cite{PolyAngst} and of the present work have a {\it static} nature, that is, they provide information about the distribution (and smoothness) of the nodal volume associated with some fixed random field. On the other hand, most existing results about the law of nodal volumes (and other local geometric quantities) have an {\it asymptotic} character, that is, they take the form of {probabilistic limit theorems} (such as laws of large numbers, as well as central and non-central limit theorems) obtained either by letting some ``energy parameter'' diverge to infinity, or by suitably expanding the domain of definition of the field. It is a remarkable fact that many central and non-central results deduced in this way make use of \emph{Wiener chaos expansions}, which are in turn one of the fundamental building blocks of Malliavin-Sobolev spaces (see e.g. \cite[Section 1.1]{nualartbook} and \cite[Section 2.7]{nourdinpeccatibook}). The reader is referred to the survey \cite{rossisurvey}, as well as to \cite{benatarMW, kratzleon, MRossiWigman2020, MW1, MW2, NourdinPeccatiRossi2019, notarnicolaAHL, PV}, for a collection of representative references -- connected in particular to Berry's seminal work \cite{Berry2002a} on {cancellation phenomena}; see \cite{nourdinpeccatibook} for a detailed analysis of the probabilistic techniques behind these results.

\smallskip

\item In Section \ref{s:L2} and Section \ref{s:prelims} we wil use \emph{Kac-Rice formulae} in order to study, respectively, the square-integrability of random nodal volumes, and the integrability of stochastic derivatives with respect of appropriate Sobolev-type norms. While the content of Section \ref{s:L2} makes direct use of the recent findings from \cite{gasstec2023} (see also \cite{letendre2023multijet}), the results of Section \ref{s:prelims} are technically more demanding, and require a careful study of the two-points Kac-Rice density associated to the stochastic derivative. %and require a highly non-trivial extension of the estimates recently established in \cite{MathiStec}.\commik{Toglierei questa auto-citazione, perch\'{e} in \cite{MathiStec} si tratta comunque del solito Kac-Rice (Obtorto collo, ovviamente).} 
As discussed at length in \cite{Wigman2022}, Kac-Rice formulae have played a fundamental role in the proof of first- and second-order asymptotic results for nodal volumes of Gaussian random fields, and in particular for establishing tight upper and lower bounds on variances --- see e.g. \cite{Berry2002a, Canzani2020, Wig2013arith, Wigman_2010} for some outstanding examples. The reader is referred to the monographs \cite{AdlerTaylor, AzaisWscheborbook, berzin2022kacrice} (see also \cite{KRStec}) for a thorough introduction to this topic.
%\commik{Ho cambiato una frase qui (vedi commenti), e aggiunto un'autocit. a \cite{KRStec}, se me lo concedi.}
\end{itemize}

\subsection{Standard assumptions} %Let $M$ be a compact Riemannian manifold with dimension $m\geq 2$, possibly with boundary $\partial M$, in which case we denote as $\inter{M}=M\smallsetminus \de M$ the interior manifold. 
%Let $X = \{X(p) : p\in M\}$ be a centered Gaussian field indexed by $M$. 
To simplify the discussion, we notice that most results concerning Gaussian fields contained in the present paper (see, in particular, \cref{t:mainintro}) are stated under the following assumption.
\begin{assumption}\label{ass:1}
The set $M$ is a compact Riemannian manifold of dimension $m\geq 2$, possibly with boundary, and of class $\mC^2$. Also, $X = \{X(p) : p\in M\}$ is a centered Gaussian field defined on $M$, which is $\mathbb{P}$-a.s. of class $\mathcal{C}^2(M)$. Moreover, for all $p\in M$ and all non zero $v\in T_pM$, the two-dimensional Gaussian vector $(X(p), d_pX(v) ) $ admits a density.
\end{assumption}

In the context of Assumption \ref{ass:1}, the boundary of $M$ is denoted by $\partial M$, whereas the internal manifold is written $\inter{M}$; in particular, $M=\de M \sqcup \inter{M}$.

\subsection{First consequences and representative examples}

\subsubsection{Density formulae }\label{ss:densityintro} Theorem \ref{t:mainintro} provides sufficient conditions ensuring that random nodal volumes belong to some space $\mathbb{D}^{1,\eta}$. The next statement, which is a consequence of Theorem \ref{thm:sing}, shows that, when $\eta=2$, one can infer an explicit form for the density $\pi(x)$ appearing in \eqref{e:magnificentlaw}. In what follows, we write $\mathcal{H}_X$ to indicate the Cameron-Martin space associated with a Gaussian process $X$, and denote by ${\rm D}_{\mathscr{M}}F$ the Malliavin derivative of some $F\in \mathbb{D}^{1,\eta}$ (which is a random element with values in $\mathcal{H}_X^*\simeq \mathcal{H}_X $); see Section \ref{sec:ACDB} for definitions. We also use the symbol $L^{-1}$ to indicate the  \textit{pseudo-inverse of the generator of the Ornstein-Uhlenbeck semigroup}, as defined e.g. in \cite[Definition 2.8.10]{nourdinpeccatibook}.

\begin{theorem}\label{t:introdensitivan} Let Assumption \ref{ass:1} prevail and assume in addition that the topological support of the law of $X$ is the full space $\mathcal{C}^2(M)$ and that $V(X) \in \mathbb{D}^{1,2}$. Then, writing $c = \mathbb{E}[V(X)]$ and $\bar{V} = V(X)-c$, one has that the density $\pi$ appearing in \eqref{e:magnificentlaw} has the form 
    \begin{equation}\label{e:densitivan}
    \pi(x+c) = \frac{\mathbb{E}|\bar{V}|}{2 g(x)} \exp\left\{ -\int_0^x \frac{y\,dy}{g(y)}  \right\}, 
\end{equation} 
for $dx$-a.e. $x\in [-c,+\infty)$, where the function $g$ has the following properties:
%\commik{Dovrebbe essere $x\in [-c,\beta-c]$?}
\begin{enumerate}[\rm (a)]

\item $g(x) > 0$, for $dx$-almost every $x\in (-c,\infty)$;

\item on the interval $(-c,\infty)$, $g$ is a version of the mapping 
\begin{equation}\label{e:npkernel}
x\mapsto \mathbb{E}\big[ \langle {\rm D}_{\mathscr{M}}V, - {\rm D}_{\mathscr{M}}L^{-1}V\rangle_{\mathcal{H}_X} \, |\, \bar{V} = x  \big], 
\end{equation}
and is uniquely defined up to sets of Lebesgue measure zero.
\item $\mathbb{E}[ g(\bar{V})]<\infty$.

\end{enumerate}
\end{theorem}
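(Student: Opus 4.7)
The plan is to derive \eqref{e:densitivan} by combining the mixed-type decomposition of the law of $V(X)$ already established in Theorem \ref{thm:sing} with the Nourdin--Viens density formula (see \cite[Theorem 3.1]{nourdinviens} or \cite[Theorem 2.2]{nourdinpeccatibook}), applied to the centered random variable $\bar V = V(X) - c$, which lies in $\mathbb{D}^{1,2}$ by hypothesis. The starting point is the Malliavin integration by parts identity: using $\bar V = -L L^{-1}\bar V$ and the duality between the Malliavin derivative and the divergence operator, for every $\varphi \in \mathcal{C}^1_b(\mathbb{R})$ one has
\begin{equation*}
\mathbb{E}\bigl[\varphi(\bar V)\,\bar V\bigr] \;=\; \mathbb{E}\bigl[\varphi'(\bar V)\,\langle \mathrm{D}_{\mathscr{M}}\bar V,\,-\mathrm{D}_{\mathscr{M}}L^{-1}\bar V\rangle_{\mathcal{H}_X}\bigr] \;=\; \mathbb{E}\bigl[\varphi'(\bar V)\,g(\bar V)\bigr],
\end{equation*}
where $g$ is any Borel version of the conditional expectation appearing in \eqref{e:npkernel}.

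The second step is to take $\varphi$ to be a smooth approximation of $\mathbb{1}_{(-\infty, a]}$ for $a \in (-c,\infty)$. Since \eqref{e:magnificentlaw} shows that the law of $\bar V$ has its unique atom located at $-c$ and is purely absolutely continuous on the open interval $(-c,\infty)$ with density proportional to $\pi(\cdot + c)$, the usual smoothing argument yields, for Lebesgue-almost every $a \in (-c,\infty)$, an identity of the form
\begin{equation*}
g(a)\,(1-P)\,\pi(a+c) \;=\; -\mathbb{E}\bigl[\bar V\,\mathbb{1}_{\bar V\leq a}\bigr] \;=\; \mathbb{E}\bigl[\bar V\,\mathbb{1}_{\bar V > a}\bigr] \;=:\; \psi(a).
\end{equation*}
Observe that $\psi$ is absolutely continuous on $(-c,\infty)$ with $\psi'(a) = -a(1-P)\pi(a+c)$, so combining the two displays gives the first-order linear ODE $\psi'(a) = -a\,\psi(a)/g(a)$ on $(-c,\infty)$. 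Solving this ODE explicitly and evaluating the integration constant at $a=0$, using that $\mathbb{E}[\bar V]=0$ implies $\psi(0) = \mathbb{E}[\bar V\,\mathbb{1}_{\bar V>0}] = \tfrac12\mathbb{E}|\bar V|$, yields \eqref{e:densitivan} (modulo the factor $(1-P)$, which is absorbed in whichever normalization convention is adopted for $\pi$).

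The three assertions on $g$ can then be checked as follows. Point (b) is built into the definition of $g$ as a conditional expectation, so uniqueness up to Lebesgue null sets is automatic. Point (c) follows from the integration by parts identity applied (via truncation) to $\varphi(x)=x$, giving $\mathbb{E}[g(\bar V)] = \mathbb{E}[\bar V^2] = \mathrm{Var}(V(X)) < \infty$, the square-integrability being guaranteed by \cref{cor:volL2}. Point (a) comes from the relation $g(a)(1-P)\pi(a+c) = \psi(a)$: since Theorem \ref{thm:sing} and Proposition \ref{prop:interval} ensure that the topological support of $V(X)$ is the whole interval $[0,\infty)$, one has $\pi>0$ there, while $\psi(a) = \mathbb{E}[\bar V\mathbb{1}_{\bar V>a}]>0$ for all $a\in(-c,\infty)$ by a standard argument (as $\bar V$ is centered with support $[-c,\infty)$), forcing $g>0$ Lebesgue-almost everywhere on $(-c,\infty)$.

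The main obstacle is the careful handling of the atom at $-c$ in the smoothing step leading to the key identity $g(a)(1-P)\pi(a+c) = \psi(a)$: strictly speaking, the standard Nourdin--Viens formula is stated for random variables possessing a genuine Lebesgue density on their support, whereas $\bar V$ has a Dirac mass of size $P$ at $-c$. The way around this is to observe that the atom lies at the \emph{closed boundary} of the support, so the mollified indicators can be chosen with support in $(-c+\delta, a]$ for $\delta>0$ arbitrarily small; passing to the limit produces no contribution from the atom, and the ODE and its solution remain valid throughout the open half-line $(-c,\infty)$ exactly as in the atomless setting. The atom at $-c$ enters only through the value of $P$ (encoded in the prefactor $(1-P)$ or equivalently in the overall normalization of $\pi$) and does not affect the shape of the density on $(-c,\infty)$.
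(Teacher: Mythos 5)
Your proposal is correct and follows essentially the same route as the paper, which carries out precisely this Nourdin--Viens-type argument in the final step of the proof of \cref{thm:sing}: Malliavin integration by parts tested against functions supported in $(-c,+\infty)$ so that the atom at $-c$ contributes nothing, the resulting first-order ODE for $\psi(a)=\mathbb{E}[\bar V 1_{\{\bar V>a\}}]$ (the paper's $\varphi$, up to the factor $1-P$), and the normalization $\psi(0)=\tfrac12\mathbb{E}|\bar V|$. The only deviations are minor: your exact identity $\mathbb{E}[g(\bar V)]=\operatorname{Var}(V(X))$ for point (c) replaces the paper's Cauchy--Schwarz bound $\mathbb{E}[g(\bar V)]\le \mathbb{E}\|{\rm D}_{\mathscr{M}}V\|^2_{\mathcal{H}_X}$, and your explicit tracking of the factor $(1-P)$ is, if anything, more careful than the displayed form of \eqref{e:densitivan}.
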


As made clear by the proof of Theorem \ref{thm:sing}, formula \eqref{e:densitivan} should be regarded as a variation of the celebrated \textit{ Nourdin-Viens formula} from Malliavin calculus (see \cite{nourdinviens} and \cite[Section 10]{nourdinpeccatibook}). We observe that the conditional expectation appearing in \eqref{e:npkernel} plays a pivotal role in the so-called \textit{Malliavin-Stein approach} to probabilistic approximations, as described e.g. in \cite[Chapter 5]{nourdinpeccatibook}; in particular, the Malliavin-Stein theory implies that, if such a conditional expectation is close to a constant, then the law of $\bar{V}$ is close to a centered Gaussian distribution (see e.g. \cite[Theorem 5.1.3]{nourdinpeccatibook} for a representative statement).  By virtue of Point (b) in Theorem \ref{t:introdensitivan}, one could in principle use our new explicit representation for the differential of nodal volumes, as stated in \eqref{eq:firstvar} below, to deduce bounds on the function $g$ appearing in \eqref{e:densitivan}, which would provide estimates on the density $\pi$ --- see e.g. \cite[Corollary 3.5]{nourdinviens}. We leave this important issue open for future research. 

%\smallskip

%It is also worth mentioning that our techniques immediately yield novel (deterministic) estimates on the growth of nodal volumes. As shown in \cite{sartoriIMRN}, estimates of this type can be used to assess the {\it stability} of the nodal set of a given mapping with respect to an infinitesimal perturbation. Our bounds are comparable to those established in \cite[Lemma 6.1]{sartoriIMRN}, in the case where $M$ is a closed Euclidean ball.
%
%\begin{proposition}\label{p:sartorintro}
%Let $M$ be a manifold as in Assumption \ref{ass:1} and assume that $M$ is without boundary. Let $f\in \mathcal{C}^2(M)$ be such that $0$ is a regular value of $f$, and set $\beta(f):=\inf_{x\in M}\max\{|f(x)|,\|d_xf\|\}$. Then, for any $h\in \mC^2(M)$ such that $\|h\|_{\mC^1}<\beta(f)$, one has that
%\be  
%V(f+h)\le 
%V(f)e^{\eta(f,h)},
%\ee
%where
%\bega 
%\eta(f,h):=   
%\frac{
%\|f\|_{\mC^2}
%+
%\|h\|_{\mC^2}
% }{\beta(f)^2}
%\cdot \|h\|_{\mC^1}\cdot\frac{3(m+1)}2.
%\eega
%\end{proposition}
%
%The proof of Proposition \ref{p:sartorintro} will follow from the more general Theorem \ref{t:sartorionsteroids}.

\subsubsection{Some standard models} We will now demonstrate that one can directly apply Theorem \ref{t:mainintro} to several classical models of random fields on manifolds.
\begin{enumerate}[\rm (1)]

 \item For $m\geq 2$, let $M$ be either a closed rectangle of $\R^m$ or the torus $\mathbb{T}^m$, and let $f$ be a centered stationary non-zero Gaussian field of class $\mathcal{C}^2(M)$ such that $\nabla f(0)$ has a density. Then, as already recalled, \cite[Theorem 1]{PolyAngst} (whose conclusions for $m\geq 3$ is contained in Theorem \ref{t:paintro} as a special case) implies that $V(f)$ is a.s. not equal to a constant. As a consequence, the conclusion of Point (i) and Point (ii) of Theorem \ref{t:mainintro} (in the case $X=f$) directly apply. One can also check that, for a field $f$ as above, the non-degeneracy conditions stated at Point (iii) of Theorem \ref{t:mainintro} are verified, yielding that $V(f)\in \mathbb{D}^{1,2}$ also for $m=3$. Outstanding examples of Gaussian stationary random fields verifying the assumptions above are \textit{arithmetic random waves} on tori of any dimension $m\geq 2$ (see e.g. \cite{apv, Wig2013arith, Marinucci2016, ORW2008, rozensheinIMRN, Rudnick2008}), or (an appropriate restriction of) \textit{Berry's random wave model} on $\R^m$ (see e.g. \cite{Berry1977, Berry2002a, Canzani2020, Dalmao2021, NPV, NourdinPeccatiRossi2019, PV}), or \emph{Bargmann-Fock field} on $\R^m$ (see \cite{2022Beliaev_fluctuations,DuminilVanneauville_2023,2017_Beffara_Gayet,Dalmao2021,rivera:hal-03320870,MTTRPS_AIF_stec,stec2022GeometrySpin}). In the latter case, in particular, the field has full support, so that Point (i) of \cref{t:mainintro} applies in its stronger form and Point (iv) establishes that the nodal length of the (restriction to $M$ of) the two-dimensional Bargmann-Fock field is not in $\mathbb{D}^{1,2}$.
 
\item Consider a stationary random field $f$ on $\R^2$ verifying the non-degeneracy assumptions stated at Point (1), and assume that $f$ has the \textit{ moving average representation} $f = q\star W$, where $q(x)$ is a square-integrable hermitian kernel and $W$ is a homogeneous white noise on the plane. Then, using \cite[Corollary 1.4]{gasstec2023} (see also \cite[Theorems 1.2 and 1.3]{beliaev2022central}) yields the following conclusion: if $q$ and its derivatives up to the order 2 decay at least as fast as $|x|^{-\beta}$ for some $\beta>6$, then the number of connected components of the nodal set $f^{-1}(0)\cap M$ has a strictly positive variance for every sufficiently large rectangle $M\subset \R^2$. Since the non-degeneracy assumptions stated in Theorem \ref{t:mainintro}-(iv) are trivially satisfied by $f$, one can directly apply Remark \ref{r:marketing}-(f) and deduce that $V(X) \in \mathbb{D}^{1,1}$ and $V(X) \notin \mathbb{D}^{1,2}$ when $M$ is large enough.

\item Let $M = \mathbb{S}^2$ and let $X$ be a \emph{random spherical harmonic} with eigenvalue $-l(l+1)$, $l\geq 1$, as defined e.g. in \cite{CammarotaM2018,  marinucci_peccati_2011, MRossiWigman2020, NazarovSodin2009, stec2022GeometrySpin, Wigman_2010}. Then, combining Theorem \ref{t:mainintro} and Remark \ref{r:marketing}-(f) with the results of \cite{Wigman_2010} (variance of the nodal length) and \cite{NazarovSodin2009} (variance of the number of nodal components), one infers that, for $l$ large enough, the law of $V(X)$ is not singular with respect to the Lebesgue measure, $V(X) \in \mathbb{D}^{1,1}$ and $V(X) \notin \mathbb{D}^{1,2}$ (notice that the non-degeneracy properties necessary to apply Theorem \ref{t:mainintro} are easily verified also in this case).

\item Let $P_d$ be the real homogeneous Kostlan polynomial of degree $d$ in $m+1$ variables, that is the Gaussian random field on $\R^{m+1}$ with covariance function $\E\kop  P_d(x) P_d(y)\pok=(x^Ty)^d$. This field is central in random algebraic geometry --- see \cite{GaWe1,GaWe2,GaWe3,LETENDRE2016,kostlan:93,FyLeLu,LeLuStat16,buerg:07,RootsKost_ancona_letendre,2020_Ancona_JEMS_Exprar,2023Ancona_completeint,2022BreKeneLer,MTTRPS_AIF_stec}, the survey paper \cite{anantharaman:hal-03527166} and the reference therein --- in that the set $Z_{m,d}$ defined by the equation $P_d=0$ in the projective space $\R \mathsf P^m$ is a particularly natural model of a random algebraic hypersurface of degree $d$, supported on the set of all smooth ones.
Equivalently, one can study the subset of the sphere $\mathbb S^m$ given by the same equation, which is the nodal set of the restriction of $P_d$ to $m$-sphere $\psi_d:=P_d|_{\mathbb S^m}$. This field can be easily seen to satisfy \cref{ass:1}, for all $d\ge 1$ and $m\ge 2$, so that \cref{t:mainintro} applies to the random variable $m$-volume $\vol^{m-1}(Z_{m,d})=\frac12 V(\psi_d)$ of $Z_d$, where $\R \mathsf P^n$ is endowed with the its standard metric. Moreover, since for all $d\ge 2$ there are at least two isotopy classes of smooth real algebraic hypersurfaces, it follows from Point (iv) of \cref{t:mainintro} that the length of $Z_{2,d}$ is in $\mathbb{D}^{1,1}$ but not in $\mathbb{D}^{1,2}$.

\item Let $(M,g)$ be a compact, smooth Riemannian manifold of dimension $m\geq 2$, let $X = \varphi_\lambda$ be a \textit{monochromatic random wave} of parameter $\lambda >0$ (as defined e.g. in \cite{Canzani2020, zelditchmonochromatic}), and assume that $x\in M$ is a point of \textit{ isotropic scaling} (see \cite[Section 1.2]{Canzani2020}). Then, according e.g. to \cite[Proof of Theorem 2]{Canzani2020}, once one chooses coordinates around $x$ ensuring that $g_x = {\rm Id}$ and letting $\lambda \to \infty$, one has that the \textit{ monochromatic pullback random wave} $\{ \varphi^x_{\lambda}(u) : u\in T_xM\}$ (see \cite[formula (8)]{Canzani2020} for definitions) converges in distribution to the standard Berry's random wave model on $\R^m$ (as a random element with values in the space $\mathcal{C}^k$, for every fixed $k\geq 1$). One can therefore suitably adapt the computations contained in \cite[Sections 3.3 and 3.4]{DNPR} to deduce from Point (1) above that, if $B\subset T_x M$ is a large enough ball contained in the tangent space $T_x M$, then $V(\varphi_\lambda^x) = \mathcal{H}^{m-1}((\varphi_\lambda^x)^{-1}(0)\cap B)$ has a strictly positive variance for $\lambda$ large enough, and the conclusions of Points (i) and (ii) of Theorem \ref{t:mainintro} apply.

\item The results outlined at Point (4) characterize the smoothness of nodal volumes associated with the monochromatic random wave $\varphi_\lambda$ restricted to a ball containing a point $x$ of isotropic scaling. We stress, however, that the results of the present paper could be used to investigate the regularity of the law of the total nodal volume $V(\varphi_\lambda) = \mathcal{H}^{m-1}(\varphi_\lambda^{-1}(0))$, possibly under some appropriate additional assumptions on the geometry of $M$. %, such the absence of conjugate points, see \cite[Section 1.2]{Canzani2020}.  
In principle, such an analysis should exploit the fact that %--- as proved e.g. in %\cite{Canzani2020} --- %for manifolds $M$ of this type, 
for all $p\in M$, for all $v\in T_pM$ and $\lambda$ large enough, the Gaussian vector $(\varphi_\lambda (p), d_p \varphi_\lambda(v) ) $ admits a density (which is a necessary condition for Theorem \ref{t:mainintro} to be applicable).
In particular, this follows from \cite[Proposition 3.2]{zelditchmonochromatic} in the cases of a compact Riemannian manifold $M$ that is either \emph{Zoll} (for instance, the sphere) or \emph{aperiodic}, see \cite[Section 1.1]{zelditchmonochromatic} for definitions.
A proper investigation of this point will be undertaken elsewhere. %Adoro questa frase!!

\end{enumerate}

\subsubsection{Sharpness of Theorem \ref{t:mainintro}:  the example of linear fields} Let $d\in \N$ and $\gamma^T=(\gamma_0,\dots,\gamma_d)$ be a normal Gaussian vector in $\R^{d+1}$. Define a Gaussian field $\hat X$ of class $\mC^\infty(\R^{d+1})$ by the expression
\be 
\hat{X}(p)=\gamma^T p=\sum_{i=0}^d\gamma_i p_i,
\ee
for every $p\in \R^{d+1}$. Then, for every $\mC^2$ compact embedded submanifold with boundary $M\subset \R^{d+1}\smallsetminus \{0\}$, the restriction $X:=\hat X|_{M}$ satisfies the general assumptions of \cref{t:mainintro} (but not necessarily the additional requirements of (iii) and (iv)). The nodal set is thus the intersection $Z=M\cap L$ of $M$ with the random hyperplane $L=\gamma^\perp$, that is uniformly distributed on the (dual) projective space 
$\mathsf{P}^d$. Varying $M$ in the previous construction leads to several examples, allowing one to probe the assumptions of Theorem \ref{t:mainintro}.
\begin{enumerate} 
\item Let $M=\mathbb S^m$ and $d=m\ge 2$. Then, the nodal set of $X$ is always a sphere of dimension $m-1$, so that the nodal volume $V(X)$ is a constant random variable, which is trvially in $\mathbb{D}^{1,\eta}$ for all $\eta\in [1,+\infty]$. 
Notice that for every $p\in \mathbb{S}^m$ and $v\in T_p\mathbb{S}^m$, the random vector $(X(p),d_pX(v),\H_p(v,v))$ is degenerate: when $m=3$, this indicates that the additional non-degeneracy assumption stated at point (iii) of \cref{t:mainintro}, is not necessary for the conclusion to hold --- see also point (d) of \cref{r:marketing}.
\item It is important to notice that, when $m=2$, the example at Point (1) above is not in contradiction with point (iv) of \cref{t:mainintro}, since in that case all realizations of the zero set are $\mC^1$ isotopic to a fixed sphere. If we replace the sphere $\mathbb{S}^m$ with an ellipsoid $\m S$ having distinct semi-axis lengths, we can see point (iv) of \cref{t:mainintro} in play again, in that the isotopy class of the nodal set $Z=\m S\cap L$ (an ellipsoid of lower dimension) is constant, but the nodal volume $V(X)$ is not. Nevertheless, one can check that in this case $V(X)\in \mathbb D^{1,2}$ and its law is absolutely continuous.
\item For $m=2$, let us consider once again Point (iv) of \cref{t:mainintro}. We want to show that, without the non-correlation assumption stated therein, one can build examples of submanifolds $M$ such that $X=\hat X|_{M}$ has an a.s. constant nodal volume $V(X)$ (which is therefore in $\mathbb{D}^{1,\eta}$ for all $\eta\geq 1$), irrespective of the constancy of the isotopy type of its nodal set; in the case of a constant isotopy type, this shows in particular that the non-correlation assumption in \cref{t:mainintro}-(iv) is not necessary for its conclusion to hold.
To see this, let $\f\colon \mathbb{S}^1\to\,\, ( -1,1 )$ be any smooth function such that $\f(-\theta)=-\f(\theta)$. Consider the two-sphere $\mathbb{S}^2$ in $\R^3=\R\times \R^2$ and define a manifold (with boundary) $M:=\kop (t,re^{i\theta})\in \mathbb{S}^2: t\le \f(\theta)\pok$, where the second entry of the vector $(t, r\theta)$ is expressed in polar coordinates, see \cref{fig:curly}. The field $X=\hat X|_M$ does not satisfy the correlation assumption stated at Point (iv) of Theorem \ref{t:mainintro}, because both $M$ and $X$ are invariant by the map $p\mapsto -p$, in such a way that, at antipodal boundary points, the field and its derivatives are fully correlated. Also, for any choice of $\f$ one has that the nodal set $Z=M\cap L$ is an anti-symmetric subset of the circle $\mathbb{S}^2\cap L$, namely, $p\in Z\iff -p \notin Z$, for all $p\notin \de Z$, which implies that $\mathcal{H}^{1}(Z) = V(X) = \pi$, and therefore $V(X)\in \mathbb D^{1,\eta}$, $\eta\geq 1$. Now, if $\f=0$, then $M$ is a half sphere, $Z=M\cap L$ is a half equator, and $Z$ has constant isotopy type in $M$. On the other hand, if $\f$ has $2+4n$ zeroes\footnote{The number of zeros of an anti-symmetric function on $\mathbb{S}^1$, if zero is a regular value, must be of the form $2+4n$, for some $n\in \N$.}, then $Z$ has $1+2n$ connected components if $L$ is horizontal (see \cref{fig:curly}-(A)), and one connected component if $L$ is vertical (see \cref{fig:curly}-(B)). Since the law of the Gaussian vector $\gamma$ has full support in $\R^{d+1}$, one immediately deduces that the probability that $L$ has $1+ 2n$ connected components and the probability that $L$ has exactly one connected component are both strictly positive. As a consequence, the isotopy type of $Z$ is not a.s. constant, despite $V(X)$ being a (trivial) element of $\mathbb{D}^{1,2}$. See also point (f) of \cref{r:cmspace}.
\end{enumerate}
%\end{example}
%CURLY SPHERE (metatheorem: any modiication of this code will have an unexpected and undesired result) Corrction: always unexpected, but sometimes it turns out to be exactly what was desired
\begin{figure}
\centering
\begin{subfigure}{.45\linewidth}
  \centering
  \includegraphics[width=.9\linewidth]{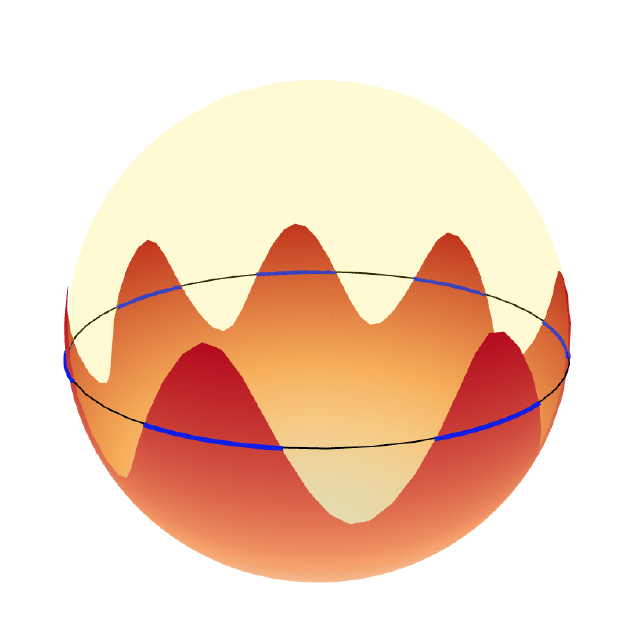}
  \vspace{15pt}
 \caption{If $L$ is \emph{horizontal}, then $Z=M\cap L$ is the disjoint union of $7$ segments of circles.}
  \label{fig:curly2}
\end{subfigure}%%
\hspace{.05\linewidth}
\begin{subfigure}{.45\linewidth}
  \centering
  \includegraphics[width=0.98\linewidth]{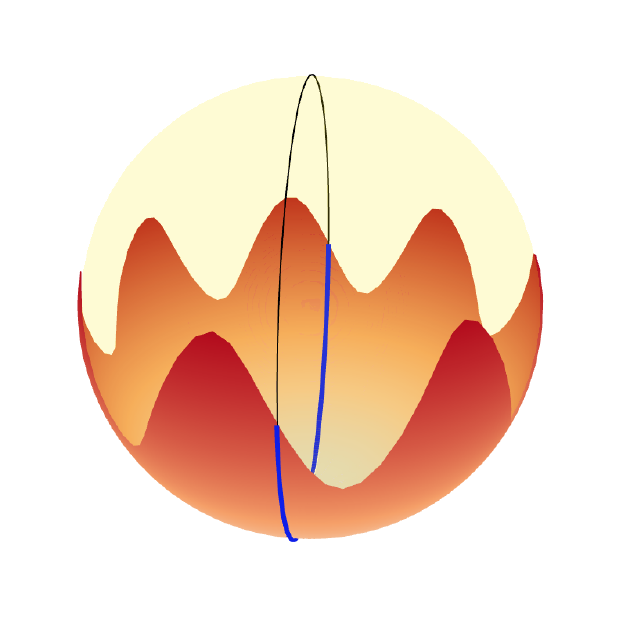}
  \caption{If $L$ is \emph{vertical}, then $Z=M\cap L$ is a semi-circle.}
  \label{fig:curly1}
\end{subfigure}%
\caption{The pictures shows two nodal lines of the Gaussian field $X=\hat{
X}|_M$ on the manifold $M:=\kop (t,re^{i\theta})\in \mathbb{S}^2: t\le \frac{2}{7}\sin(7\theta)\pok$, where the second entry of the vector $(t, r\theta)$ is expressed in polar coordinates. They both are of the form $Z=M\cap L$, with $L=X^{-1}(0)$ being a plane in $\R^2$}
\label{fig:curly}
\end{figure}
%%%%%%%%%%%%

The next section contains a discussion of the main methodological innovations established in our work and serves as a plan for the rest of the paper. 

\subsection{Main technical contributions and structure of the paper}\label{ss:motivintro}

\begin{enumerate}[(i)]
    \item {\it Criteria for absolute continuity on Banach spaces}. Section 3 contains general criteria --- stated in Theorem \ref{t:banach} and Corollary \ref{cor:abs} --- ensuring that the distribution of a random variable of the form $V = V(X)$, where $X$ is a Banach space-valued Gaussian random element, is not singular with respect to the Lebesgue measure. Our conditions require, in particular, that the Fr\'echet differential $d_f V$ is not zero for some $f$ in the topological support of the law of $X$. As discussed below, our strategy of proof is close in spirit to the arguments rehearsed in \cite{enchevstroock}.  

%\smallskip 
    
    \item {\it Explicit formulae for differentials of nodal volumes}. The main achievement of Section \ref{sec:nodaldiff} is the following new explicit formula for the differential of nodal volumes. Let $(M,g)$ be a $m$-dimensional $\mathcal{C}^2$ Riemannian manifold (possibly with boundary) and let $\mathcal{U}$ denote the open subset of $\mathcal{C}^2(M)$ composed of those $f$ having zero as a regular value; then, setting $V(f) = \mathcal{H}^{m-1} (f^{-1}(0))$, one has that $V\in \mathcal{C}^1(\mathcal{U})$ and, for all $f\in \m{U}$, 
    \be\label{eq:firstvar} 
\langle d_fV,h\rangle=-\int_{Z}h\cdot \frac{\tDelta f}{\|df\|^{2}} dZ+\int_{\de Z}h\cdot \frac{ g(\n,\nu) }{\|d(f|_{\de M})\|} d\de Z,
\ee
where $Z:=f^{-1}(0)$, $\nu:=\|\g f \|^{-1}\g f$, $\tDelta f:=\Delta f-\H f(\nu,\nu)$ and the scalar product on the right-hand side corresponds to the duality pairing notation. Formula \eqref{eq:firstvar} (whose proof is based on classical variational formulae discussed e.g. in \cite{li2012geometric}) is our main tool for directly applying the absolute continuity criteria discussed above, as well as for studying the Malliavin-Sobolev regularity of nodal volumes. As discussed in Remark \ref{r:diffheur}, when specialized to a Euclidean setting (in which $f$ is e.g. a smooth stationary Gaussian field on some rectangle $M$, having zero as a regular value with probability one), relation \eqref{eq:firstvar} can be formally deduced by differentiating the classical relation
\begin{equation}\label{e:volumecondirac}
\mathcal{H}^{m-1}(f^{-1}(0)) = \int_M \delta_0(f(x)) \, \|\nabla\, f(x)\|_{\mathbb{R}^m}\, dx,
\end{equation}
where the right-hand side of the previous equation has to be regarded as the appropriate limit of integrals where $\delta_0$ is replaced by a smooth kernel. See e.g. \cite{DNPR, marinucci2023laguerre, MRossiWigman2020, NourdinPeccatiRossi2019} for a sample of references directly using (or referring to) \eqref{e:volumecondirac}. 
%We observe that relation \eqref{eq:firstvar} is the key in the proof of Proposition \ref{p:sartorintro}.

%\smallskip 

    \item {\it Transversality of random curves in function spaces}. In Section \ref{sec:nondegeneracyconditions}, we focus on the characterization of {\it random segments} of the form $[X, X+h]:= \{ X+ th : t\in [0,1] \}$ (as special cases of more general $\mathcal{C}^2(M)$-valued curves $\{f_t\}$), where $X$ is a smooth Gaussian field satisfying Assumption \ref{ass:1} and $h$ is a fixed element of the associated Cameron-Martin space. Our main achievement --- see Theorem \ref{thm:transcurve} --- is the proof that a.s. $[X, X+h]$ only intersects \textit{transversely} the exceptional subset of $\mathcal{C}^2(M)$ composed of functions having zero as a critical value, in such a way that the intersections are finite in number and correspond to mappings for which zero is a Morse critical value. Such a result is the key element to verify the property of ray absolute continuity that is necessary to establish the Malliavin-Sobolev regularity of nodal volumes. See Section \ref{def:pseudotransv} for the appropriate notion of transversality, as well as Theorem \ref{thm:bestia} for some special results needed in the two-dimensional case. 

%\smallskip

\item {\it Regularity of nodal volumes along transverse curves via a transfer principle}. One of the crucial consequences of the findings of Section \ref{sec:nondegeneracyconditions} (see, in particular, Corollary \ref{cor:TransvToMorse}), is that the study of the local regularity of mappings of the type $t\mapsto V(f_t)$, where $V$ is the nodal volume and $\{f_t\}$ is a transverse curve, can be reduced to the local analysis of functions with the form $t\mapsto V(T-t)$, where $T$ is a Morse mapping on an adequate ancillary manifold with the same dimension. Our analysis (see e.g. Lemmas \ref{lem:upasyMorse} and \ref{lem:notd12}) reveals that the local regularity of these mappings strongly depends on the dimension on the manifold $M$. The results of Section \ref{sec:nondegeneracyconditions} and Section \ref{sec:levelMorse} are brought together in Section \ref{sec:transverse}, which also contains the proof of the non-singularity of the law of nodal volumes under \cref{ass:1} and an additional non-constancy requirement. 

%\smallskip 
    
    \item {\it Malliavin-Sobolev regularity via Kac-Rice formulae.} As already discussed, Sections \ref{s:prelims} and \ref{s:mallsob} establish the necessary integrability conditions for nodal volumes, and for the associated norms of weak derivatives, by repeatedly using Kac-Rice formulae -- as applied in particular to \eqref{eq:firstvar}. Such an approach has to be contrasted with the techniques developed in \cite{PolyAngst}, where such a task is accomplished through the study of the local regularity of the integrand appearing in an (already evoked) \textit{exact Rice formula} for nodal volumes.  

   % \smallskip 
    
    \item {\it Structure of the law of nodal volumes under a full support assumption}. As anticipated, Section \ref{s:singular} contains a full proof of Equation \eqref{e:magnificentlaw}. In Remark \ref{r:bestremarkever} it is argued that the main argument in the proof addresses a conjecture formulated in \cite[Section 4]{PolyAngst}.

\item {\it Constraints on the index of simultaneous critical points}. 
For a Gaussian field $X$ satisfying \cref{ass:1}, the event that a random segment $[X,X+h]$ contains functions $f$ with multiple critical zeros might have positive probability, although \cref{ass:1} ensures that all those critical points are Morse. For instance, if $X$ is a periodic function, on the torus $\mathbb T^m$ or on a domain $D\subset \R^m$ with small enough period, we prove (\cref{thm:bestia}) that almost surely, if such event occours, there are precise constraints on the Morse index of the critical points of $f$. It is surprising that this is true under the sole \cref{ass:1}, so in particular, without making any assumption on the correlation of the random variables $X(p)$ and $X(q)$ at distinct points $p\neq q\in M$.
\item {\it Negligible events in Gaussian spaces}.
We signal that \cref{lem:blind}, despite its very technical appearence, provides a very efficient strategy to prove that certain events have zero Gaussian probability, especially those arising from a differential condition, and involving random segments. Indeed, it works when more standard transversality arguments --- like those on which \cref{lem:Ptransv}, \cref{lem:bulinskaya} (Bulinskaya) and  \cref{thm:transcurve} rely on --- are not available. It is a key ingredient in the proof of \cref{thm:bestia} and in the proof of \cref{thm:sing}, to prove the validity of \cref{e:magnificentlaw} in point (i) of \cref{t:mainintro}.
\end{enumerate}

The forthcoming Section \ref{sec:notations} contains a list of preliminary definitions and notational conventions, whereas the formal definition of the Malliavin-Sobolev space $\mathbb{D}^{1,p}$ is deferred to Section \ref{ss:malliavin}.
\subsection{Acknowledgments}
The authors are thankful to J. Angst and G. Poly for useful discussions.
This research is supported by the Luxembourg National Research Fund (Grant: 021/16236290/HDSA).
%%%%%%%%%%%%%%%
\section{Notations}\label{sec:notations}

The following list contains some recurring conventions adopted in our work.
\begin{enumerate}[(i)]%, wide]

%\item {For the rest of the paper, and unless otherwise specified, every random element is assumed to be defined on an adequate common probability space $\Prob$.}

\item A \emph{random element} (see \cite{Billingsley}) of the topological space $T$ (or \emph{with values} in $T$) is a measurable mapping $X\colon \Omega\to T$, defined on $\Prob$. In this case, one writes
    \be\label{eq:randin}
    X\randin T
    \ee 
    and denote by $[X]=\PP X^{-1}$ the (push-forward) Borel probability measure on $T$ induced by $X$. We will use the notation
\be 
\PP\{X\in U\}:=%[X](U)=
\PP X^{-1}(U)
%=\int_{U}d[X],
\ee 
to indicate the probability that $X\in U$, for some Borel measurable subset $U\subset T$, and write (as usual)
\be 
\E\{f(X)\}:=\int_{T}f(t)d[X](t),
\ee
to denote the expectation of the random variable $f(X)$, where $f\colon T\to \R^k$ is a measurable mapping such that the above integral is well-defined.
%Two random elements $X_1,X_2$ are said to be \emph{equivalent} and treated as if they were equal if $[X_1]=[X_2]$. 
We will sometimes write that $X$ is a \emph{random variable}, a \emph{random vector} or a \emph{random mapping}, respectively, when $T$ is the real line, a vector space, or a space of functions $\mC^r(M,N)$, %or $N^M$ with the product topology),
respectively. Finally, if $T_1\subset T$ is a measurable subset such that $\PP\{X\in T_1\}=1$, then we will also write $X\randin T_1,$ by a slight abuse of notation.
\item The space of $\mC^r$ functions between two manifolds $M$ and $N$ is denoted by $\mC^r(M,N)$. We simply write $\mC^r(M)=\mC^r(M,\R)$ when $N=\R$.
If $E\to M$ is a vector bundle, we denote the space of its $\mC^r$ sections by $\Gamma^r(E)$ (see e.g. \cite[Chapter 10]{leesmooth}). Both spaces $\mC^r(M,N)$ and $\Gamma^r(E)$ are regarded as topological spaces endowed with the weak Whitney's topology (see \cite[p. 34]{Hirsch}). In particular, the space of $\mC^r$ vector fields on $M$ will be denoted by $\Vect^r(M):=\Gamma^r(TM)$.
% \item We will use the special symbol
% \be 
% X\colon M \randto N,
% \ee 
% to indicate that $X$ is a random mapping (see above), i.e., a random element of $\mC^0(M,N)$. The symbol hints at the fact that $X$ is indeed a function $X\colon M\times \Omega\to N$.
\item The sentence: ``$X$ has the property $\mathcal{P}$ almost surely'' (abbreviated ``a.s.'') means that the set  $S=\{t\in T : t \text{ has the property }\mathcal{P}\}$ contains a Borel set of $[X]$-measure $1$. It follows, in particular, that the set $S$ is $[X]$-measurable, i.e. it belongs to the $\sigma$-algebra obtained from the completion of the measure space $(T,\mathcal{B}(T),[X])$.
 \item We write $\#(S)$ for the cardinality of the set $S$.
 %\item {\bf \blue (Richiesta: puoi mettere una definizione di ``$C^k$ isotopic'' con un riferimento bibliografico preciso? Alla fine sul Hirsch non sono riuscito a trovare la definizione esatta ...)   }
\item We use the symbol $A\transv B$ to say that objects $A$ and $B$ are in transverse position, in the usual sense of differential topology (as in \cite[p. 74]{Hirsch} and \cref{def:transv}).
%\item We use $\Gamma(\mathcal{Z})$ for the space of continous sections of a continuous fiber bundle $\mathcal{Z}\to M$.
%\item Given a topological space $T$, we denote by $\Me(T)$ the topological vector space of finite signed Borel measures, endowed with the weak-$*$ topology induced by the inclusion $\Me(T)\subset \mC_b(T)^*$. We write $\Me^+(T)$ for the subset of positive finite measures and $\mathscr{P}(T)$ for that of probability measures, both considered with the subspace topology, if not otherwise specified.

%\item We write $\uball_k$ for the $k$ dimensional volume of the unit ball in $\R^{k}$ and $\usph_{k}=\frac{2^{k+1}\pi^{k}}{k!b_k}$ for the $k$ dimensional volume of the unit sphere in $\R^{k+1}$. %We recall $k!\uball_k \usph_k=2^{k+1}\pi^{k} $}
\end{enumerate}
%%%%%%%%%%%%%%%
%\input{Setting_G}
\section{Absolute Continuity and Differentiability on Banach Spaces}\label{sec:ACDB}

{In this section, we use some basic elements of the theory Gaussian measures on Banach spaces. The reader is referred to \cite[Chapters 2 and 3]{bogachev} for a comprehensive discussion; see also \cite[Section 4]{hairerSPDES} for a succinct presentation. From now on, the notational conventions from Section \ref{sec:notations} are adopted without further notice.}

\subsection{Some general statements} {Let $E$ be a Banach space, denote by $E^*$ its dual and write $\langle \lambda,g\rangle$, $\lambda \in E^*, \, g\in E$, to indicate the usual duality pairing}. We endow $E$ with a centered Gaussian probability measure $\mu$ (see \cite[Definition 2.1]{bogachev}), and we assume that $\mu = [X]$, that is,  $X\randin E$ is a random element whose push-forward measure on $E$ coincides with $\mu$ (see Section \ref{sec:notations}--(ii) and subsequent discussion). Accordingly, we will use the notation $\HX$ to indicate the {\it Cameron-Martin space} associated with $\mu$, as defined e.g. in \cite[p. 44]{bogachev}. We recall that $\HX$ is a Hilbert space with the properties that the inclusion $\HX\subset E$ is injective and continuous, and the random element $X$ such that $\mu=[X]$ can be chosen in such a way that 
\begin{equation} 
X=\sum_{n\in \N}\gamma_n h_n, \label{e:seriexp}
\end{equation}
for an arbitrary orthonormal Hilbert basis $(h_n)_n$ of $\HX$ and any family $(\gamma_n)_n$ of i.i.d. random variables $\gamma_n\sim \mathcal{N}(0,1)$, where the series converges in $E$, a.s.-$\mathbb{P}$. See e.g. \cite[Theorem 3.5.1]{bogachev}, as well as the forthcoming Remark \ref{r:cmspace}, for concrete examples.

\smallskip

In the sequel, we will assume that $\mu$ is {\it non-degenerate}, meaning that the topological support of $\mu$ is the whole space $E$, that is: for each open subset $ A \subset E$ one has that $\mu(A)>0$ (see e.g. \cite[Definition 3.6.2]{bogachev}). According to \cite[Theorem 3.6.1]{bogachev}, this property is equivalent to the fact that $\HX$ is dense in $E$. 

 \begin{remark}\label{r:fullsupport} {\rm Following e.g. \cite[Definition A.3.14]{bogachev}, we recall that the topological support of the measure $\mu$ --- henceforth written ${\rm supp}(\mu)$ ---  is defined to be the smallest closed subset of $E$ such that $\mu(\, {\rm supp}(\mu) \, ) = 1$. Since (again by virtue \cite[Theorem 3.6.1]{bogachev}) the set ${\rm supp}(\mu)$ coincides with the closure of $\mathcal{H}_X$ in $E$, one can bypass the non-degeneracy assumption on $\mu$ by considering instead the restriction of $\mu$ to the Banach (sub)space ${\rm supp}(\mu)$. 
}
\end{remark}

Now fix a Borel measurable mapping $V\colon E \to \R$, and assume that there exists an open subset $\mathcal{U}_0 \subset E$ such that $V$ is of class $\mC^1$ on $\mathcal{U}_0$, that is, there exists a continuous mapping $dV:\mathcal{U}_0\to E^*$ such that, as $g\to 0$ in $E$,
\begin{equation}\label{e:banachgrad}
V(f+g)=V(f)+\langle d_fV,g\rangle+o\tyu\|g\|\uyt,
\end{equation}
where we used the duality pairing notation introduced above. The following statement provides a straightforward criterion ensuring that there exists a truncation of the random variable $V(X)$ whose law is absolutely continuous.
%Here, $\Omega^*$ denotes the dual Banach space and $\langle \lambda,g\rangle$ denotes the duality pairing. 

\begin{theorem}\label{t:banach}
Let the above setting and assumptions prevail (in particular, $X$ is a $E$-valued random element with non-degenerate distribution $\mu$), and assume that $d_fV\neq0$ for some $f\in \m{U}_0$. Then, there exists an open neighborhood $\m O\subset\m{U}_0$ of $f$ such that the law of the random variable $V1_{\m{
O}} := V(X)1_{\m{
O}}(X)$ is absolutely continuous with respect to the Lebesgue measure.
\end{theorem}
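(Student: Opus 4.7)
The plan is to exploit the density of $\mathcal{H}_X$ in $E$ to find a Cameron-Martin direction $h$ along which the functional $V$ is locally a diffeomorphism near $f$, and then to disintegrate the Gaussian law $\mu$ along this direction, applying the standard one-dimensional change-of-variables formula fiber by fiber.

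First, since $d_f V \in E^*$ is nonzero and $\mathcal{H}_X$ is dense in $E$ (by non-degeneracy of $\mu$ together with \cite[Theorem 3.6.1]{bogachev}), I can pick $h \in \mathcal{H}_X$, of unit Cameron-Martin norm, such that $\langle d_f V, h\rangle \neq 0$. The continuity of $dV\colon \mathcal{U}_0 \to E^*$ then yields an open neighborhood $\mathcal{O}\subset\mathcal{U}_0$ of $f$ and a constant $c>0$ for which $|\langle d_g V, h\rangle|\geq c$ for every $g\in\mathcal{O}$.

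Next, let $\hat h \in L^2(\mu)$ be the measurable linear functional canonically associated with $h$ via the Cameron-Martin isomorphism (see \cite[Section 2.10]{bogachev}); then $\gamma := \hat h(X)$ is standard Gaussian, and $Y := X - \gamma h$ is jointly Gaussian with $\gamma$ and uncorrelated with it, hence independent. Conditionally on $Y=y$, the field $X$ has the law of $y + \gamma h$. For each such $y$, the open set $I_y := \{t\in\mathbb{R} : y+th\in\mathcal{O}\}$ supports the $\mathcal{C}^1$ map $\psi_y(t) := V(y+th)$, whose derivative $\psi_y'(t) = \langle d_{y+th}V, h\rangle$ has absolute value uniformly bounded below by $c$. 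Consequently, $\psi_y|_{I_y}$ is a local diffeomorphism on its open domain, and the pushforward by $\psi_y$ of the standard Gaussian measure restricted to $I_y$ is absolutely continuous with respect to Lebesgue on $\mathbb{R}$.

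Finally, for any Borel set $A\subset\mathbb{R}$ of zero Lebesgue measure, Fubini gives
\[
\mathbb{P}\bigl(V(X)\in A,\;X\in\mathcal{O}\bigr) \;=\; \int_E \int_{I_y} \mathbf{1}_A(\psi_y(t))\,\phi(t)\,dt\;d[Y](y) \;=\; 0,
\]
where $\phi$ is the standard Gaussian density, whence the law of $V\mathbf{1}_{\mathcal{O}}$ restricted to $\mathbb{R}\setminus\{0\}$ is absolutely continuous. I anticipate no substantive obstacle: the only delicate point is the availability of a Cameron-Martin direction $h$ with $\langle d_f V, h\rangle\neq 0$, which is guaranteed by the non-degeneracy of $\mu$; all the rest is a combination of the Cameron-Martin disintegration, Fubini, and the elementary fact that a $\mathcal{C}^1$ map with nowhere-vanishing derivative sends Lebesgue-null sets to Lebesgue-null sets on its domain of regularity.
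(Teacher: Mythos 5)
Your proposal is correct and follows essentially the same route as the paper: choose a Cameron--Martin direction $h$ with $\langle d_fV,h\rangle\neq 0$ (available by density of $\mathcal{H}_X$ and continuity of $dV$), split $X=Y+\gamma h$ with $\gamma\sim\mathcal{N}(0,1)$ independent of $Y$, observe that $t\mapsto V(y+th)$ has derivative bounded away from zero on the relevant fiber so it pulls Lebesgue-null sets back to null sets, and conclude by Fubini. The only cosmetic differences are that the paper takes $\mathcal{O}$ of rectangular form $\mathcal{O}_0\oplus(a,b)h$ so each fiber map is a global diffeomorphism between intervals, whereas you work with a general open fiber $I_y$ and local diffeomorphisms, which is equally valid.
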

\begin{proof}
By density and continuity, we can find $h_0\in \mathcal{U}_0$ and $h\in \HX$ such that $\langle d_{h_0}V,h\rangle \neq 0$. Without loss of generality, we can assume that $|h|=1$. The rest of the proof is partially based on a construction reminiscent of the arguments rehearsed in \cite[Section 2]{enchevstroock}. Define the closed Banach subspace $E_0:=\overline{\{h\}^\perp}$ (closure of $\{h\}^\perp$ in $E$). Then, using e.g. \cite[first Lemma of Section 2]{enchevstroock}, one sees that there is a splitting $E =E_0\oplus \R h$ such that the random element $X$ can be written as $X=Y+h\gamma$, where $Y\randin E_0$ is a Gaussian random element with values in $E_0$ (with Cameron-Martin space $\m{H}_Y=\{h\}^\perp$) and $\gamma\sim \m{N}(0,1)$, in such a way that $Y$ and $\gamma$ are independent. Let $\m{O}:=\m{O}_{0}\oplus (a,b)h$ be an open neighborhood of $h_0$ such that $0\notin \{\langle d_fV, h\rangle :f\in \m{O}\}$, where $\m{O}_0\subset E_0$ is open and $a,b\in \R$. For any $g\in \m{O}_0,$ then, the function $t\mapsto V(g+th)$ is a $\mC^1$ diffeomorphism between two intervals: $(a,b)\to (V(g+ah),V(g+bh))$, so that, if $Z\subset \R$ has zero Lebesgue measure, then %then for any fixed $g\in \Omega_0$ we have that \be 
%1_{\m{O}_0}(g)
$1_Z\tyu V(g+t h)\uyt=0,$ for a.e. $t\in (a,b).$
 As a consequence, %since every element of $\m{O}$ can be written as $g+th$ for some $g\in \m{O}_0$ and $t\in (a,b)$, we have that
\bega 
%\mu\kop 1_{\m{O}}V\in Z\pok
\tyu\tyu V1_{\m{O}}\uyt_*\mu\uyt\kop Z\pok
&=
\E\kop 1_{\m{O}_0}(Y)1_{(a,b)}(\gamma)1_Z\tyu V(Y+\gamma h)\uyt\pok
\\&=\E\kop 1_{\m{O}_0}(Y)\int_{a}^b 1_Z\tyu V(Y+t h)\uyt\rho(t)dt\pok=0.
\eega
\end{proof}

\begin{remark} To keep the notational complexity of the present paper within bounds, in the following we will only deal with Gaussian random elements with values in Banach spaces, so that Theorem \ref{t:banach} will be enough for our purposes. One should note, however, that the above-displayed arguments can be made to work in the case where $E$ is a non-complete normed vector space or non-normed Fr\'{e}chet space. 
\end{remark}

The following statement is a direct consequence of Theorem \ref{t:banach}. It is a general criterion allowing one to deduce that the law of a given function of the Gaussian element $X$ has an absolutely continuous component.

\begin{corollary}\label{cor:abs}
Under the assumptions of this section, consider a mapping $V:E\to \R$ that is differentiable and non-constant on some connected open subset of $E$. Then, the law of the random variable $V(X)$ and the Lebesgue measure are not mutually singular.
\end{corollary}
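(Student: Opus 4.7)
The overall strategy is to reduce the corollary to a single invocation of Theorem \ref{t:banach} and exploit the non-degeneracy of $\mu$ to guarantee that the neighbourhood produced by that theorem has positive Gaussian mass. Writing $\mathcal{U} \subset E$ for the connected open subset on which $V$ is differentiable and non-constant, the plan is to locate a point $f_0 \in \mathcal{U}$ where $d_{f_0} V \neq 0$, to apply Theorem \ref{t:banach} there to obtain a neighbourhood $\mathcal{O} \subset \mathcal{U}$ such that the pushforward of $\mu|_{\mathcal{O}}$ under $V$ is absolutely continuous, and finally to deduce that this pushforward is a non-trivial absolutely continuous piece of the full law $[V(X)]$.

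First I would prove the existence of $f_0 \in \mathcal{U}$ with $d_{f_0} V \neq 0$ by a standard connectedness argument. If $d_f V = 0$ for every $f \in \mathcal{U}$, then the mean value theorem in Banach spaces implies that $V$ is locally constant on $\mathcal{U}$; since $\mathcal{U}$ is a connected open subset of a Banach space, it is path-connected, so local constancy forces $V$ to be constant on $\mathcal{U}$, contradicting the hypothesis. Hence such an $f_0$ exists, and Theorem \ref{t:banach} applied at $f_0$ yields an open neighbourhood $\mathcal{O} \subset \mathcal{U}$ of $f_0$ such that the law of $V(X)\mathbf{1}_{\mathcal{O}}(X)$ is absolutely continuous with respect to the Lebesgue measure.

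To finish, I would use the non-degeneracy of $\mu$ established at the beginning of Section \ref{sec:ACDB}: since the topological support of $\mu$ equals $E$, the open set $\mathcal{O}$ satisfies $\mathbb{P}(X \in \mathcal{O}) > 0$. Let $\nu := [V(X)]$ and let $\nu_{\mathcal{O}}$ be the pushforward of $\mu|_{\mathcal{O}}$ under $V$. Then $\nu_{\mathcal{O}}$ is a sub-probability measure on $\mathbb{R}$ of total mass $\mu(\mathcal{O}) > 0$, it satisfies $\nu_{\mathcal{O}}(B) \leq \nu(B)$ for every Borel $B \subset \mathbb{R}$, and Theorem \ref{t:banach} ensures that $\nu_{\mathcal{O}}$ is absolutely continuous with respect to Lebesgue measure. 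Thus the Lebesgue decomposition of $\nu$ contains a non-zero absolutely continuous component, so $\nu$ and Lebesgue measure cannot be mutually singular.

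The only subtle point in this plan is matching the regularity hypothesis: Theorem \ref{t:banach} is stated for mappings of class $\mathcal{C}^1$ on an open set, whereas the corollary only assumes differentiability on $\mathcal{U}$. I would resolve this by interpreting ``differentiable'' in the sense of a continuously varying Fr\'echet derivative on $\mathcal{U}$, which is the reading consistent with the direct appeal to Theorem \ref{t:banach}; alternatively, one can check that the proof of Theorem \ref{t:banach} goes through verbatim as soon as $t \mapsto V(g + th)$ is a $\mathcal{C}^1$ diffeomorphism on a small interval for $g$ in some neighbourhood, which is guaranteed by Fr\'echet differentiability together with $d_{f_0} V(h) \neq 0$ and continuity of $V$.
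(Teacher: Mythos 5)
Your argument is correct and is exactly the derivation the paper has in mind: the paper gives no explicit proof, presenting the corollary as a direct consequence of Theorem \ref{t:banach}, and your three steps (locate $f_0$ with $d_{f_0}V\neq 0$ via connectedness, invoke Theorem \ref{t:banach} to get an absolutely continuous pushforward of $\mu|_{\mathcal O}$, and use full support to ensure $\mu(\mathcal O)>0$) fill in precisely the intended details. Your remark on reading ``differentiable'' as $\mC^1$ in the sense of \eqref{e:banachgrad} is also the right resolution, since that is the standing hypothesis of the section.
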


\begin{remark}\label{r:cmspace}{\rm Let $M$ be a $\mC^2$, compact $m$-dimensional manifold. Most of the Banach spaces encountered in the present work will have the form of closed subsets of the class $\m C^2(M)$, that we endow with the weak Whitney's topology --- see Section \ref{sec:notations}--(iii). As explained e.g. in \cite[p. 35]{Hirsch}, the set $\m C^2(M)$ (and therefore any of its closed subsets) can be equipped with the structure of a Banach space. Now consider a centered Gaussian measure $\mu$ on $\m C^2(M)$, and let $\mu = [X]$ for some random field $X $ with covariance $K_X$ (note that, necessarily, $K_X\in C^{2,2}(M\times M)$). According e.g. to the discussion contained in \cite[Section 2.4]{bogachev} or \cite[Section 4 and Appendix A]{dtgrf}, the Cameron-Martin space $\mathcal{H}_X$ admits the following standard characterization:
 \begin{enumerate}[(a)]
 \item Let $D$ be the linear space generated by the collection of Dirac masses $\{\delta_p : p\in M\}$ and define $\m H$ to be Hilbert space obtained as the closure of (an appropriate quotient of) $D$ with respect to the bilinear extension of the inner product $\langle \delta_p, \delta_q\rangle = K_X(p,q).$ Then, $\mathcal{H}_X$ is the subset of $\mathcal{C}^2(M)$ given by the mappings $\Phi_h: p\mapsto \Phi_h(p) := \langle h, \delta_p\rangle$, $h\in \m H$, with inner product $\langle \Phi_h, \Phi_g\rangle_{\mathcal{H}_X}= \langle h,g \rangle$.

 \item If $G\in \m C^2(M)^*$ has the form
 $$
 G(f) = \int_M f(p)\, \theta(dp) = \int_M \delta_p(f)\, \theta(dp), \quad f\in \m C^2(M),
 $$
for some finite signed Borel measure $\theta$, then $G\in \HX^*\simeq \HX$ and 
$$
\| G \|^2_{\HX} = \int_M \int_M \langle \delta_p, \delta_q \rangle \,\theta(dp)\theta(dq) = \int_M \int_M K_X(p,q) \, \theta(dp)\theta(dq) .
$$
A full characterization of $C^2(M)^*$ (not needed in our work) is provided in \cite[Theorem 44]{dtgrf}. 
\end{enumerate}
 }
\end{remark}

Several consequences of Corollary \ref{cor:abs} are discussed in Section \ref{sec:nons}.

\subsection{Malliavin-Sobolev spaces}\label{ss:malliavin}
 This is the right place for introducing the \emph{Malliavin-Sobolev} \emph{spaces} $\mathbb{D}^{1,p},$ for all $p\geq 1$. The next definition corresponds to the content of \cite[Definitions 5.2.3 and 5.2.4]{bogachev}, with one exception: at Point (i) below, we decided to state the property of ``ray absolute continuity'' in a form that is more suitable for the framework of our paper. For the sake of completeness, the equivalence between the two formulations is proved in Appendix \ref{apx:ray}. Finally, to be in line with the notation adopted in the standard references \cite[Section 1.2]{nualartbook} and \cite[Section 2.3]{nourdinpeccatibook}, we use the symbol $\mathbb{D}^{1,p}(\mu)$ instead of writing $\mathbb{D}^{p,1}(\mu)$ (inverting $1,p$ to $p,1$) as in \cite{bogachev}.
\begin{definition}\label{def:Dspace}
Let $E$ be a Banach space equipped with a centered Gaussian measure $\mu=[X]$ having full support, and let $\HX\subset E$ be the associated Cameron-Martin Hilbert space. Let $1\le p <\infty.$ We write $\mathbb{D}^{1,p}(\mu)$ to indicate the {\rm Malliavin-Sobolev space} of all functions $V\in L^p(\mu),$ with the following properties:
\begin{enumerate}[\rm (i)]
    \item {\rm ($V$ is {ray absolutely continuous})} For every $h\in \HX,$ there is a measurable set $N_h\subset E,$ with $\mu(N_h)=0,$ such that for all $x\in E \smallsetminus N_h,$ the function $t\mapsto V(x+th)$ coincides $dt$-almost everywhere with an absolutely continuous function $t\mapsto \f(t)$.
    \item {\rm ($V$ is \emph{stochastically Fr\'echet differentiable})} There exists measurable mapping $${\dmlv V:E\to \HX^*}$$ such that for any $h\in \HX,$ the expression 
    \be 
\mu\kop x\in E \colon \left|\frac{V(x+th)-V(x)}{t}-\langle \dmlv V(x),h\rangle \right|>\e\pok 
    \ee
    tends to zero as $t\to 0,$ for any choice of $\e>0$, i.e., there is convergence in probability of the divided difference. The mapping $\dmlv V$ is called the \emph{stochastic derivative} of $V.$
    \item The stochastic derivative is $p$-integrable: $\E\|\dmlv V\|_{\HX}^p<\infty.$
\end{enumerate}
If $V\in \mathbb{D}^{1,p}(\mu),$ then the random variable $\dmlv V(X) \randin \HX^*$ is called the \emph{Malliavin derivative} of $V$, and $V$ is said to be \emph{in the domain of the Malliavin derivative}. We equip $\mathbb{D}^{1,p}(\mu)$ with the norm:
\be 
\|V\|_{\mathbb{D}^{1,p}}:=\E\kop|V|^p\pok^\frac1p+\E\kop\|\dmlv V\|_{\HX}^p\pok^{\frac1 p}.
\ee
\end{definition}

%\begin{remark}
%The Malliavin derivative $\dmlv V\randin \HX^*$ is most often seen as a random element $\nabla_{\Mlv}V\randin \HX,$ via the canonical isomorphism $\HX\cong \HX^*.$ It will be convenient for us to keep these two objects distinct, so we will call $\nabla_{\Mlv}V\randin \HX$ the \emph{Malliavin Gradient} of $V.$ However, we stress the fact that there is no substantial difference between the two.
%\end{remark}

\begin{remark}
    It is clear from the above definition that if $V\in \mathbb{D}^{1,p}(\mu)$ and $V$ is  of class $\mC^1$ on an open subset $\m U\subset E,$ then for $\mu$-a.e.  $x\in \m U$ one has that
    \be 
\dmlv V(x)=d_xV|_{\HX}.
    \ee
We recall that $\mathcal{H}_X$ is continuously embedded in $E$.   
    
    %Notice also that such linear form on $\HX$ is defined and continuous whenever $V$ is Fr\'{e}chet differentiable at $x$, because of the continuity of the inclusion $\HX\subset E$ and because $d_xV\in E^*.$
\end{remark}

\begin{remark}{\rm 
By \cite[Thm. 5.7.2]{bogachev}, the normed spaces $\mathbb{D}^{1,p}(\mu)$ defined above are complete and contain the subset of smooth cylindrical functions as a dense subset, where the smooth cylindrical functions $V\colon E \to \R$ are those of the form $V=F\circ L,$ where $L : E\to \R^N$ is a continuous linear map for some $N\in \N$ and $F\in \m{C}^{\infty}(\R^N)$ has bounded derivatives of all orders. Thus, \cref{def:Dspace} coincides with the most common definition of the Malliavin Sobolev spaces $\mathbb{D}^{1,p}(\mu)$ as the completion of the spaces of smooth cylindrical functions with respect to the norm $\|\cdot\|_{\mathbb{D}^{1,p}}.$ See e.g. \cite[Section 1.2]{nualartbook} and \cite[Section 2.3]{nourdinpeccatibook}. } 
\end{remark}
\section{Nodal Volumes as Differentiable Mappings: Proof of \eqref{eq:firstvar}}\label{sec:nodaldiff}
\subsection{Some heuristic considerations} The main achievement of the present section is the derivation of formula \eqref{eq:firstvar} (see Theorem \ref{thm:firstvar}). Such a result yields an explicit representation of the Fr\'echet differential $d_f V$ (as defined in \eqref{e:banachgrad}) in the case where $E$ is a closed subset of the space $\m{C}^2(M)$ associated with a $m$-dimensional compact $\mC^2$ Riemannian manifold $M$, and $V(f) = \mathcal{H}^{m-1}(f^{-1}(0))$ is the {\it nodal volume} of a mapping $f\in \m{C}^2(M)$ such that $0$ is a regular value of $f$ (see Definitions \ref{d:nodalv} and \ref{def:regval}).

\smallskip

\begin{remark}\label{r:diffheur} Fix $m\geq 2$. In the case where $M$ is a bounded domain of $\mathbb{R}^m$ with a smooth boundary, and $f : M\rightarrow \mathbb{R}$ is a $\m C^2$ mapping such that $0$ is a regular value of $f$, formula \eqref{eq:firstvar} takes the following simple form: for the mapping $g\mapsto V(g) = \mathcal{H}^{m-1}(g^{-1}(0))$ and for any $h\in \mathcal{C}^2(M)$, writing $Z = f^{-1}(0)$, one has that
\begin{eqnarray}\label{e:heu}
\langle d_f V, h\rangle &= &\int_Z \frac{h(x)}{\|\nabla f(x)\|^2}\cdot \left\{\nu\, {\rm Hess } f(x) \nu^T - \Delta f(x) \right\} \, \mathcal{H}^{m-1}(dx) \\
&&+ \int_{Z\cap \partial M} h(x) \cdot \frac{\langle \nabla f(x), {\bf n}(x)\rangle}{\|\nabla (f|_{\de M})\|\|\nabla f(x)\| } \mathcal{H}^{m-2}(dx), \notag
\end{eqnarray}
where $\nu = \nabla\, f(x) / \|\nabla\, f(x)\| $, and ${\bf n}(x)$ is the outward pointing normal at $x\in \partial M$. It is interesting that formula \eqref{e:heu} can be deduced by {\it formally} differentiating the right-hand side of \eqref{e:volumecondirac} under the integral sign. Writing $A_x (g) := \delta_0(g(x))$ and $B_x(g) := \|\nabla g(x)\| $, one has indeed the formal chain rule
\begin{eqnarray}\label{e:heu2}
\langle d_f V, h\rangle &= &\int_M \langle d_f A_x , h \rangle \|\nabla f(x)\| dx + \int_M \delta_0(f(x)) \langle d_f B_x, h \rangle dx := (I)+(II).  
\end{eqnarray}
Now, standard computations yield that
$$
\langle d_f B_x, h \rangle = \frac{\langle \nabla f(x) , \nabla h(x)\rangle}{\|\nabla f(x)\|},
$$
 whereas a formal integration by parts based on the divergence theorem leads to
 $$
 (I) = - \int_M \delta_0(f(x)) \left\{ \nabla \bullet \frac{h(x) \nabla f(x) }{\|\nabla f(x)\|} \right\} dx + \int_{\partial M} \delta_0(f(x)) h(x)  \frac{\langle \nabla f(x), {\bf n}(x)\rangle}{\|\nabla f(x)\|}  dx, 
 $$
where the symbol $\bullet$ indicates a scalar product in $\mathbb{R}^m$; the fact that $(I)+(II)$ equals the right-hand side of \eqref{e:heu} now follows by explicitly computing the term between curly brackets inside the first integral.
\end{remark}

\subsection{Elements of Riemannian geometry}\label{ss:riemannelements}
Our main references for this section are the monographs \cite{AdlerTaylor, Hirsch, leesmooth, leeriemann, li2012geometric}, to which we refer the reader for any unexplained notion and results. For $m\geq 1$, we consider an $m\ge 1$ dimensional $\mC^2$ {compact} Riemannian manifold $M$, possibly with boundary $\de M$, endowed with a metric $g:TM\otimes TM\to \R$ of class $\mC^2$, with associated norm $\|\cdot\|=\sqrt{g(\cdot,\cdot)}$. 
We can think that $M$ is a stratified manifold (see \cite[Section 8.1]{AdlerTaylor}) with two strata: the interior $\inter M$ of dimension $m=:\dim M$ and the boundary $\de M$ of dimension $m-1$, with the tangent bundle of $\inter M$ that extends to a vector bundle $TM$ over all $M$. There is a $\mC^1$ section $\n:\de M\to TM$ defined as the unit (i.e., $\|\n\|=1$) outward normal vector to $\de M$, so that $TM|_{\de M}=T{\de M}\oplus_\perp \mathrm{span}\{\n\}$; %({\bf \blue Gio: should be $TM|_{\de M}=T{\de M}\oplus_\perp \mathrm{span}\{b\}$ ?}); 
see e.g. \cite[p. 391]{leesmooth}.

\subsubsection{Volumes}
We use the notation $\int_M f dM=\int_M f(p) dM(p)$ for the integration of a Borel function $f:M\to \R$ with respect to the \emph{volume measure} of $(M,g)$. The integral is defined as the linear functional such that when $f$ is supported in the domain of a chart $\f\colon O\to \R^m$, 
\be 
\int_M f dM:=\int_{\f(O)}f(\f^{-1}(x))\sqrt{\det G(x)}dx,
\ee 
where $G_{ij}(x)=g\tyu\frac{\de \f^{-1}}{\de x^i}(x), \frac{\de \f^{-1}}{\de x^j}(x)\uyt$. % is the matrix representing the metric in coordinates. 
In particular, if $Z\subset M$ is a submanifold of dimension $k$, endowed with the metric induced by $g$ and $\vol^{k}$ denotes the $k$-dimensional Hausdorff measure on $M$ with respect to the geodesic distance induced by $g$ (see e.g. \cite[pp. 168-169]{AdlerTaylor}), then $\int_Z f dZ=\int_M f(p)1_Z(p) \vol^k(dp)$, so that the \emph{volume}, or \emph{$k$-volume}, of $Z$ is
\be 
\vol^k(Z)=\int_Z1 dZ.
\ee
Plainly, the $1$-volume is the \emph{length} and the $2$-volume is the \emph{surface area}.
\subsubsection{Gradient, Hessian and Laplacian}
The metric induces an isomorphism of vector bundles $\sharp_g\colon T^*M\to TM$, which \emph{raises the indices}
 and whose inverse is denoted by $\flat_g$. We will adopt the standard notation 
 \be 
 %\Vec{\a}:=
 \a^\sharp:=\sharp_g (\a),
 \ee 
 for any $\a\in T^*_pM$, so that for all $v\in T_pM$, we have  $g(\a^\sharp,v)=\langle\a,v\rangle$.
 The \emph{gradient} of a differentiable function $f\in\mC^{2}(M)$ is the vector field $\grad{f}={d f}^\sharp\in \Vect^1(M)$ such that
 \be 
 g\tyu \g f(p), v\uyt=\langle d_pf, v\rangle
 \ee 
 Let $\nabla$ denotes the Levi-Civita connection of the metric $g$. The \emph{Hessian} of $f\in\mC^{2}(M)$ at $p\in M$ is the symmetric bilinear form $\hess_p f\colon T_pM\times T_pM\to\R$ such that
\be 
\hess_p f(v,w)=\langle \nabla_v(d_pf),w\rangle,
\ee
and defines a continuous tensor $\H f$.
The corresponding linear operator is $\h f =\sharp_g\circ \H f\colon TM\to TM$, that is the operator such that $g(\h f(v),w)=\H f(v,w)$ for all $v,w\in TM$.
% \begin{definition}
% We will use the shortened notation: $\g f=\grad f$, $\H f:=\hess f$ and $\h f =\sharp\circ Hf\colon TM\to TM$, that is the operator such that $g(\h f(v),w)=\H f(v,w)$ for all $v,w\in TM$.\footnote{In this way, we distinguish it from $\nabla f$, since in some books the latter denotes $\nabla f=df\in\Gamma^r(T^*M)$, which is a $1$-form.}
% \end{definition}
Since $\nabla\sharp_g=\sharp_g\nabla,$ it follows that $\h f=\nabla \g f.$
Taking the trace of the Hessian, with respect to the metric, one obtains the Laplace-Beltrami operator $\Delta\colon \mC^{2}(M)\to \mC^0(M)$, or just the \emph{Laplacian}, of the metric $g$: 
\be 
\Delta f=\mathrm{tr}_g( \H f)=\mathrm{tr}(\h f).
\ee
\subsubsection{Mean curvature}
Given $f\in \mC^2(M)$, such that $\{p\in M:f(p)=0,d_pf=0\}=\emptyset$, then $Z=f^{-1}(0)$ is a $\mC^{2}$ hypersurface, cooriented by the normal vector field $\nu=\|\g f\|^{-1}\g f$. The \emph{second fundamental form} of $Z$ at $p\in Z$ is the bilinear form $\mathrm{II}_p=\|d_pf\|^{-1}\H_p f|_{TZ}$.  Indeed, for any $v,w\in T_pZ$ we have
\be \label{eq:II}
\mathrm{II}(v,w)\overset{\mathrm{def}}{=}g( v,\nabla_w \nu) = \frac{\H f(v,w)}{\|df\|}
%-\langle v, \frac{df(\sharp \hess f(w))}{\|df\|^3} \grad f\rangle
%-\langle v, \frac{\hess f(\grad f,w))}{\|df\|^3}\grad f\rangle
-g( v,\nu)\frac{\H f(\nu,w)}{\|df\|},
\ee
but the second term vanishes because $T_pZ=\nu(p)^\perp.$
Finally, the \emph{mean curvature} of $Z$ is the trace of $\mathrm{II}$ with respect to the restriction of $g$ to $TZ$, that is the function $H_Z\in\mC^0(Z)$ 
\be\label{eq:mean}
H_Z=\text{tr}_g\tyu {\mathrm{II}}\uyt=\|df\|^{-1}\tyu \Delta f-\H f(\nu,\nu)\uyt.
\ee
\subsection{Nodal volume and regular values: main results}
The next definition formally introduces the main object of our paper. 
\begin{definition}[Nodal volumes]\label{d:nodalv}
Let the above notation and assumptions prevail. We define the \emph{nodal volume} as the mapping $V\colon \mC^2(M)\to \R \, \colon f\mapsto V(f)$ such that 
\be \label{e:defnodvol}
V(f):=\vol^{m-1}(f^{-1}(0)).
\ee
\end{definition}
Observe that the definition of $V$ is well given because $f^{-1}(0)$ is a closed (and, therefore, Borel) set. In general, $V$ is a mapping possessing some degree of regularity only when restricted to functions $f$ such that $f^{-1}(0)$ is a submanifold -- the latter property is encoded in the fundamental notion of \emph{regular value}. 
\begin{definition}[Regular Values]\label{def:regval}
For $m\geq 1$, let $M$ be a $m$-dimensional $\mC^2$ Riemannian manifold with boundary, let $f\in \mC^2(M)$ and $r\in\R$. If $\de M=\emptyset$, then we say that $r$ is a \emph{regular value} of $f$, and write $f\transv r$, if for every $p\in M$, the following implication holds: $f(p)=r\implies d_pf\neq 0$.
In general, we say that $r$ is a \emph{regular value} of $f$, and write $f\transv r$, if and only if $f|_{\inter M}\transv r$ and $f|_{\de M}\transv r$. 
Moreover, $r$ is said to be a \emph{critical value} of $f$ if it is not a regular value. 
From now on, we write
\begin{equation}\label{e:callu}
\m U:=\kop f\in \mC^2(M): f\transv 0\pok.
\end{equation}
\end{definition}
\begin{definition}[Neat Hypersurfaces]\label{def:neat}
Following \cite[Section 1.4]{Hirsch}, we say that a subset $Z\subset M$ is a $\mC^2$ \emph{neat submanifold} of $M$, having codimension $k$, if for every point $p\in Z$, there is a $\mC^2$ chart $\f\colon O\to \R^{m-1}\times [0,+\infty)$ of $M$ around $p$, such that $Z\cap O=\f^{-1}(\R^{m-k})$. %(\R^{m-1-k}\times [0,+\infty))$. 
As customary, we will say: \emph{hypersurface}, in place of: \emph{submanifold of codimension one}.
%$\mC^2$ submanifold of $M$, with boundary $\de Z=Z\cap \de M$, and such that for every $x\in \de Z$, we have $T_xZ\cap T_x\de M=T_x\de Z$.
\end{definition}

In particular, a $\mC^2$ neat hypersurface $Z\subset M$ is a $\mC^2$ manifold with boundary $\de Z=Z\cap \de M$, and such that for every $x\in \de Z$, we have $T_xZ\cap T_x\de M=T_x\de Z$. We refer to \cite[Section 1.4]{Hirsch} for more details. 
The following statement explains the relation between the two above definitions; its proofs can be found in \cite[Theorem 1.4.1]{Hirsch}. 

\begin{proposition}\label{prop:neat}
The class $\m U$ defined in \eqref{e:callu} is an open subset of $\mC^2(M)$. Moreover, if $f\in \m U$, then $Z:=f^{-1}(0)$ is a $\mC^2$ neat hypersurface of $M$.%, with boundary $\de Z=Z\cap \de M$, and such that for every $x\in \de Z$, we have $T_xZ\cap T_x\de M=T_x\de Z$. Equivalently, following \cite[Section 1.4]{Hirsch}, $Z$ is a $\mC^2$ \emph{neat hypersurface} of $M$.
\end{proposition}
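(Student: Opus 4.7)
The plan is to establish both claims by local arguments relying on compactness of $M$ and on the implicit/submersion function theorem, assembling the pieces into a proof whose core is classical (Theorem 1.4.1 of \cite{Hirsch}).

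For the openness of $\m U$, I would fix $f\in \m U$ and argue on the compact zero locus $A:=f^{-1}(0)$. For each $p\in A$, I build an open neighborhood $U_p\subset M$ together with a constant $\e_p>0$ such that: if $p\in \inter M$, then $U_p\subset\inter M$ and $\|d_qf\|\ge \e_p$ for every $q\in U_p$; if $p\in \de M$, then $\|d_q(f|_{\de M})\|\ge \e_p$ for every $q\in U_p\cap \de M$. Both are possible by continuity of $df$, of its tangential part along $\de M$, and by $f\transv 0$. Extract a finite subcover $U_{p_1},\dots,U_{p_N}$ of $A$, set $U:=\bigcup_i U_{p_i}$, and note that $|f|$ attains a positive minimum $\delta$ on the compact set $M\smallsetminus U$. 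Because the weak Whitney topology on $\mC^2(M)$ controls $\mC^2$-norms on compact sets, any $g$ close enough to $f$ will satisfy $|g|>\delta/2$ on $M\smallsetminus U$, $\|dg\|>\e_{p_i}/2$ on each interior $U_{p_i}$, and $\|d(g|_{\de M})\|>\e_{p_i}/2$ on each boundary piece, hence $g\in \m U$.

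For the neat-hypersurface claim, I would fix $f\in \m U$ and analyze $Z=f^{-1}(0)$ locally around a point $p\in Z$. If $p\in \inter M$, I apply the classical implicit function theorem in a smooth chart around $p$, producing a $\mC^2$ chart in which $Z$ becomes a coordinate hyperplane --- this is Definition \ref{def:neat} at an interior point. If $p\in \de M$, the condition $f\transv 0$ gives $d_p(f|_{\de M})\neq 0$, which also implies $d_pf\neq 0$ and pins down a nonzero tangential component of $d_pf$ along $\de M$. The submersion theorem on $\de M$ furnishes coordinates $(y^1,\dots,y^{m-1})$ around $p$ in $\de M$ for which $f|_{\de M}(y)=y^{m-1}$. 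Using a $\mC^2$ collar neighborhood of $\de M$ in $M$, I extend to coordinates $(y^1,\dots,y^{m-1},t)$ on $M$ with $t\in[0,\e)$ and $\{t=0\}=\de M$. In these coordinates $\partial_{y^{m-1}}f(p)=1\neq 0$, so the implicit function theorem yields $Z$ locally as a graph $y^{m-1}=\psi(y^1,\dots,y^{m-2},t)$ with $\psi$ of class $\mC^2$ up to $t=0$. The change of variables
\be
(y^1,\dots,y^{m-2},\,y^{m-1}-\psi,\,t)
\ee
is a $\mC^2$ chart $\f:O\to \R^{m-1}\times[0,+\infty)$ sending $Z\cap O$ to $\{y^{m-1}-\psi=0\}=\f^{-1}(\R^{m-1})$, exactly the shape required in Definition \ref{def:neat}.

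The delicate step, which I expect to be the principal obstacle, is the boundary case of the second part: one has to ensure that the collar extension is compatible with the ambient $M$-chart and preserves the half-space structure $t\ge 0$, while keeping $\mC^2$ regularity of $\psi$ up to $t=0$. Once this boundary chart is in place, the remainder of the argument is routine, and the compactness step for openness is standard in view of the weak Whitney topology.
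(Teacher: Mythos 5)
Your proposal is correct in substance, and it is necessarily a different route from the paper's, because the paper offers no argument at all: it attributes both claims to \cite[Theorem 1.4.1]{Hirsch}. What you have written out is essentially the classical proof behind that citation --- compactness of $f^{-1}(0)$ plus stability of the two non-degeneracy conditions for openness, and the implicit function theorem (in the interior) together with a collar-plus-straightening argument (at the boundary) for neatness. The boundary chart you construct, $(y^1,\dots,y^{m-2},\,y^{m-1}-\psi,\,t)$, produces exactly the local model of \cref{def:neat}, and in particular gives $T_xZ\cap T_x\de M=T_x\de Z$ at boundary points; the ``delicate step'' you flag is handled in the standard way, e.g.\ by extending $f$ to a $\mC^2$ function on an open enlargement of $M$ before applying the implicit function theorem and then restricting to $t\ge 0$.

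One case in your openness argument is not closed as written. Your finite cover consists of interior-type sets $U_{p_i}\subset\inter{M}$ on which you bound $\|df\|$ from below, and boundary-type sets $U_{p_j}$ on which you bound only $\|d(f|_{\de M})\|$ from below, and only on $U_{p_j}\cap\de M$. A perturbation $g$ may acquire a zero at a point $q\in U_{p_j}\cap\inter{M}$ for a boundary-type $U_{p_j}$, with $q$ lying in no interior-type $U_{p_i}$; your listed conditions then say nothing about $d_qg$, so $g\transv 0$ is not yet established. The fix is immediate: at a boundary point $p\in f^{-1}(0)\cap\de M$ the hypothesis $d_p(f|_{\de M})\neq 0$ forces $d_pf\neq 0$, so you may shrink $U_p$ so as to also guarantee $\|d_qf\|\ge\e_p$ for every $q\in U_p$ (not only on the boundary slice), and then require of $g$ the additional bound $\|dg\|>\e_{p_j}/2$ throughout each boundary-type $U_{p_j}$. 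With that amendment every zero of $g$ is accounted for and the argument is complete.
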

%Following \cite[Section 1.4]{Hirsch}, the conclusion of the above proposition can be summarized by saying that $Z$ is a $\mC^2$ \emph{neat submanifold} of $M$.
In short, all subsets that are not neat hypersurfaces are somewhat degenerate for our study. In this sense, the functions in the class $\m U$ are non-degenerate, in that the equation $f=0$ defines a neat hypersurface.
The next result is the main achievement of the section, as well as the linchpin of the entire paper. The proof is deferred to Section \ref{ss:proofirstvar}.

 \begin{theorem}[Nodal volumes as differentiable mappings]\label{thm:firstvar}
Assume that $M$ is a $\mC^2$ Riemannian manifold with boundary $\de M$ and let $\n\in \Gamma^\infty(TM|_{\de M})$ %\mC^\infty(\de M | TM)$ 
 denote the outward normal vector to the boundary. Define $\m U\subset \mC^2(M)$ as in \eqref{e:callu}. Then, the following conclusions hold:
 \begin{enumerate}[\rm (1)]
     \item  The nodal volume mapping $V$ defined in \eqref{e:defnodvol} is such that $V\in \mC^1(\m{U})$.  
\item Fix $f\in \m{U}$, let $Z:=f^{-1}(0)$, and define $\nu:=\|\g f \|^{-1}\g f$ and $\tDelta f:=\Delta f-\H f(\nu,\nu)$. Then, formula \eqref{eq:firstvar} holds for all $h\in \mC^2(M)$, that is:
\be\label{eq:firstvar1} 
\langle d_fV,h\rangle=-\int_{Z}h\cdot \frac{\tDelta f}{\|df\|^{2}} dZ+\int_{\de Z}h\cdot \frac{ g(\n,\nu) }{\|d(f|_{\de M})\|} d\de Z,
\ee
where the existence of the integrals on the right-hand side is part of the conclusion.
 \end{enumerate}
 
\end{theorem}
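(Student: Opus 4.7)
The strategy is to realize $t\mapsto V(f+th)$ as the time-evolution of the $(m-1)$-volume of a smoothly varying family of hypersurfaces $Z_t:=(f+th)^{-1}(0)$, and to differentiate at $t=0$ via the classical first variation of volume formula of Riemannian geometry. By \cref{prop:neat}, for $|t|$ small each $Z_t$ is a neat $\mC^2$ hypersurface; I would realize $Z_t=\Phi_t(Z)$ as the image of $Z:=f^{-1}(0)$ under the flow $\Phi_t$ of a $\mC^1$ vector field $X_h$ on $M$ that preserves $\de M$. In the interior of $M$, the naive choice $X_h:=-h/\|\g f\|^{2}\cdot \g f$ is purely normal to $Z$ and satisfies $g(X_h,\nu)=-h/\|df\|$ on $Z$; near $\de M$, one modifies it by subtracting a controlled multiple of $\n$ so that $X_h\in T\de M$ along $\de M$, which is precisely what ensures $\Phi_t(\de M)\subset \de M$ and hence $\Phi_t(\de Z)\subset \de Z_t$. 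Neatness of $Z$ forces $d_p(f|_{\de M})\neq 0$ at every $p\in\de Z$, which makes this modification possible via a partition of unity; the implicit function theorem applied to $(p,t)\mapsto f(p)+th(p)$ then yields $\Phi_t(Z)=Z_t$ for small $|t|$.

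With this $X_h$ in hand, the standard first variation formula for a submanifold with boundary (see e.g.~\cite{li2012geometric}) reads
\be
\left.\frac{d}{dt}\right|_{t=0}\vol^{m-1}(\Phi_t(Z)) = \int_{Z} g(X_h,\nu)\, H_Z \, dZ + \int_{\de Z} g(X_h^T,\eta)\, d\de Z,
\ee
where $X_h^T$ is the tangential-to-$Z$ component of $X_h$ and $\eta$ is the outward unit conormal of $\de Z$ in $Z$. The bulk term is immediate: substituting $g(X_h,\nu)=-h/\|df\|$ and $H_Z=\tDelta f/\|df\|$ from \eqref{eq:mean} produces exactly $-\int_Z h\,\tDelta f/\|df\|^{2}\, dZ$. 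The boundary term is extracted by a short linear-algebra calculation in the $2$-dimensional subspace of $T_pM$ orthogonal to $T_p\de Z$, which is spanned both by $\{\nu,\eta\}$ and by $\{\n,\mu\}$ (with $\mu\in T\de M$ unit and perpendicular to $\de Z$): writing $X_h|_{\perp}=a\nu+b\eta$ with $a=g(X_h,\nu)=-h/\|df\|$, the tangency condition $g(X_h,\n)=0$ forces $b=-a\,g(\nu,\n)/g(\eta,\n)$, while the projection identity $\|d(f|_{\de M})\|^{2}=\|df\|^{2}(1-g(\nu,\n)^{2})$ gives $g(\eta,\n)=\|d(f|_{\de M})\|/\|df\|$. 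Combining these produces $g(X_h^T,\eta)=h\,g(\n,\nu)/\|d(f|_{\de M})\|$, which is the boundary integrand of \eqref{eq:firstvar1}.

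Once \eqref{eq:firstvar1} is established pointwise, both integrals on the right-hand side depend continuously on $f\in\m U$: the zero set $Z_f$ and its boundary $\de Z_f$ vary continuously in the $\mC^2$-sense under $\mC^2$-small perturbations (e.g. by parametrizing nearby zero sets as graphs over $Z_f$ through a tubular neighborhood of $Z_f$ in $M$), and the integrands are continuous expressions in $f$ and its first two derivatives. This yields Gâteaux differentiability of $V$ at every $f\in\m U$ together with a norm-continuous Gâteaux derivative $f\mapsto d_f V\in\mC^2(M)^{*}$, which by the mean value inequality upgrades to Fréchet differentiability, completing $V\in\mC^1(\m U)$. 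The main obstacle in the plan is the boundary-preserving construction of $X_h$ and the linear-algebra identification of the boundary contribution; both steps crucially rely on the transversality of $Z$ and $\de M$ encoded in neatness, and would collapse without it.
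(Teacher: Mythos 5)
Your overall route is the same as the paper's: realize $Z_t=(f+th)^{-1}(0)$ as the image of $Z$ under the flow of a boundary-preserving $\mC^1$ vector field, apply the first variation of volume for a hypersurface with boundary, and extract the boundary contribution by linear algebra in the two-plane $(T_p\de Z)^\perp$. Your bulk computation, the identity $g(\eta,\n)=\|d(f|_{\de M})\|/\|df\|=\sqrt{1-g(\n,\nu)^2}$, and the resulting boundary integrand all agree with \cref{prop:boundarypuff} and with the argument of Section \ref{ss:proofirstvar}; the continuity argument upgrading to $V\in\mC^1(\m U)$ is also the one the paper uses.

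The step that fails as written is the construction of the deformation field near $\de M$. For the flow to carry $Z$ onto $Z_t$ the transport identity $\langle d_p(f+th),W_t(p)\rangle+h(p)=0$ on $Z_t$ (see \eqref{eq:dW}) is mandatory, i.e.\ the normal component $g(W_t,\nu_t)=-h/\|d(f+th)\|$ is forced, while $g(W_t,\n)=0$ along $\de M$ is forced by boundary preservation. A field of the form $-h\|\g f\|^{-2}\g f-c\,\n$ with $c$ chosen to make it tangent to $\de M$ has normal component $-h\tyu 1-g(\nu,\n)^2\uyt/\|df\|$ at points of $\de Z$, not $-h/\|df\|$; so its flow does not map $\de Z$ into $Z_t$ (unless $\g f$ happens to be tangent to $\de M$ along $\de Z$), and the two constraints you then feed into the linear-algebra step ($a=g(X_h,\nu)=-h/\|df\|$ \emph{and} $g(X_h,\n)=0$) are not simultaneously satisfied by your field. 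Those constraints are the right ones --- they hold for the velocity of any genuine parametrization of $(Z_t)_t$ with $\de Z_t\subset\de M$, which is why your final integrand is nevertheless correct --- but your recipe does not produce such a parametrization, so the existence statement underlying the first-variation formula is left unproved. The paper's fix is to take on $\de M$ the intrinsic field $-h\,\|\g((f+th)|_{\de M})\|^{-2}\g((f+th)|_{\de M})$, which is automatically tangent to $\de M$ and satisfies the transport identity for the restricted function (see \eqref{eq:sodfnvoiv}), to glue it to the interior field by cutoffs $\rho_n$ concentrating at the boundary, to apply \cref{lem:flowP} and \cref{prop:boundarypuff} to each $W^n_0$, and to let $n\to\infty$. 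Replacing your ``subtract a multiple of $\n$'' step by this (or an equivalent) construction closes the gap.
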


For the next statement, we assume that $X\randin \m C^2(M)$ is a centered Gaussian element with values in $\m C^2(M)$ and law $\mu$. We write $\mathbb{E}[X(p)X(q)] := K_X(p,q)$ for the covariance of $X$ and denote by $E\subset \m C^2(M)$ the topological support of $\mu$.

\begin{corollary}[Cameron-Martin norms]\label{cor:expDV}
Under the above notation and assumptions, let $\HX\subset E$ denote the Cameron-Martin space of $X$, and assume that
$$
\mathbb{P}[X\in \mathcal{U}] = \mu(E\cap \mathcal{U}) = 1.
$$
Let $Z=Z_X=X^{-1}(0)$ and set $\nu = \nu_X=\|\g X\|^{-1}\g X$. Then, the nodal volume mapping $V$ defined in \eqref{e:defnodvol} is such that $V\in \m C^1(\mathcal{U}_0)$, where $\m U_0 :=E\cap \m U $, and consequently $d_XV|_{\HX}\randin \HX^*\cong \HX$. Moreover, a.s.-$\mathbb{P}$,
\bega 
 \| d_XV\|^2_{\HX} = &\int_Z\int_Z \frac{\tilde{\Delta} X(p) }{\|d_pX\|^2}\frac{\tilde{\Delta} X(q) }{\|d_qX\|^2}K_X (p,q)dZ(p)dZ(q)
 \\
 &+
 \int_{\de Z}\int_{\de Z} \frac{g_p(\n,\nu)g_q(\n,\nu)  }{\|d_p(X|_{\de M})\|\|d_q(X|_{\de M})\|}K_X (p,q)d\de Z(p)d\de Z(q).
\eega
\end{corollary}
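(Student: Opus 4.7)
The plan is to combine \cref{thm:firstvar} with the Cameron--Martin inner-product formula from point (b) of \cref{r:cmspace}. By \cref{thm:firstvar}(1), $V \in \mC^1(\m U)$; since $E$ is a closed subspace of $\mC^2(M)$ and $\m U_0 = E \cap \m U$ is open in $E$, we get $V \in \mC^1(\m U_0)$. The continuous embeddings $\HX \hookrightarrow E \hookrightarrow \mC^2(M)$ then imply that, for every $f \in \m U_0$, the restriction $d_fV|_{\HX}$ is a continuous linear functional on $\HX$, i.e.\ an element of $\HX^*$. Since $\PP\{X \in \m U_0\} = 1$ by hypothesis, the random variable $d_X V|_{\HX} \randin \HX^* \cong \HX$ is well-defined.

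The key step is to recognize that \eqref{eq:firstvar1} exhibits $d_f V$ in precisely the form required by \cref{r:cmspace}(b), namely
\be
\langle d_f V, h\rangle = \int_M h(p)\, \theta_f(dp), \qquad h \in \mC^2(M),
\ee
where $\theta_f$ is the signed Borel measure on $M$ whose restriction to $Z_f := f^{-1}(0)$ has density $-\tDelta f / \|df\|^{2}$ with respect to $\vol^{m-1}$, and whose restriction to $\de Z_f := Z_f \cap \de M$ has density $g(\n, \nu) / \|d(f|_{\de M})\|$ with respect to $\vol^{m-2}$. This $\theta_f$ is indeed a finite signed measure: by \cref{prop:neat}, $Z_f$ is a compact $\mC^2$ neat hypersurface of $M$, so $\vol^{m-1}(Z_f)$ and $\vol^{m-2}(\de Z_f)$ are finite, and the densities are continuous (hence bounded) because $f \in \m U$ ensures that $\|df\|$ and $\|d(f|_{\de M})\|$ are strictly positive on the compact sets $Z_f$ and $\de Z_f$, respectively.

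To conclude, applying \cref{r:cmspace}(b) with $G = d_X V$ and $\theta = \theta_X$ yields
\be
\|d_X V\|^2_{\HX} = \int_M\!\int_M K_X(p,q)\, \theta_X(dp)\, \theta_X(dq),
\ee
and expanding the product measure $\theta_X \otimes \theta_X$ via Fubini--Tonelli -- justified by the continuity (hence boundedness) of $K_X$ on the compact $M \times M$ together with the finite total variation of $\theta_X$ -- produces the stated displayed expression once the contributions from the supports $Z \times Z$ and $\de Z \times \de Z$ (together with the sign coming from the minus in the first integrand of \eqref{eq:firstvar1}) are collected. The only genuinely technical point is the identification of $d_f V$ with integration against a bona fide finite signed measure, which is essentially immediate from \eqref{eq:firstvar1} and the regularity of $f$ at $0$; the remainder is routine bookkeeping.
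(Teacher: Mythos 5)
Your proof is correct and follows essentially the same route as the paper's one-line proof: identify $d_XV$ via \eqref{eq:firstvar1} with integration against a finite signed Borel measure $\theta_X$ carried by $Z$ and $\partial Z$, then apply the Cameron--Martin norm formula of \cref{r:cmspace}(b) with $G=d_XV$; you merely make explicit the (routine) verifications that $\theta_X$ has finite total variation and that Fubini applies. One small caveat, inherited from the statement rather than introduced by you: expanding $\int_M\int_M K_X\,\theta_X(dp)\,\theta_X(dq)$ with $\theta_X=\theta_1+\theta_2$ also yields the mixed contributions over $Z\times\partial Z$ and $\partial Z\times Z$, which do not obviously vanish and which neither your write-up nor the paper's displayed formula records, so "collecting the contributions from $Z\times Z$ and $\partial Z\times\partial Z$" silently discards two cross terms that the argument actually produces.
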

\begin{proof} The proof is a direct consequence of the content of Remark \eqref{r:cmspace}-(b), in the special case $G = d_X V$ and $\theta(dp) = \| d_p X\|^{-2} 1_Z(p)  dZ+ 1_{\partial Z}(p)g_p(\n, \nu)\| d_p X|_{\partial M} \|^{-1} d\partial Z$.

\end{proof}

\begin{remark}{\rm In the previous statement, the random element $\Omega \ni \omega \mapsto d_X V (\omega)$ is defined as follows: $d_XV ( \omega) = d_{X(\omega)} V$, whenever $\omega$ is such that $X(\omega) \in \m U_0$, and $d_X V ( \omega)= 0$ otherwise. } \end{remark}

\medskip

Before proving Theorem \ref{thm:firstvar} in Section \ref{ss:proofirstvar}, one needs to study in some detail the first-order variations of nodal volumes. The necessary technical statements are gathered together in Sections \ref{ss:firstvar} and \ref{ss:changevar}.

\subsection{First variation of the volume}\label{ss:firstvar}
The mean curvature of a (compact and oriented) hypersurface is known to be the derivative of the $d-1$ dimensional volume of the surface (see \cite{li2012geometric}), in the sense that if we move $Z$ in the direction of $\nu$ for a small time $t$ inside a closed manifold $M$, then, the volume varies as $\vol^{m-1}(Z_t)=t\int_Z H_Z dZ+o(t).$ 
%For the purposes of this paper, we will need to express the variation in terms of $f$ rather than $\nu.$
Let us report the most general formula, including the case when the variation has a tangential component $T\in\Vect^1(Z)$ and $Z\subset M$ is a submanifold with boundary $\de Z$ without any assumption on the relative positions of $\de Z$ and $\de M$.
    Assume that $Z_t=\phi_t(Z)$, where $\phi\colon (-\e,\e)\times Z\to M$ is a $\mC^1$ map such that $\phi_t=\phi(t,\cdot)$ is an immersion for all $t$. Let us define the \emph{first variation} of such family of parametrized hypersurfaces as $\delta{Z}:=\frac{d \phi_t}{dt}\big|_{t=0}$, which is a $\mC^1$ section of $TM|_{Z}$.
Then we can decompose $\delta Z=T+\dot Z\nu$ into a tangential vector field $T\in \Vect^1(Z)$ and a section of the normal bundle determined by a function $\dot{Z}\in \mC^1(Z)$. Recall that in this case $T_zZ^\perp$ is just a line spanned by $\nu(z)=\|\g f(z)\|^{-1}\g f(z)$. The following formula holds (see \cite[Section 1, page 4]{li2012geometric}).
\be\label{eq:li} 
\frac{d}{dt}\Big|_0\vol_{m-1}\tyu Z_t\uyt =\int_Z  \dot{Z}\cdot H_Z +\mathrm{div}(T) dZ.
\ee
We will need the following formulation of the above formula.
\begin{proposition}\label{prop:boundarypuff} Let the assumptions above prevail. If, moreover, $\de Z_t\subset \de M$, but $T Z_0|_{\de Z_0}\neq T\de M$, then
\bega\label{eq:firstvargeometric}
\frac{d}{dt}\Big|_0\vol_{m-1}\tyu Z_t\uyt &=%\int_Z \langle \dot{Z}(x),n\rangle \cdot H_Z(x) +\mathrm{div}(T) dZ(x),
%\\&=
\int_Z  g( \delta{Z},\nu) \cdot H_Z dZ
-
\int_{\de Z}g( \delta{Z},\nu) 
 \frac{g(\n,\nu)}{\sqrt{1-g(\n,\nu)^2}} d\de Z,%+\int_{\de Z}\langle \n_\de,\delta Z\rangle  d\de Z
\eega 
where $H_Z$ is the mean curvature of $Z$ in the direction $\nu$, defined as in \cref{eq:mean}.
\end{proposition}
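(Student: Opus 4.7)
The plan is to start from the general first variation formula \eqref{eq:li} and convert the tangential divergence term into a boundary contribution via Stokes' theorem on the $(m-1)$-manifold $Z$; the constraint $\partial Z_t \subset \partial M$ will then express the resulting integrand purely in terms of the normal speed $\dot Z = g(\delta Z, \nu)$.

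Since $\dot Z = g(\delta Z, \nu)$, the bulk term of \eqref{eq:li} already coincides with the first summand on the right-hand side of \eqref{eq:firstvargeometric}. By the divergence theorem applied to $Z$ with boundary, one has
\begin{equation*}
\int_Z \mathrm{div}(T)\, dZ \;=\; \int_{\partial Z} g(T, \mathbf{m})\, d\partial Z,
\end{equation*}
where $\mathbf{m} \in TZ|_{\partial Z}$ is the outward unit conormal to $\partial Z$ in $Z$. It therefore suffices to prove that along $\partial Z$,
\begin{equation*}
g(T, \mathbf{m}) \;=\; -\dot Z\, \frac{g(\n, \nu)}{\sqrt{1 - g(\n, \nu)^2}}.
\end{equation*}

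This identity follows from the boundary constraint. Fix $p \in \partial Z$; since $\phi_t(p) \in \partial Z_t \subset \partial M$ for every $t$, differentiating at $t=0$ gives $\delta Z(p) \in T_p \partial M$, i.e.\ $g(\delta Z, \n) = 0$. Decompose $T = T_{\partial Z} + g(T, \mathbf{m})\,\mathbf{m}$ with $T_{\partial Z} \in T_p \partial Z \subset T_p \partial M$; pairing $\delta Z = T + \dot Z \nu$ with $\n$ produces
\begin{equation*}
0 \;=\; g(T, \mathbf{m})\, g(\mathbf{m}, \n) \;+\; \dot Z\, g(\nu, \n).
\end{equation*}
To eliminate $g(\mathbf{m}, \n)$, work in the $2$-plane $P := (T_p \partial Z)^{\perp} \subset T_p M$. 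Both $(\mathbf{m}, \nu)$ and $(w, \n)$, for a suitable outward unit $w \in T_p \partial M \cap P$, are orthonormal bases of $P$, so that $g(\mathbf{m}, \n)^2 + g(\nu, \n)^2 = 1$. The hypothesis $TZ|_{\partial Z} \neq T\partial M$ forces $|g(\nu, \n)| < 1$, hence $g(\mathbf{m}, \n) = +\sqrt{1 - g(\nu, \n)^2}$ once $\mathbf{m}$ is oriented outwards compatibly with $\n$. Solving for $g(T, \mathbf{m})$ and integrating over $\partial Z$ delivers the desired boundary term.

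The main obstacle is the positive sign $g(\mathbf{m}, \n) = +\sqrt{1 - g(\nu, \n)^2}$: one must check that for a neat hypersurface $Z$ with $\partial Z \subset \partial M$, the outward conormal of $\partial Z$ in $Z$ has positive component along the outward normal $\n$ of $\partial M$. This is a standard orientation compatibility — the interior of $Z$ lies on the interior side of $M$ near $\partial Z$, so moving outward along $\mathbf{m}$ (which is tangent to $Z$) has positive component along $\n$ — but requires careful bookkeeping and is the only subtle point of the argument.
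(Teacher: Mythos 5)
Your proposal is correct and follows essentially the same route as the paper: apply \eqref{eq:li}, convert $\int_Z \mathrm{div}(T)\,dZ$ into a boundary integral against the outward conormal (your $\mathbf{m}$ is the paper's $\n_\de$), use $\delta Z|_{\de Z}\in T\de M$ to pair with $\n$, and resolve the sign via the orthonormal pair $(\n_\de,\nu)$ spanning $(T_x\de Z)^\perp$ with both $\n_\de$ and $\n$ pointing outward, giving $g(\n,\n_\de)=\sqrt{1-g(\n,\nu)^2}>0$. The orientation point you flag as subtle is handled in the paper exactly as you suggest.
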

\begin{proof}
By \eqref{eq:li} and Stokes-divergence theorem, we have that
 \bega%\label{eq:firstvargeometric}
\frac{d}{dt}\Big|_0\vol_{m-1}\tyu Z_t\uyt &%=\int_Z  \dot{Z}(x) \cdot H_Z(x) +\mathrm{div}(T) dZ(x),
=\int_Z g( \delta{Z},\nu) \cdot H_Z dZ+\int_{\de Z}g(\n_\de,T) d\de Z,
% \\
% &=\int_Z g( \delta Z,\nu) \cdot H_Z dZ+\int_{\de Z}g(b_\de,\delta Z) d\de Z(x),
\eega 
where $\n_\de\in \Gamma^\infty(N\de Z)$ is the outward unit normal vector of $\de Z$. We have that for $x\in\de Z$, $\n=\n(x)$ belongs to the space $(T_x\de Z)^\perp$ spanned by the orthonormal basis given by $\n_\de=\n_\de(x)$ and $\nu=\nu(x)$, %hence there is some $\a=\a(x)\in \R$ s.t.  
 %\be 
 %b=b_\de \cos \a +\nu \sin \a %}{\sqrt{1+\langle b,\nu\rangle\sin (2\a)}},
 %\ee 
 %with $\sin \a =g(b,\nu)$. 
 Moreover, by construction, both $\n_\de$ and $\n$
 are pointing outside of $\de M$, thus $g(\n,\n_\de)=\sqrt{1-g(\n,\nu)^2}>0$. %we can take $\a\in (-\frac{\pi}{2},\frac{\pi}{2})$, so that $\a=\arcsin (g(\n,\nu))$ is well defined.
Under the hypothesis that $\de Z_t\subset \de M$ for all $t$, we must have that $\delta Z(x)\in T_x\de M=\n^\perp$, so that, evaluating at $x$,
 \be 
 g(\n_\de,T)=g(\n_\de,\delta Z)%=-\tan (\a) g(\nu, \delta Z)
 =
 -\frac{g(\n,\nu)}{\sqrt{1-g(\n,\nu)^2}}g(\nu, \delta Z).
 \ee
 \end{proof}
 \subsection{Change of variation formula}\label{ss:changevar}
Let $f\in \m U\subset \mC^2(M)$ and $Z=f^{-1}(0)$. In order to apply the formula of \cref{prop:boundarypuff} we need to represent the family of hypersurfaces $Z_t=(f+th)^{-1}(0)$ as a small translation of $Z$ in the direction of a vector field $\delta Z:Z\to TM$.
To this end we look for a time dependent vector field $W\in\mC^1(\R\times M,TM)$ such that, if $P^t$ is the flow of $W_t$, then $P^t(Z)=Z_t$. %\{p\in M:f+th=0\}
This is equivalent to look for a solution $W_t$ of the equation:
\be 
(f+th)(P^t(z))=0, %\text{ for all $z\in Z$ and $t\in (-\e,\e)$.}
\ee
By differentiating with respect to $t$, we get the equivalent condition that: 
\be\label{eq:dW} 
\langle d_pf+td_ph,W_t(p)\rangle+h(p)=0, \text{ for all $p\in Z_t$ and $t\in (-\e,\e)$}.
\ee
The second equation is solved in a neighborhood of $Z$ by the vector field 
\be\label{eq:Wt} 
W_t=-h\|\g (f+th)\|^{-2}\g (f+th),
\ee 
which is of class $\mC^1$ in that neighborhood. Using a partition of unity, $W_t$ can be extended to a $\mC^1$ vector field $W_t$ on the whole manifold $M$. 
If the manifold is closed, i.e. $\de M=\emptyset$, then $P^t$ is defined for all times and we have $P^t(Z)=Z_t$ for small enough $t$. In this case, we can apply \cref{prop:boundarypuff} directly with 
\be\label{eq:changeofvariation}
\delta{Z}=W_0|_Z= -h\frac{\g f}{\|\g f\|^2}.
\ee
and deduce the formula for $d_fV$ from \cref{eq:mean}.
In the general case, we cannot argue that $Z_t=P_t(Z)$ because the flow $P_t$ is not defined for all times, since the trajectories could fall outside the boundary. In this case however, \cref{eq:dW} holds. 
\begin{lemma}\label{lem:flowP}
Let $f\in \m U\subset \mC^2(M)$ and $Z=f^{-1}(0)$. Let $P(t,x)=P^t(x)$ be the flow of a time dependent vector field $W\in\mC^1(\R\times M,TM)$, that satisfies $\eqref{eq:Wt}$ and $g(\n,W_t|_{\de M})= 0$ for $|t|\le \e$. Then, $P:(-\e,\e)\times M\to M$ is a $\mC^1$ map and, for all $t\in (-\e,\e)$, the map $P^t\colon M\to M$ is a $\mC^1$ diffeomorphism such that $P^t(Z)=Z_t$.
\end{lemma}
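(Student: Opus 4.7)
The plan is to establish the three claims --- global existence on $(-\e,\e)\times M$, joint $\mC^1$ regularity, and the identity $P^t(Z)=Z_t$ --- by combining standard ODE theory on compact manifolds with boundary with a conservation-law argument along trajectories starting on $Z$.

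First, I would address existence and regularity of the flow. Since $M$ is compact, the only obstruction to global existence of the flow of $W\in\mC^1(\R\times M,TM)$ would be trajectories escaping through $\de M$; the hypothesis $g(\n,W_t|_{\de M})=0$ says precisely that $W_t$ is tangent to $\de M$, so $\de M$ is invariant under the flow and trajectories cannot leave $M$. Thus the standard non-autonomous ODE theorem provides $P\in\mC^1((-\e,\e)\times M, M)$, and each $P^t$ is a $\mC^1$ diffeomorphism with inverse obtained by integrating the time-reversed field.

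The substantive point is $P^t(Z)=Z_t$. Given $z\in Z$, consider $\Phi(s):=(f+sh)(P^s(z))$. Using $\dot P^s(z)=W_s(P^s(z))$ one gets
\be
\Phi'(s)=h(P^s(z))+\langle d_{P^s(z)}(f+sh),W_s(P^s(z))\rangle,
\ee
which vanishes \emph{whenever} $P^s(z)$ lies in the open neighborhood $U$ of $Z$ on which $W_s$ coincides with the expression in \eqref{eq:Wt}, by virtue of \eqref{eq:dW}. Since $Z$ is compact and contained in $U$, joint continuity of $P$ at $s=0$ combined with compactness implies that, after shrinking $\e$ if necessary, $P^s(z)\in U$ for every $z\in Z$ and every $|s|<\e$. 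Hence $\Phi\equiv \Phi(0)=f(z)=0$, proving $P^s(z)\in Z_s$ and in particular $P^t(Z)\subseteq Z_t$.

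For the reverse inclusion $Z_t\subseteq P^t(Z)$, given $q\in Z_t$ set $z:=(P^t)^{-1}(q)$ and apply the same conservation argument to the full trajectory $s\mapsto P^s(z)$, which at time $t$ passes through $q\in Z_t$. Since the submanifolds $Z_s$ vary continuously with $s$ and coincide with $Z$ at $s=0$, the backward trajectory from $q$ stays in $U$ provided $\e$ is small enough, so $\Phi\equiv 0$ along it and in particular $f(z)=\Phi(0)=0$, i.e.\ $z\in Z$. The main obstacle I anticipate is precisely this uniform containment of trajectories in $U$; it is resolved by compactness of $Z$ (and of each $Z_s$ for $s$ small) together with the hypothesis $f\in\m U$, which ensures $\g f\neq 0$ on $Z$ and hence the validity of \eqref{eq:Wt} on a genuine open neighborhood of $Z$.
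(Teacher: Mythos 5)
Your proof is correct and follows essentially the same route as the paper: tangency of $W_t$ to $\de M$ together with compactness of $M$ yields a globally defined $\mC^1$ flow by standard O.D.E. theory, and the identity $P^t(Z)=Z_t$ comes from the conservation law $(f+sh)(P^s(z))\equiv 0$ forced by \eqref{eq:dW} on the neighborhood where $W_s$ has the form \eqref{eq:Wt}. The paper's own proof records only the O.D.E. step and leaves the conservation argument to the discussion preceding the lemma; you have simply made that part explicit, including the reverse inclusion $Z_t\subseteq P^t(Z)$, for which your compactness-and-continuity justification of the uniform containment of trajectories in $U$ is the right (and needed) ingredient.
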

\begin{proof}
Since $g(\n,W^{\de}_t)= 0$, the vector field $W_t^n$ is tangent to the boundary $\de M$. This ensures, by standard O.D.E. theory (see \cite[Section 6.2]{Hirsch}), that there exists a flow $P_t^n\colon M\to M$ of $W_t^n$ defined for all times $t\in \R$. Since $W^n\in \mC^1(\R\times M,TM)$, by the property of smooth dependence on initial data of O.D.E., we have $P^n\in \mC^1(\R\times M,M)$. 
\end{proof}
\subsection{Proof of Theorem \ref{thm:firstvar}}\label{ss:proofirstvar}

Let $Z_t=\{f+th=0\}$ and $Z_0=Z$.
We take the time-dependent vector field %defined as before: %which we will denote by $\inter{W}_t\in \Vect(\inter M)$. %This time we cannot argue that $Z_t=P_t(Z)$ because the flow $P_t$ of $\inter{W}_t$ might not be defined for all times, since the trajectories could fall outside the boundary. To avoid that, we should modify $W_t$ so that $g(b,W_t)\le 0$. 
$\inter{W}_t\in \mC^1(\R\times M, TM)$ is defined so that $\inter{W}_t=-h\|\g (f+th)\|^{-2}\g (f+th)$ in a neighborhood $\m Q$ of $Z$ which contains all $Z_t$ for $|t|\le t_0$ small enough. Therefore, for every $p\in Z_t$, we have that 
\be \label{eq:sodfnvoivprimo}
\langle d_pf+td_ph,\inter{W}_t(p)\rangle+h(p)=0, \text{ for all $p\in Z_t$}.
\ee
Observe that, applying the same argument to $f|_{\de M}$ yields a time-dependant vector field $W^{\de} \in \mC^1(\de M\times \R, T\de M)$ such that
\be \label{eq:sodfnvoiv}
\langle d_pf+td_ph,W^{\de}_t(p)\rangle+h(p)=0, \text{ for all $p\in \de Z_t$}.
\ee
Let $\rho_n\in\mC^{\infty}(M,[0,1])$ be a sequence of smooth functions such that $\rho_n(\de M)=1$ and such that $\sup_K \rho_n\to_{n\to
\infty} 0$ for any $K\subset \inter{M}$ compact subset. This can be achieved by taking $\rho_n$ supported in a tubular neighborhood of $\de M$ of radius $\frac{1}{n}$. It is also convenient to extend $\n$ and $W^{\de}_t$ to vector fields $\n\in \Vect^\infty(M)$ and $W^\de_t\in \Vect^1(M)$. We can assume that \cref{eq:sodfnvoiv} remains true for all $p$ in a fixed neighborhood $\m Q_\de$ containing $\de Z_t$ for all $|t|\le t_0$. Define
\be 
W_{t}^n:=\inter{W}_t (1-\rho_n) +W^\de_t \rho_n \in \Vect^1(M),
\ee
Observe that $g(\n,W_t^n)=g(\n,W^{\de}_t)=0$. %, i.e., the vector field $W_t^n$ is tangent to the boundary $\de M$. This ensures, by standard O.D.E. theory, that there exists a flow $P_t^n\colon M\to M$ of $W_t^n$ defined for all times $t\in \R$. Since $W^n\in \mC^1(\R\times M,TM)$, by the property of smooth dependence on initial data of ODE, we have $P^n\in \mC^1(\R\times M,M)$. 
By construction, there is $n_0\in\N$ big enough that $Z_t\cap \mathrm{supp}(\rho_n)\subset \m Q_\de$, for all $|t|\le t_0$ and $n\ge n_0$. Then, for $p\in Z_t$, both \cref{eq:sodfnvoivprimo} and \cref{eq:sodfnvoiv} hold and hence, by linearity, we can apply \cref{lem:flowP} to the vector field $W^n_t$,
hence $P_t^n(Z)= Z_t$ for all $t\in [-t_0,t_0]$ and $n\ge n_0$. Define a parametrization of $Z_t$ as the $\mC^1$ map $\phi\colon \R\times Z\to M$ such that $\phi(t,z)=P_t^n(z)$, then an application of \cref{prop:boundarypuff} %from \cite{li2012geometric} 
with $\delta Z=W_0^n$, yields
\bega
\frac{d}{dt}\Big|_0&\vol_{m-1} \tyu Z_t\uyt =\int_Z g( W_0^n,\nu) \cdot H_Z dZ%+\int_{\de Z}\langle \n_\de,W_0\rangle  d\de Z
-
\int_{\de Z}g(W_0^n,\nu)
 \frac{g(\n,\nu)}{\sqrt{1-g(\n,\nu)^2}} d\de Z(x)=
\\
&=
\int_Z \tyu \frac{df(\inter{W}_0)(1-\rho_n)}{\|d f\|}+g( \nu,W^\de_0) \rho_n \uyt \cdot H_Z dZ\\
&\quad -
\int_{\de Z}\frac{df( W_0^\de)}{\|d f\|}\frac{df (\n)}{\sqrt{\|df\|^2-df(\n)^2}} d\de Z(x)
\\
&=\dots,%+\int_{\de Z}\frac{\langle \n_\de,W^\de_0\rangle}{\|df\|^2}  d\de Z
\eega 
for all $n\ge n_0$, thus letting $n\to \infty$, we obtain that
\bega 
\dots &= \int_Z  \frac{-h}{\|d f\|}  \cdot H_Z dZ -\int_{\de Z} \frac{-h}{\|d f\|} \frac{df (\n)}{\sqrt{\|df\|^2-df(\n)^2}} d\de Z(x)=
\\
&=
\int_Z  \frac{-h}{\|d f\|}  \cdot \frac{\tDelta f}{\|d f\|} dZ
+\int_{\de Z} \frac{h}{\|d f\|} \frac{df (\n)}{\|d(f|_{\de M})\|} d\de Z(x).
\eega
The fact that $V\in\mC^1(\m{U})$, follows because the differential map $dV\colon \m{U}\to \mC^2(M)^*$ is continuous.
\qed
\subsection{Isotopy}
\begin{definition}[$\mC^1$-isotopic]\label{def:c1iso}
Let $Z_0,Z_1\subset M$ be two $\mC^1$ submanifolds. We say that \emph{$Z_0$ and $Z_1$ are $\mC^1$-isotopic in $M$} if there is a $\mC^1$ diffeotopy of $M$ that sends $Z_0$ to $Z_1$, namely, a $\mC^1$ function $P\colon M\times [0,1]\to M$ such that: $P(\cdot,t)$ is a $\mC^1$ diffeomorphism for all $t\in [0,1]$, $P(\cdot,0)$ is the identity and $P(Z_0,1)=Z_1$.
\end{definition}
\begin{remark}
This definition agrees with that of \cite[Chapter 8, Section 1]{Hirsch}, after replacing $\mC^\infty$ with $\mC^1$, see \cite[Chapter 8, Section 1, Exercise 4]{Hirsch}. Precisely, because of the Isotopy Extension properties (see \cite[Theorems 1.5-1.9]{Hirsch}), two submanifolds $Z_0$, $Z_1$ are $\mC^1$-isotopic, according to \cref{def:c1iso}, if and only if there is a $\mC^1$ isotopy $F_t\colon Z_0\to M$ (defined as in \cite[Chapter 8, Section 1]{Hirsch}) of the embedding $F_0\colon Z_0\subset M$ in $M$, such that $F_1(Z_0)=Z_1$. 
\end{remark}
The notion of isotopy is very natural in the context of Morse theory. Given a Morse function $T\colon M\to \R$ (see \cite[Section 6.1]{Hirsch} for definitions), if there are no critical values in the interval $(a,b)$, then the level sets $T^{-1}(a)$ and $T^{-1}(b)$ can be continuosly deformed one into the other, see \cite[Theorem 3.1]{MilnorMorse}. The deformation is obtained from a construction based on the gradient flow of $T$, which naturally provides an isotopy of the same regularity as that of the differential of $T$. One can then use \cref{prop: transvprop} below to prove that the zero sets $f_a^{-1}(0)$ and $f_b^{-1}(0)$ are $\mC^1$-isotopic for any $\mC^1$ family of functions $f_t$ all contained in $\m U$ (see \cref{e:callu}). We make this discussion rigorous, with the following lemma. 
\begin{lemma}\label{lem:c1iso}
 Let $F\colon M\times [0,1]\to M$ be continuous and assume that $f_t=F(\cdot,t)$ is in $\m U=\{f\in \mC^2(M)\colon f\transv 0\}$ for all $t\in [0,1]$. Then, the zero sets $f_1^{-1}(0)$ and $f_0^{-1}(0)$ are $\mC^1$-isotopic in $M$. 
\end{lemma}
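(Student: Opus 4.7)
I would prove the lemma by combining compactness of $[0,1]$ with the vector-field flow construction already worked out in \cref{ss:changevar}. Since $\mC^1$-isotopy is an equivalence relation on neat submanifolds of $M$ (inverses and concatenations of $\mC^1$ diffeotopies are again $\mC^1$ diffeotopies), it suffices to show that every $t_0\in[0,1]$ admits an open neighborhood $J\subset [0,1]$ such that $f_t^{-1}(0)$ is $\mC^1$-isotopic to $f_{t_0}^{-1}(0)$ for all $t\in J$. Covering $[0,1]$ by finitely many such $J$ and concatenating the resulting diffeotopies then yields the desired isotopy from $f_0^{-1}(0)$ to $f_1^{-1}(0)$.

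For the local step I would exploit that $\m U$ is open in $\mC^2(M)$ by \cref{prop:neat} together with the continuity of $t\mapsto f_t$ into $\mC^2(M)$: for $t$ close enough to $t_0$, the whole straight segment $g_s:=f_{t_0}+s h$, $s\in[0,1]$, with $h:=f_t-f_{t_0}\in\mC^2(M)$, lies entirely in $\m U$. I would then reuse verbatim the construction preceding \cref{lem:flowP} (with the $f,h$ of that section replaced by the present $f_{t_0}$ and $h$) to build a $\mC^1$ time-dependent vector field $W_s^n$ on $M$, tangent to $\de M$, and satisfying $\langle d_p g_s, W_s^n(p)\rangle + h(p) = 0$ at every $p\in g_s^{-1}(0)$ and $s\in[0,1]$. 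By \cref{lem:flowP}, its flow $P^s\colon M\to M$ is a $\mC^1$ family of diffeomorphisms with $P^0=\Id$ and $P^s(f_{t_0}^{-1}(0))=g_s^{-1}(0)$; specializing to $s=1$ gives a diffeomorphism carrying $f_{t_0}^{-1}(0)$ onto $f_t^{-1}(0)$, and $\{P^s\}_{s\in[0,1]}$ is the required $\mC^1$ diffeotopy.

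The main subtlety I foresee concerns the precise meaning of ``$F$ continuous''. The plan above uses that $t\mapsto f_t$ is continuous with values in $\mC^2(M)$, which is the natural interpretation in the context of this paper and is automatic in the applications (where the curves of interest are even affine in $t$ with $\mC^2(M)$-valued slope). If one wishes to start from mere joint continuity of $F$ on $M\times[0,1]$ with $f_t\in\mC^2(M)$ pointwise, one extra step is needed: by openness of $\m U$ and compactness of $M\times[0,1]$, an approximation of $F$ in the $t$-variable by a $\mC^2(M)$-continuous family $\tilde f_t$ remains inside $\m U$, the argument above applies to $\tilde f_t$, and the conclusion is transferred back to $f_t$ by the local constancy of the $\mC^1$-isotopy class on $\m U$ --- itself a direct consequence of the same flow argument.
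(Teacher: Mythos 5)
Your proof is correct and follows essentially the same route as the paper's: exploit that $\mC^1$-isotopy is an equivalence relation, use openness of $\m U$ to reduce (locally or globally) to affine segments $f+sh$ contained in $\m U$, invoke the flow construction of \cref{lem:flowP} to produce the diffeotopy on each segment, and conclude by compactness of $[0,1]$ and transitivity. Your remark that the hypothesis must be read as continuity of $t\mapsto f_t$ into $\mC^2(M)$ (rather than mere joint continuity of $F$) is a fair observation — the paper's piecewise-affine replacement step implicitly assumes the same.
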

For compact manifolds and $f_t=T-t$, this is an easy consequence of the proof of \cite[Theorem 3.1]{MilnorMorse}, although it does not follow quite explicitely from the statement. The same applies to other mainstream references on Morse theory (for instance, \cite{nicolaescu2011Morse,GoreskyMacPherson,audin2013morse,banyaga2004lectures}), which indeed are more concerned with topological properties of the level sets. See \cite{MorseBoundary,laudenbach:morseBoundary} for extensions to manifolds with boundary. The general case $f_t$ can be seen as a particular case of the setting of Thom's isotopy theorem, see \cite{GoreskyMacPherson, Mathernotes} and the references therein. Such point of view has been adopted in a recent paper \cite[Theorem 4.1]{beliaev2023covariance} to prove a statement analogous to \cref{lem:c1iso}, but which postulates the existence of an isotopy that is only $\mC^0$. We report a short proof of \cref{lem:c1iso}, since all the ingredients are already in \cref{lem:flowP}.
\begin{proof}
We start by observing that being $\mC^1$-isotopic is an equivalence relation. To see the transitivity, observe that two $\mC^1$ isotopies $P,Q$ such that $P(\cdot,1)=Q(\cdot,0)$ can be attached together using a smooth function $\rho:[0,1]\to [0,1]$ such that $\rho(t)=0$ for $t$ in a neighborhood of $0$ and $\rho(t)=1$ for all $t$ in a neighborhood of $1$. Then, the function 
\be 
F(x,t)=P(x,\rho(2t))1_{\{2t\le 1\}}+Q(x,\rho(2t-1))1_{\{2t\ge 1\}}
\ee 
is still a $\mC^1$-isotopy with $F(\cdot,0)=P(\cdot,0)$ and $F(\cdot,1)=Q(\cdot,1)$.
Let us replace the curve $t\mapsto F(\cdot,t)$ in $\m U$ with one, called $\tilde F$, that is piece-wise of the form $\tilde{F}(\cdot,t)=f_i+t h_i$ for $t\in [t_i,t_{i+1}]$, for some finite sequence $0=t_0<\dots <t_n=1$ and $h_i\in \mC^2(M)$. The sole constraint on $\tilde{F}$ is that $\tilde{F}(\cdot,t)\subset \m U$ and $\tilde F(\cdot,t)=F(\cdot,t)$ for $t\in \{0,1\}$. This is possible simply because $\m U$ is an open subset of $\mC^2(M)$. Knowing that $\mC^1$-isotopy is an equivalence relation, we can reduce our study to a single piece, namely to the case when $F(\cdot,t)=f_0+th$ for all $t\in [0,1]$, which falls in the context of \cref{lem:flowP}.
From \cref{lem:flowP} and the antecedent discussion, we obtain that for every $s\in [0,1]$, there is an $\e_s>0$, such that if $|t-s|<\e_s$, then $Z_t=f_t^{-1}(0)$ is $\mC^1$-isotopic to $Z_s=f_s^{-1}(0)$. By the compactness of $[0,1]$, we deduce that we can find a finite sequence $0=t_0<t_1<\dots <t_N=1$ such that $Z_{t_i}$ is $\mC^1$-isotopic to $Z_{t_{i+1}}$ and conclude, from the transitivity, that $Z_0$ is $\mC^1$-isotopic to $Z_1$. 
\end{proof}
\section{Non-degeneracy conditions}\label{sec:nondegeneracyconditions}
In order to make a meaningful study of the nodal set $Z=X^{-1}(0)\subset M$ of a random field $X$, it is important to rule out the most degenerate situations. In fact, one can show that if $X$ is supported on the whole space $\mC^2(M)$, then $Z$ ranges over all closed subsets of $M$, see \cite{WhitneyExtension}.\footnote{ 
For any $C\subset M$ closed subset, there exists $f\in \mC^\infty(M)$ such that $C=f^{-1}(0)$. This result is generally attributed to Whitney, as a corollary of his celebrated Extension Theorem \cite{WhitneyExtension}.}
However, by Bulinskaya Lemma (see \cref{lem:bulinskaya} below), under \cref{ass:1}, $Z$ is a hypersurface with probability one. In other words, the set 
\be\label{eq:W} 
\m{W}:=\mC^2(M)\smallsetminus \m U
\ee 
of degenerate maps has zero probability: \cref{lem:bulinskaya} below says that $\PP(X\in \m W)=0$. 

We know by \cref{thm:firstvar} that, outside $\m W$, the nodal volume map $V$ is of class 
$\mC^1$, hence the study of the Malliavin differentiability of $V$ naturally reduces to a study of $V$ near $\m W$. Precisely, in this section we will be concerned about condition (i) of \cref{def:Dspace}, which requires to study the regularity of the restriction of $V$ to random segments of the form $[X,X+h]$. Therefore, we will need to investigate the intersections of such random segments with $\m W$.

The leading idea of this section, and one key idea of the whole paper, is the following heuristics: 
\emph{Let $E$ be the support of $X$. The set $\m W \cap E$ is a stratified hypersurface of $E$. In particular, there is a singular subset $\m W'$ of codimension $2$ such that $E\cap \m W\smallsetminus \m W'$ is a hypersurface.} This statement takes a precise meaning when $E$ is finite dimensional, although it is not always true under the sole \cref{ass:1}. 
The notion of codimension in this discussion is purely heuristic and it is expressed by the property, which is the content of \cref{thm:transcurve}, that a random segment $[X,X+h]$ in $E$ intersects $\m W$ in a finite set, all contained in $\m W\smallsetminus \m W'$. %This 
%In this section, we will give conditions under which $\m{W}\smallsetminus \m W'\cap E$ %is a singular hypersurface of $\codim \m{W}=1$, meaning that it 
%has a tangent space of $\codim \m{W}=1$ at all points, where% out of a negligible subset $\m{W}_1\subset \m{W}$, where here negligible means that 
A second key idea is that the main stratum of $\m W$ is the set
\be \label{eq:Uprime}
\m{U}':=\{f\in \mC^2(M): \text{$0$ is a Morse critical value of $f$}\}\subset \m{W},\quad \m{W}':=\m{W}\smallsetminus \m{U}', 
\ee
where the definition of Morse critical value will be given below, see \cref{def:morse} in \cref{sec:degMorse}. In order to prove this, we will introduce the concept of \emph{transverse curves} 
 (see \cref{def:transv} and \cref{prop: transvprop} below), which is analogous to the standard notion of transversality in differential geometry (see \cref{def:transv} below).
 The fact that the degenerate functions in a random segment $[X,X+h]$ have only Morse critical zeroes, will allow us to completely understand when the function $t\mapsto V(X+th)$ is absolutely continuous, and thus when the condition (i) of \cref{def:Dspace} holds. This is the content of \cref{sec:levelMorse}.

There is one caveat to the previous paragraph: for technical reasons, we will also need to make sure that not too many Morse critical zeros will appear at the same time. We will discuss this in \cref{sec:bestia} and prove a very precise and general result in this direction, see \cref{thm:bestia}, which is of independent interest. Nevertheless, we stress that for us this is important only to establish Point (iv) of \cref{t:mainintro}, in the case $m=2$.
\subsection{The nodal set is a submanifold}
When $M\subset \R^m$ is an open domain with boundary, the first order Taylor polynomial of a function $f\colon M\to \R$ at $p\in M$ is identified by the pair $j^1_pf:=(f(p),d_pf)$, taking values in $M\times \R\times \R^m$. The classical Bulinskaya Lemma is phrased in terms of $j^1_pf$.
%its tangent bundle can be identified with $TM=M\times \R^d$ and its \emph{first jet bundle} with $J^1(M)=M\times \R\times \R^m$. 
Using the language of vector bundles (for which we refer to \cite[Section 4.1]{Hirsch}), the result generalizes to one that is convenient in our setting. 
\begin{definition}
The \emph{first jet bundle} of $M$ is the vector bundle $J^1(M):=\R\times TM$. For any $f\in \mC^{1}(M)$, we call the \emph{first jet of $f$} the section $p\mapsto j^1_pf=(f(p),d_pf)$ (see  \cite[Section 2.4]{Hirsch}).
\end{definition}
Recall the definition of the set $\mathcal{U}$ given in \eqref{e:callu}. 
By \cref{prop:neat}, all maps $f\in \m U$ are non-degenerate in the sense that $f^{-1}(0)$ is a $\mC^2$ neat submanifold of $M$.
The following classical lemma gives conditions for which the set 
$
\m{W}$
of degenerate maps, defined as in \cref{eq:W}, has zero probability.
\begin{lemma}[Bulinskaya, \cite{AzaisWscheborbook}]\label{lem:bulinskaya}
Let $X\randin \mC^2(M)$ and assume that for any $p\in M$ the random vector $j^1_pX=(X(p),d_pX)\randin \R\times T_pM$ has a density $\rho|_{\R\times T_pM}$, where $\rho\colon \R\times TM\to\R$ is a locally bounded function, then 
\be 
\PP\{X\in \m U\}=\PP\{X^{-1}(0) \text{ is a $\mC^2$ neat hypersurface of $M$}\}=1.
\ee
\end{lemma}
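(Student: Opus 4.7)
My plan is to follow the classical strategy behind Bulinskaya's lemma, adapted to the manifold-with-boundary setting. First, I would decompose the complementary event into two pieces:
\[
\{X\notin \m U\}\,\subseteq\, A_{\inter}\cup A_{\de},
\]
where $A_{\inter}:=\{\exists\, p\in \inter M:\,X(p)=0,\ d_pX=0\}$ and $A_{\de}:=\{\exists\, p\in \de M:\,X(p)=0,\ d_p(X|_{\de M})=0\}$. It is enough to show that each of these events has probability zero.

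Next, I would reduce to local coordinates. Since $M$ is compact and of class $\mC^2$, it can be covered by finitely many $\mC^2$ charts, either of the form $O\simeq (-1,1)^m$ (interior charts) or $O\simeq (-1,1)^{m-1}\times [0,1)$ (boundary charts). By countable subadditivity and a trivialization of $TM$ over each chart, it suffices to prove, for a fixed interior chart and a compact subset $K\subset O$, that
\[
\PP\big(\exists\, x\in K:\,X(x)=0,\ \nabla X(x)=0\big)=0,
\]
together with the analogous statement in a boundary chart for $X|_{\de M}$. For the boundary case I would use that the joint law of $(X(p),d_p(X|_{\de M}))$ is the pushforward of the law of $(X(p),d_pX)$ under the linear surjection $\mathrm{Id}\oplus \pi$, where $\pi\colon T_pM\to T_p\de M$ is the orthogonal projection; integrating out the normal direction of the density $\rho$ yields a locally integrable density on $\R\times T_p\de M$, which is enough for the argument (one can either check local boundedness directly, or relax the hypothesis to local integrability at this step).

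The core step is the standard ``covering plus density'' estimate. Since $X$ is a.s. of class $\mC^2$, the random variable $R_X:=\|X\|_{\mC^2(K)}$ is a.s.\ finite; letting $\Omega_R:=\{R_X\le R\}$, one has $\PP(\Omega_R)\to 1$ as $R\to\infty$, so it suffices to bound $\PP(\Omega_R\cap A_{\inter,K})$. For $\e>0$, partition $K$ into cubes $\{C_i\}_{i\in I_\e}$ of side $\e$, with $\#I_\e=O(\e^{-m})$. On $\Omega_R$, if $x\in C_i$ satisfies $X(x)=0$ and $\nabla X(x)=0$, then Taylor's theorem for $X$ at the center $x_i$ of $C_i$, together with the $\mC^2$-bound, yields
\[
|X(x_i)|\le C R\,\e,\qquad \|\nabla X(x_i)\|\le C R\,\e,
\]
for a constant $C$ depending only on the chart. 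Hence the jet $j^1_{x_i}X$ lies in a ball $B\subset \R\times T_{x_i}M$ of radius $CR\e$ around the origin. By the local boundedness of $\rho$ on the compact neighborhood swept by $\{x_i\}\times\{v:\|v\|\le 1\}$, one obtains
\[
\PP\big(j^1_{x_i}X\in B\big)\,\le\, \|\rho\|_{\infty}\cdot\mathrm{Vol}(B)\,\lesssim\, (R\e)^{m+1}.
\]
Summing over $i\in I_\e$ gives
\[
\PP(\Omega_R\cap A_{\inter,K})\,\lesssim\, \e^{-m}\,(R\e)^{m+1}\,=\,R^{m+1}\,\e,
\]
which tends to $0$ as $\e\to 0$. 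Letting $R\to\infty$ shows $\PP(A_{\inter})=0$; the same argument applied in a boundary chart, with $X|_{\de M}$ in place of $X$ and $m-1$ in place of $m$, gives $\PP(A_\de)=0$. Combined with \cref{prop:neat}, this yields the claimed $\PP\{X\in\m U\}=1$. The main technical delicacy is keeping the Taylor remainder and the density estimates compatible across charts; this is handled uniformly because $K$ is compact and the metric $g$ is of class $\mC^2$, so all constants appearing in the Taylor expansion and in the density bound can be taken uniform over $K$.
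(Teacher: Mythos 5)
The paper does not actually prove this lemma --- it is quoted from Aza\"is--Wschebor --- so the only thing to compare against is the classical argument, which is essentially what you reconstruct. Your interior case is correct and complete: the decomposition of $\{X\notin\m U\}$ into interior and boundary critical zeros matches \cref{def:regval}, the exhaustion of $\inter M$ by compacts is handled by countable subadditivity, and the covering estimate $\e^{-m}\cdot(R\e)^{m+1}=R^{m+1}\e\to 0$ is the standard Bulinskaya computation (Taylor actually gives $|X(x_i)|\le CR\e^{2}$ rather than $CR\e$, so you even have room to spare).

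The one genuine weak point is the boundary case. You assert that pushing the density $\rho$ of $(X(p),d_pX)$ forward under $\mathrm{Id}\oplus\pi$ yields a locally integrable density and that this ``is enough for the argument (one can either check local boundedness directly, or relax the hypothesis to local integrability)''. Neither half of that parenthesis holds as stated. Local integrability is not enough: the covering argument needs the quantitative bound $\PP\{j^1_p(X|_{\de M})\in B(0,r)\}\lesssim r^{m}$, which requires a pointwise bound on the density near the origin, not just integrability. And local boundedness of the marginal does not follow from local boundedness of $\rho$: the marginal at $(t,w)$ is $\int_\R\rho(t,w+s\,\n)\,ds$, an integral over an unbounded fiber, which can be infinite or unbounded absent any decay of $\rho$ in the normal direction. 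The fix is to avoid the pushforward altogether: on $\Omega_R$ the normal derivative $d_pX(\n)$ is also bounded by $CR$, so a critical zero of $X|_{\de M}$ in a boundary cube $C_i$ of side $\e$ forces the full jet $j^1_{x_i}X\in\R^{m+1}$ to lie in the $CR\e$-neighbourhood of the compact segment $\{(0,0,s):|s|\le CR\}$, a set of volume $O\big(R\,(R\e)^{m}\big)$; summing over the $O(\e^{-(m-1)})$ cubes covering $\de M$ again gives $O(R^{m+1}\e)\to 0$, using only the local boundedness of the full-jet density that the lemma actually assumes. (In the paper's applications $X$ is Gaussian and, under \cref{ass:1}, the restricted jet $(X(p),d_p(X|_{\de M}))$ is a nondegenerate Gaussian vector with bounded density, so your original route also works there; but the lemma as stated is not restricted to that case.)
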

Recall the definition of the set $\mathcal{U}$ given in \eqref{e:callu}. 
\subsection{Transversality}
We recall the following standard notions from differential geometry. For references, see \cite[Section 3.2]{Hirsch}.
\begin{definition}\label{def:transv}
Let $f\colon M\to V$ a be a $\mC^r$ map, with $r\ge 1$, between $\mC^r$ manifolds, possibly non compact and without boundary. Let $W\subset V$ be a $\mC^r$ submanifold. We say that \emph{$f$ is transverse to $W$} and write
\be 
f\transv W
\ee
if for every $p\in f^{-1}(W)$, we have that $d_pf(T_pM)+T_{f(p)}W=T_{f(p)}V$. 
\end{definition}
In particular, if $V=\R$ and $W=\{0\}$, then $f\transv 0$ if and only if $0$ is a regular value of $f$ as defined in \cref{def:regval}. A standard consequence of transversality (see \cite[Theorem 4.2]{Hirsch}) is that if $f\transv W$, then $f^{-1}(W)$ is a $\mC^r$ submanifold of $M$ having the same codimension as $W$. We will need to consider the case when $W=0_V$ is the zero section of a vector bundle $V\to M$. By passing to local charts this setting can be easily reduced to that of a $\mC^r$ map $f\colon \R^m\to \R^k$, with $W=\{0\}$. In this setting, Sard's theorem \cite{sard}, states that the set $\{u\in \R^k:f+u\transv 0\}$ has full Lebesgue measure in $\R^k$, provided that $r\ge 1+\max\{m-k,0\}$. We will use the following generalization proved in \cite{dtgrf}.
\begin{lemma}\label{lem:Ptransv}
Let $M$ be a $\mC^r$ manifold of dimension $\dim M=m$, with $\de M=\emptyset$, possibly not compact. Let $V\to M$ be a $\mC^r$ vector bundle of rank $k$ and denote by $0_V$ the zero section. Let $X\randin \Gamma^r(V)$ be a Gaussian random section of class $\mC^r$ such that for all $p\in M$, the Gaussian random vector $X(p)\randin V_p$ has a density on $V_p$. If $r\ge 1+\max\{m-k,0\}$, then $\PP\{X\transv 0_V\}=1$.
\end{lemma}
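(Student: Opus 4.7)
The plan is to reduce the infinite-dimensional transversality statement to a finite-dimensional parametric Sard argument, by coupling the Cameron--Martin structure of $X$ with the pointwise density assumption on $X(p)$. Since $M$ admits a countable atlas by relatively compact coordinate balls over which $V$ trivializes, $\sigma$-subadditivity of $\mathbb{P}$ reduces the problem to proving transversality on an arbitrarily small neighbourhood $U$ of any fixed $p_0\in M$, where we can assume $V|_U\cong U\times\R^k$.

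Around $p_0$, the density hypothesis forces the evaluation $e_{p_0}\colon \mathcal{H}_X\to V_{p_0}\cong \R^k$ to be surjective, so we can select $h_1,\dots,h_k\in \mathcal{H}_X$, orthonormal in the Cameron--Martin norm, for which $h_1(p_0),\dots,h_k(p_0)$ span $V_{p_0}$; by continuity of the $h_i$'s, after shrinking $U$ they remain linearly independent at every $p\in U$. Gaussian orthogonal projection then yields the decomposition $X=X^{\perp}+\sum_{i=1}^{k}\gamma_i h_i$, where $(\gamma_1,\dots,\gamma_k)$ is a standard Gaussian vector on $\R^k$ independent of the residual $X^{\perp}$---a finite-dimensional avatar of the expansion \eqref{e:seriexp}. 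Conditionally on $X^{\perp}$, the key object is the map
\begin{equation*}
F\colon \R^k\times U\longrightarrow \R^k,\qquad F(y,p):=\Bigl(X^{\perp}+\sum_{i=1}^k y_i h_i\Bigr)(p),
\end{equation*}
which is $\mC^r$ and, by the choice of the $h_i$'s, satisfies $\partial_y F(y,p)\in \mathrm{GL}(\R^k)$ for every $(y,p)$. In particular $F$ is a submersion and $\Sigma:=F^{-1}(0)\subset \R^k\times U$ is an $m$-dimensional $\mC^r$ submanifold.

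By construction, $X^{\perp}+\sum_i y_i h_i$ is transverse to $0_V$ on $U$ if and only if $y$ is a regular value of the projection $\pi\colon \Sigma\to \R^k$; since $\dim\Sigma-k=m-k$, Sard's theorem applies to $\pi$ precisely when $r\ge 1+\max\{m-k,0\}$, which is the smoothness assumption of the lemma, and asserts that the critical values of $\pi$ form a Lebesgue-null subset of $\R^k$. Because $(\gamma_1,\dots,\gamma_k)$ has a strictly positive Gaussian density on $\R^k$, the conditional probability, given $X^{\perp}$, that it lands among the regular values of $\pi$ equals $1$; integrating in $X^{\perp}$ and taking the union over the countable cover of $M$ yields $\mathbb{P}\{X\transv 0_V\}=1$. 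The subtle point I expect is the bookkeeping that aligns the Cameron--Martin decomposition with a jointly $\mC^r$ parametrization on $U$ and verifies that the Sard exponent extracted from $\dim\Sigma-\dim\R^k$ matches the hypothesis exactly; once this is in place, the rest is a direct application of parametric Sard together with Gaussian Fubini.
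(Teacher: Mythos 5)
Your proof is correct and is essentially the same argument the paper relies on: the paper reduces to local charts and then invokes \cite[Theorem 7]{dtgrf}, whose proof is precisely the Cameron--Martin decomposition plus parametric Sard scheme you spell out, with the remark that Sard's theorem still applies for finite $r\ge 1+\max\{m-k,0\}$. Your version simply makes that cited argument self-contained.
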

\begin{proof}
By passing to local charts, we can reduce to the case of a Gaussian field $X\randin \mC^r(M,\R^k)$. If $r=\infty$, the statement follows directly from \cite[Theorem 7]{dtgrf}. However, the proof of \cite[Theorem 7]{dtgrf} employs Sard's theorem and goes through without any modification for finite $r$ such that $r\ge 1+\max\{d-k,0\}$.
\end{proof}
\subsection{Degenerations are Morse}\label{sec:degMorse}
Let us introduce the notion of Morse critical value on a manifold with boundary, following \cite{GoreskyMacPherson}.
\begin{definition}\label{def:morse}
Let $(M,g)$ be a $\mC^2$ Riemannian manifold with boundary and let $f\colon M\to\R$ be a $\mC^2$ function and $r\in \R$ be a critical value. If $\de M=\emptyset$, we say that $r\in \R$ is a \emph{Morse critical value} of $f$ if for any point $p$ such that $f(p)=r$ and $d_pf=0$, the Hessian $\H_p f :T_pM\times T_pM\to \R$ is non-degenerate. In general, we say that $r\in \R$ is a \emph{Morse critical value} of $f$ if it is a Morse critical value of $f|_{\inter M}$ and $f|_{\de M}$ and if $0\notin df(\de M)$. The mapping $f$ is a \emph{Morse function} if every critical value of $f$ is a Morse critical value. From now on, we define $\m U'$ and $\m W'$ as in \cref{eq:Uprime}.
\end{definition}
The definition of Morse critical value corresponds with that of \cite{GoreskyMacPherson} of \textit{ Morse functions on stratified spaces}, when the strata of $M$ are $\inter M$ and $\de M$. Notice that this definition does not depend on the metric.
In the Gaussian case, in the same setting as that of \cref{lem:bulinskaya}, we have the following stronger statement. 
\begin{lemma}\label{lem:asMorse}
Let $X\randin \mC^2(M)$ be Gaussian and assume that for any $p\in M$ the Gaussian random vector %$j^1_pX=(X(p),d_pX)\randin \R\times T_pM$ has a density $\rho|_{\R\times T_pM}$
$d_pX\randin T_p^*M$ has a density, then $\PP\{X\text{ is Morse}\}=1.$
\end{lemma}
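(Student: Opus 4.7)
The plan is to verify the three defining conditions of \cref{def:morse} separately, each being an a.s. statement that will follow from an application of \cref{lem:Ptransv} (transversality of Gaussian sections) to appropriately chosen vector bundles. Namely, writing $\{X \text{ is Morse}\} = A_1 \cap A_2 \cap A_3$, where
\begin{align*}
A_1 &= \{ d_pX \neq 0 \text{ for every } p \in \de M \}, \\
A_2 &= \{ X|_{\inter M} \text{ is a Morse function on } \inter M \}, \\
A_3 &= \{ X|_{\de M} \text{ is a Morse function on } \de M \},
\end{align*}
it will suffice to show that each $A_i$ has probability $1$, since the intersection of three a.s. events is a.s.

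For $A_1$, consider $dX$ as a $\mC^1$ section of the rank $m$ bundle $T^*M|_{\de M}$ over the closed $\mC^2$ manifold $\de M$ of dimension $m-1$. By hypothesis, for every $p \in \de M$ the Gaussian vector $d_pX \randin T_p^*M$ admits a density on a fiber of dimension $k = m$. Since $r = 1 \geq 1 + \max\{(m-1)-m,\,0\} = 1$, \cref{lem:Ptransv} applies and yields $\PP\{dX|_{\de M} \transv 0_{T^*M|_{\de M}}\} = 1$; because $\codim 0_{T^*M|_{\de M}} = m > m-1 = \dim \de M$, transversality here means that the preimage is empty, i.e. $d_pX \neq 0$ for every $p\in \de M$.

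For $A_2$, view $dX|_{\inter M}$ as a $\mC^1$ section of $T^*M$ over the (non-compact, boundaryless) manifold $\inter M$; here the rank $k = m$ equals $\dim \inter M = m$, and the hypothesis on densities of $d_pX$ together with \cref{lem:Ptransv} ($r = 1 \geq 1 + \max\{0,0\}$) gives $\PP\{dX|_{\inter M} \transv 0_{T^*M}\} = 1$. The standard identification of the vertical differential of $dX$ at a critical point with the Hessian $\H_p X$ shows that this transversality condition is equivalent to: every critical point of $X|_{\inter M}$ has non-degenerate Hessian, which is exactly the Morse condition on $\inter M$.

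For $A_3$, an analogous argument applies to the $\mC^1$ section $d(X|_{\de M})$ of $T^*\de M$ over $\de M$. The only non-trivial point is verifying the density hypothesis of \cref{lem:Ptransv}: for each $p\in \de M$, the covector $d_p(X|_{\de M}) \in T_p^*\de M$ is the image of $d_pX$ under the surjective linear quotient $T_p^*M \to T_p^*\de M$, and the push-forward of a density under a surjective linear map is again a density on the target. Since again $k = m-1 = \dim \de M$ and $r = 1$, \cref{lem:Ptransv} gives $\PP(A_3) = 1$ and, as above, transversality translates into the Morse property of $X|_{\de M}$.

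The main technical point is really the verification of the density/regularity hypotheses needed to invoke \cref{lem:Ptransv} three times at the minimal regularity $r=1$, and then reading off the geometric content of transversality to $0_V$ in each of the three bundles; none of these steps requires any new computation beyond what is already recorded in \cref{lem:Ptransv} and \cref{def:morse}.
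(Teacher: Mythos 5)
Your proof is correct and follows essentially the same route as the paper's: three applications of \cref{lem:Ptransv}, one for $X|_{\inter M}$, one for $X|_{\de M}$, and one to rule out $0\in dX(\de M)$ by a codimension count. Your explicit verification of the density hypothesis for $d_p(X|_{\de M})$ via push-forward under the quotient $T_p^*M\to T_p^*\de M$ is a detail the paper leaves implicit, but it is the same argument.
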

\begin{proof}
When, $\de M=\emptyset$, the function $f$ is Morse if and only if the differential $df$ is transverse to the zero section of $T^*M$. Applying \cref{lem:Ptransv} to $dX$ 
one obtains that $\PP\{X\text{ is Morse}\}=1.$ When there is boundary $\de M$, by repeating this argument twice: one for $X|_{\inter M}$ and one for $X|_{\de M}$, one obtains that $\PP\{X\text{ is Morse}\}=\PP\{0\notin dX(\de M)\}.$ Consider $(dX)|_{\de M}$ as a section of the vector bundle $TM|_{\de M}$; one more application of \cref{lem:Ptransv} to that section, shows that $\de M\cap dX^{-1}(0)=\emptyset$ almost surely, thus we conclude. 
\end{proof}
In order to discriminate between typical segments and degenerate ones, we introduce the notion of transverse curve. The heuristic behind, confirmed by \cref{thm:transcurve} below, is that a random segment $[X,X+h]$ is almost surely a transverse curve.
\begin{definition}\label{def:pseudotransv}
Let $\m U'$ and $\m W'$ be defined as in \cref{eq:Uprime}. Let $f\in \m U'\subset \m W$. We define the \emph{tangent space to $\m W$ at $f$} as  
\be T_f\m{W}:=\{\dot{c}(0)|\ c\colon \R\to \mC^2 (M) \text{ is a $\mC^1$ curve with $c(0)=f$ and $c(\R)\subset \m W$.}\}.
\ee
Let $I\subset \R$ be an interval. We will say that a curve $c\colon I\to \mC^2 (M)$ is \emph{transverse} to $\m{W}$ at $t\in I$, and write $c\transv_t \m{W}$, if and only if: %$c(\R)\cap \m W= \m U'$ and, 
$c(t)\in \m{W}$ $\implies$ $c(t)\in \m U'$ and $\dot{c}(t)\notin T_{c(t)}\m{W}$. If $c\transv_t \m W$ for all $t\in I$ and if $c(\de I)\cap \m W=\emptyset$, then we write 
\be 
c\transv \m W% and $c(\de I)\subset \m U$.
\ee
and we say that \emph{$c$ is transverse to $\m W$} and that \emph{$c$ is a transverse curve}. When $c(t)=f+th$, $t\in [0,1]$, we will identify $c$ with its image, the segment $[f,f+h]$.
\end{definition}

In a reasonable finite-dimensional setting, one has that $\m{W}\subset E$ is a stratified hypersurface having $\m{W}'$ as singular locus. In this situation, a curve $c$ in $E$ is transverse to $\m{W}$ in the usual sense if and only if $c\transv \m{W}$ in the sense of \cref{def:pseudotransv}. In fact, although we do not need to be that precise, even in this infinite dimensional case, we could argue that $\m{W}$ is a submanifold.
% \footnote{If $c(t)\in \m{W}$ for some $t\in \R$, then there exists a curve $p(\cdot)$ in $M^k $ such that $\{p_i(t)\colon i=1,\dots,k\}$ are the critical points of $c(t)$% and such that $\H_{p(s)}c(s)\dot{p}(s)+\frac{d}{d\e}d_{p(s)}X(c(s+\e))=0$ for all $s$ near $t$. 
% The existence of this curve is due just to the non-degeneracy of the second derivative of $c(t)$ at the point $p_i(t)$. %Along this curve, $d_{p(s)}c(s)=0$ and $p(s)$ is the point at which $\|j^1X(c(s))\|$ attains its minimum value. 
% It follows that $c(t)\in \m{W}$ if and only if $F(c(s)):=c(t)(p_i(t))=0$ for some $i$ and, in fact, this is the local equation defining the hypersurface $\m{W}$, since if $c\transv \m{W}$ in the sense of \cref{def:pseudotransv}, then $0$ is a regular value for $F$. So the equivalence of the two notions of transversality follows from the formula $\frac{d}{dt}j^1_{p(t)}X(c(t))=\tyu\frac{d}{dt}X(c(t))(p),0\uyt.$ \commik{(Unclear)}}

The following proposition clarifies the role of transverse curves and their close relation with Morse functions. Thanks to \cref{cor:TransvToMorse}, based on point (4) of \cref{prop: transvprop} below, we will be able to pass from a generic transverse curve or segment $\{c(t):t\in I\}$ to one of the form $\{T-t\colon t\in I\}$, where $T$ is a Morse function, while preserving the geometry of the zero sets, see \cref{cor:TransvToMorse}. 
\begin{proposition}\label{prop: transvprop}
Let $c\colon I\to \mC^2(M)$ be a $\mC^1$ curve defined on an open interval $I\subset \R$ and denote $c(t):=f_t$. The following statements are equivalent.
\begin{enumerate}[\rm (1)]
\item $c\transv \m{W};$
\item 
$c(I)\cap \m{W}\subset \m U'$ and $j^1_pf_t=0\implies \frac{d}{dt} f_t(p)\neq 0$, for all $t\in I$ and $p\in M$;
\item the function $j^1c\colon M\times I\to J^1(M)$ defined as $(p,t)\mapsto j^1_p f_t$, is transverse to the zero section $0_J:=\{0\}\times 0_{T^*M}\subset J^1(M)$;
\item (If moreover, $c$ is of class $\mC^2$) the equation $f_t(p)=0$ is regular on $M\times I$ and defines a $\mC^2$ hypersurface $N\subset M\times I$ with boundary $\de N=\de M\times I$. Moreover, the second projection $\pi_2|_{N}\colon N\to I$ is a Morse function whose critical points $(p,t)$ correspond to the pairs such that $p$ is a critical point of $f_t$ with value $0$.
    %\item There are distinct $t_1,\dots, t_N\in M\times I$ such that for all $t\in I\smallsetminus\{t_i:i=1,\dots N\}$, we have that $0$ is a regular value of the function $X(c(t))\colon M\to \R$ and otherwise $0$ is a Morse critical value of the function $X(c(t_i))$. Moreover, for any $i$ there are $A_i$ distinct curves $p_a(\cdot)$ in $M$, with $a=1,\dots ,A_i$ defined on intervals $(t_i-\e,t_{i}+\e)$ such that for all $t\in (t_i-\e,t_{i}+\e)$ we have $dX(c(t))^{-1}(0)\cap X(c(t))^{-1}(\e,\e)=\{p_a(t)\colon a=1,\dots, A_i\}$ and such that $\frac{d}{dt}X(c(t))(p(t))\neq 0$ for all $t\in (t_i-\e,t_i+\e)$.
\end{enumerate}
\end{proposition}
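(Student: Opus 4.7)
My plan is to prove the four equivalences via two parallel reductions: first, unpack both (1) and (3) so that they meet at the concrete criterion (2); then, under the extra $\mC^2$ hypothesis, pass from (3) to the global Morse statement (4) via the implicit function theorem applied to $F(p,t):=f_t(p)$.

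\textbf{Step 1: (2) $\Leftrightarrow$ (3).} Fix $(p_0,t_0)$ with $j^1c(p_0,t_0)\in 0_J$, i.e.\ $f_{t_0}(p_0)=0$ and $d_{p_0}f_{t_0}=0$. The fiber $\R\oplus T^*_{p_0}M$ of $J^1(M)$ at the origin is a natural complement to $T_00_J\cong T_{p_0}M$, so transversality of $j^1c$ to $0_J$ at $(p_0,t_0)$ is equivalent to the projection of $d(j^1c)_{(p_0,t_0)}$ onto this fiber being surjective. Using that $d_{p_0}f_{t_0}=0$, the $M$-partial derivative contributes the vectors $(0,\H_{p_0}f_{t_0}(v,\cdot))$ for $v\in T_{p_0}M$ (by the formula $\nabla_v(df)=\H f(v,\cdot)$ at critical points), and the $t$-partial derivative contributes the single vector $(\dot f_{t_0}(p_0),d_{p_0}\dot f_{t_0})$. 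Surjectivity onto $\R\oplus T^*_{p_0}M$ therefore holds if and only if $\H_{p_0}f_{t_0}$ is non-degenerate (which fills the $T^*_{p_0}M$ factor) \emph{and} $\dot f_{t_0}(p_0)\neq 0$ (which then produces the $\R$-direction after subtracting off a suitable $v$). This pair of conditions is exactly: $f_{t_0}\in \m U'$ (in the interior sense), together with the implication in (2). The same reasoning applied to the restricted map $j^1 c|_{\partial M\times I}$ forces $0\notin df_t(\partial M)$ and produces Morse-ness on $\partial M$, completing the boundary case of $f_{t_0}\in \m U'$.

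\textbf{Step 2: (1) $\Leftrightarrow$ (2).} The content here is the identification, for $f\in \m U'$ with critical zeros $p_1,\dots,p_k$,
\begin{equation*}
T_f\m W=\bigcup_{i=1}^{k}\{h\in \mC^2(M)\colon h(p_i)=0\}.
\end{equation*}
For the inclusion $\subseteq$: given a $\mC^1$ curve $\gamma\colon\R\to\mC^2(M)$ with $\gamma(0)=f$ and $\gamma(\R)\subset\m W$, the non-degeneracy of $\H_{p_i}f$ lets one apply the implicit function theorem to $dg=0$ and continue each $p_i$ to a $\mC^1$ family $p_i(s)$ of critical points of $\gamma(s)$. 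Set $\phi_i(s):=\gamma(s)(p_i(s))$; since $d_{p_i}f=0$, one has $\phi_i'(0)=\dot\gamma(0)(p_i)$. The condition $\gamma(s)\in\m W$ near $s=0$ forces, for each $s$, at least one $\phi_i(s)=0$, so by a finite pigeonhole there is a sequence $s_n\to 0$ with $\phi_i(s_n)=0=\phi_i(0)$ for one fixed $i$, whence $\phi_i'(0)=0$. For $\supseteq$: if $h(p_i)=0$, one builds a curve in $\m W$ with initial velocity $h$ by perturbing $f+sh$ along the solution manifold $\{g\colon g(p_i(g))=0\}$, which is a submanifold near $f$ of codimension 1 by the IFT and tangent to the hyperplane $\{h'\colon h'(p_i)=0\}$. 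Given this description, $\dot c(t_0)\notin T_{f_{t_0}}\m W$ translates exactly to the universally quantified implication in (2).

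\textbf{Step 3: (3) $\Leftrightarrow$ (4).} Assume $c\in \mC^2$. For $F(p,t):=f_t(p)$ and $(p,t)\in N:=F^{-1}(0)$, one has $dF=(d_pf_t,\dot f_t(p))$, which is nonzero at every point of $N$: away from critical zeros because $d_pf_t\neq 0$, and at critical zeros because Step 1 gives $\dot f_t(p)\neq 0$. Hence $N$ is a $\mC^2$ hypersurface, and the identity $\partial N=\partial M\times I$ follows from the fact that (3) excludes critical zeros at the boundary. To recognise $\pi_2|_N$, use the IFT to write $N$ locally as the graph $t=\tau(p)$ of a $\mC^2$ function. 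Differentiating $f_{\tau(p)}(p)=0$ yields $d\tau(p)=-d_pf_{\tau(p)}/\dot f_{\tau(p)}(p)$, which vanishes exactly at critical zeros, and a second differentiation at such a point gives $\H\tau(p_0)=-\H_{p_0}f_{t_0}/\dot f_{t_0}(p_0)$. Thus critical points of $\pi_2|_N=\tau$ are precisely critical zeros of $f_t$, and Morse-ness of $\tau$ is equivalent to non-degeneracy of $\H_{p_0}f_{t_0}$, recovering (3).

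\textbf{Expected main obstacle.} The delicate step is Step 2, the description of $T_f\m W$. Unlike a classical tangent space it is a \emph{union} of hyperplanes (not a linear subspace), and deriving its shape requires pairing the IFT-based continuation of Morse critical points with a pigeonhole argument on finitely many $\mC^1$ scalar functions; the $\supseteq$ direction also demands an explicit IFT construction of a curve in $\m W$. A minor secondary point is the correct stratified reading of transversality in (3) when $\partial M\neq\emptyset$, needed to extract the $\partial M$ conditions in the definition of $\m U'$. Once these are handled, Steps 1 and 3 reduce to the bilinear computations sketched above.
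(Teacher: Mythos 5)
Your proof is correct and follows essentially the same route as the paper: the implicit-function-theorem continuation of Morse critical zeros to curves $p_i(t)$ reducing (1)$\Leftrightarrow$(2) to the vanishing of $\dot f_t(p_i)$, the fiberwise unfolding of jet transversality for (2)$\Leftrightarrow$(3), and the computation $\H(\pi_2|_N)=\pm(\dot f_t(p))^{-1}\H_p f_t$ for (4). Your only real addition is making the identification $T_f\m W=\bigcup_i\{h:h(p_i)=0\}$ explicit with both inclusions (pigeonhole for $\subseteq$, an IFT-constructed curve in $\m W$ for $\supseteq$), which the paper leaves implicit but which is the same underlying argument.
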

\begin{proof}
Assume that $c(t_0)\in \m W$, then both (1) and (2) imply that $c(t_0)\in \m U'$,  thus that $0$ is a Morse critical value of $f_{t_0}=c(t_0)$. Let $p_1,\dots,p_k$ be the critical points of $f_{t_0}$. %Without loss of generality we can assume that $t_0=0$.  
By applying the implicit function theorem to the equation $ d_{p}f_t=0$ in a neighborhood of $(p_i,t_0)$, we deduce that there exist $k$ curves $p_i(\cdot)$ of critical points of $c(\cdot)$ in $M$, that is, such that $d_{p_i(t)}f_t=0$ for all $t$ close to $t_0$. Then, we have that $c(t)\in\m W$ if and only if there exists an $i\in \{1,\dots,k\}$ such that $f_t(p_i(t))=0$. By differentiating the latter equation at $t=t_0$, we get that $0=\frac{d}{dt}f_t(p_i(t))=\frac{d}{dt} {f}_{t}(p_i(t_0))$. 
Since the latter condition depends only on the value of $c(t_0)$ and $\dot c (t_0)$ at the point $t_0$, it follows that the curve $c(t)$ is tranverse to $\m{W}$ at $t_0$ if and only if $\frac{d}{dt}|_0 f_t(p_i(t_0)\neq 0$ for all $i\in \{1,\dots,k\}$. This argument proves that (1) is equivalent to (2).

By definition, point (3) means that if $j^1_pf_t=0$, then $\frac{d}{dt}f_t(p)\neq 0$ and $(\nabla df_t)_p:T_pM\to T^*_pM$ is surjective, i.e., $\H_pf_t$ is non degenerate. The second condition implies that $f_t\in \m{W}\smallsetminus\m{W}'$. Therefore, (2) and (3) are equivalent.

$(2)\implies (4):$ By $(2)$, we have that the differential of the function $(p,t)\mapsto f_t(p)$ cannot vanish at a point of $N=\{(p,t)\in M\times I\colon f_t(p)=0\}$, therefore $N$ is a $\mC^2$ hypersurface of $M\times I$. The $\mC^2$ regularity follows from that of the defining equation $c(t)(p)=$, which is of class $\mC^2$ because $c$ is. The tangent space of $N$ at $(p,t)\in N$ is 
\be\label{eq:Wtangent} 
T_{(p,t)}N=\ker \tyu \frac{d}{dt}f_t(p)dt+d_pf_t=0\uyt,
\ee  
so that $dt|_{T_{(p,t)}N}=0$ if and only if $d_pf_t|_N=0$, from which we deduce the characterization of critical points. Given that $N$ has codimension $1$, a point $(p,t)$ is critical for the second projection if and only if $T_{(p,t)}N=T_pM\times \{0\}$. By differentiating twice the equation of $N$ along a curve $(p(s),t(s))\in N$, such that $p(0)=p$, $t(0)=t$ and $v:=(\dot{p},\dot{t})\in T_{(p,t)}N$, we obtain that
\be   
\H_{(p,t)} \tyu \pi_2|_N\uyt(v,v)=\ddot{t}=\tyu\frac{d}{dt}f_t(p)\uyt^{-1}\H_pf_t(\dot{p},\dot{p}).
\ee
It follows that $(p,t)$ is Morse as a critical point of $\pi_2|_N$ if and only if $p$ it is a Morse critical point of $f_t$.
Repeating the previous argument backwards one can prove that $(4)\implies (2)$.
\end{proof}
\begin{corollary}\label{cor:everytransv}
Every point $f\in \m U'$ lies on a tranverse curve. Thus, $\m U'=\{c(0)\colon c\transv \m W \text{ and } c(0)\notin \m U\}$.
\end{corollary}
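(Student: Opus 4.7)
My plan is to realize each $f \in \m U'$ as the value at $t=0$ of the affine curve $c(t) := f + t$, restricted to a sufficiently small interval $I$. The reverse inclusion in the displayed equality is immediate from \cref{def:pseudotransv}: if $c \transv \m W$ and $c(0) \notin \m U$, then $c(0) \in \m W$, and the definition forces $c(0) \in \m U'$. Only the direct inclusion requires argument.

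Since $f \in \m U'$, the Morse boundary condition $0 \notin df(\de M)$ places every critical point of $f$ inside $\inter M$, and the critical zeros of $f$ form a finite set $\{p_1, \dots, p_k\} \subset \inter M$ of non-degenerate Morse points; likewise $f|_{\de M}$ has finitely many Morse critical zeros $q_1, \dots, q_j \in \de M$. Using continuity of the Hessian together with the fact that a non-degenerate critical point is isolated, I can choose pairwise disjoint open neighborhoods $U_i \subset \inter M$ of each $p_i$ inside which $p_i$ is the only critical point of $f$ and $\H_p f$ is non-degenerate, and similarly $V_j \subset \de M$ around $q_j$ for $f|_{\de M}$. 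Since the complements $M \smallsetminus \bigcup_i U_i$ and $\de M \smallsetminus \bigcup_j V_j$ are compact and contain no critical zero of $f$ respectively $f|_{\de M}$, the quantities
\[
\epsilon := \inf\{|f(p)| : p \in M \smallsetminus \textstyle\bigcup_i U_i,\ d_p f = 0\}, \qquad
\epsilon' := \inf\{|f(p)| : p \in \de M \smallsetminus \textstyle\bigcup_j V_j,\ d_p(f|_{\de M}) = 0\}
\]
are strictly positive, with the convention $\inf \emptyset = +\infty$.

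Setting $\delta := \tfrac12 \min(\epsilon, \epsilon')$ and $I := [-\delta, \delta]$, I claim that $c(t) = f + t$ is transverse to $\m W$ on $I$. For $t \in I \smallsetminus \{0\}$, an interior critical point of $f + t$ is a critical point of $f$ and either lies outside $\bigcup_i U_i$ (so $|(f+t)(p)| \ge |f(p)| - |t| > 0$) or equals some $p_i$ (and so has value $t \ne 0$); the same dichotomy on $\de M$ shows that $(f+t)|_{\de M}$ has no critical zero. Hence $f + t \in \m U$ for $t \ne 0$, so $c(I) \cap \m W = \{f\} \subset \m U'$ and $c(\de I) \cap \m W = \emptyset$. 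At the finitely many critical zeros of $c(0)$, namely $(p_i, 0)$ and $(q_j, 0)$, the Hessian of $f$ is non-degenerate and $\dot c \equiv 1 \ne 0$, so the equivalence (1) $\iff$ (2) in \cref{prop: transvprop} yields $c \transv \m W$. The delicate point is the quantitative isolation provided by $\epsilon, \epsilon'$, which is exactly what prevents critical points of $f$ away from the critical zeros from producing unwanted elements of $\m W$ along the segment.
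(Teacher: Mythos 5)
Your proof is correct, and it takes a slightly but genuinely different route from the paper's. The paper disposes of the corollary in one line by taking the \emph{random} direction: it considers $c(t)=f+tX$ and invokes point (2) of \cref{prop: transvprop} to conclude that this is almost surely a transverse curve on a small random interval; the non-vanishing of $\frac{d}{dt}f_t(p_i)=X(p_i)$ at the critical zeros then holds almost surely by the non-degeneracy of $X(p_i)$, and the existence of at least one good realization gives the claim. You instead take the deterministic constant direction $h\equiv 1$, for which the derivative condition in (2) is trivially $\frac{d}{dt}f_t(p)\equiv 1\neq 0$, and you carry the entire burden in the elementary isolation estimate ($\epsilon,\epsilon'>0$) showing that $f+t\in\m U$ for all small $t\neq 0$, so that $c(I)\cap\m W=\{f\}\subset\m U'$ and $c(\de I)\cap\m W=\emptyset$. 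This buys you a fully deterministic, self-contained argument that does not touch the Gaussian structure at all and produces an explicit transverse curve through every $f\in\m U'$ (rather than an almost-sure statement about a random family); it is also somewhat more careful than the paper's one-liner, which leaves implicit why $f+tX$ avoids $\m W\smallsetminus\m U'$ for $t\neq 0$. Your handling of the boundary stratum (treating the critical zeros of $f|_{\de M}$ separately and using $0\notin df(\de M)$ to keep the critical points of the full differential in $\inter M$) is consistent with \cref{def:morse} and \cref{def:regval}, and the reverse inclusion is, as you note, immediate from \cref{def:pseudotransv}.
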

\begin{proof}
By point (2), we have that the curve defined as $c(t)=f+tX$ is almost surely a transverse curve on a small enough (random) interval $t\in (-\e(X),\e(X))$.
\end{proof}
We are ready to state and prove the main theorem of this section, having as an immediate consequence that almost every segment $[X,X+h]$ is transverse to $\m W$.
\begin{theorem}\label{thm:transcurve}
%Let $X\randin \mC^2(M)$ Gaussian and assume that for any $p\in M$ the random vector $j^1_pX=(X(p),d_pX)\randin \R\times T_pM$ has a density. 
Let \cref{ass:1} prevail. Let $c:[0,1]\to \mC^2(M)$ be any $\mC^2$ curve. Then $\PP\{c+X\transv \m{W}\}=1$.
\end{theorem}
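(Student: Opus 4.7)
\emph{Plan.} The strategy is to exploit the pointwise characterization of $c\transv\m W$ provided by \cref{prop: transvprop} (the equivalence of (1), (2), (3)) and then to apply \cref{lem:Ptransv}~--- the probabilistic transversality theorem for Gaussian sections~--- to the random jet map associated with $c+X$. After extending $c$ to a $\mC^2$ curve on an open interval $\tilde I\supset [0,1]$, the endpoint requirement $(c+X)(\de I)\cap \m W=\emptyset$ is immediate from \cref{lem:bulinskaya} applied separately to $c(0)+X$ and $c(1)+X$, the locally bounded density of $j^1_pX$ being guaranteed by \cref{ass:1}, the compactness of $M$ and the continuity of the covariance. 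The substance of the proof is then to show that the Gaussian $\mC^1$ section
\[
\Psi\colon M\times \tilde I \to \R\times T^*M,\qquad \Psi(p,t):=\bigl((c(t)+X)(p),\,d_p(c(t)+X)\bigr),
\]
is almost surely transverse to the zero section.

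The key technical input for invoking \cref{lem:Ptransv} is that, at every $p\in M$, the Gaussian vector $(X(p),d_pX)$ admits a density on the full $(m+1)$-dimensional space $\R\times T_p^*M$. Although \cref{ass:1} posits only a family of two-dimensional densities, the upgrade to full non-degeneracy is a short linear-algebra step: any non-trivial relation $aX(p)+d_pX(w)=0$ a.s.\ would, when tested either with $v=w$ (if $w\ne 0$) or with any nonzero $v\in T_pM$ (if $w=0$, forcing $X(p)\equiv 0$), contradict the density of $(X(p),d_pX(v))$ in $\R^2$ posited by \cref{ass:1}.

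I would then apply \cref{lem:Ptransv} in three settings. First, on the boundaryless $(m+1)$-dimensional manifold $\inter M\times \tilde I$, to $\Psi$ with values in the rank-$(m+1)$ bundle $\R\times T^*M$: this yields transversality a.s.\ over the interior, which by the local jet computation carried out in the proof of \cref{prop: transvprop} amounts to saying that at every interior critical zero of $c(t)+X$ the Hessian is non-degenerate and $\dot c(t)(p)\ne 0$. Second, on the boundaryless $m$-dimensional manifold $\de M\times \tilde I$, to the section $(p,t)\mapsto \bigl((c(t)+X)(p),\,d_p(c(t)+X)|_{T_p\de M}\bigr)$ with values in the rank-$m$ bundle $\R\times T^*\de M$: this yields the analogous Morse and $\dot c$ conditions at boundary critical zeros of $(c(t)+X)|_{\de M}$. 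Third, again on $\de M\times \tilde I$, to $\Psi|_{\de M\times \tilde I}$ with values in the full rank-$(m+1)$ bundle $\R\times T^*M$: here the dimension count $m-(m+1)=-1$ forces the preimage of the zero section to be empty a.s., ruling out any point in $\de M\times \tilde I$ where $c(t)+X$ and its full differential simultaneously vanish. Assembling these three conclusions with the endpoint step verifies condition (2) of \cref{prop: transvprop} for $c+X$ almost surely.

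The main obstacle I expect is the joint-density verification in the second paragraph: it is the gateway to every application of \cref{lem:Ptransv}, and it requires carefully extracting full $(m+1)$-dimensional non-degeneracy from the one-dimensional hypothesis in \cref{ass:1}. A related subtle point is interpreting the clause ``$0\notin d(c(t)+X)(\de M)$'' appearing in the definition of $\m U'$ in the local sense of stratified Morse theory, i.e.\ as a condition only at critical points of $(c(t)+X)|_{\de M}$, so that it is covered by the third application of \cref{lem:Ptransv} above; otherwise a two-point argument (of the kind developed in \cref{sec:bestia}) would be needed.
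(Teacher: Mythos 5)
Your proposal is correct and follows essentially the same route as the paper: the paper's proof likewise views $j^1_p(X+c(t))$ as a $\mC^1$ Gaussian section of the rank-$(m+1)$ bundle over the $(m+1)$-dimensional base $M\times\R$, invokes \cref{lem:Ptransv} to get a.s.\ transversality to the zero section, translates this via point (3) of \cref{prop: transvprop}, and disposes of the endpoints with \cref{lem:bulinskaya}. Your write-up is in fact more explicit than the paper's on two points the paper leaves implicit --- the upgrade from the two-dimensional densities of \cref{ass:1} to full non-degeneracy of $(X(p),d_pX)$, and the three separate applications of \cref{lem:Ptransv} needed to treat the strata $\inter M$ and $\de M$ (the last of which correctly handles the clause $0\notin df(\de M)$) --- both of which are resolved exactly as you propose.
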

% \begin{remark}
% By looking closely at the proof of \cite[Theorem 8]{dtgrf}, one can prove that, in fact, it is sufficient to assume that $\Phi$ is $\mC^2$ in the Fr\'{e}chet sense. \commik{Prove it}
% \end{remark}
\begin{proof}
We have that the assignement $\psi_X(p,t)=j^1_p(X+c(t))$ defines a $\mC^1$ Gaussian random section $\psi_X \colon M\times \R\to J^1(M)$ of the vector bundle $V\to \R\times M$ obtained as a trivial extension of $J^1(M)\to M$. By hypothesis $\psi_X(p,t)$ has a density on $V_{(p,t)}$. Notice that the rank of $V$ is $m+1$, which is equal to the dimension of $M\times \R$, thus the $\mC^1$ regularity of $\psi_X$ is sufficient to apply \cref{lem:Ptransv}, from which it follows that $\psi_X\transv 0_V$ almost surely. The latter is equivalent to $X+c\transv \m W$, by point (3) of \cref{prop: transvprop}. Finally, we have that $\PP\{c(i)+X\cap \m W=\emptyset\}=1$ by \cref{lem:bulinskaya}.
\end{proof}

\begin{corollary}\label{cor:TransvToMorse}
Let \cref{ass:1} prevail. Let $\{f_t\colon t\in [0,1]\}$ be a $\mC^2$ transverse curve. Then, there exists a $\mC^2$ compact $m$-dimensional Riemannian manifold $N$ with boundary %, %endowed with a $\mC^1$ Riemannian metric 
and a Morse function $T\colon N\to \R$, such that $T^{-1}(t)$ is $\mC^2$ isometric to $f_t^{-1}(0)$ for all $t\in [0,1]$, via the restriction of a $\mC^2$ map $\pi\colon N\to M$.\footnote{Then, $\pi^{-1}(p)$ is in bijection with $\{t\colon p\in f_t^{-1}(0)\}$.}
\end{corollary}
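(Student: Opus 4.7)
The corollary is essentially a repackaging of point (4) of \cref{prop: transvprop}, together with a natural choice of Riemannian metric on the total space of the family. I would proceed in four steps.

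First, extend the transverse curve $c\colon [0,1]\to \mC^2(M)$ to a slightly larger open interval $I\supset [0,1]$. Since by \cref{def:pseudotransv} a transverse curve must satisfy $c(0),c(1)\in \m U$, and $\m U$ and the set of $t$ with $c\transv_t\m W$ are both open conditions, such an extension exists and remains transverse on $I$. Then, by point (4) of \cref{prop: transvprop}, the set
\be
\widetilde N:=\left\{(p,t)\in M\times I \colon f_t(p)=0\right\}
\ee
is a $\mC^2$ hypersurface of the manifold with boundary $M\times I$, and the map $\widetilde T:=\pi_2|_{\widetilde N}\colon \widetilde N\to I$ is a Morse function whose critical points correspond exactly to the pairs $(p,t)$ for which $p$ is a critical zero of $f_t$.

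Second, set $N:=\widetilde N\cap (M\times [0,1])$. Since $M$ is compact, $N$ is compact, and since $c(0),c(1)\in \m U$, the values $0$ and $1$ are regular values of $\widetilde T$, so $N$ is a $\mC^2$ manifold with boundary (possibly with corners along $\widetilde N\cap (\de M\times\{0,1\})$ if $\de M\neq \emptyset$; these can be smoothed in a standard way, or one can simply work with manifolds with corners, which does not affect the Morse-theoretic content). Let $T:=\widetilde T|_N$, which is again Morse.

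Third, equip $M\times I$ with the product metric $g\oplus dt^2$, and endow $N$ with the induced Riemannian metric. The natural map $\pi:=\pi_1|_N\colon N\to M$, $(p,t)\mapsto p$, is of class $\mC^2$. Its restriction to the fiber $T^{-1}(t)$ is a bijection onto $f_t^{-1}(0)$, and since the tangent space to $T^{-1}(t)$ at $(p,t)$ consists of vectors $(v,0)$ with $v\in T_p f_t^{-1}(0)$, the product metric restricts on these vectors to $g(v,v)$. Hence $\pi|_{T^{-1}(t)}$ is an isometry onto $f_t^{-1}(0)$ with the metric induced from $(M,g)$.

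Fourth, collect the outputs: $N$, $T$ and $\pi$ satisfy all the required properties. The only delicate aspect is the boundary/corner structure mentioned above, but it is a purely cosmetic issue and does not obstruct the statement, since the isometry property and the Morse property of $T$ are local and hold independently of any smoothing.
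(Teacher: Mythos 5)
Your overall architecture coincides with the paper's: invoke point (4) of \cref{prop: transvprop} to get the incidence variety $N=\{(p,t): f_t(p)=0\}$ with $T=\pi_2|_N$ Morse, give $M\times\R$ the product metric, and observe that $\pi_1$ restricts to an isometry of $T^{-1}(t)$ onto $f_t^{-1}(0)$ because $T_{(p,t)}T^{-1}(t)=T_pf_t^{-1}(0)\times\{0\}$. That part is fine and matches the paper.

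The gap is precisely the corner issue that you dismiss as ``purely cosmetic'': it is the only nontrivial content of the paper's proof, and neither of your two proposed fixes works as stated. If you set $N:=\widetilde N\cap(M\times[0,1])$, the corners sit along $(f_0^{-1}(0)\cap\de M)\times\{0\}$ and $(f_1^{-1}(0)\cap\de M)\times\{1\}$, i.e.\ exactly on the fibers $T^{-1}(0)$ and $T^{-1}(1)$ that the corollary requires to remain isometric to $f_0^{-1}(0)$ and $f_1^{-1}(0)$; any smoothing of those corners modifies $N$ in a neighbourhood of them and hence perturbs precisely those fibers (at minimum you would first have to cut at $t=-\e,1+\e$ so the corners move off the range $[0,1]$, and then argue the smoothing preserves both the $\mC^2$ structure and the Morse property of $\pi_2$ — none of which is automatic). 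The alternative of ``working with manifolds with corners'' is not available either: the downstream results that consume this corollary (\cref{lem:Morsepiecewisecont}, \cref{thm:upperbound}, \cref{lem:upasyMorse}, \cref{thm:DeterministPicture}) are stated and proved for compact manifolds with boundary, and the paper handles corners only later and by a separate approximation argument (\cref{thm:D12corner}). The paper's resolution is different and worth noting: it extends the curve $(f_t)$ to $t\in[-2,3]$ by concatenating with (perturbed, transverse) segments joining $f_0$ and $f_1$ to the constant function $1$, so that $f_{-2}^{-1}(0)=f_3^{-1}(0)=\emptyset$. Then $N$ is a compact subset of $M\times(-2,3)$ with the \emph{open} interval, its only boundary is $N\cap(\de M\times(-2,3))$, and no corners ever appear — no smoothing is needed and the fibers over $[0,1]$ are untouched. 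You should replace your step two with this (or an equivalent) construction.
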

\begin{proof}
The statement almost follows from point (4) of \cref{prop: transvprop}, except for the fact that the manifold $N$ so defined can have corners at $N\cap \de M \times \{0,1\}$. We can extend $N$ to a manifold without corners as follows.

Consider the segments $[1,f_0]$ and $[f_1,1]$ in $\mC^2(M)$. By \cref{thm:transcurve} there are realizations $h_0,h_1$ of $X$, such that $[1,f_0]+h_0$ and $[f_1,1]+h_1$ are transverse curves. Moreover, we can choose $h_i$ so small (in $\mC^2(M)$) that, for $i\in \{0,1\}$, $h_i+1> 0$ and such that $h_i+f_i$ belongs to a ball $B_i\subset \m U$ around $f_i$. This is because $\m U$ is an open subset, see \cref{prop:neat}. For the same reason, it is possible to extend the transverse curve $(f_t)_{t\in [0,1]}$ to a $\mC^2$ curve defined for all $t\in [-2,3]$, in such a way that $\{f_t: t\in [-2,-1]\}=[1,f_0]+h_0$; $\{f_t: t\in [-1,0]\}\subset B_0$; $\{f_t: t\in [1,2]\}\subset B_1$ and $\{f_t: t\in [2,3]\}=[f_1,1]+h_1$. Such a curve is automatically tranverse, since the two new pieces are contained in $\m U$. Moreover, $f_{-2}^{-1}(0)=f_{3}^{-1}(0)=\emptyset$.

Let $N:=\{(p,t)\in M\times [-2,3]\colon f_t(p)=0\}$ and define $T:=\pi_2|_N$ and $\pi:=\pi_1|_N$, where $\pi_1$ and $\pi_2$ are the projections on the first and second factors of $M\times \R$, respectively. Then, $N$ is compact and, by \cref{prop: transvprop}, $N\subset M\times (-2,3)$ is a $\mC^2$ neat hypersurface with boundary $\de N=\de M\times (-2,3)$ and no corners. 

By construction, we have that $T^{-1}(t)=f_t^{-1}(0)\times \{0\}$, for all $t\in [0,1]$, so that $\pi$ restricts to a diffeomorphism on it. Let us endow $N$ with the Riemannian metric induced by the inclusion in the product space $M\times \R$, when the latter is given the product Riemannian metric. Then the metric induced on $T^{-1}(t)$ by the inclusion in $N$ coincides with that induced by the inclusion in $M\times \R$, therefore the restriction of $\pi$ is also an isometry of $T^{-1}(t)$ onto $f_t^{-1}(0)$.
\end{proof}
%%COMPENSATION
\subsection{Simultaneous critical points have the same index}\label{sec:bestia}
Given $f\in \mC^2(M)$, we denote by $\CZ_f$ the set of \emph{critical zeroes} of $f$. When $\de M=\emptyset$, this is the set 
\be 
\CZ_f=\kop p\in M\colon f(p)=0,\ d_pf=0\pok,
\ee 
and $\CZ_f=\CZ_{f|_{\inter M}}\cup \CZ_{f|_{\de M}}$ in the general case. %{\blue \bf (discussion of writing ``(strat.)''? Can we better explain the  ``implies all previous''? Later use?)}
We will prove in \cref{sec:lowd2} that the function $t\mapsto V(f+th)$ has a certain behavior in a neighborhood of any $t_0$ such that $f+t_0h\in \m W$. This as long as the segment $[f,f+h]$ is transverse and if the critical zeroes of $f+t_0h$ do not compensate each other, see \cref{prop:3pt}. The following result ensures that such an event has nonzero probability only under very restricting deterministic assumptions. 
\begin{theorem}\label{thm:bestia}
%Let $M$ be a $\mC^2$ compact Riemannian surface with boundary $M=\inter M \sqcup \de M$ and let $X\randin \mC^2(M)$ be a Gaussian random field with non-degenerate first jet at every point $p\in M$. 
Let \cref{ass:1} prevail. Fix an arbitrary element $e\in E$ of the support $E\subset \mC^2(M)$ of $X$. Then, almost surely, 
we have the following: if  $C=\CZ_f\neq \emptyset$ is the set of critical zeroes of $f=X+t_0e$ (which implies that $X+t_0e\notin \m U$), for some $t_0\in \R$, then: %$0$ is a (strat.) Morse critical value of $X+t_0e$ and if $C=\{p_1,\dots,p_k\}$ is the set of critical zeroes of $X+t_0e$ then:
\begin{enumerate}[\rm (1)]
%\item\label{if} $\{t\in\R\colon X+te \notin \m U\}=\{t\in\R\colon X+te \in \m U_1\}$ is locally finite;
\item\label{bestia:im} $0$ is a Morse critical value of $f$ ;%and $C=\{p_1,\dots,p_k\}$ is finite;
\item\label{ie} $e(p)\neq 0$ for all $p\in C$;
%\item\label{i?} If $S=\de M$ then $b({p_i})(X+te)\neq 0$;
%\item\label{id} the quadrics $e(p_i)H_{p_i}(X+te)|_S$ are all non-degenerate ...
\item\label{bestia:is} $C\subset S$, where $S\in\{ \inter M,\de M\}$;
\item\label{bestia:ii} the symmetric forms $e(p)\H_{p}f|_S$ all have the same index $\lambda$ for every $p\in C$;
\item\label{ic} for any pair $p,q\in C$, the random vectors $j^1_{p}X|_S, j^1_{q}X|_S$ in $J^1(S,\R)$ are fully correlated.
\end{enumerate}
\end{theorem}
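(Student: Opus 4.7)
My starting point is \cref{thm:transcurve} applied to the deterministic $\mC^2$ curve $c(t) := te$: this yields that, almost surely, the curve $t \mapsto X + te$ is transverse to $\m W$. Points (1) and (2) then follow immediately from the equivalence in \cref{prop: transvprop}(2): transversality forces every $f = X + t_0 e \in \m W$ to actually lie in $\m U'$ (so $0$ is a Morse critical value of $f$), and it forces $\tfrac{d}{dt}(X+te)(p)\big|_{t=t_0} = e(p) \neq 0$ at each critical zero $p$ of $f$.

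For (3), the natural approach is a dimension-counting transversality argument. Consider the $\mC^1$ Gaussian section
\[
\Psi\colon \inter M \times \de M \times \R \to \R \oplus T^*M \oplus \R \oplus T^*\de M,
\]
defined by $\Psi(p,q,t) := \bigl(X(p)+te(p),\, d_pX+td_pe,\, X(q)+te(q),\, d_q(X|_{\de M})+td_q(e|_{\de M})\bigr)$. The target has rank $2 + m + (m-1) = 2m+1$, while the source has dimension $2m$. Under \cref{ass:1} the fibrewise law of $\Psi$ has a density, so \cref{lem:Ptransv} applies directly (the regularity requirement $r \geq 1 + \max\{2m - (2m+1),0\} = 1$ is matched by $\Psi \in \mC^1$). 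Transversality of $\Psi$ to the zero section, in this overdetermined regime, means $\Psi^{-1}(0) = \emptyset$ almost surely, ruling out the simultaneous presence of critical zeros in $\inter M$ and in $\de M$.

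The core of the proof is (5) and (4); here I expect the main obstacle to lie, and this is precisely where the excerpt signals that \cref{lem:blind} will be used. For (5), fix a stratum $S \in \{\inter M, \de M\}$ and a pair $p \neq q$ in $S$: the event that both $p$ and $q$ belong to $C$ for some $t_0$ forces the Gaussian vector $(j^1_p X|_S,\, j^1_q X|_S)$ to lie on the one-dimensional line $\bigl\{(-t\, j^1_p e|_S,\, -t\, j^1_q e|_S) : t \in \R\bigr\}$. If these jets are not fully correlated, the joint law has support of dimension strictly greater than $\dim S + 1 \geq 2$, so lying on any fixed line is a probability-zero event pointwise. Since the target line varies with $(p,q)$, a naive Fubini argument does not suffice; \cref{lem:blind} is the device that converts such parameter-dependent pointwise zero-measure statements into a global almost-sure conclusion.

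Finally, for (4), once (5) is known one can parameterize the 1-jets of $X$ at all critical zeros via the 1-jet at a fixed base point, but the Hessians $\H_{p_i}X|_S$ remain jointly Gaussian with a generically nondegenerate conditional law. Since the event that two Hessians have different Morse indices is open and of positive Lebesgue measure on its natural stratum, it cannot be excluded by a pointwise dimension count alone. My plan is to identify, on each index-stratification of the tuple $(\H_{p_i}X|_S)_i$, an additional linear or quadratic constraint on the Hessians --- emerging from a local expansion of $f_t = X + te$ around $t_0$ together with the full-support hypothesis on $[X]$ --- whose violation forces a contradiction, and then to invoke \cref{lem:blind} once more to globalize. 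This index-compatibility step is the most technically demanding part I anticipate, and it is the reason why \cref{lem:blind} is advertised as tailored for this sort of differential constraint on random segments.
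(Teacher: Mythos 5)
Your treatment of points (1) and (2) is correct and coincides with the paper's: \cref{thm:transcurve} applied to $c(t)=te$ plus the equivalence in \cref{prop: transvprop}(2) gives exactly these two conclusions. The rest of the proposal, however, has a genuine gap.

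For point (3), the proposed application of \cref{lem:Ptransv} to the two-point section $\Psi(p,q,t)=\bigl(j^1_p(X+te),\,j^1_q((X+te)|_{\de M})\bigr)$ does not work: that lemma requires the Gaussian vector $\Psi(p,q,t)$ to have a density on the fibre, i.e.\ the \emph{joint} non-degeneracy of the $1$-jets of $X$ at two distinct points. \cref{ass:1} only gives one-point non-degeneracy, and the two-point joint law can perfectly well be degenerate under \cref{ass:1} --- conclusion (5) of the very theorem you are proving allows fully correlated jets at distinct points, and the linear-field example on the ``curly'' domain in Section 1 exhibits a field satisfying \cref{ass:1} whose jets at antipodal boundary points are fully correlated. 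This is precisely the situation, flagged in the paper's item (viii) of Section 1.5, in which the standard transversality lemmas are unavailable and \cref{lem:blind} is needed instead. The paper proves (3) inside that machinery: after parametrizing the critical points via the implicit function theorem and expanding to second order, one finds that a simultaneous interior and boundary critical zero would force $\ker(j^1_{p_0})=\ker(j^1_{q_0}|_{\de M})$ as subspaces of $E$, which is impossible because these kernels have different codimensions ($m+1$ versus $m$).

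For points (4) and (5) your outline correctly identifies \cref{lem:blind} as the tool, but what you give is a plan, not a proof: the entire substance of the argument is the verification of the hypothesis of \cref{lem:blind}, namely that for every $f_0$ in the bad set one can find a neighborhood $O_{f_0}$, a $\delta>0$ and a direction $h\in\HX$ such that $T(f,\delta,h)$ is finite for all $f\in O_{f_0}$. The paper does this by parametrizing the critical points $x_a(f)$ of nearby $f$ via the implicit function theorem, introducing $\phi_a(f):=f(x_a(f))$, solving $\phi_a=0$ for $t=\tau(f)$, and Taylor-expanding $s\mapsto\phi_b(f+sh+\tau(f+sh)e)$ to second order; the vanishing of the zeroth, first and second order terms for all $h$ is what yields both the equality of kernels (hence full correlation, point (5)) and the identity between the inverse Hessians weighted by $1/e(p)$ and $1/e(q)$ (hence equality of indices, point (4)). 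Your ``support lies on a line'' heuristic for (5) only gives a pointwise-in-$(p,q)$ zero-probability statement, and your description of (4) explicitly defers the identification of the needed constraint. Without these computations the proof is incomplete.
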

\begin{proof}
The proof is postponed to Appendix \ref{apx:compensation}.
\end{proof}
We will use \cref{thm:bestia} in \cref{sec:sobd2} in conjunction with \cref{prop:3pt} and with the results of \cref{sec:lowd2}, in order to prove \cref{cor:notD12}. Point (iv) of \cref{t:mainintro} follows quite directly from the latter, see the proof of \cref{thm:D12boundary}. We stress the fact that points (i)-(iii) of \cref{t:mainintro} are independent from \cref{thm:bestia}.
\section{Nodal volumes of the levels of a Morse function}\label{sec:levelMorse}
In \cref{sec:degMorse} we discussed how to pass from an arbitrary transverse curve $f_t$ or segment in $\mC^2(M)$, to one of the form $T-t$, where $T$ is a Morse function defined on another manifold with the same dimension, see point (4) of \cref{prop: transvprop}, in a such a way that $f_t^{-1}(0)$ is isometric to $(T-t)^{-1}(0)$ and thus
\be 
V(f_t)=V(T-t).
\ee 
Now, we will focus on the latter case. 
Let $M$ be a $\mC^2$ compact manifold with boundary, endowed with a Riemannian metric of class $\mC^1$. Let $T\in \mC^2(M)$ be a Morse function and assume that $0$ is a critical value of $T$. Let $Z_t:=T^{-1}(t)$ and let  $\f(t):=V(T-t)=\vol^{m-1}(Z_t).$
This section is devoted to determine the Sobolev regularity of the function $\f(t)$, as this will be exploited later in \cref{sec:transverse} to give conditions under which the nodal volume $V$ is ray-absolutely continuous, see (i) of \cref{def:Dspace}.

First, we will reduce the study to a neighborhood of a critical point of $T$ in \cref{sec:localize} and provide a general upper bound for the integration over the level sets of a Morse function, see \cref{thm:upperbound}. 
This is the main theorem of this section and we consider it to be of independent interest. Using the latter result, in \cref{sec:upasy} we deduce the behavior of $\f'$ near $t=0$, proving \cref{lem:upasyMorse}, also including the case of manifolds with boundary. In \cref{sec:lowd2}, we will obtain lower bounds in the two dimensional case, implying that $\f'$ is not square integrable in this case. This will be an important ingredient of the proof of \cref{cor:notD12} and thus of point (iv) of \cref{t:mainintro}.

\subsection{Localization}\label{sec:localize}
\subsubsection{Morse coordinates}\label{sec:Mors}
Morse Lemma gurantees that a Morse function is always locally equivalent, up to a change of coordinates, to its second order Taylor polynomial, near a critical point. This result is standard when $T\in \mC^{2+r}$, however in most of the literature, the statements provides a change of coordinates only of class $\mC^r$. For us, $r=0$, but it will be convenient to have a $\mC^1$ change of coordinates. This is possible due to the following theorem, proved by Bromberg and L\'{o}pez de Medrano \cite{bromberg1993lemme}, improving a result of Kuiper \cite{kuiper1970cr}. We report it in full generality, for the reader's interest.
\begin{theorem}[$\mC^r$ Morse Lemma]\label{lem:CrMorse}
Let $\m H$ be a separable Hilbert space and let $f\colon \m H\to \R$ be a function of class $\mC^r$, $r\ge 1$, such that $f(0)=0$ and $d_0f=0$. The following two statements are equivalent:
\begin{enumerate}
\item There exist two neighborhoods $O_1,O_2$ of $0$ in $\m H$, a $\mC^r$ diffeomorphism $\phi\colon O_1\to O_2$ and a quadratic form $Q:\m H\to \R$ such that $f\circ \phi=Q$;
\item $f$ admits the differential of order $r+1$ at $0$ and the second order differential $\H_0f$ is non-degenerate.
\end{enumerate}
\end{theorem}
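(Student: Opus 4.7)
The direction $(1) \Rightarrow (2)$ I would handle as a chain rule bookkeeping exercise. If $f \circ \phi = Q$ with $\phi$ a $\mC^r$ diffeomorphism (and, after composing with a translation, $\phi(0)=0$), then $f = Q \circ \phi^{-1}$, and composing the degree-$2$ polynomial $Q$ with the order-$r$ Taylor expansion of $\phi^{-1}$ at $0$ yields a Taylor expansion of order $r+1$ for $f$ at $0$. The crucial point is that the remainder bookkeeping gains exactly one order because $Q$ is of degree $2$ and vanishes to first order at $0$, so a $\mC^r$-controlled perturbation of $\phi^{-1}$ produces a $\mC^{r+1}$-controlled perturbation of $f$ at the origin. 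For non-degeneracy of $\H_0 f$, a direct computation gives (after polarization of $Q$) a relation of the form $\H_0 f(u,v) = Q(d_0\phi^{-1}\, u,\, d_0\phi^{-1}\, v)$, and since $d_0\phi^{-1}$ is invertible, $\H_0 f$ is non-degenerate if and only if $Q$ is; non-degeneracy of $Q$ must be built into the setting of the theorem, as otherwise no diffeomorphism could conjugate $f$ to $Q$ on a full neighborhood of $0$.

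The substantial direction $(2) \Rightarrow (1)$ I would prove by the \emph{Moser homotopy method} adapted to low regularity. Set $Q(x) := \tfrac{1}{2}\H_0 f(x,x)$ and write $f = Q + g$; hypothesis (2) delivers the pointwise estimate $g(x) = o(\|x\|^{r+1})$ as $x \to 0$, together with matching control on the derivatives of $g$ up to order $r$ at the origin. Introduce the interpolation $f_t := Q + tg$, $t \in [0,1]$, and seek a time-dependent vector field $V_t$, vanishing to sufficiently high order at $0$, such that the associated flow $\phi_t$ satisfies $f_t \circ \phi_t = Q$; differentiating in $t$ reduces the problem to solving the cohomological equation $df_t(x) \cdot V_t(x) = -g(x)$ in a neighborhood of $0$. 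Using the non-degeneracy of $Q$, I would construct $V_t$ by an explicit algebraic formula inverting $dQ_x$ against $g(x)$ through the invertible self-adjoint operator $A$ representing $Q$, in such a way that the apparent singularity at $x = 0$ is cancelled by the rapid decay of $g$. The time-one map $\phi := \phi_1$ is then a local $\mC^r$ diffeomorphism fixing the origin and satisfying $f \circ \phi = Q$.

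The main obstacle — and the reason why this refinement of the classical Morse lemma is genuinely delicate — is the systematic loss of two derivatives in the usual Taylor integral representation $g(x) = \int_0^1 (1-s)\, \H_{sx} g (x,x)\, ds$, which only produces a $\mC^{r-2}$ operator-valued field and would yield merely a $\mC^{r-2}$ change of coordinates. Under the strictly weaker \emph{pointwise} hypothesis (2), one cannot rely on such a uniform integral representation; instead, following Bromberg--L\'opez-de-Medrano, I would split $g$ into a principal part, handled algebraically without any loss of regularity, plus a small correction absorbed by a contraction-mapping or Picard-type scheme. The most subtle estimates concern the behaviour of $V_t$ and its derivatives as $x \to 0$: they are obtained by combining the pointwise Taylor bound at the origin coming from (2) with interior $\mC^r$ estimates of $g$ on dyadic annuli around $0$, and then gluing via carefully chosen cutoffs to produce a vector field whose flow extends across the origin as a $\mC^r$ diffeomorphism.
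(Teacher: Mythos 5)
First, a point of comparison: the paper does not prove \cref{lem:CrMorse} at all --- its proof environment consists of the single citation \cite{bromberg1993lemme} --- so your sketch has to be judged on its own, and the hard direction $(2)\Rightarrow(1)$ as you describe it has two genuine gaps. The first is a false estimate at the base of your Moser scheme. With $Q=\tfrac12\mathrm{Hess}_0f$ and $g=f-Q$, hypothesis (2) does \emph{not} give $g(x)=o(\|x\|^{r+1})$: for $r\ge 2$ the homogeneous Taylor terms of $f$ of degrees $3,\dots,r+1$ all survive in $g$, so in general $g$ is only $o(\|x\|^{2})$. Relatedly, you appear to read ``admits the differential of order $r+1$ at $0$'' in the Peano sense of an order-$(r+1)$ expansion of the \emph{values} of $f$; under that reading the statement is actually false already for $r=1$: the function $f(x)=x^2+x^{5/2}\sin(1/x)$ on $\mathbb{R}$ is $\mathcal{C}^1$ and satisfies $f(x)=x^2+o(x^2)$, yet $f'(x)=2x+\tfrac52x^{3/2}\sin(1/x)-x^{1/2}\cos(1/x)$ changes sign infinitely often near $0$, so $f$ has critical points accumulating at the origin and cannot be $\mathcal{C}^1$-conjugate to $x^2$ on any neighborhood. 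The hypothesis has to be read as differentiability of the map $x\mapsto d^r_xf$ at the point $0$, and none of your estimates invoke that.

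The second, more structural gap is the derivative count in the path method itself. Any field $V_t$ solving $d_xf_t(V_t(x))=-g(x)$ is built from $df_t$ (e.g.\ $V_t=-g\,\nabla f_t/\|\nabla f_t\|^2$, up to the algebraic inversion of $A$ you describe), hence is a priori only of class $\mathcal{C}^{r-1}$ away from the origin, and its time-one flow is then only a $\mathcal{C}^{r-1}$ diffeomorphism. Recovering that lost derivative is precisely the content of the theorem, and your sketch does not supply the mechanism: the hypothesis is purely pointwise at $0$, so there are no ``interior $\mathcal{C}^r$ estimates of $g$ on dyadic annuli'' to be had (nothing plays the role of an elliptic equation here), and the ``principal part plus contraction mapping'' step is named but not constructed. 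As written, the argument yields the classical conclusion ($\mathcal{C}^r$ data, $\mathcal{C}^{r-1}$ chart), not the statement of \cref{lem:CrMorse}; a self-contained proof would have to reproduce the actual device of Kuiper and Bromberg--L\'opez de Medrano \cite{bromberg1993lemme} rather than the Moser homotopy. (Your direction $(1)\Rightarrow(2)$ is essentially fine, including the correct observation that the quadratic form in (1) must be non-degenerate for the stated equivalence to hold.)
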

\begin{proof}
    \cite[Lemma de Morse $\mC^r$]{bromberg1993lemme}.
\end{proof}\subsubsection{Local parametrization of the level set} 
Let $q_0\in \inter{M}$ be a critical point of $T$, with $T(q_0)=0$ and assume that there are no other critical points in $T^{-1}(0)$. By the $\mC^1$ version of \cref{lem:CrMorse} above, applied with $\m H=\R^m$, there exists a neighborhood $O\subset M$ of $q_0$ and a $\mC^1$ diffeomorphism $x=(x_+,x_-):O\to x(O)\subset \R^{n_+}\times \R^{n_-}$ such that $T(x)=|x_+|^2-|x_-|^2$. 
We may assume that $\overline{O}$ has a $\mC^2$ boundary and that 
\be 
B_{\sqrt{2\e}}^{n_+}\times B_{\sqrt{\e}}^{n_-}\subset x(O)\subset B_{2\sqrt{2\e}}^{n_+}\times B_{\sqrt{\e}}^{n_-}, 
\ee 
where $B_s^k$ is the open ball of radius $s$ in $\R^k$. Therefore, 
we can parametrize $Z_t\cap O$ as follows:
\be\label{eq:parametr}   
Z_t\cap O=x^{-1}\kop (u\sqrt{r^2+t},vr):u\in S^{n_+-1}, v\in S^{n_--1}, r\in [0,\sqrt{\e})\pok, \quad \forall t\in [0,\e),
\ee 
if $n_+,n_-\ge 1$. When $n_-=0,$ the function has a minimum at level $t=0$, so that $Z_t\cap O=x^{-1}(\sqrt{t}S^m)$, for all $t\ge 0$. When $n_+=0,$ we have the opposite situation: the function has a maximum at $t=0,$ which means that $Z_t\cap O=\emptyset$ for $t>0$ and $Z_t\cap O=\{q_0\}$ if $t=0$. 

We will start by considering the case when $n_+,n_-\ge 1$. Let us fix $\e>0$ and define the map $\psi_t: S^{n_+-1}\times S^{n_--1}\times [0,\e^{\frac12})\to \R^{m}$, such that 
\be\label{eq:psi} 
\psi_t(u,v,r)=(u\sqrt{r^2+t},vr).
\ee 
From now on, we will divide $Z_t$ in two parts:
\be 
Z_t^1:=Z_t\cap O \quad \text{ and } \quad Z_t^2:=Z_t\cap M\smallsetminus O
\ee 
We have that $Z_t^2$ is a $\mC^2$ neat hypersurface of $M_O:=M\smallsetminus O$, for all $t\in [0,\e^2)$. Indeed, $T$ has no critical point in its interior $M\smallsetminus \overline{O}$ and, as can be seen from the expression in the coordinates $x_+,x_-$, we have that $T|_{\de O}$ has no critical point in $\de M\cap T^{-1}([0,\e^2))$. Therefore, by \cref{thm:firstvar}, its volume $\f_2(t):=\vol^{m-1}(Z_t^2)$ is a $\mC^1$ function of $t$. Indeed, denoting $V_O$ and $\m U_O$ be the nodal volume functional and the set of regular functions relative to the manifold $M_O$, as in \cref{thm:firstvar}, then, $(T-t)|_{M_O}$ is a $\mC^1$ curve contained in $\m U_O$ and hence $\f_2(t)=V_O(T-t|_{M_O})$.
This allows us to focus on the function $\f_1(t)=\vol^{m-1}(Z^1_t)=\f(t)-\f_2(t)$, which we can study within the coordinate chart.
\subsubsection{Riemannian vs Eucliden volume}
The Morse coordinate provide an explicit coordinate expression for $T$, but the metric might not be Euclidean, so we will need to have a control on the change of $k$-volume elements. 
%Fix a $\mC^1$ coordinate chart $x=(x^1,\dots,x^m)$ in a neighborhood of $p\in M,$ so that 
In the $\mC^1$ coordinate chart $x\colon O\to \R^m$, the Riemannian metric is represented by a matrix $g(x)$, depending continuously on $x$. Let $A:\R^k\to T_pM$ be a linear injection. Then the \emph{Jacobian} of $A$ is $JA:=\sqrt{\det\tyu A^Tg(x)A\uyt}$. 
%and does not depend on the choice of the chart. 
Let us consider the function 
\be 
{\mathcal{J}im}_{g}(x,A):=\frac{\sqrt{\det\tyu A^Tg(x)A\uyt}}{\sqrt{\det\tyu A^TA\uyt}},
\ee
defined for all $A$ injective. Note that for $G\in GL(k),$ we have that ${\m{J}im}_g(p,A)={\m{J}im}_g(p,A G)$, therefore ${\m{J}im}_g$ depends only on $g$ and on the image of $A,$ which is a $k$-dimensional linear subspace $V\subset T_pM.$ This means that ${\m{J}im}_g$ defines a function on the Grassmannian of $k$-planes in $T_pM$. In this paper, we only care about the case $k=m-1$, for which we give the following definition.
\begin{definition}
%Let $(M,g)$ be a Riemannian manifold, l
Let $\mathsf{P}(T^*M):=\{\ell \subset T_p^*M \colon p\in M, \ \ell \text{ is a line containing the origin}\}$ be the projectivized cotangent bundle. We define
\be
\m{J}_g : \mathsf{P}(T^*M) \to (0,+\infty), \qquad
\m{J}_g(\ell):={\m{J}im}_g(p,A),
\ee
where $A\colon \R^{m-1}\to T_pM$ is a linear injection with $A(\R^m)=\ell^\perp.$
\end{definition}
\begin{lemma}
    $\m{J}_g$ is continuous.
\end{lemma}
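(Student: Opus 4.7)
The plan is to verify continuity at an arbitrary point $\ell_0 \in \mathsf{P}(T^*_{p_0} M)$ by constructing, in a neighborhood of $\ell_0$, a continuous local section $A(p,\ell)$ of the bundle of bases of annihilators of lines. Since the preceding paragraph already establishes that ${\mathcal{J}im}_g(p,A)$ depends only on the image of $A$, namely $\ell^\perp \subset T_pM$, and hence only on $\ell$, it suffices to exhibit one such continuous local choice of $A$ near $\ell_0$ and then compose with continuous operations.

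First I would fix a chart $(U, x)$ around $p_0$ trivializing both $T^*M$ and $TM$ via the coframe $dx^1,\dots,dx^m$ and its dual frame $\partial_1,\dots,\partial_m$. A line $\ell \in \mathsf{P}(T^*_pM)$, with $p\in U$, is then represented by a class $[a]$ with $a \in \R^m \smallsetminus \{0\}$. After a linear change of coordinates I may assume $\ell_0 = [a_0]$ with $(a_0)_1 \neq 0$; on a neighborhood $\m N$ of $\ell_0$ in $\mathsf{P}(T^*M)|_U$ the representative can be normalized by $a_1=1$, so that $a_2,\dots,a_m$ become smooth coordinates on the fiber direction. Define the $m \times (m-1)$ matrix $A(p,\ell)$ whose $j$-th column is $-a_{j+1}e_1 + e_{j+1}$ for $j=1,\dots,m-1$; these columns span the annihilator of $a$, hence represent a basis of $\ell^\perp$ in the chart frame of $T_pM$, and $A$ depends continuously (in fact $\mC^\infty$) on $(p,\ell)\in \m N$. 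Since the metric $g$ is continuous on $M$, the Gram matrices $A^T g(p) A$ and $A^T A$ depend continuously on $(p,\ell)$; both are positive definite (by injectivity of $A$ and positive definiteness of $g(p)$), so their determinants are strictly positive continuous functions. Consequently
\begin{equation*}
\m{J}_g(\ell) = \frac{\sqrt{\det\bigl(A(p,\ell)^T g(p)\, A(p,\ell)\bigr)}}{\sqrt{\det\bigl(A(p,\ell)^T A(p,\ell)\bigr)}}
\end{equation*}
exhibits $\m{J}_g$ as a composition of continuous functions on $\m N$, and since $\ell_0$ was arbitrary, $\m{J}_g$ is continuous on all of $\mathsf{P}(T^*M)$.

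There is no serious obstacle here: the statement essentially reduces to the observation that the Grassmannian of hyperplanes in a finite-dimensional vector space admits continuous local frames. The only genuine bookkeeping is the construction of $A$ from a local trivialization of $T^*M$; once this is in place, the continuity of the determinant and of the square root on $(0,\infty)$ closes the argument, and the independence of the formula on the choice of $A$ ensures that the local expression really does represent $\m{J}_g$.
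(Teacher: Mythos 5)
Your proof is correct and follows essentially the same route as the paper, which simply observes that ${\mathcal{J}im}_g(p,A)$ is continuous on the open set of injective $A$ and that the value depends only on $\ell$. You additionally write out the continuous local section $A(p,\ell)$ that makes this descent to $\mathsf{P}(T^*M)$ explicit, which is a welcome but not essentially different elaboration.
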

\begin{proof}
Observe that ${\m{J}im}_g(p,A)$ is continuous on the open set $\{(p,A): A\text{ is injective}\}$.
\end{proof}
\subsubsection{The volume density in a Morse chart: case $n_-,n_+\ge 1$}
We apply the previous discussion to control the $(m-1)$-volume element of the parametrization $\psi$, defined as in \cref{eq:psi}.

Let $g(x)$ be the matrix of the Riemannian metric in the chart $x\colon O\to \R^m$. 
Let 
\bega   
A_t(u,v,r)&:=\begin{pmatrix} 
d\psi_t(\dot{r}) & d\psi_t(\dot{u}_1) & \dots & d\psi_t(\dot{u}_{n_+-1}) &d\psi_t(\dot{v}_1) & \dots & d\psi_t(\dot{v}_{n_--1})
\end{pmatrix}
\\
&=
\begin{pmatrix} 
ur(r^2+t)^{-\frac12} & (r^2+t)^{\frac12}\dot{u}_1 & \dots & (r^2+t)^{\frac12}\dot{u}_{n_+-1} &0 & \dots & 0
\\
v & 0 & \dots & 0 &r\dot{v}_1 & \dots & r\dot{v}_{n_--1} 
\end{pmatrix}
\eega
be the $m\times m-1$ matrix of $d_{(u,v,r)}\psi_t$, where $(\dot{u_i})_i$ is an orthonormal basis of $T_uS^{n_+-1}=u^\perp$ and $(\dot{v_j})_j$ is an orthonormal basis of $T_vS^{n_--1}=v^\perp.$ Therefore,
\bega\label{eq:AA} 
\sqrt{\det\tyu A_t(u,v,r)^TA_t(u,v,r)\uyt}&=\sqrt{\tyu\frac{r^2}{(r^2+t)}+1\uyt}(r^2+t)^{\frac{n_+-1}{2}}r^{n_--1}
\\
&=
(2r^2+t)^{\frac12}(r^2+t)^{\frac{n_+-2}{2}}r^{n_--1}
\\
&=
t^{\frac{m-2}{2}}(2s^2+1)^{\frac12}(s^2+1)^{\frac{n_+-2}{2}}s^{n_--1},
\eega
where in the last line we define $s$ such that $r=s\sqrt{t}.$
Define 
\be 
\ell_t(u,v,r):=\R (u\sqrt{r^2+t},-vr)\in \mathsf{P}(\R^m)
\ee 
as the line generated by the differential of $T$ at $\psi_t:=\psi_t(u,v,r)$. Then the image of $A_t:=A_t(u,v,r)$, that is the tangent space to $Z_t$ at $\psi_t(u,v,r)$, is the orthogonal to $\ell_t(u,v,r)$:
\be 
\sqrt{\det(A_t^Tg(\psi_t)A_t)}=\sqrt{\det(A_t^TA_t)} \m{J}_g(\ell_t(u,v,r))
\ee 
for all $x\in O$ and $t.$ Notice that the above function is continuous at $t=0$.

\subsubsection{Integration over $Z_t^1$}
Let $S_+:=S^{n_+-1}$ and $S_-:=S^{n_--1}$. For all $t\in[0,\e^2)$, the embedding $\psi_t\colon S_+\times S_-\times (0,\e)\to \R^m$ parametrizes the submanifold $\Sigma_t=x(Z^1_t)\smallsetminus (\sqrt{t}S_+\times \{0\})$, which has full measure in $x(Z^1_t)$. The downside is that $\Sigma_t$ is not compact. 
For any positive measurable function $h\colon \R^m\to\R$, we have 
\bega\label{eq:hSigmat} 
&\int_{Z^1_t}h\circ x dZ^1_t
=
\int_{\Sigma_t}hd\Sigma_t
\\
&=
\int_{S_+}\int_{S_-}h(\psi_t)\int_{0}^\e\sqrt{\det\tyu A_t^Tg(\psi_t)A_t\uyt}drdS_+(u)dS_-(v)
\\ 
&=\int_{0}^{\sqrt{\e}} \int_{S_+}\int_{S_-}h(\psi_t)\sqrt{\det\tyu A_t^TA_t\uyt}\m{J}_g(\ell_t)drdS_+(u)dS_-(v),
%\\&=\int_{0}^\e \tyu\int_{S_+}\int_{S_-} h(u\sqrt{r^2+t},rv)\m{J}_g(\ell_t(u,v,r))dS_-(v)dS_+(u)\uyt(2r^2+t)^{\frac12}(r^2+t)^{\frac{n_+-2}{2}}r^{n_--1}dr,
\eega
where we denoted $A_t=A_t(u,v,r)$, $\psi_t=\psi_t(u,v,r)$ and $\ell_t=\ell_t(u,v,r)$, for brevity.
Thus, recalling \cref{eq:AA} and performing the change of variable $s=r\sqrt{t}$, we get the following.
\begin{lemma}
\be\label{eq:intZ1}
\int_{Z^1_t}(h\circ x) dZ^1_t
=t^{\frac{m-1}{2}}\int_{0}^\frac{\sqrt{\e}}{\sqrt{t}} \hat h(t,s)
(2s^2+1)^{\frac12}(s^2+1)^{\frac{n_+-2}{2}}s^{n_--1}
ds,
\ee
where $\hat h\colon (0,\e)\times (0,+\infty)\to \R$ is the function 
\be\label{eq:hhat} 
\hat h(t,s)=\int_{S_+}\int_{S_-}\m{J}_g(\ell_t(u,v,s\sqrt t))h(\psi_t(u,v,s\sqrt{t}))dS_+(u)dS_-(v).
\ee
\end{lemma}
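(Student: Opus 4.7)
The statement is a packaging result: it assembles the formulas already derived in \eqref{eq:hSigmat} and \eqref{eq:AA} into a single identity in which the $t$-dependence is explicitly separated from an integrand defined on a $t$-independent set of spherical coordinates. No new geometric input is required; the plan is a direct computation followed by a radial change of variables.

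Concretely, I would start from the last line of \eqref{eq:hSigmat}, which already expresses $\int_{Z^1_t}(h\circ x)\, dZ^1_t$ as a triple integral of $h(\psi_t)\sqrt{\det(A_t^TA_t)}\, \m{J}_g(\ell_t)$ over $(u,v,r)\in S_+\times S_-\times (0,\sqrt{\e})$. Next, I would substitute the explicit expression for $\sqrt{\det(A_t^TA_t)}$ given by \eqref{eq:AA}, written with the auxiliary variable $s:=r/\sqrt{t}$, so that the radial factor becomes $t^{(m-2)/2}(2s^2+1)^{1/2}(s^2+1)^{(n_+-2)/2}s^{n_--1}$. Then I would perform the change of variables $r=s\sqrt{t}$, whose Jacobian $dr=\sqrt{t}\,ds$ combines with the prefactor $t^{(m-2)/2}$ to produce the clean power $t^{(m-1)/2}$ appearing in \eqref{eq:intZ1}; at the same time, the upper limit $r=\sqrt{\e}$ transforms into $s=\sqrt{\e}/\sqrt{t}$. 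Finally, I would invoke Fubini--Tonelli to bring the $S_+\times S_-$ integrals inside the radial one, repackaging them into the definition of $\hat h(t,s)$ from \eqref{eq:hhat}.

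I do not expect any substantive obstacle: the real work was already completed when the Morse-coordinate parametrization \eqref{eq:psi}, the volume-form identity \eqref{eq:AA}, and the function $\m{J}_g$ relating the Euclidean and Riemannian $(m-1)$-volumes were put in place. The only points to be careful about are that $h$ must be taken nonnegative (or locally integrable) so that Fubini--Tonelli applies without ambiguity, and that the change of variables rescales the upper radial limit as $\sqrt{\e}/\sqrt{t}$, which is precisely what produces the $t$-dependent domain of integration on the right-hand side of \eqref{eq:intZ1}. The form \eqref{eq:intZ1} is convenient because the factor $t^{(m-1)/2}$ out front is exactly the scaling one needs to track when differentiating $\f_1(t)$ and analysing its behavior near a Morse critical value in the subsequent subsections.
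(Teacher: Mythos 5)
Your proposal is correct and coincides with the paper's own derivation: the lemma is obtained there exactly by substituting \eqref{eq:AA} into the last line of \eqref{eq:hSigmat} and performing the change of variable $r=s\sqrt{t}$, with the sphere integrals absorbed into $\hat h$. Your remark that $h$ should be nonnegative (or integrable) for Fubini--Tonelli matches the paper's standing assumption that $h$ is a positive measurable function.
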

\subsubsection{The volume density in a Morse chart: case $n_-=0$ and $n_+=m$}
This case is much simpler than the previous one in that $Z_t^1=O\cap Z_t=x^{-1}(\sqrt{t}S^m)$ and, for small $t>0,$ such set is a closed embedded sphere entirely contained in the open set $O.$ The integral of a measurable function $h\colon \R^m\to \R$ over such sphere writes as
\bega\label{eq:nmeno0}
\int_{Z^1_t}(h\circ x)dZ^1_t
&=
t^{\frac{m-1}{2}}\int_{S^{m-1}}\m{J}_g(\R \sqrt t x)h(\sqrt{t}x)dS^{m-1}(x).
\eega
\subsubsection{A general upper bound}
% \begin{theorem}\label{thm:upperbound}
%     Let $M$ be a $\mC^2$ Riemannian manifold of dimension $m\ge 2$, with $\de M=\emptyset$. Let $T\in \mC^2(M)$ and let $\lambda\in\R$ be a Morse critical level of $T,$ with critical set $C\subset W$. Let $h:M\smallsetminus C\to \R$ be a continuous function that vanishes out of a compact subset of $W$, and assume that there exists $k\in \N$ such that: 
%     \be 
%     T(x)\ge \lambda\implies |h(x)|\le \mathrm{dist}(x,C)^{-k}.
%     \ee 
%     Then there exists a constant $\e>0$ such that for all $t\in [0,\e],$ we have that
%     \be 
% \int_{\{T=\lambda +t\}} |h(x)|d\mathcal{H}^{m-1}(x)\le \frac{1}{\e}\left\{
% \begin{aligned}
%     1,\quad &\text{if $k\le m-2$;}
%     \\
%     -\log t,\quad &\text{if $k= m-1$;}
%     \\
%     \frac{1}{t^{\frac{1+k-m}{2}}},\quad &\text{if $k\ge m$.}
% \end{aligned}
% \right.
%     \ee
%     Moreover, if $k\le m-2$, then $\int_{\{T=\lambda +t\}} h(x)d\mathcal{H}^{m-1}(x)$ is well defined and continuous on $[0,\e]$.
% \end{theorem}
The following theorem describes the integration along level sets of Morse functions, i.e., we study the behavior at $t\to 0$, of the quantity 
\be\label{eq:inth} 
\int_{\{T=t_0+t\}} h(p)\mathcal{H}^{m-1}(dp)
\ee 
where $h$ is a measurable function on $M$. While we believe that this result is of independent interest, our main purpose is to apply it to the function $h={\tDelta T}{\|dT\|^{-2}}$ appearing in the formula (\cref{eq:firstvar}) for the derivative of $V$, which explodes at critical points, see \cref{lem:Morsepiecewisecont}. Thus, we consider functions $h$ with a controlled behavior near the critical set of $T$, measured by the Riemannian distance function (see \cite[Section 2]{leeriemann}) of $(M,g)$.

Given $C\subset M$ a finite subset of a Riemannian manifold $(M,g)$, we denote by $\mathrm{dist}_g(p,C)=\min\{\mathrm{dist}_g(p,q)\colon q\in C\}$ the Riemannian distance from $p\in M$ to $C$. A consequence of Gauss Lemma \cite[Theorem 6.9]{leeriemann} is that for $p$ in the domain of a small enough coordinate chart $x\colon O\to \R^m$ around a point $q=x^{-1}(0)\in C$, we have that 
\be 
\frac 1A |x(p)|\le \mathrm{dist}_g(p,C)=\mathrm{dist}_g(p,q) \le A |x(p)| 
\ee
for some constant $A>0$, see \cite[Corollary 6.12]{leeriemann}. 
\begin{theorem}\label{thm:upperbound}
Let $M$ be a $\mC^2$ compact Riemannian manifold of dimension $m\ge 2$, with boundary $\de M$. Let $g$ be a $\mC^1$ Riemannian metric. Let $T\in \mC^2(M)$ be Morse and let $t_0\in\R$ be a Morse critical level of $T,$ with critical set $C\subset 
\inter{M}$ and assume that there are no other critical values in the interval $[t_0,t_0+\e]$, for some $\e>0$. 
Then, there exists a constant $A>0$ such that for any measurable function $h:M\smallsetminus C\to \R$ that satisfy 
\be T(x)\ge t_0 \implies |h(x)|\le \mathrm{dist}(x,C)^{-k}\ee 
for some $k\in \N$, we have that for all $t\in (0,\e]$
    \be 
\int_{\{T=t_0 +t\}} %\mathrm{dist}_g(x,C)^{-k}
|h(p)|\mathcal{H}^{m-1}(dp)\le A\left\{
\begin{aligned}
    1,\quad &\text{if $k\le m-2$;}
    \\
    -\log t,\quad &\text{if $k= m-1$;}
    \\
    \frac{1}{t^{\frac{1+k-m}{2}}},\quad &\text{if $k\ge m$.}
\end{aligned}
\right.
    \ee
    Moreover, if $k\le m-2$, and $h$ is continuous, then the mapping $t\mapsto \int_{\{T=t_0 +t\}} h(x)d\mathcal{H}^{m-1}(x)$ is well defined and continuous on $[0,\e]$. Moreover, the constant $A$ can be chosen uniformly for all metrics $\tilde g$ in a $\mC^1$ neighborhood of $g$. 
\end{theorem}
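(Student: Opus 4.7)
The strategy is a localization argument: cover the critical set $C$ by disjoint Morse charts and split $Z_t := T^{-1}(t_0+t)$ into a piece $Z_t^1$ lying in these charts and a remainder $Z_t^2$ uniformly separated from $C$. On $Z_t^2$ the distance to $C$ is bounded below, so $|h|$ is uniformly bounded, while $Z_t^2$ depends $\mathcal C^1$-smoothly on $t$ via Theorem \ref{thm:firstvar} applied to the restriction of $T$ to $M\setminus O$ (a manifold on which $T-(t_0+t)$ lies in the open set of transverse functions for all $t\in[0,\epsilon]$). This handles the $Z_t^2$ contribution, giving both a uniform bound and continuity in $t$.

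For the main term over $Z_t^1$, I will fix a single critical point $q\in C$ and use the $\mathcal C^1$ Morse Lemma (Theorem \ref{lem:CrMorse}) to write $T(x)=|x_+|^2-|x_-|^2$ in coordinates $x=(x_+,x_-)\in\R^{n_+}\times\R^{n_-}$. Assuming $n_+,n_-\ge 1$ (the cases $n_-=0$ or $n_+=0$ are simpler and treated separately via \eqref{eq:nmeno0} and the observation $Z_t^1=\emptyset$ respectively), the parametrization \eqref{eq:psi} and the volume computation \eqref{eq:intZ1} together with the distance estimate $\mathrm{dist}(\psi_t(u,v,r),C)\asymp|x(\psi_t)|=\sqrt{2r^2+t}$ (which follows from Gauss's Lemma applied to the $\mathcal C^1$ metric $g$) give, after the change of variable $r=s\sqrt t$,
\begin{equation*}
\int_{Z_t^1}|h\circ x^{-1}|\, dZ_t^1 \;\le\; A\, t^{(m-1-k)/2}\int_0^{\sqrt{\epsilon/t}} (2s^2+1)^{(1-k)/2}(s^2+1)^{(n_+-2)/2}\, s^{n_--1}\, ds,
\end{equation*}
where I have used $|h|\le A_1(t+2r^2)^{-k/2}$ together with the explicit Jacobian \eqref{eq:AA} and the continuity (and hence boundedness on the compact domain) of $\mathcal J_g$.

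The three cases now follow from elementary asymptotics of the $s$-integral, whose integrand behaves as $s^{m-2-k}$ for $s\to\infty$ and is integrable near $s=0$ because $n_-\ge 1$. If $k\le m-2$ the integral grows like $(\sqrt{\epsilon/t})^{m-1-k}$, producing a bounded total; if $k=m-1$ it grows logarithmically, yielding the $-\log t$ bound; if $k\ge m$ the integral converges uniformly in $t$, so the total is of order $t^{-(1+k-m)/2}$. For the continuity claim when $k\le m-2$, I return to the $r$-variables and apply the dominated convergence theorem: the integrand $|h(\psi_t)|\sqrt{\det(A_t^T g A_t)}$ converges pointwise for $r>0$ and, using the bound $\sqrt{\det A_t^T A_t}\le C(r^2+\epsilon)^{(n_+-1)/2}r^{n_--1}$ combined with $|h(\psi_t)|\le A_1(2r^2)^{-k/2}$, is dominated by $C'\, r^{m-2-k}$, which is integrable on $[0,\sqrt\epsilon]$ precisely when $k\le m-2$. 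Continuity at any $t\in(0,\epsilon]$ is automatic since $t_0+t$ is a regular value and one can locally trivialize via the gradient flow.

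The main obstacle will be the continuity statement, which requires a dominating function simultaneously controlling the singularity of $h$ at $C$ and the degeneracy of the Morse-chart volume element at $r=0$; the key observation is that the factor $(r^2+t)^{(n_+-2)/2}$ — potentially problematic when $n_+=1$ — is exactly compensated by the factor $(2r^2+t)^{1/2}$ giving a uniformly bounded ratio $\sqrt{(2r^2+t)/(r^2+t)}\in[1,\sqrt 2]$, after which the exponent $m-2-k$ arises cleanly. The uniformity of $A$ over a $\mathcal C^1$-neighborhood of $g$ follows from the fact that all ingredients (the Morse chart, which can be chosen independent of $g$; the comparison between Euclidean and $g$-distances; and $\mathcal J_g$) depend continuously on $g$ in the $\mathcal C^1$ topology, so one may absorb the variation into a slightly enlarged constant.
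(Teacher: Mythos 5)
Your proof follows the paper's argument essentially step for step: the same splitting $Z_t=Z_t^1\cup Z_t^2$, the same use of the $\mC^1$ Morse Lemma, the parametrization $\psi_t$, the Jacobian identity \eqref{eq:AA}, the substitution $r=s\sqrt t$, and the same three-case asymptotics of the resulting $s$-integral (whose integrand indeed behaves like $s^{m-2-k}$ at infinity); the treatment of the degenerate indices $n_\pm=0$, of the off-chart piece via Theorem \ref{thm:firstvar}, and of the uniformity in the metric all match the paper, and the main bounds are correct.

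The one step that does not work as written is the dominating function in the continuity argument. You bound the volume element by $C(r^2+\varepsilon)^{(n_+-1)/2}r^{n_--1}$ and $|h\circ\psi_t|$ by $A_1(2r^2)^{-k/2}$; the product of these two bounds is of order $r^{n_--1-k}$ near $r=0$, not $r^{m-2-k}$, and it fails to be integrable whenever $n_+\ge 2$ and $n_-\le k\le m-2$ (for instance $n_+=3$, $n_-=2$, $k=3=m-2$ gives $r^{-2}$). The compensation you correctly identify between $(2r^2+t)^{1/2}$ and $(r^2+t)^{(n_+-2)/2}$ is not by itself enough: to reach the integrable dominant $r^{m-2-k}$ you must also pair the singular factor coming from $h$ with the remaining positive power of $(r^2+t)$ \emph{before} discarding $t$. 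Concretely, since $r^2+t\le 2r^2+t\le 2(r^2+t)$, the full integrand is comparable to $(r^2+t)^{(n_+-1-k)/2}\,r^{n_--1}$, which is at most $r^{n_+-1-k}\cdot r^{n_--1}=r^{m-2-k}$ when $n_+-1-k\le 0$, and at most $C\,r^{n_--1}$ otherwise; either way this is a $t$-independent integrable dominant for $k\le m-2$, and your dominated-convergence conclusion then goes through.
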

\begin{proof}
We can assume that $t_0=0$, so that it is enough to show that the same property holds for the integral over $Z_t^1$ in \eqref{eq:intZ1} and \cref{eq:nmeno0}, when 
\be 
h(x)\le |x|^{-k}= \left\{\begin{aligned}
    \qwe(2s^2+1)t\ewq^{-\frac{k}{2}}, 
    \quad &\text{in the case $n_+,n_-\ge 1$;}\\
    t^{-\frac{k}{2}},
    \quad &\text{in the case $n_-=0$.}
\end{aligned}\right.
\ee 
Indeed, there is at most a finite number of critical points so the integral is the sum of the contributions of each critical point. Around the points of local maximum $\vol^{m-1}(Z^1_t)=0$ for all $t\ge 0.$ 

Let us start with the case $n_+,n_-\ge 1.$ The function $\hat h$ in \cref{eq:hhat} satisfies the bound $\hat h(t,s)\le C\qwe(2s^2+1)t\ewq^{-\frac{k}{2}},$ so that we have 
\bega
\int_{Z^1_t}hdZ^1_t
&\le t^{\frac{m-1-k}{2}}\int_{0}^\frac{\e}{\sqrt{t}} (2s^2+1)^{\frac{1-k}{2}}(s^2+1)^{\frac{n_+-2}{2}}s^{n_--1}
ds,
\\
&\le t^{\frac{m-1-k}{2}}\qwe \int_{0}^1 
s^{n_--1}ds+2^{\frac{1-k}{2}}\int_1^{\frac{\e}{\sqrt{t}}}s^{m-k-2}
ds
\ewq
\\
&= t^{\frac{m-1-k}{2}}\qwe 1+\frac{2^{\frac{1-k}{2}}}{m-k-1}\tyu \frac {\e^2}t \uyt ^{\frac{m-k-1}{2}}.
\ewq
\eega
This satisfies the bound in the thesis, except for the case when $m=k+1,$ when the last equality is false and instead we have 
\be
\int_{Z^1_t}hdZ^1_t\le\dots = t^{0}\qwe 1+2^{\frac{1-k}{2}}\log \tyu  \frac{\e}{\sqrt{t}} \uyt
\ewq=O\tyu |\log t| \uyt.
\ee
In the case $n_-=0$ and $n_+=m$, \cref{eq:nmeno0} gives
\be 
\int_{Z^1_t}hdZ^1_t
\le 
t^{\frac{m-1}{2}}\int_{S^{m-1}}Ct^{-\frac k2}dS^{m-1}(x)\le C' t^{\frac{m-1-k}{2}},
\ee
for all $k$. In case $m=k-1$, this bound implies the thesis, since $t^{\frac{m-1-k}{2}}=1\le |\log t|$ for $t\in [0,\e)$. 

The last case to consider is when $n_-=m$ and $n_+=0$. In this case $\{T=\lambda + t\}$ is contained in the complement of a neighborhood of the critical point, for all $t>0$, thus the integrand is uniformly bounded for $t>0$. 

Observe that the function $\psi_t:S_+\times S_-\times (0,\e)$ is a $\mC^1$ parametrization of the submanifold $\Sigma_t:=Z_t^1\smallsetminus x^{-1}(\sqrt{t}S_+\times\{0\})$ for all $t\in [0,\e]$, including $t=0$ and $\int_{\Sigma_t^1}=\int_{Z_t^1}$. therefore the integrand in \cref{eq:hSigmat} converges almost everywhere as $t\to 0^+$ to the one corresponding to $\int_{\Sigma_0^1}h dZ$. The argument used in the previous part of the proof, now proves the dominated convergence:  $\int_{\Sigma_t^1}h dZ\to \int_{\Sigma_0^1}h dZ$.
\end{proof}
\subsection{Asymptotics of the nodal volume}\label{sec:upasy}
We consider the function $\f(t)=V(T-t)$, expressing the $m-1$ dimensional volume of $T^{-1}(t)$. Let us assume that $0$ is the only critical value of the Morse function $T\colon M\to \R$ contained in $I=[-\e ,\e ]$. The following lemma resumes what we know so far about the function $\f$.
\begin{lemma}\label{lem:Morsepiecewisecont}
One has that $\f\in \mC^0(I)\cap \mC^1(I\smallsetminus \{0\})$ and
\be\label{eq:fiprime} 
\f'(t)=\int_{Z_t}\frac{\tilde{\Delta}T}{\|dT\|^2}dZ_t.%\frac{-\hess(T)(\grad T,\grad T)+\|dT\|^2\Delta T}{\|dT\|^4}dZ_t.
\ee 
\end{lemma}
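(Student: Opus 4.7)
The plan is to reduce the lemma to two results that have already been established: \cref{thm:firstvar}, which provides the Fréchet differential of $V$ on the regular set $\m{U}$, and \cref{thm:upperbound}, which was just proved and controls integrals over the level sets of a Morse function as $t\to 0$.

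For the $\mC^1$ part of the claim, I will use the hypothesis that $0$ is the only critical value of $T$ in $I = [-\e,\e]$: for each $t \in I\smallsetminus\{0\}$ the shifted function $T-t$ has $0$ as a regular value and therefore lies in the open subset $\m{U} \subset \mC^2(M)$ introduced in \cref{prop:neat}. The curve $t \mapsto T - t$ is smooth into $\mC^2(M)$ and stays inside $\m{U}$ on each connected component of $I\smallsetminus\{0\}$, so composing with the map $V \in \mC^1(\m{U})$ supplied by \cref{thm:firstvar} shows that $\f \in \mC^1(I\smallsetminus\{0\})$. The formula for $\f'$ then follows from \cref{eq:firstvar1} applied with $f = T-t$ and $h = \partial_t(T-t) = -1$, together with the trivial identities $d(T-t) = dT$ and $\tilde{\Delta}(T-t) = \tilde{\Delta}T$, giving $\f'(t) = \int_{Z_t} \tilde{\Delta}T / \|dT\|^2\, dZ_t$.

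For the continuity of $\f$ at $t=0$, I will apply the final assertion of \cref{thm:upperbound} with the constant function $h \equiv 1$, corresponding to exponent $k=0$. Since $m \geq 2$, one has $k = 0 \leq m-2$, so that assertion yields directly that $t \mapsto \int_{Z_t} d\mathcal{H}^{m-1} = \f(t)$ is continuous on $[0,\e]$. Running the symmetric argument on $-T$, whose critical points coincide with those of $T$ at the reflected level, extends continuity to $[-\e,0]$, and hence to all of $I$.

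The main obstacle, and essentially the only delicate point, is reconciling the boundary integral appearing in \cref{eq:firstvar1} with the form of $\f'$ stated in the lemma. Under the standing assumption that the critical set $C$ lies in $\inter M$, the restriction $T|_{\partial M}$ has $0$ as a regular value on all of $I$, so \cref{thm:firstvar} applied separately to the smooth boundary curve $t \mapsto (T-t)|_{\partial M}$ shows that the boundary contribution is in fact $\mC^1$ across $t=0$ and may be absorbed into the smooth background (and in the localized setting where this lemma is used later, $Z_t$ will not meet $\partial M$ at all). Consequently, the only place where regularity can fail is the interior integrand $\tilde{\Delta}T/\|dT\|^2$, whose blow-up rate at the critical points of $T$ is precisely what the next subsection is devoted to analyzing.
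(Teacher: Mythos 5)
Your proof is correct and follows essentially the same route as the paper's (which is just two lines): Theorem \ref{thm:firstvar} applied to the curve $t\mapsto T-t$ inside $\mathcal{U}$ gives $\f\in\mC^1(I\smallsetminus\{0\})$ with $\f'(t)=\langle d_{T-t}V,-1\rangle$, and Theorem \ref{thm:upperbound} with $h\equiv 1$, $k=0\le m-2$ (plus the reflection $T\mapsto -T$ for the left limit) gives continuity at $0$. Your extra paragraph on the boundary integral is a legitimate clarification the paper omits at this point — the stated formula for $\f'$ drops the $\partial Z_t$ term, which the paper only reinstates later in \eqref{eq:fiprimebd} — and your resolution (the boundary contribution is regular across $t=0$ when the critical set lies in $\inter{M}$, the case to which Theorem \ref{thm:upperbound} applies) is the right one.
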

\begin{proof}
By \cref{thm:firstvar}, we have that for $t\neq 0$, $\f$ is continuously differentiable in a neighborhood of $t$ with derivative $\f'(t)=\langle d_{T-t}V,-1\rangle$. \cref{thm:upperbound}, with $h=1$, $k=0\le m-2$ implies that $\f$ is continuous at $0$.
\end{proof}
\subsubsection{Local expression of the derivative}\label{subsub:locexp} 
The integrand in \cref{eq:fiprime} plays the role of the function $h$ in \cref{thm:upperbound}. Using the local expression for $T$ provided by the Morse coordinate, we show that $\cref{thm:upperbound}$ can be applied with $k=2$, thanks to \cref{lem:localexpr}.
\begin{lemma}\label{lem:localexpr}
    There exists a positive constant $C>0$ such that 
    \be
\frac{|\tilde{\Delta} T(x)|}{\|d_xT\|^2}\le C\frac{1}{|x|^2}.
    \ee
\end{lemma}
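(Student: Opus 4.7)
The strategy is to avoid working directly in the Morse coordinates $x$, where the metric $g$ is only of class $\mC^0$ (pulling back a $\mC^1$ metric by the $\mC^1$ Morse diffeomorphism loses a derivative), and to perform all estimates in a fixed ambient $\mC^2$ chart $y\colon O\to \R^m$ around the critical point $q_0 = x^{-1}(0)$, in which $T$ is $\mC^2$ and the metric components $g_{ij}$ are $\mC^1$ in the classical sense. Since $\phi := y\circ x^{-1}$ is a $\mC^1$ diffeomorphism fixing $0$, there exist constants $c_1, C_1 > 0$ with $c_1 |x| \le |\phi(x)| \le C_1 |x|$ on a neighborhood of $0$. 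Both $\|dT\|^2$ and $\tDelta T$ are intrinsic quantities, so it suffices to bound them in the $y$-chart in terms of $|y|$ and then transport the estimates via the above bi-Lipschitz equivalence.

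For the denominator, the Morse condition at $q_0$ is equivalent to the invertibility of the matrix $H := D^2_y T(0)$ of second partial derivatives of $T$ in the $y$-chart. The Taylor expansion of $T$ then yields $\partial_i T(y) = (Hy)_i + o(|y|)$, and since $g^{ij}(y)$ is continuous and positive-definite at $y = 0$ while $H$ is invertible, a direct argument gives a constant $c_2 > 0$ such that
\[
\|d_y T\|_g^2 \;=\; g^{ij}(y)\,\partial_i T(y)\,\partial_j T(y) \;\ge\; c_2 |y|^2
\]
on a sufficiently small neighborhood of $0$. Combining this with $|\phi(x)| \ge c_1 |x|$ gives $\|dT\|^{-2} \le (c_2 c_1^2)^{-1} |x|^{-2}$.

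For the numerator, in the $y$-chart the Hessian tensor reads $(\H T)_{ij} = \partial_i \partial_j T - \Gamma^k_{ij}\,\partial_k T$. Since $T$ is $\mC^2$ the second partials $\partial_i \partial_j T$ are continuous, since $g$ is $\mC^1$ the Christoffel symbols $\Gamma^k_{ij}$ are continuous, and $\partial_k T(y) = O(|y|)$ is bounded. Hence $|(\H T)_{ij}(y)|$ stays uniformly bounded by some constant $C_2$ near $0$, so that taking the $g$-trace and evaluating at the unit vector $\nu$ produces $|\Delta T|, |\H T(\nu,\nu)| \le C_3$, and thus $|\tDelta T| \le 2 C_3$. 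Multiplying the two estimates yields the claim with $C := 2 C_3 / (c_2 c_1^2)$.

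The main subtlety, which dictates the choice of chart, is the regularity mismatch between the two charts: the explicit quadratic form $T = |x_+|^2 - |x_-|^2$ is available only in the Morse chart, but in that chart the metric is only continuous, so the Hessian tensor cannot be expressed through Christoffel symbols in the classical sense. Passing to the ambient $\mC^2$ chart sidesteps this issue, and the proof then reduces to a standard Taylor expansion argument in which the non-degeneracy of $H$ yields the $|y|^2$ lower bound for $\|dT\|^2$ and the $\mC^1$ regularity of $g$ bounds the tensorial Hessian.
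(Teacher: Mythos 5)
Your proof is correct, but it takes a genuinely different route from the paper's. The paper stays entirely inside the Morse chart: it writes $T(x)=x^THx$ exactly, expands $\hess(T)_x=\hess(T)_0+O(|x|)$ via the Christoffel symbols in that chart, and computes the full rational expression for $\tDelta T/\|d_xT\|^2$ in closed form (formula \eqref{eq:locexpr1}), reading off the $|x|^{-2}$ bound from the explicit leading term. You instead split the estimate into an upper bound $|\tDelta T|\le C_3$ on the numerator and a lower bound $\|d_pT\|^2\ge c\,|x|^2$ on the denominator, carrying out both in a fixed ambient $\mC^2$ chart and transporting the result through the bi-Lipschitz comparison between that chart and the Morse coordinates. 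Your choice of chart is well motivated: the Morse diffeomorphism supplied by \cref{lem:CrMorse} is only $\mC^1$, so the pulled-back metric in the Morse chart is merely continuous and its Christoffel symbols $\Gamma^i_{a,b}(x)$ are not classically defined there, a regularity point that the paper's computation passes over by appealing directly to the $\mC^1$ regularity of $g$. What your shortcut gives up is the explicit signed asymptotic expression \eqref{eq:locexpr1}, which the paper reuses in \cref{sec:lowd2} to derive the matching lower bounds in dimension two; for the upper bound asserted by the lemma itself, your argument is complete and, if anything, cleaner.
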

\begin{proof}
In the Morse chart $x\colon O\to U$, we have that $H:=\frac12\hess(T)_0$ is:
\be  
H=\begin{pmatrix}
    \mathbbm{1}_{n_+} & 0
    \\ 0 & -\mathbbm{1}_{n_-}
\end{pmatrix}
\ee
indeed $T(x)=x^THx=(x_+)^2-(x_-)^2$. 
Hence, we have that $d_xT=2(Hx)^T$ and that $\grad T(x)=2g_x^{-1}Hx.$

Notice that the bilinear form $\hess(T)_0(v,w)=\langle \nabla_v dT)_0,w\rangle$ does not depend on the metric since $d_0T=0,$ while in general we have: 
\be
\hess(T)_x= 2H-\tyu \Gamma^i_{a,b}(x)(2Hx)_i\uyt dx^a\otimes dx^b=H+O(|x|),
\ee  
where $\Gamma^i_{a,b}=\langle dx^i, \nabla_{\de_a}\de_b\rangle\colon O\to \R$ are the Christoffel symbols of the Levi-Civita connection $\nabla$ (see \cite{leeriemann}). Since the are defined by an expression involving the derivatives of the metric $g$, the above approximation holds as soon as the metric $g$ is of class $\mC^1$. Let us denote $H_x:=\frac12\hess(T)_x$, so that $H_0=H$, and $g:=g_0.$

The Laplacian $\Delta T(x)$ is the trace of the matrix of the bilinear form $\hess(T)_x$ with respect to an orthonormal basis of $g_x$, that is: 
\be
\Delta T(x)=\mathrm{tr}( g_x^{-1}\hess(T)_x)=\mathrm{tr}( 2g_x^{-1}H_x)=\mathrm{tr}( 2g^{-1}H)+o(1).
\ee

We have thus a local formula for the integrand in \eqref{eq:fiprime}
\bega \label{eq:locexpr1}
\frac{\tilde{\Delta} T(x)}{\|d_xT\|^2} &= 
\frac{-\hess(T)(\grad T,\grad T)+\|dT\|^2\Delta T}{\|dT\|^4}\Big|_x
\\
&=\frac{1}{4x^THg_x^{-1}Hx}\tyu2\mathrm{tr}(g_x^{-1}H_x)- \frac{8x^THg_x^{-1}H_xg_x^{-1}Hx}{4x^THg_x^{-1}Hx}\uyt
\\
&=\frac{1+o(|x|^2)}{2x^THg^{-1}Hx}\tyu \mathrm{tr}(g^{-1}H)-\frac{x^THg^{-1}Hg^{-1}Hx}{x^THg^{-1}Hx}+o\tyu 1\uyt\uyt
\\
&\le C \frac{1}{|x|^2},
\eega
for some $C>0$.
\end{proof}
\subsubsection{Upper asymptotics}
From \cref{lem:localexpr} and \cref{thm:upperbound} we deduce the following behavior of $\f'$:
\begin{lemma}\label{lem:upasyMorse}
Let $M$ be a compact $\mC^2$ manifold with boundary. Let $g$ be a Riemannian metric of class $\mC^1$. Let $T\in \mC^2(M)$ be a Morse function (see \cref{def:morse}) and having $0$ as a critical value and define $\f(t)=\m H^{m-1}(T^{-1}(t))$. Then, there is a constant $C>0$ and $\e>0$ such that for all $t\in (-\e,\e)$, we have 
\be\label{eq:upas}
|\f'(t)|\le C\left\{\begin{aligned}
    & 1, \quad \text{if }m\ge 4 ;
    \\
    & |\log t|, \quad \text{if }m= 3;
    \\
    & t^{-\frac12}, \quad \text{if }m= 2  .
\end{aligned}\right.
\ee
\end{lemma}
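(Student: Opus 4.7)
The starting point is to extend \cref{eq:fiprime} so that it also contains the boundary contribution. Applying \cref{thm:firstvar} to the $\mC^2$ curve $t\mapsto T-t$ (which lies in $\m U$ for small $t\ne 0$ by the Morse hypothesis) with variation $-1$, one obtains
\[
\f'(t)=\int_{Z_t}\frac{\tDelta T}{\|dT\|^{2}}\,dZ_t \;-\; \int_{\de Z_t}\frac{g(\n,\nu)}{\|d(T|_{\de M})\|}\,d\de Z_t.
\]
The Morse hypothesis at level $0$ ensures that the critical set $C$ of $T$ is finite and splits as $C^{\circ}\sqcup C^{\partial}$ with $C^{\circ}\subset \inter M$ and $C^{\partial}\subset \de M$. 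Away from fixed small neighborhoods of $C^{\circ}$ and $C^{\partial}$, both integrands are continuous in $(p,t)$ and the level sets vary smoothly, so the corresponding contributions are uniformly bounded in $t$. The task thus reduces to estimating each integrand locally around each critical point.

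For the interior contribution, \cref{lem:localexpr} gives the pointwise bound
\[
|\tDelta T|\,\|dT\|^{-2}\le C\,\mathrm{dist}_g(\cdot,C^{\circ})^{-2},
\]
and I would apply \cref{thm:upperbound} to $(M,g,T)$ with $k=2$. The three cases $k\le m-2$, $k=m-1$ and $k\ge m$ of that theorem match exactly $m\ge 4$, $m=3$ and $m=2$, yielding the bounds $O(1)$, $O(|\log t|)$ and $O(t^{-1/2})$ required in \cref{eq:upas}.

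For the boundary contribution, note first that for each $q\in C^{\partial}$ one has $d_qT\ne 0$ by \cref{def:morse}, so $g(\n,\nu)$ is bounded near $q$ and the only singularity of the integrand comes from $\|d(T|_{\de M})\|^{-1}$. A standard Morse-lemma expansion of $T|_{\de M}$ around $q$ on the $(m-1)$-dimensional manifold $\de M$ gives $\|d(T|_{\de M})\|^{-1}\le C\,\mathrm{dist}_g(\cdot,C^{\partial})^{-1}$. When $m\ge 3$, the boundary $\de M$ is a closed (boundaryless) Riemannian manifold of dimension $m-1\ge 2$, and applying \cref{thm:upperbound} to $(\de M,g|_{\de M},T|_{\de M})$ with $k=1$ yields $O(1)$ for $m\ge 4$ and $O(|\log t|)$ for $m=3$. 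When $m=2$, $\de Z_t$ is a finite set of points, and the one-dimensional Morse expansion $T|_{\de M}(s)=c_q\pm s^2$ near each $q\in C^{\partial}$ forces $\|d_p(T|_{\de M})\|\sim |t|^{1/2}$ for the points $p\in \de Z_t$ close to $q$, so the boundary integral degenerates into a bounded sum of terms of size $t^{-1/2}$. Summing the two contributions over the finitely many critical points in $C$ gives \cref{eq:upas} for $t\in(0,\e)$; the case $t\in(-\e,0)$ follows by repeating the argument with $-T$ in place of $T$. I expect the main technical point to be the careful verification in this last paragraph, in particular the separate treatment of the low-dimensional case $m=2$ where \cref{thm:upperbound} does not apply directly to $\de M$.
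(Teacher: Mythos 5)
Your proposal is correct and follows essentially the same route as the paper: the decomposition of $\f'$ via the boundary version of \eqref{eq:firstvar}, the bound $|\tDelta T|\,\|dT\|^{-2}\le C\,\mathrm{dist}^{-2}$ from \cref{lem:localexpr} combined with \cref{thm:upperbound} for $k=2$ in the interior, and the application of \cref{thm:upperbound} with $k=1$ on $\de M$ for boundary critical zeros. Your explicit one-dimensional treatment of the boundary term when $m=2$ (where $\de M$ has dimension $1$ and \cref{thm:upperbound} does not formally apply) is a welcome extra precaution; the paper handles that case by the analogous direct computation in \cref{lem:notd12bd}.
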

\begin{proof}
The case $\de M=\emptyset$ is a consequence of \cref{lem:localexpr} and \cref{thm:upperbound} for $k=2$. The general case, is proven in \cref{sec:upasybound}.
\end{proof}
In particular, $\f'(t)$ is integrable and thus $\f$ is an absolutely continuous function, that is, $\f\in W^{1,1}(I)$. Moreover, if $d\ge 4$, then $\f\in \mC^1(I)$ and if $d=3$, then $\f\in W^{1,2}(I)$.
\subsubsection{Upper asymptotics for manifolds with boundary}\label{sec:upasybound} Consider a $\mC^2$ compact Riemannian manifold $M$, of dimension $m$, with boundary $\de M$. Let $T\colon W\to \R$ be a Morse function, $Z_t:=T^{-1}(t)$ and $\f(t):=\vol^{m-1}(Z_t).$

Moreover, let us assume that $I\subset \R$ is a closed interval containing $0$ in its interior and assume that $0$ is the only critical value of $T$ in $I$. 
By \cref{thm:firstvar}, we have that for $t\notin I_0$:
\be\label{eq:fiprimebd} 
\f'(t)=\langle d_{T-t}V,-1\rangle=\int_{Z_t}\frac{\tilde{\Delta}T}{\|dT\|^2}dZ_t-\int_{\de Z_t} \frac{ g(\n,\nu) }{\|d(T|_{\de M})\|} d\de Z_t,
\ee
where $\n\in \Gamma^\infty(TM|_{\de M})$ is the outward normal to the boundary and $\nu =\|dT\|^{-1}\g T$ is the normal to $Z_t$. Let $p\in W$ be a critical zero of $T$. Then, there are two cases: $p\in \inter W$ is a critical point of $T|_{\inter W}$, or $p\in \de W$ is a critical point of $T|_{\de W}$. In the first case, we have that the second integral is continuous at $t=0$, while the first behaves exactly as described in \cref{eq:upas}. Let us consider the second kind of critical points, when $p\in \de W$. Then, we have that the first integrand $\frac{\tDelta T}{\|dT\|^2}$ is bounded in a neighborhood of $p$, hence the first integral is bounded around $p$, while the second behaves as follows:
\be 
\frac{ |g(\n,\nu)| }{\|d(T|_{\de M})\|} \le \mathrm{dist}(x,p)^{-1}.
\ee
This can be easily seen by applying the discussion in \cref{subsub:locexp} to the function $f|_{\de M}$. By \cref{thm:upperbound}, with $k=1$ and $\dim \de W=m-1$, it follows that the second term in \cref{eq:fiprimebd} behaves as in \cref{eq:upas}, when $t\to 0$. We conclude that the upper asymptotics at  \cref{eq:upas} still hold, in the boundary case. This concludes the proof of \cref{lem:upasyMorse}.

\subsection{Lower asymptotics in dimension $m=2$}\label{sec:lowd2}
In the two dimensional case, we we will need to be more precise and complement \cref{lem:upasyMorse} with lower bounds. \cref{prop:3pt} and \cref{lem:notd12bd}, in this subsection, establish that $|\f'(t)|$ behaves as $\frac{1}{\sqrt{t}}$, if $m=2$, provided that some special combinations of critical points are excluded. Indeed, there can be compensation phenomena if $T$ has many critical points with the same value. For this reason, we will be very precise about signs, see \cref{lem:notd12} and the pictures below.

Consider the proof of \cref{lem:localexpr}. A consequence of the spectral theorem\footnote{By the spectral theorem, there exists an orthonormal basis of $g_0$, with respect to which the bilinear form $\frac12\hess(T)_0$ is diagonal. By rescaling the vectors of such basis, we obtain an orthogonal basis of $g_0$ with respect to which the bilinear form $\frac12\hess(T)_0$ is represented by the matrix $H$.} is that in \cref{eq:locexpr1} we can assume that the matrix $g=g_0$ is diagonal, so that
\be 
g^{-1}=\mqty[\a & 0 \\ 0 & \beta],%\mqty[\dmat{\a_1,\ddots,\a_{n_+},\beta_1,\ddots,\beta_{n_-}}], 
\quad \text{where} \quad \a=\mqty[\dmat{\a_1,\ddots,\a_{n_+}}] \text{ and }\quad \beta=\mqty[\dmat{\beta_1,\ddots,\beta_{n_-}}]
\ee
for some real numbers $\a_i,\beta_j>0$. Hence, with this choice of coordinates we have
\begin{eqnarray*}
\frac{\tilde{\Delta} T(x)}{\|d_xT\|^2}&=&\frac{(2t)^{-1} }{\sum_{i=1}^{n_+}\a_i(x_+^i)^2+\sum_{j=1}^{n_-}\beta_j(x_-^j)^2} \times \\ &&\quad \times \tyu 
\sum_{i=1}^{n_+}\a_i-\sum_{j=1}^{n_-}\beta_j -\frac{\sum_{i=1}^{n_+}\a_i^2(x_+^i)^2-\sum_{j=1}^{n_-}\beta_j^2(x_-^j)^2}{\sum_{i=1}^{n_+}\a_i(x_+^i)^2+\sum_{j=1}^{n_-}\beta_j(x_-^j)^2}
+o(1)\uyt.
\end{eqnarray*}
Thus, when $x=\psi_t(u,v,s\sqrt{t}),$ we get
\bega
\frac{\tilde{\Delta} T(\psi_t(u,v,s\sqrt{t}))}{\|d_xT\|^2}=\frac{1}{2x^Tg^{-1}x }\Bigg( 
\sum_{i=1}^{n_+}\a_i-\sum_{j=1}^{n_-}\beta_j 
+\dots
\\
\dots +\frac{
\tyu(\sum_{j=1}^{n_-}\beta_j^2|v^j|^2-\sum_{i=1}^{n_+}\a_i^2|u^i|^2\uyt s^2+\sum_{i=1}^{n_+}\a_i^2|u^i|^2
}{
\tyu \sum_{j=1}^{n_-}\beta_j|v^j|^2+\sum_{i=1}^{n_+}\a_i|u^i|^2\uyt s^2+\sum_{i=1}^{n_+}\a_i^2|u^i|^2
}
+o(1)\Bigg).
\eega

From now on, we will focus on the case $m=2$.
\begin{lemma}\label{lem:notd12}
    If $t_0$ is a critical value of $T$ with one critical point $p_0$ of index $i\in \{0,1,2\}$, then $|\f'(c+t)|=O(1)+\Theta\tyu  g_i(t-t_0) \uyt$ for $t\to t_0$, where
    \be 
\begin{aligned}
   g_0(t)= & \frac{1}{\sqrt{|t|}}1_{\{t> 0\}}; \\
   g_1(t)= & \frac{-\mathrm{sgn}(t)}{\sqrt{|t|}}=g_0(t)+g_2(t); \\
   g_2(t)= & -\frac{1}{\sqrt{t}}1_{\{t< 0\}}.
\end{aligned}
    \ee
    (See \cref{fig:compensation}).
\end{lemma}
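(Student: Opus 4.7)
The plan is to localize the analysis around the unique critical zero $p_0$ of $T-t_0$ using a Morse chart $x\colon O\to \R^2$ as in \cref{sec:Mors}, and in each of the three cases for the Morse index $i\in\{0,1,2\}$ compute the leading-order behavior of $\varphi'(t_0+t)$ explicitly. By \cref{thm:firstvar} and \cref{lem:Morsepiecewisecont}, $\varphi'(t_0+t)=\int_{Z_{t_0+t}}\tilde{\Delta}T/\|dT\|^{2}\,dZ_{t_0+t}$, and the contribution from $Z_{t_0+t}\setminus O$ is $\mC^{1}$ in $t$ near $0$ (by the same localization argument as in the proof of \cref{lem:upasyMorse}), hence $O(1)$. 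The matching upper bound $|\varphi'(t_0+t)|=O(|t|^{-1/2})$ is already provided by \cref{lem:upasyMorse}, so what remains is to show that the coefficient of $|t|^{-1/2}$ in $\varphi'(t_0+t)$ is non-zero, with the sign pattern prescribed by $g_i$.

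For $i=0$ (local minimum, $n_+=2,\,n_-=0$) and $i=2$ (local maximum, $n_+=0,\,n_-=2$), the local piece $Z_{t_0+t}^{1}:=Z_{t_0+t}\cap O$ is empty on one side of $t_0$ and is a Euclidean sphere of radius $\sqrt{|t|}$ on the other. Substituting the local expansion for $\tilde{\Delta}T/\|dT\|^{2}$ derived at the opening of \cref{sec:lowd2} into \eqref{eq:nmeno0}, the factor $|t|^{1/2}$ from the Jacobian and the $|t|^{-1}$ blow-up of the integrand combine to produce a term of order $|t|^{-1/2}$, whose coefficient is
\be
\pm \frac{1}{2}\int_{S^{1}} \m{J}_{g}(\R u) \, \frac{K(u)}{\sum_i\alpha_i(u^i)^2}\,dS^{1}(u), \qquad K(u):=\sum_i\alpha_i-\frac{\sum_i\alpha_i^{2}(u^i)^{2}}{\sum_j\alpha_j(u^j)^{2}}
\ee
(respectively, the analogue in the $\beta_j$'s for $i=2$). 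The weighted-average inequality $\min_i\alpha_i\le\frac{\sum_i\alpha_i^{2}(u^i)^2}{\sum_j\alpha_j(u^j)^{2}}\le \max_i\alpha_i$ yields $K(u)>0$, so the coefficient is non-zero; the sign is $+$ for $i=0$ (matching $g_0$) and $-$ for $i=2$, coming from the $-\sum_j\beta_j$ term dominating in the expansion (matching $g_2$).

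The main work is the saddle case $i=1$, where $n_+=n_-=1$ and $Z_{t_0+t}^{1}$ is non-empty on both sides of $t_0$. Using \eqref{eq:intZ1} for $t>0$ and the symmetric parametrization $\tilde\psi_t(u,v,r)=(vr,u\sqrt{r^2+|t|})$ for $t<0$, the decisive step is an algebraic simplification in the diagonal basis $g^{-1}=\mathrm{diag}(\alpha,\beta)$ of \cref{sec:lowd2}: at $\psi_t(u,v,s\sqrt{t})$, the bracket $\alpha-\beta - \frac{s^{2}(\alpha^{2}-\beta^{2})+\alpha^{2}}{A}$ (with $A=s^{2}(\alpha+\beta)+\alpha$) appearing in the expansion of $\tilde{\Delta}T/\|dT\|^{2}$ collapses to $-\alpha\beta/A$, and the symmetric computation for $t<0$ yields $+\alpha\beta/\tilde A$ with $\tilde A=s^{2}(\alpha+\beta)+\beta$, both independent of the signs $(u,v)\in\{\pm 1\}^{2}$. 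Summing over the four corners of $S^{0}\times S^{0}$ multiplies by $4$, and substituting into the $s$-integral from \eqref{eq:intZ1} yields an integrand of order $s^{-3}$ at infinity, hence an integral converging as $t\to 0$ to a strictly positive constant. This produces leading terms $-K_1^{+}/\sqrt{t}$ for $t>0$ and $+K_1^{-}/\sqrt{|t|}$ for $t<0$, with $K_1^{\pm}>0$ given by explicit convergent integrals; the sign pattern $(-,+)$ matches exactly $g_1(t)=-\mathrm{sgn}(t)/\sqrt{|t|}=g_0(t)+g_2(t)$.

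The main technical obstacle is controlling the $o(1)$ remainders in the local expansion of $\tilde{\Delta}T/\|dT\|^{2}$, which come from the $\mC^{1}$ corrections to the metric (Christoffel symbols) and to the Hessian of $T$. These produce an error in the integrand bounded by $O(|x|^{-1})$, which by \cref{thm:upperbound} applied with $k=1$ and $m=2$ integrates to at most $O(|\log t|)$. Since the leading term is of order $|t|^{-1/2}$, this lower-order error is absorbed in the $\Theta(g_i(t-t_0))$ conclusion; in the saddle case, a parity argument (the first correction is odd in $x$ and vanishes upon summing the four sign configurations $(u,v)\in\{\pm1\}^{2}$) even upgrades the error to $O(1)$. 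Combining the leading computation in the three cases with this error bound concludes the proof.
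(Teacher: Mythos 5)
Your proposal is correct and follows essentially the same route as the paper: localization in a Morse chart, explicit computation of the leading $|t|^{-1/2}$ coefficient in each index case (including the same algebraic collapse of the bracket to $-\alpha\beta/A$ in the saddle case), sign matching with $g_i$, and absorption of the lower-order remainder. The only quibble is your claim that the remainder is $O(|x|^{-1})$: under the paper's regularity ($\mC^1$ metric, $\mC^2$ function) the corrections in the expansion are only $o(1)$, giving an error of $o(|x|^{-2})$ and hence $o(|t|^{-1/2})$ after integration — which still suffices, since the leading coefficient is non-zero, but the $O(|\log t|)$ rate (and the parity upgrade to $O(1)$) would need extra smoothness.
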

\begin{figure}
    \centering
    \includegraphics[scale=0.6]{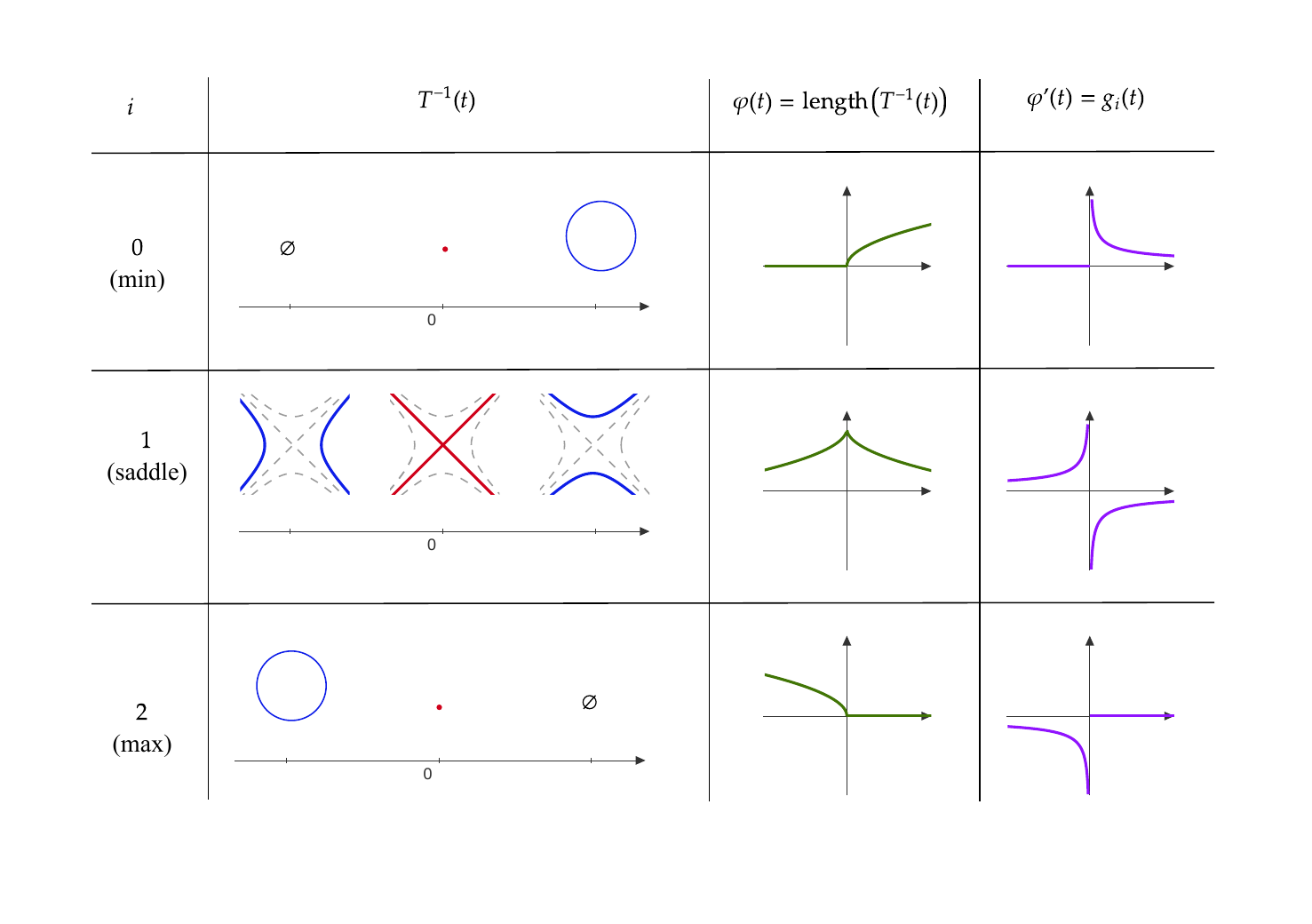}
    \caption{The picture illustrates the behavior of the length of level sets $T^{-1}(t)$ (assuming these are compact) of a Morse function $T\colon \R^2\to \R$, having only one critical point of index $i\in \{0,1,2\}$ with critical value $t=0$.}
    \label{fig:compensation}
\end{figure}
\begin{proof}
we can assume that $t_0=0$ and look at what happens as $t\to 0^+$. The case $t\to 0^-$ can be deduced by considering the Morse function $-T$. 

Let $h(x):=\frac{\tilde{\Delta} T(x)}{\|d_xT\|^2}$. Then, we have that $|\f'(t)|=O(1)+|\int_{Z_t^1}h dZ_t^1|$. We can study the second term as in \cref{eq:hSigmat}.

Let us start from the easiest case: when $i=2$, it means that $0$ is a local maximum value for $T$,thus  for $t>0$ we have $Z_t^1=\emptyset$, hence $g_2(t)=0$ for $t\ge 0$. 

In the case $n_-=0$, we have that $0$ is a local minimum value for $T$. Let us denote $u=x_+=(u_1,u_2)$ Then, for all $t\in [0,\e]$, we have that $Z_t^1=x^{-1}(\sqrt{t}S^1)$, thus $\f '(t)=O(1)$
\bega 
\int_{Z_t^1}h dZ_t^1 
&=
t^{\frac{1}{2}}\int_{S^{1}}\m{J}_g(dT(\sqrt t u)))h(\sqrt{t}u)dS^{1}(u).
\\
&=
\frac{t^{-\frac{1}{2}}}{2}\int_{S^{1}}\m{J}_g(dT(\sqrt t u)))
\frac{
\tyu 
\a_1+\a_2 -\frac{\a_1^2u_1^2+\a_2^2u_2^2}{\a_1u_1^2+\a_2u_2^2}
+o(1)\uyt
}{\a_1u_1^2+\a_2u_2^2} dS^1(u).
\\
&=
\frac{t^{-\frac{1}{2}}}{2}\int_{S^{1}}\m{J}_g(dT(\sqrt t u)))\frac{2\a_1\a_2}{\tyu\a_1u_1^2+\a_2u_2^2\uyt^2} dS^1(u)
\\
&=\Theta\tyu \frac{1}{\sqrt{t}} \uyt=\Theta\tyu g_0(t) \uyt.
\eega

Now consider the more complicated case: when $n_-=n_+=i=1$. Define $\mu(u,v,r):=\tyu\m{J}_g(dT(\psi_t(u,v,r)))\uyt$. Then, by \cref{eq:hSigmat}, we have that
\bega 
\int_{Z_t^1}h dZ_t^1 &=
\sqrt{t}\sum_{(u,v)\in S^0\times S^0}\int_{0}^{\frac{\e}{\sqrt{t}}}h(\psi_t(u,v,s\sqrt{t}))
\sqrt{2s^2+1}(s^2+1)^{-\frac12}
\mu(u,v,s\sqrt{t})
ds.
\eega 
Using the the coordinate $x=(x_+,x_-)$ discussed in \cref{subsub:locexp}, we have
\bega 
h(\psi_t(u,v,s\sqrt{t}))
&=\frac{\frac1{2t}}{(\a+\beta)s^2+\a }\tyu 
\a-\beta 
+
\frac{(\beta^2-\a^2)s^2-\a^2}{(\a+\beta)s^2+\a}
+o(1)\uyt
\\
&=\frac1{2t}\tyu 
\frac{-\a\beta}{\qwe(\a+\beta)s^2+\a\ewq^2}
+\frac{o(1)}{(\a+\beta)s^2+\a}\uyt
\\
&=\frac{-\frac{\beta}{\a}}{2t}\tyu 
\frac{1}{\qwe\nu^2s^2+1\ewq^2}
+\frac{o(1)}{\nu^2 s^2+1}\uyt
\eega
where $\nu=\sqrt{1+\frac{\beta}{\a}}$ depend only on $\hess(T)_0$ and $g_0$. By construction, there are two positive constants $c_1,c_2>0$ such that $\mu(u,v,r)\in [c_1,c_2]$. For any parameter $\delta>0$ we can choose $\e=\e(\delta)$ so small that $|o(1)|\le \delta$. Then, we will need to choose $\delta=\delta(\nu)$ small enough and, consequently, $\e=\e(\delta(\nu))$.
\bega 
\int_{Z_t^1}h dZ_t^1 &=
\frac{-\frac{\beta}{\a}}{2\sqrt{t}}\sum_{(u,v)}\int_{0}^{\frac{\e}{\sqrt{t}}}
\tyu 
\frac{1}{\qwe\nu^2s^2+1\ewq^2}
+\frac{o(1)}{\nu^2 s^2+1}\uyt
\sqrt{\frac{2s^2+1}{(s^2+1)}}
\mu(u,v,s\sqrt{t})
ds\le \dots
\\
&\dots\le
\frac{-C}{\sqrt{t}}
\tyu\tyu \int_{0}^{+\infty}
\frac{ds}{\tyu\nu^2s^2+1\uyt^2}
\uyt
-\delta\tyu
\int_{0}^{+\infty}
\frac{ds}{\nu^2 s^2+1}\uyt
\uyt
c_1\le \dots
\\
&\dots \le -\frac{1}{\sqrt{t}} C'=C'g_1(t).
% \\
% &\quad -
% \frac{|(1-\nu^2)|}{2\sqrt{t}}4\int_{c}^{\frac{\e}{\sqrt{t}}}
% \tyu 
% \frac{1}{\nu^4}s^{-4}
% +\frac{\delta}{\nu^2}s^{-2}\uyt
% \sqrt{2}c_2
% ds\ge \dots
% \\
% &\dots \ge
% \frac{C}{\sqrt{t}}
% -
% \frac{C}{\sqrt{t}}
% \tyu 
% t^{\frac32}
% +t^{\frac12}+C\uyt
% \sqrt{2}c_2
\eega 
where the last inequality holds, for some positive constants $C,C'>0$, as soon as we choose $\delta(\nu)$ small enough, since both integrals in the previous line are finite. A lower bound $\f'(t)\ge -C'' \frac{1}{\sqrt{t}}$ holds as well, by \cref{thm:upperbound}, thus we have proved that $\f'(t)=\Theta(g_1(t))$, as $t\to 0^+$.
\end{proof}
\begin{proposition}\label{prop:3pt}
Let $M=\inter M\sqcup \de M$ be a compact Riemannian $\mC^2$ surface with boundary and let $T\colon M\to \R$ be a Morse function. Assume that $t_0$ is a critical value of $T$ with critical set $C\subset \inter M$. Then the function $\f'(t):=\frac{d}{dt}\vol_1(\{T=t_0+t\})$ behaves as $\frac{1}{\sqrt{t}}$ on at least one side of $0$ unless there are at least three critical points $p_0,p_1,p_2\in C$ of Morse index $0,1,2$, respectively.
\end{proposition}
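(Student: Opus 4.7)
The plan is to combine \cref{lem:notd12} with a localization/additivity argument over the (finite) critical set $C$ at level $t_0$. First I would choose pairwise disjoint Morse coordinate charts $O_p$, one around each $p \in C$, as in \cref{sec:Mors}. Writing $h := \tilde{\Delta} T/\|dT\|^2$ for the integrand appearing in \eqref{eq:fiprime}, one has the decomposition $Z_{t_0+t} = Z^2_t \sqcup \bigsqcup_{p\in C} (Z_{t_0+t}\cap O_p)$, where $Z^2_t$ avoids every critical point of $T$ for $|t|$ sufficiently small. By the reasoning of \cref{sec:localize} (or equivalently by \cref{thm:firstvar} applied to the manifold with boundary $M \smallsetminus \bigsqcup_p O_p$, on which $T-t$ stays regular for small $t$), the contribution $\int_{Z^2_t} h\,dZ_t^2$ is a $\mC^1$ function of $t$, hence $O(1)$, and the problem reduces to analyzing
\begin{equation*}
\f'(t_0+t) = O(1) + \sum_{p\in C} I_p(t), \qquad I_p(t) := \int_{Z_{t_0+t}\cap O_p} h\,dZ,
\end{equation*}
each term of which is exactly the local quantity treated in \cref{lem:notd12}.

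Second, I would read off from the explicit computations in the proof of \cref{lem:notd12} the precise \emph{sign} of the leading term in each $I_p(t)$. For each $p\in C$ of Morse index $i(p)\in \{0,1,2\}$, there exists a strictly positive constant $c_p>0$ such that $I_p(t) = c_p\, g_{i(p)}(t) + O(1)$ as $t\to 0$, where $g_0,g_1,g_2$ are as in \cref{lem:notd12}: an index-$0$ point contributes $+c_p/\sqrt{t}$ for $t>0$ and $0$ for $t<0$; an index-$1$ saddle contributes $-c_p/\sqrt{t}$ for $t>0$ and $+c_p/\sqrt{|t|}$ for $t<0$; and an index-$2$ point contributes $0$ for $t>0$ and $-c_p/\sqrt{|t|}$ for $t<0$ (the latter case being obtained by applying the index-$0$ analysis to $-T$ via the symmetry $\f_{-T}(s) = \f_T(-s)$). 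Summing over $p\in C$,
\begin{equation*}
\f'(t_0+t) = O(1) + \frac{A_+}{\sqrt{t}}\mathbf{1}_{\{t>0\}} + \frac{A_-}{\sqrt{|t|}}\mathbf{1}_{\{t<0\}},
\end{equation*}
where
\begin{equation*}
A_+ := \sum_{p\in C,\, i(p)=0} c_p - \sum_{p\in C,\, i(p)=1} c_p, \qquad A_- := \sum_{p\in C,\, i(p)=1} c_p - \sum_{p\in C,\, i(p)=2} c_p.
\end{equation*}

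The conclusion follows by contrapositive. Assume that $\f'(t_0+t)$ behaves as $|t|^{-1/2}$ on neither side of $0$; then $A_+=0$ and $A_-=0$. Since every $c_p$ is strictly positive, $A_+=0$ forces the sets of index-$0$ and index-$1$ critical points in $C$ to be simultaneously empty or simultaneously nonempty, and $A_-=0$ does the same for indices $1$ and $2$. As $C\neq\emptyset$, at least one index class must be nonempty, and the two equalities then propagate to force all three to be nonempty, producing the desired triple $(p_0,p_1,p_2)$. The only genuinely delicate point in the whole argument is the second step, namely reading the signed leading-order constants $c_p$ off from \cref{lem:notd12}; but this is automatic because the Morse neighborhoods $O_p$ are disjoint, so the contributions $I_p(t)$ decouple, and \cref{lem:notd12} is proved by an \emph{explicit} positive-integral computation in each case (the integrals at the end of the proof of that lemma are strictly negative for saddles and strictly positive for extrema, with uniform constants), which is exactly what is needed.
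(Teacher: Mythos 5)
Your proof is correct and follows essentially the same route as the paper: localize $\f'$ into the sum of contributions from disjoint Morse charts around each $p\in C$ plus an $O(1)$ remainder, read the signed leading asymptotics $c_p\,g_{i(p)}(t)$ off from \cref{lem:notd12}, and observe that vanishing of the leading coefficient on both sides of $0$ forces all three index classes to be nonempty. If anything, your bookkeeping with per-point constants $c_p>0$ is slightly more careful than the paper's formulation $\Theta\big(\sum_i \#(C_i)A_ig_i(t)\big)$, which tacitly assigns a single constant to each index class, but the cancellation argument and conclusion are identical.
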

\begin{proof}
The function $\f'(t)$ behaves as the sum of the local behavior around each critical point $p\in C$. In other words, if we partition $C=C_0\sqcup C_1\sqcup C_2$ according to the Morse index, then we have 
\be 
\f'(t) =O(1)+ \Theta \tyu\sum_{i=0,1,2}\#(C_i) A_i g_{i}(t)\uyt,
\ee 
as $t\to 0$. Where $A_i>0$ are some constants.
then It is clear from \cref{lem:notd12} (see also \cref{fig:compensation}) that the only way in which the above sum could behave differently than $\frac{1}{\sqrt{|t|}}$ on both sides of $t=0$ is if $\#(C_0) A_0=\#(C_2) A_2=\#(C_1) A_1$, which implies that $\#(C_i)>0$ for all $i\in \{0,1,2\}$.
\end{proof}
\subsubsection{Lower asymptotics for a two-dimensional manifold with boundary}
\cref{lem:notd12} holds in a weaker form, when the manifold has boundary.
\begin{proposition}\label{lem:notd12bd}
Let $t_0$ be a critical value of $T$ and assume that $p\in \de M$ is the only critical point in $T^{-1}(t_0)$, then $|\f'(t)|=\Theta\tyu\frac{1}{\sqrt{|t-t_0|}}\uyt$ for $t\to t_0^+$, or for $t\to t_0^-$.
\end{proposition}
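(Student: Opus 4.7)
The plan is to apply the variational formula \eqref{eq:fiprimebd} and localize the analysis to a small neighborhood $U\subset M$ of $p$ on which $p$ is the unique critical zero. Since $p\in\de M$ is a critical point of $T|_{\de M}$ but not of $T$ itself, we have $d_pT\neq 0$ and $\g T(p)$ is normal to $T_p\de M$, i.e.\ $\g T(p)=c\,\n(p)$ for some $c\neq 0$. In particular $\|dT\|$ is bounded below on $U$, so the bulk integrand $\tilde\Delta T/\|dT\|^2$ is bounded on $U$, making the first term of \eqref{eq:fiprimebd} a continuous function of $t$ at $t_0$ (the contribution from $M\smallsetminus U$ is continuous by \cref{thm:firstvar}). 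Hence only the boundary integral
\[
I_\de(t):=\int_{\de Z_t}\frac{g(\n,\nu)}{\|d(T|_{\de M})\|}\,d\de Z_t
\]
can carry an unbounded contribution as $t\to t_0$, and only through points of $\de Z_t\cap U$.

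Next I would apply the $\mC^1$ Morse Lemma (\cref{lem:CrMorse}) to the one-dimensional Morse function $T|_{\de M}-t_0$ at $p$, producing a $\mC^1$ chart $y\in(-\delta,\delta)$ of $\de M$ around $p$ in which $T|_{\de M}(y)=t_0+\epsilon y^2$, where $\epsilon\in\{-1,+1\}$ records whether $p$ is a local minimum or maximum of $T|_{\de M}$. For $t$ with $\epsilon(t-t_0)>0$, the equation $T|_{\de M}=t_0+t$ has exactly two solutions $y_\pm=\pm\sqrt{\epsilon(t-t_0)}$ in the chart, while for $\epsilon(t-t_0)<0$ it has none.

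Since $\de Z_t$ is zero-dimensional, $I_\de(t)$ restricted to the chart is just the sum of the two point values $g(\n,\nu)(y_\pm)/\|d(T|_{\de M})\|(y_\pm)$. A direct computation in the chart yields $\|d(T|_{\de M})\|(y_\pm)=2|y_\pm|\cdot\kappa(y_\pm)$ where $\kappa$ is continuous and positive at $0$, so each denominator is $\Theta(\sqrt{|t-t_0|})$. The decisive observation is that $\g T(p)=c\,\n(p)$ forces, by continuity of $\nu=\|\g T\|^{-1}\g T$ on $U$, the limit $g(\n,\nu)(y_\pm)\to\mathrm{sgn}(c)\in\{-1,+1\}$ at \emph{both} points $y_+$ and $y_-$ with the \emph{same} sign. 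Thus the two point contributions add constructively rather than cancel, producing a term of size $\mathrm{sgn}(c)\cdot\Theta(1/\sqrt{|t-t_0|})$ in $I_\de(t)$ on the side $\epsilon(t-t_0)>0$, and nothing on the other side.

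Combining with the bounded bulk term and the minus sign in \eqref{eq:fiprimebd} gives $|\f'(t)|=\Theta(1/\sqrt{|t-t_0|})$ as $t\to t_0$ on the side $\epsilon(t-t_0)>0$, while on the opposite side $\f'(t)$ remains bounded, which is precisely the claim. The main technical point is the constructive sign agreement of the two boundary contributions, which would not hold at a generic point of $\de M$ but is forced here by the alignment $\g T(p)\parallel \n(p)$ characteristic of boundary critical points; every other ingredient follows directly from the one-dimensional Morse normal form and the variational formula \cref{thm:firstvar}.
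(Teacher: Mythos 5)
Your proof is correct and follows essentially the same route as the paper: isolate the boundary term of \eqref{eq:fiprimebd}, note the bulk term is $O(1)$ near a boundary critical point, reduce to the two points of $\de Z_t$ near $p$ via a Morse normal form, and show $\|d(T|_{\de M})\|=\Theta\tyu\sqrt{|t-t_0|}\uyt$ there. The only differences are cosmetic: you apply the one-dimensional Morse Lemma to $T|_{\de M}$ where the paper uses a two-dimensional normal form that simultaneously straightens $T$ and puts $\de M$ into parabola form, and you spell out the sign agreement $g(\n,\nu)(y_\pm)\to\mathrm{sgn}(c)$ forced by $\g T(p)\parallel\n(p)$, which rules out cancellation between the two boundary contributions --- a point the paper's proof encodes in the choice $\e=-\mathrm{sign}(g(\n,\nu)|_p)$ but leaves implicit.
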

\begin{proof}
    We have by \cref{eq:fiprimebd} and the discussion thereafter, we have 
    \be 
\f'(t_0+t)=%\int_{Z_t}\frac{\tilde{\Delta}T}{\|dT\|^2}dZ_t
-\sum_{p\in \de Z_t} \frac{ g(\n,\nu)|_p }{\|d_p(T|_{\de M})\|} +O(1),
\ee
Assume that $p$ is a minimum of $T|_{\de M}$. Then, there is a $\mC^1$ coordinate chart $\f\colon U\to \R^2$ around $p=\f^{-1}(0,0)$ such that 
\be 
\f(U)=\kop(x,y)\colon x\in (-1,1), \ \e y\ge \e x^2\pok, \quad \text{and} \quad T\circ \f^{-1}(x,y)= y,
\ee
where $\e=-\mathrm{sign}(g(\n,\nu)|_p)\in \{-1,1\}$ 
(see \cite[Lemma de Morse $\mC^r$ \`{a} param\`{e}tres]{bromberg1993lemme}%[\textbf{Hajduc m-functions}]
). For simplicity, assume that $W=U$. Define $p_\pm(t)=(\pm\sqrt{t},t)$, then we have $\de Z_t=\{p_-(t),p_+(t)\}$ and $T_{p_\pm(t)}\de M=\mathrm{span}\{\de_x\pm 2\sqrt{t}\de_y\}$. 
\be 
\|d_p(T|_{\de M})\|=\left|g\tyu \g f, \frac{\de_x\pm 2\sqrt{t}\de_y}{\|\de_x\pm 2\sqrt{t}\de_y\|}\uyt\right|=
\frac{2\sqrt{t}}{\|\de_x\pm 2\sqrt{t}\de_y\|}.
\ee
We conclude that $|\f'(c+t)|\ge \frac{C}{\sqrt t}$ for some $C>0$.
\end{proof}
\begin{remark}
In the boundary case, one can formulate a suitable generalization of \cref{prop:3pt}. However, in this case the situation is less rigid, indeed the local behavior around each critical point at the boundary can take all the four forms: $\pm g_0(t), \pm g_2(t)$, depending on the index $i$ as a critical point of $T|_{\de M}$ ($i=0$ if the point is a local minimum and $i=1$ if it is a local maximum) and on the sign $s_\n$ of $dT(\n)$ at the critical point, where $\n$ is the outer normal vector to the boundary. If we denote as $s_\de$ the sign of $2i-1$, then we have:
\be 
\f'(t)=O(1)+\Theta\tyu  s_{\n} s_\de\cdot g_{2i}(t)\uyt
\ee 
(see \cref{fig:boundaryconvex} and \cref{fig:boundaryconcave}).
\begin{figure}
    \centering
    \includegraphics[scale=0.56]{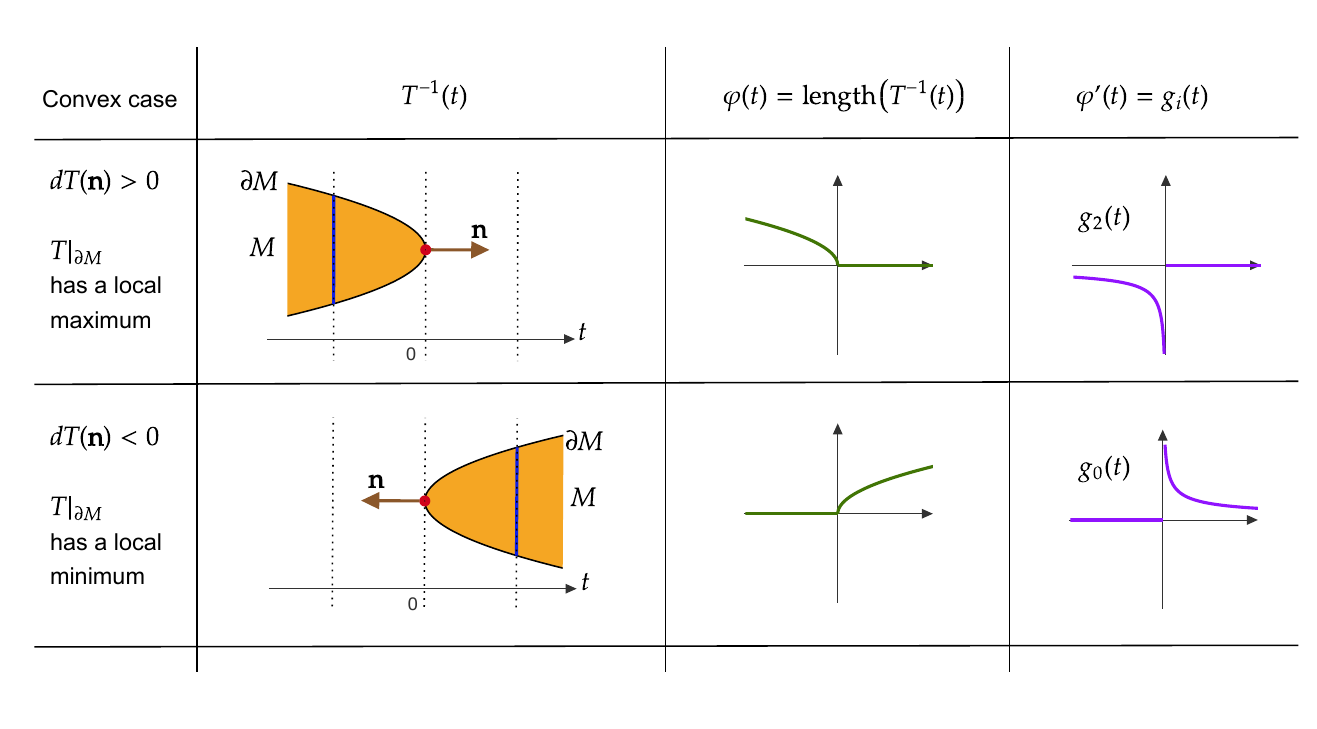}
    \caption{The picture illustrates the behavior of the length of level sets $T^{-1}(t)$ of a Morse function $T\colon M\to \R$, where $M$ is a compact domain in $\R^2$ and $T$ is the projection on the horizontal axis, $T(x,y)=x$. There are four qualitatively different cases in total, depending on the convexity of the domain near the critical point. Two are depicted above and the other two are in \cref{fig:boundaryconcave}.}
    \label{fig:boundaryconvex}
\end{figure}
\begin{figure}
    \centering
    \includegraphics[scale=0.56]{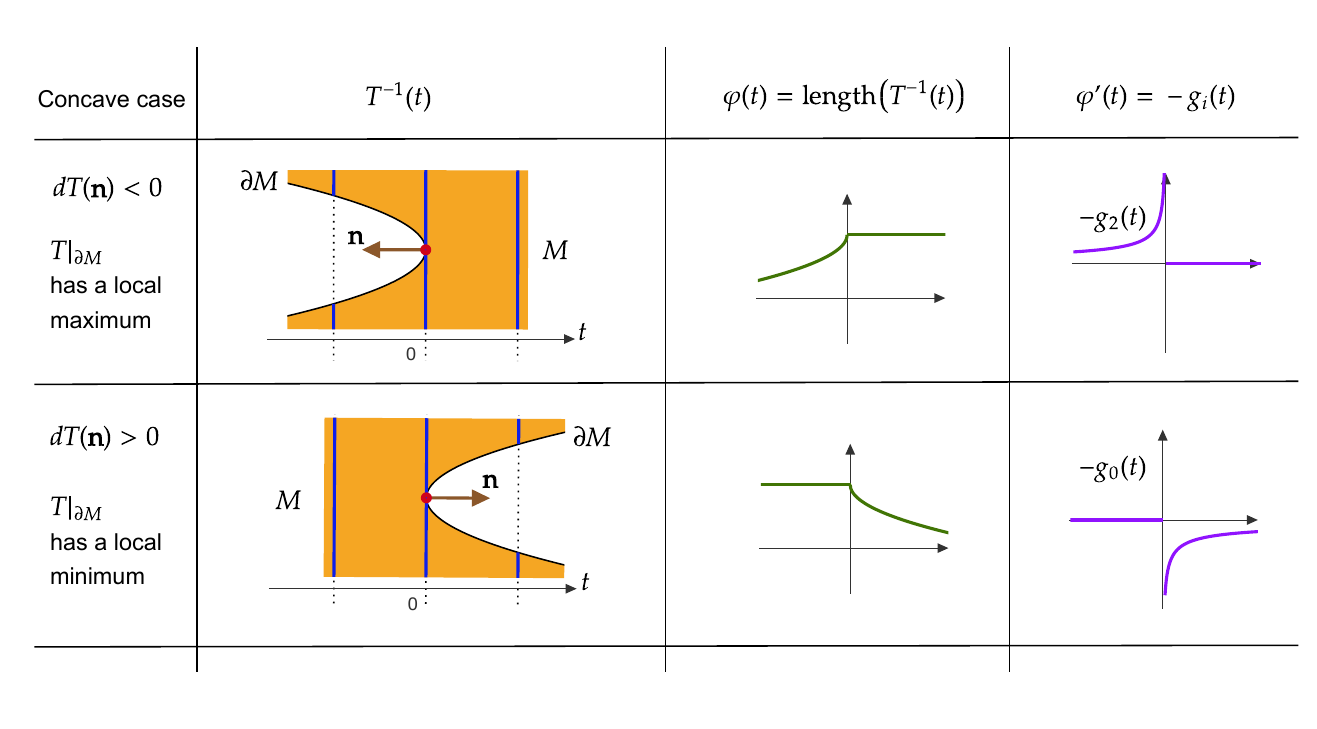}
    \caption{See the caption of \cref{fig:boundaryconvex}.}
    \label{fig:boundaryconcave}
\end{figure}
\end{remark}
\section{The nodal volume along transverse curves}\label{sec:transverse}
In this section we put together the results of \cref{sec:nondegeneracyconditions} and of \cref{sec:levelMorse} in order to complete our study of the function $\f(t)=V(f_t)$, where $(f_t)_{t\in I}$ is a transverse curve in $\mC^2(M)$, see \cref{thm:DeterministPicture} and \cref{def:noncomp} below.
Taking up the discussion started at the beginning of \cref{sec:nondegeneracyconditions}, this study is aimed at giving criteria for verifying the condition of ray-absolute continuity, (i) of \cref{def:Dspace}, that is an important part of the proof of \cref{t:mainintro}. Indeed, \cref{thm:DeterministPicture} and \cref{cor:notD12} below essentially characterize the Sobolev regularity of the restriction of the nodal volume functional $V$ to random segments $[X,X+h]$ in $\mC^2(M)$, given that, by \cref{thm:transcurve}, such segments are transverse curves with probability one.

Furthermore, the continuity part of \cref{thm:DeterministPicture}, stating that $\f$ is continuous, combined in particular with the results of \cref{sec:ACDB}, allows us to prove point (i) of \cref{t:mainintro}, with the exception of the description of the singular part of the law (\cref{e:magnificentlaw}), which will be discussed in \cref{s:singular}. \cref{thm:mainabs} states the existence of an absolutely continuous component of the law of $V(X)$ and \cref{prop:interval} ensures that its support is an interval. 
\subsection{Sobolev regularity of the nodal volume}
The following deterministic result is one of the key point of the paper. Observe that \cref{eq:thmupas} provides an intuitive justification of the results (ii),(iii) and (iv) of \cref{t:mainintro}. Indeed, it is an important ingredient of the proofs of \cref{thm:D12}, \cref{thm:D11} and \cref{thm:D12boundary}.
\begin{theorem}\label{thm:DeterministPicture} 
Let \cref{ass:1} prevail. Let $I$ be a bounded interval, let $(f_t)_{t\in I}$ be a $\mC^2$ transverse curve in $\mC^2(M)$, then $I_c=\kop t\in I\colon f_t\notin \m U\pok$ is finite. Define $\f(t)=V(f_t)$ then
$\f\in \mC^0(I)\cap \mC^1(I\smallsetminus I_c)$. There is a constant $C>0$ such that for any $t_0\in I_c$,
\be\label{eq:thmupas}
|\f'(t_0+t)|\le C\left\{\begin{aligned}
    & 1, \quad \text{if }m\ge 4 
    \\
    & |\log t|, \quad \text{if }m= 3;
    \\
    & t^{-\frac12}, \quad \text{if }m= 2.  
\end{aligned}\right.
\ee
In particular, $\f\in W^{1,1}(I)$; if $m\ge 3$, then $\f\in W^{1,k}(I)$ for all $k\in\N$. Moreover, if $m\ge 4$, then $\f\in \mC^1(I)$.
\end{theorem}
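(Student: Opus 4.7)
The plan is to reduce the global statement to finitely many local problems around each $t_0 \in I_c$, and then invoke the Morse-theoretic results of \cref{sec:levelMorse}.

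First, I would establish the finiteness of $I_c$ by applying characterization (4) of \cref{prop: transvprop}: the transverse curve $(f_t)_{t\in I}$ determines a $\mC^2$ neat hypersurface $N = \{(p,t)\in M\times I : f_t(p)=0\}$ on which the second projection $T = \pi_2|_N$ is a Morse function whose critical values are precisely the elements of $I_c$. Compactness of $N$, which follows from the compactness of $M$ and the boundedness of $I$, yields that $T$ has finitely many critical values.

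Next, I would fix $t_0\in I_c$ and apply \cref{cor:TransvToMorse} to the restriction of the curve to a small interval around $t_0$ (after the harmless extension used in the proof of that corollary): this produces a compact $\mC^2$ Riemannian $m$-manifold with boundary $(\widetilde N, g)$, a Morse function $\widetilde T\colon \widetilde N\to \R$, and a $\mC^2$ map $\pi\colon \widetilde N\to M$ whose restriction is an isometry between $\widetilde T^{-1}(t)$ and $f_t^{-1}(0)$ for $t$ in a neighborhood $J_{t_0}$ of $t_0$. Consequently $\varphi(t) = \vol^{m-1}(\widetilde T^{-1}(t))$ on $J_{t_0}$, so the continuity of $\varphi$ at $t_0$ and its $\mC^1$ character on $J_{t_0}\setminus \{t_0\}$ are exactly the content of \cref{lem:Morsepiecewisecont}, while the local bound on $|\varphi'|$ is the content of \cref{lem:upasyMorse}. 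On $I\setminus I_c$ one has $f_t\in \m U$ and hence $\varphi\in \mC^1$ by \cref{thm:firstvar}. Patching together these finitely many local statements and taking the maximum of the finitely many local constants yields the global assertions $\varphi\in\mC^0(I)\cap \mC^1(I\setminus I_c)$ and \eqref{eq:thmupas}.

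Finally, the Sobolev conclusions follow from the integrability of the dominating functions: each of $1$, $|\log t|$, $t^{-1/2}$ is integrable on a bounded interval, so combining \eqref{eq:thmupas} with the continuity of $\varphi$ on $I$ and its absolute continuity on each component of $I\setminus I_c$ produces $\varphi\in W^{1,1}(I)$; for $m\ge 3$ the dominating function lies in $L^k$ for every $k\in\N$, giving $\varphi\in W^{1,k}(I)$ for all $k$. The remaining claim $\varphi\in \mC^1(I)$ when $m\ge 4$ requires showing that $\varphi'$ extends continuously through each $t_0\in I_c$, and this is the step I expect to be the main obstacle; my plan is to address it via the dominated-convergence part of \cref{thm:upperbound} applied to $h=\widetilde\Delta\widetilde T/\|d\widetilde T\|^2$, whose singularity $\mathrm{dist}(\cdot,C)^{-2}$ satisfies the required bound $k=2\le m-2$ precisely when $m\ge 4$, together with the boundary analogue developed in \cref{sec:upasybound} for the $\int_{\de Z_t}$ term in \eqref{eq:fiprimebd}.
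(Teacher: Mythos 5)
Your proposal is correct and follows essentially the same route as the paper: reduce the transverse curve to a Morse function $T$ on an auxiliary compact manifold via \cref{cor:TransvToMorse}, then read off continuity, the derivative bounds, and the Sobolev/$\mC^1$ conclusions from \cref{lem:Morsepiecewisecont}, \cref{lem:upasyMorse} and the continuity part of \cref{thm:upperbound}. The only cosmetic difference is that you localize around each $t_0\in I_c$ and patch, whereas the paper invokes the corollary once globally; both are fine.
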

\begin{proof}
%Point (4) of \cref{prop: transvprop} 
By \cref{cor:TransvToMorse}, there exists a $\mC^2$ Riemannian manifold with boundary $N$ and a Morse function $T\colon N\to \R$ such that $\f(t)=\vol^{m-1}(T^{-1}(t))$, so the statement follows from \cref{lem:Morsepiecewisecont}.
\end{proof}
\begin{remark}
One might be led to think that $\f\in \mC^0(I)\cap \mC^1(I\smallsetminus I_c)$, with $I_c$ finite, is enough to conclude that $\f$ is absolutely continuous. Indeed, $t_0,t_1\in I_0$, the following limit exists:
\be
\f(t_{1})-\f(t_{0})=\lim_{\e_0,\e_1\to 0^+}\f(t_{1}-\e_1)-\f(t_{0}+\e_0)=\lim_{\e_1,\e_2\to 0^+}\int_{t_{0}+\e_0}^{t_1-\e_1}\f'.
\ee
The existence of the rightmost limit, does not imply that the function $\f'$ is integrable in the considered interval, i.e., that $\f'\in L^1([t_0,t_{1}])$. A counterexample is the function $\f(x)=\frac{1}{x}\sin\tyu\frac{1}{x}\uyt$.
\end{remark}
\subsection{Sobolev (non)regularity in dimension $2$}\label{sec:sobd2}%{Multiple critical points do not compensate each other}
\begin{definition}\label{def:noncomp}
Let $f_0,e\in E$ and let $\ell=f_0+\R e$. We say that $\ell$ is a \emph{non-compensating transverse line} if $\ell\transv \m W$ (in the sense of \cref{def:pseudotransv}) and if, whenever $f=f_0+t_0e\in \m W$, %and $C=\CZ_f$ is the set of critical zeroes of $f=f_0+te$,
we have that: \begin{enumerate}[(1)]
\item\label{im} $0$ is a (strat.) Morse critical value of $f$ ;%and $C=\{p_1,\dots,p_k\}$ is finite;
\item\label{is} $\CZ_f\subset \inter M$, or $\CZ_f=\{p\}$ is one point;
\item\label{ii} the symmetric forms $e(p)\H_{p}f$ all have the same index for every $p\in C$.
%As $t\to t_0$, we have $\frac{d}{dt}V(f_0+te)=O(1)+\frac{1}{\sqrt{t-t_0}}$ at least on one side of $t_0$.
%The function $t\mapsto V(f_0+te)$ is not in $W^{1,2}(\R)$
\end{enumerate}
\end{definition}
%Notice that by \cref{rem:UU}, since $\m W_1,\m W_2(e)\subset \m W_\times (e)$, a non-compensating transverse line is in particular a transverse curve in the sense of \cref{thm:transcurve}. 
From \cref{thm:bestia} we can deduce the following results concerning a Gaussian field $X\randin \mC^2(M)$.
\begin{corollary}\label{cor:immcor}
Let \cref{ass:1} prevail. Assume that there are no pairs of points $p,q \in \de M$ such that the random vectors $j^1_{p}X|_{\de M}, j^1_{q}X|_{\de M}$ in $J^1(\de M,\R)$ are completely correlated. Then, for every $e\neq 0 \in E$, almost surely, $X+\R e$ is a non-compensating transverse line.
\end{corollary}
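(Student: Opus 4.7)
The proof is a direct combination of \cref{thm:transcurve} and \cref{thm:bestia}, together with the hypothesis of the corollary.

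First, I apply \cref{thm:transcurve} to the affine curves obtained by restricting $t\mapsto te$ to each compact subinterval of $\R$ (after a linear reparameterization onto $[0,1]$), and intersect the resulting countable family of probability-one events. This shows that, almost surely, the line $X+\R e$ is a transverse curve in the sense of \cref{def:pseudotransv}, so the transversality requirement $\ell\transv \m W$ appearing in \cref{def:noncomp} is met.

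Next I invoke \cref{thm:bestia} for the same $e\in E$, obtaining almost surely the five listed properties, to be applied whenever $C:=\CZ_f$ is nonempty for some $f=X+t_0 e$. Property (1) of \cref{thm:bestia} immediately yields condition (1) of \cref{def:noncomp} (that $0$ is a Morse critical value of $f$). Property (4) of \cref{thm:bestia} yields condition (3) of \cref{def:noncomp}: when the stratum $S=\inter M$ one has $\H_p f|_S=\H_p f$, so the common index of $e(p)\H_p f|_S$ is that of $e(p)\H_p f$; when $S=\partial M$, the forthcoming argument shows $\#C=1$, making condition (3) vacuous.

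It therefore remains to verify condition (2) of \cref{def:noncomp}. By property (3) of \cref{thm:bestia}, $C\subset S$ with $S\in\{\inter M,\partial M\}$. If $S=\inter M$ the condition is immediate. If $S=\partial M$, then property (5) of \cref{thm:bestia} would force every pair of distinct points $p\neq q\in C$ to have fully correlated jets $j^1_p X|_{\partial M},\, j^1_q X|_{\partial M}$, contradicting the standing hypothesis of the corollary. Hence $\#C\leq 1$, and since $C\neq\emptyset$ we conclude $C=\{p\}$ for a single $p\in\partial M$, as required. No step presents a genuine obstacle; the corollary is essentially an unpacking of \cref{thm:transcurve} and \cref{thm:bestia} through the lens of the boundary non-correlation assumption.
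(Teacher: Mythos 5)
Your proof is correct and follows essentially the same route as the paper's: the paper likewise reads conditions (1)--(3) of \cref{def:noncomp} off points (1), (3), (4) and (5) of \cref{thm:bestia}, using the boundary non-correlation hypothesis to force $\#(C)=1$ whenever $C\subset \de M$. The only cosmetic difference is that you obtain the transversality $\ell\transv \m W$ by re-invoking \cref{thm:transcurve} on compact subintervals, whereas the paper treats it as already contained in points (1)--(2) of \cref{thm:bestia}; both are valid.
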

\begin{proof}
This is a direct consequence of \cref{thm:bestia}.
\cref{thm:bestia} immediately implies condition (1) of \cref{def:noncomp}. Point (3) of \cref{thm:bestia} implies that $CZ_f\subset \inter{M}$ or $CZ_f\subset \de M$. Moreover, Condition (5) of \cref{thm:bestia} is satisfied by a pair of points $p,q\in \de M$ only if $p=q$, thus we deduce property (2) of \cref{def:noncomp}. Finally, condition (4) of \cref{def:noncomp} corresponds to point (3) of \cref{thm:bestia}.
\end{proof}
\begin{corollary}\label{cor:notD12}
Let \cref{ass:1} prevail with $\dim M=2$. Assume that there are no pairs of points $p,q \in \de M$ such that the random vectors $j^1_{p}X|_{\de M}, j^1_{q}X|_{\de M}$ in $J^1(\de M,\R)$ are completely correlated. Then, for every $e\neq 0 \in E$, almost surely, either $X+\R e\subset \m U$, or the function $t\mapsto V(X+te)$ is not in the space $W^{1,2}(\R)$.
\end{corollary}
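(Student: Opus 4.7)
The plan is to begin with Corollary \ref{cor:immcor}, which under the boundary non-correlation hypothesis guarantees that, almost surely, $X + \R e$ is a non-compensating transverse line in the sense of Definition \ref{def:noncomp}. Fixing such a realization and assuming $X + \R e \not\subset \m U$, I would like to show that $\f(t) := V(X+te)$ fails to be in $W^{1,2}(\R)$. By Theorem \ref{thm:DeterministPicture} applied on any bounded interval, the exceptional set $I_c = \{t \in \R : X + te \notin \m U\}$ is discrete, so I can pick $t_0 \in I_c$ and a bounded open interval $I \ni t_0$ with $I \cap I_c = \{t_0\}$; it then suffices to show $\f \notin W^{1,2}(I)$.

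Next I would apply Corollary \ref{cor:TransvToMorse} to the $\mC^2$ transverse curve $(X+te)_{t \in I}$, producing a Morse function $T : N \to \R$ on a compact $\mC^2$ Riemannian $2$-manifold with boundary, together with an isometric identification $T^{-1}(t) \cong (X+te)^{-1}(0)$ for each $t \in I$. Writing $f := X + t_0 e$, the identity
\[
\H_{(p, t_0)} T = e(p)^{-1}\, \H_p f,
\]
established inside the proof of Proposition \ref{prop: transvprop}, equates the Morse index of $T$ at a critical point $(p, t_0)$ with the index of the bilinear form $e(p)\, \H_p f$. The defining properties of a non-compensating transverse line (Definition \ref{def:noncomp}) now split into two mutually exclusive regimes: either the critical zeros of $f$ all lie in $\inter M$ and the forms $e(p)\,\H_p f$ share a common index $\lambda \in \{0, 1, 2\}$, or $f$ has a unique critical zero sitting on $\de M$.

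In the interior regime I would appeal to Proposition \ref{prop:3pt}: since only one of the three possible Morse indices of $T$ at the level $t_0$ is attained, the proposition delivers constants $c, \delta > 0$ and a fixed side of $t_0$ on which $|\f'(t_0 + t)| \geq c/\sqrt{|t|}$ holds for all small $|t|$. The boundary regime is handled by Lemma \ref{lem:notd12bd}, which yields an identical one-sided lower bound. Either way $\f' \notin L^2(I)$. The conclusion follows because any $W^{1,2}(\R)$ representative of $\f$ would be absolutely continuous on $I$ with weak derivative coinciding a.e. on $I \setminus \{t_0\}$ with the classical derivative produced by Theorem \ref{thm:DeterministPicture}, contradicting the divergence of $\int_I |\f'|^2$. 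I expect the main subtlety to be the index-matching identity, which is the conduit transferring the Gaussian information packaged in Theorem \ref{thm:bestia} (through Corollary \ref{cor:immcor}) into the purely Morse-theoretic input that Propositions \ref{prop:3pt} and \ref{lem:notd12bd} require.
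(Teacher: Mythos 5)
Your proposal is correct and follows essentially the same route as the paper: Corollary \ref{cor:immcor} to get a non-compensating transverse line, then the dichotomy between interior critical zeros of a common index (handled by Proposition \ref{prop:3pt}) and a single boundary critical zero (handled by Proposition \ref{lem:notd12bd}), yielding a one-sided $|t-t_0|^{-1/2}$ lower bound on $\f'$ that kills square-integrability. The paper's proof is terser and leaves implicit the transfer to the Morse setting via Corollary \ref{cor:TransvToMorse} and the index-matching identity from Proposition \ref{prop: transvprop}, both of which you correctly spell out.
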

\begin{proof}
Let $\f(t)=V(X+te)$. \cref{cor:immcor} implies that $X+\R e$ is a non-compensating transverse line. 
From %\cref{lem:notd12}, 
\cref{prop:3pt}, since there cannot be three critical points of different index,
we know that if $\CZ\subset \inter M$, then $(\f')^2$ is not integrable in a neighborhood of a critical value $t_0$. If $\CZ=\{p\}\subset \de M$, the same is true, as it follows from \cref{lem:notd12bd}. It follows that the only situation in which $(\f')^2$ is integrable is if there are no points $t\in \R$ such that $f+te\notin \m U$.
\end{proof}

\subsection{Absolute continuity of the nodal volume}\label{sec:nons}
Let us consider the setting of \cref{sec:ACDB}, with $\mu$ being the Gaussian measure induced by $X$ on its topological support $E\subset \mC^2(M)$ and $V\colon E\to \R$ being the restriction of the nodal volume functional.
\begin{theorem}\label{thm:mainabs}
Let \cref{ass:1} prevail. If $V(X)$ is non-constant, then the law of $V(X)\randin \R$ and the Lebesgue measure are not mutually singular.
\end{theorem}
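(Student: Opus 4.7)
The plan is to argue by contradiction, combining Theorem \ref{t:banach}, the transverse-curve results of Section \ref{sec:nondegeneracyconditions}, and a classical zero--one law for Gaussian measures. Let $E\subset \mC^2(M)$ denote the topological support of the law $\mu=[X]$ and set $\m U_0:=\m U\cap E$, which is an open subset of $E$ of full $\mu$-measure by Bulinskaya's \cref{lem:bulinskaya}. First I would show that, if the law of $V(X)$ is mutually singular with respect to the Lebesgue measure, then $d_fV=0$ for every $f\in \m U_0$. Indeed, if some $f_0\in \m U_0$ had $d_{f_0}V\neq 0$, \cref{t:banach} would furnish an open neighborhood $\m O\subset \m U_0$ of $f_0$ such that the law of $V(X)\,1_{\m O}(X)$ is absolutely continuous with respect to the Lebesgue measure; since $\mu(\m O)>0$, this would contribute a non-trivial absolutely continuous component to the law of $V(X)$, contradicting the singularity assumption. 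Together with \cref{thm:firstvar}, the identity $d_fV\equiv 0$ on $\m U_0$ forces $V$ to be locally constant on $\m U_0$, hence constant on each connected component of $\m U_0$.

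Next I would promote this local constancy to a global invariance of $V$ under Cameron--Martin translations. Fix $h\in \HX$ and consider the deterministic $\mC^2$ curve $c\colon [0,1]\to \mC^2(M)$, $c(t):=th$. By \cref{thm:transcurve}, the random curve $t\mapsto X+th$ is almost surely transverse to $\m W$, so that it lies in $\m U_0$ except at finitely many isolated times $t\in [0,1]$, while \cref{thm:DeterministPicture} guarantees that $t\mapsto V(X+th)$ is continuous on $[0,1]$. On each maximal open sub-interval of $[0,1]$ over which the curve stays in $\m U_0$, the curve is a continuous path inside a single connected component of $\m U_0$, and consequently $V(X+th)$ is constant on that interval. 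By continuity, the constants so obtained must agree across the exceptional times, and therefore $V(X+th)$ is constant on $[0,1]$; evaluating at $t=1$ yields
\be\label{eq:CMinvplan}
V(X+h)=V(X)\quad \text{almost surely, for every } h\in \HX.
\ee

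Finally I would invoke the classical zero--one law for Gaussian measures (see, e.g., \cite[Chapter 2]{bogachev}, which relies on the ergodicity of the Cameron--Martin translation action on $(E,\mu)$): any Borel function $F\colon E\to \R$ such that $F(\cdot+h)=F(\cdot)$ $\mu$-almost everywhere for every $h\in \HX$ is itself $\mu$-a.e.\ constant. Applied to $F=V$, this forces $V(X)$ to be $\PP$-a.s.\ constant, contradicting the non-constancy hypothesis and completing the proof.

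The main obstacle is the middle paragraph. Local constancy of $V$ on $\m U_0$ does not by itself preclude the non-constancy of $V(X)$, since $\m U_0$ can split into many open components each carrying positive $\mu$-measure and distinct $V$-values (indeed, this is known to happen concretely in the one-dimensional case excluded from \cref{ass:1}). What makes the argument work is precisely the transverse-curve technology of Section \ref{sec:nondegeneracyconditions}, in combination with the continuity of $t\mapsto V(X+th)$ along transverse curves (\cref{thm:DeterministPicture}): these two ingredients allow one to glue the local constancy of $V$ across the exceptional stratum $\m W$ and upgrade it to the Cameron--Martin invariance \eqref{eq:CMinvplan}, after which the zero--one law closes the loop.
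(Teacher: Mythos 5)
Your proof is correct, and it shares the paper's opening move and its two main technical ingredients, but it closes the argument by a genuinely different mechanism. Like the paper, you reduce to showing that singularity of the law forces $V$ to be constant on each connected component of $\m U_0=\m U\cap E$ (the paper phrases this via \cref{cor:abs}, you via \cref{t:banach}; the two are equivalent since $V\in\mC^1(\m U_0)$ by \cref{thm:firstvar}), and like the paper you use \cref{thm:transcurve} together with the continuity of $V$ along transverse curves from \cref{thm:DeterministPicture} to propagate this constancy across the degenerate stratum $\m W$. The divergence is in the final step: the paper connects two \emph{arbitrary} points $f_0,f_1\in\m U_0$ by a transverse curve and observes that, under componentwise constancy, $V(\m U_0)$ is countable (by second countability of $E$), so the interval $[V(f_0),V(f_1)]\subset V(\m U_0)\cup V(I_c)$ is countable and hence degenerate; you instead restrict to random segments $[X,X+h]$, deduce the Cameron--Martin invariance $V(X+h)=V(X)$ a.s.\ for all $h\in\HX$, and invoke the Gaussian zero--one law. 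Your route avoids the countable-image trick and the need to connect arbitrary pairs of functions, at the cost of importing one external (though entirely standard) result from Gaussian measure theory; the paper's route is more self-contained and, as a by-product, directly feeds into \cref{prop:interval} (the support of the law of $V(X)$ is an interval), which the zero--one law argument does not yield. Your diagnosis of the crux in the closing paragraph --- that local constancy alone is not enough and that the transverse-curve technology is precisely what glues the components together --- matches the paper's own use of these tools.
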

\begin{proof}
We want to apply \cref{cor:abs}. To do so, considering \cref{lem:bulinskaya} and \cref{thm:firstvar}, we only need to show that there is a connected component of $\m U\cap E \subset E$ on which $V$ is not a constant function. By contradiction, if $V$ were constant on each connected component of $\m U\cap E$, then $V(\m U\cap E)$ should be countable, since $E$ is a II-countable space. By \cref{thm:transcurve}, for any two connected components of $\m U\cap E$, there is a transverse curve in $E$ connecting one to the other. This means that any two points $f_0,f_1\in \m U\cap E$ are connected by a transverse curve $c:[0,1]\to E$, so that $c^{-1}(E\smallsetminus \m U)$ is a finite, or empty, subset $I_c\subset [0,1]$. Now, \cref{thm:DeterministPicture} ensures that $\f=V\circ c$ is continuous, hence $[V(f_0),V(f_1)]\subset \f([0,1]) = V(\m U\cap E)\cup V(I_c) $ is countable, which implies that $V(f_1)=V(f_2)$. Applying the latter argument to every pair $f_1,f_2\in \m U\cap E$, we conclude that $V$ is constant on $\m U\cap E$, which contradicts the hypothesis.
\end{proof}
%\subsection{The support is an interval}
\begin{proposition}\label{prop:interval}
Let \cref{ass:1} prevail. The topological support of the random variable $V(X)$ is the set $\overline{V(\m U\cap E)}$ and it is an interval. Moreover, $\overline{V(\m U\cap E)}\smallsetminus V(\m U\cap E)$ is locally finite.
\end{proposition}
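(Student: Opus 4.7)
The claim splits into three sub-claims: (a) $\mathrm{supp}[V(X)]=\overline{V(\m U\cap E)}$, (b) this closure is an interval, and (c) $S:=\overline{V(\m U\cap E)}\smallsetminus V(\m U\cap E)$ is locally finite. Claim (a) combines \cref{lem:bulinskaya} (giving $X\in\m U\cap E$ almost surely, so $\mathrm{supp}[V(X)]\subseteq \overline{V(\m U\cap E)}$) with the following converse: fix $f\in \m U\cap E$ and $\e>0$; by \cref{thm:firstvar} the map $V$ is continuous on the open set $\m U$, so there is an open $O\subseteq \m U$ with $f\in O$ and $V(O)\subseteq (V(f)-\e,V(f)+\e)$, and $O\cap E$ is an open neighborhood of $f\in E=\mathrm{supp}(\mu)$ with positive $\mu$-measure, giving $\PP\{V(X)\in (V(f)-\e,V(f)+\e)\}>0$.

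The main technical device for (b) and (c) is the construction of suitable transverse curves between prescribed points of $\m U\cap E$. Given $f_1,f_2\in \m U\cap E$, consider the affine curve $c(t):=(1-t)f_1+tf_2$, which takes values in the linear space $E$. By \cref{thm:transcurve}, $c+X$ is transverse to $\m W$ almost surely, while $\{\|X\|_{\mC^2}<\delta\}$ has positive probability for every $\delta>0$ because $0\in E=\mathrm{supp}(\mu)$. Intersecting these events produces, for arbitrarily small $\delta>0$, a deterministic $h\in E$ of $\mC^2$-norm less than $\delta$ such that $\tilde c:=c+h$ is a $\mC^2$ curve transverse to $\m W$; for $\delta$ small, openness of $\m U$ (\cref{prop:neat}) and continuity of $V$ make $\tilde c(0),\tilde c(1)\in \m U\cap E$ with $V(\tilde c(i))$ arbitrarily close to $V(f_i)$. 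By \cref{thm:DeterministPicture} the function $\phi:=V\circ\tilde c$ is continuous on $[0,1]$, the set $I_c:=\{t\colon \tilde c(t)\notin \m U\}$ is finite, and $\phi(t)\in V(\m U\cap E)$ for $t\notin I_c$. The intermediate value theorem yields $\phi([0,1])\supseteq [\phi(0),\phi(1)]$, so $[\phi(0),\phi(1)]\smallsetminus V(\m U\cap E)\subseteq \phi(I_c)$, a finite set.

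Claim (b) follows by letting $\delta\to 0$: for every $f_1,f_2\in \m U\cap E$, $[V(f_1),V(f_2)]\subseteq \overline{V(\m U\cap E)}$, forcing $\overline{V(\m U\cap E)}$ to be a (possibly unbounded) interval $J$. For claim (c), pick any $v_0$ in the interior $J^\circ$ of $J$. Since $V(\m U\cap E)$ is dense in $J$, one can select $v_i=V(f_i)\in V(\m U\cap E)$ with $v_1<v_0<v_2$; the construction above with $\delta$ small enough that $\phi(0)<v_0<\phi(1)$ exhibits an open neighborhood $(\phi(0),\phi(1))\subseteq \R$ of $v_0$ whose intersection with $S$ is contained in the finite set $\phi(I_c)$. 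The topological boundary of $J$ in $\R$ has at most two points, accounting for at most two further elements of $S$.

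The main obstacle is to rule out accumulation of interior points of $S$ at a finite boundary endpoint $\beta$ of $J$ which is not attained by $V$ on $\m U\cap E$: I would address this by fixing $f_1\in \m U\cap E$ with $V(f_1)$ well below $\beta$ and a sequence $f_2^{(n)}\in \m U\cap E$ with $V(f_2^{(n)})\nearrow \beta$, and applying the transverse-curve construction to each pair $(f_1,f_2^{(n)})$; combining the resulting finite-gap statements with the continuity of $\phi$ at the endpoints, one controls $S\cap (\beta-\eta,\beta]$ for $\eta$ small enough, yielding that every compact $K\subset \R$ meets $S$ in a finite set.
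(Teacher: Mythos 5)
Your decomposition and your tools coincide with the paper's: part (a) is proved exactly as in the paper (Bulinskaya's \cref{lem:bulinskaya} for one inclusion; continuity of $V$ on the open set $\m U$ together with the full-support property of $\mu$ on $E$ for the other), and parts (b)--(c) rest, as in the paper, on producing transverse curves by perturbing affine segments with arbitrarily small realizations of $X$ (\cref{thm:transcurve} combined with $0\in\mathrm{supp}(\mu)$), on the continuity of $t\mapsto V(\tilde c(t))$ along such curves together with the finiteness of $I_c$ (\cref{thm:DeterministPicture}), and on the intermediate value theorem. Your arguments for (a), for (b), and for local finiteness at interior points of $J:=\overline{V(\m U\cap E)}$ are correct.

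The gap is exactly where you flag it: local finiteness of your set $S$ at a finite endpoint $\beta$ of $J$. Because you perturb the whole segment, endpoints included, a single curve only controls $S\cap[\phi(0),\phi(1)]$ with $\phi(1)<\beta$ in general, and your proposed remedy --- running the construction along a sequence $f_2^{(n)}$ with $V(f_2^{(n)})\nearrow\beta$ --- produces a sequence of finite exceptional sets $\phi_n(I_{c_n})$ with no uniform bound on their cardinalities; their union over $n$ may well be infinite and accumulate at $\beta$, so no single $\eta$ with $S\cap(\beta-\eta,\beta]$ finite is obtained. The paper avoids this by building transverse curves with \emph{prescribed} endpoints: given $f_0,f_1\in\m U\cap E$, it perturbs only a segment joining their connected components (for a small enough perturbation the new endpoints stay in the same components, which are open in $E$), and then joins $f_i$ to the perturbed endpoint by a path inside the component, along which $V$ is continuous by \cref{thm:firstvar}; the concatenated curve leaves $\m U$ at finitely many times, so $[V(f_0),V(f_1)]\smallsetminus V(\m U\cap E)$ is itself finite for the \emph{actual} values $V(f_0),V(f_1)$. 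Together with \cref{cor:everytransv}, which lets one start a transverse curve from a point of $\m U'\cap E$ whose value is at or near $\beta$, this yields a fixed interval of the form $[V(f_0),\beta]$ meeting $S$ in a finite set, which is what local finiteness at the endpoint requires. You should rework the endpoint case along these lines rather than through a limiting argument.
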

\begin{proof}
Let $S\subset \R$ be the topological support of $V(X)$, that is, the smallest closed set of probability one.
$V(\m U\cap E)\subset \R$ is an event of probability one for $V(X)$, thus $S\subset  \overline{V(\m U\cap E)}$. On the other hand, since $V$ is continuous on the open set $\m U$, the set $V^{-1}(\R\smallsetminus S)\cap \m U\cap E$ is open in $E$. Clearly, it has probability zero, so it must be empty. From this, we deduce that $V(\m U\cap E)\subset S$ which implies that $S=\overline{V(\m U\cap E)}$. 

Now we prove that $\overline{V(\m U\cap E)}$ is an interval.
For any two connected components of $\m U\cap E$, there is a transverse curve connecting one to the other. This implies that any two points in $\m U\cap E$ are connected by a transverse curve. \cref{thm:transcurve} shows that a transverse curve is always contained in $\m U\sqcup \m U'\cap E$ and any point in $\m U\sqcup \m U'\cap E$ lies on some transverse curve, by \cref{cor:everytransv}; moreover, \cref{thm:DeterministPicture} states that $V$ is continuous along a transverse curve. From this, we can conclude that the set $V(\m U\sqcup \m U'\cap E)$ is an interval and that $V(\m U\sqcup \m U'\cap E)\subset \overline{V(\m U\cap E)}$. It follows that $\overline{V(\m U\sqcup \m U'\cap E)}= \overline{V(\m U\cap E)}$ is also an interval. Moreover, since a transverse curve is contained in $\m U$ for all but a locally finite set of times, we conclude that $\overline{V(\m U\sqcup \m U'\cap E)}\smallsetminus V(\m U'\cap E)$ is locally finite. 
\end{proof}
\begin{remark}\label{r:whatisused}
In the above proofs, we used only the results discussed up to \cref{sec:degMorse}, with the exception of \cref{thm:DeterministPicture}. However, we do not need the full power of the latter: the only thing that is needed is \cref{lem:Morsepiecewisecont}, ensuring the continuity of the nodal volume of the level sets of a Morse function. This is a quite direct consequence of Morse Lemma \ref{lem:CrMorse}. In particular, the proof of \cref{lem:Morsepiecewisecont} follows from a very simple sub-case ($k=0$, and $h=1$) of \cref{thm:upperbound}. Considering that, we can say that \cref{thm:mainabs} and \cref{prop:interval} rely only on the results discussed up to \cref{sec:degMorse} and on Morse Lemma.
\end{remark}
\section{Preliminary lemmas}\label{s:prelims}
This section is devoted to a study of the integrability of $\|d_XV\|_{\HX}^2$, with the purpose of checking the validity of condition (iii) of \cref{def:Dspace}, for the nodal volume random variable $V(X)$. 

Let $X\randin \mC^2(M)$ be a Gaussian satifying \cref{ass:1}. 
Let $C(p,q)=\E\kop X(p)X(q)\pok$ be its  covariance function. Then, $C\colon M\times M\to \R$ is semipositive definite, symmetric and of class $\mC^{2,2}(M\times M)$.
\subsection{Kac-Rice formula on the zero set}
\subsubsection{Degenerations are few}
Define
\be 
\Delta^X:=\kop (p,q)\in M^2: (X(p),X(q)) \text{ is degenerate}\pok
\ee
The diagonal $\Delta:=\kop (p,p)\colon p\in M\pok\subset M\times M$ is, in general, a subset of $\Delta^X$. Notice that $\Delta^X$ is closed.
\begin{lemma}\label{lem:Mp1}
    %Let $M$ be a $\mC^2$ compact Riemannian manifold. Let $X\randin \mC^2(M,\R)$ be Gaussian, with positive variance: $\E\{|X(p)|^2\}>0$ for all $p\in M$, and with the property that: For any point $p\in M$ the Gaussian vector $d_pX\randin T_pM$ is non-degenerate. Then 
    For every $p\in M,$ the set $
M_p^X=\Delta^X\cap (\{p\}\times M) $
    is finite. Moreover, there is a neighborhood $B(\Delta)$ of the true diagonal in $M\times M$ such that $B(\Delta)\cap \Delta^X=\Delta$.
\end{lemma}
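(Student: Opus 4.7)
The plan is to work with the non-negative function $F\colon M\times M\to[0,+\infty)$ defined by
\[
F(p,q):=C(p,p)\,C(q,q)-C(p,q)^2,
\]
which is the determinant of the covariance matrix of the Gaussian vector $(X(p),X(q))$; hence $F\geq 0$, and $\Delta^X=F^{-1}(0)$, so that $M_p^X=\{q\in M: F(p,q)=0\}$. Note that \cref{ass:1} entails $C(p,p)>0$ for every $p\in M$, and $C\in\mathcal{C}^{2,2}(M\times M)$. Both parts of the lemma will be obtained by showing that every zero of $F(p,\cdot)$ is a non-degenerate local minimum, with the Hessian being uniformly positive definite along the diagonal.

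To compute $\H F(p,\cdot)$ at a zero $q_0\in M_p^X$, I would use the identity
\[
F(p,q)=C(p,p)\,\E\!\left[Z_q^{\,2}\right],\qquad Z_q:=X(q)-\lambda_q X(p),\qquad \lambda_q:=C(p,q)/C(p,p),
\]
and note that $Z_{q_0}=0$ almost surely. Differentiating twice along a $\mathcal{C}^2$ curve $q(t)\subset M$ with $q(0)=q_0$, $q'(0)=v$, the a.s. vanishing of $Z_{q_0}$ cancels both the first-order term and the cross term in the second derivative, leaving
\[
\tfrac{1}{2}\,(\H F(p,\cdot))_{q_0}(v,v)=C(p,p)\cdot\mathrm{Var}\bigl(d_{q_0}X(v)-\mu(v)\,X(p)\bigr),\qquad \mu(v):=\partial_2 C(p,q_0)(v)/C(p,p).
\]
The key step, and the main obstacle, is to show that this quadratic form is strictly positive for every $v\neq 0$: if it vanished, then $d_{q_0}X(v)=\mu(v)X(p)$ a.s.; combined with $X(q_0)=\lambda X(p)$ a.s., where $\lambda=C(p,q_0)/C(p,p)\neq 0$ because $C(q_0,q_0)=\lambda^2 C(p,p)>0$, one would deduce $d_{q_0}X(v)=(\mu(v)/\lambda)\,X(q_0)$ a.s., contradicting the non-degeneracy of the Gaussian pair $(X(q_0),d_{q_0}X(v))$ guaranteed by \cref{ass:1} applied at $q_0$. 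Hence every $q_0\in M_p^X$ is an isolated strict minimum of $F(p,\cdot)$ and, being a closed subset of the compact manifold $M$, $M_p^X$ is finite.

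For the second assertion, specializing the Hessian formula above to $q_0=p$ (so $\lambda=1$ and $\mu(v)=\partial_2 C(p,p)(v)/C(p,p)$) and simplifying yields
\[
\tfrac{1}{2}\,(\H F(p,\cdot))_{p}(v,v)=C(p,p)\,\mathrm{Var}\bigl(d_p X(v)\bigr)-\bigl(\partial_2 C(p,p)(v)\bigr)^2,
\]
which is exactly the determinant of the covariance matrix of $(X(p),d_p X(v))$, hence strictly positive for $v\neq 0$ by \cref{ass:1}. Since this expression depends continuously on $(p,v)\in TM$ and is quadratic in $v$, compactness of the unit tangent sphere bundle of $M$ provides a uniform bound $\tfrac{1}{2}\,(\H F(p,\cdot))_{p}(v,v)\geq c\,\|v\|^2$ for some $c>0$, and a standard Taylor estimate uniform in $p$ then produces a neighborhood $B(\Delta)$ of the diagonal on which $F(p,q)>0$ whenever $q\neq p$, giving $B(\Delta)\cap\Delta^X=\Delta$.
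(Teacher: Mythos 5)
Your proof is correct. It rests on the same underlying mechanism as the paper's --- the non-degeneracy of the pair $(X(q),d_qX(v))$ forces a second-order non-degeneracy at every point of $\Delta^X\cap(\{p\}\times M)$ --- but the execution is genuinely different. The paper first normalizes $X$ to unit variance, characterizes $M_p^X$ as $h_p^{-1}(\{-1,1\})$ for $h_p:=C(p,\cdot)$, and uses the identity $d^2_qh_p=\pm\,\E\{X(q)d^2_qX\}=\mp\,\E\{d_qX\,(d_qX)^T\}$ (an identity that is only available after normalization) to conclude that these extrema are non-degenerate, hence isolated; the diagonal statement is then handled by a separate soft argument, extracting from a hypothetical sequence $(p_n,q_n)\in\Delta^X\smallsetminus\Delta$ a direction $v$ with $d_pX(v)=0$ almost surely. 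You instead keep the field un-normalized and compute the Hessian of the covariance determinant $F$ at an arbitrary zero as $C(p,p)\,\mathrm{Var}\bigl(d_{q_0}X(v)-\mu(v)X(p)\bigr)$. This buys you three things: a single formula covering both the off-diagonal zeros and the diagonal ones (where it reduces exactly to $\det\mathrm{Cov}(X(p),d_pX(v))$), no bookkeeping of the sign $\pm 1$, and a quantitative uniform lower bound $F(p,\exp_p(v))\geq \tfrac{c}{2}\|v\|^2$ near the diagonal in place of the paper's compactness contradiction. The only points worth making explicit in a final write-up are (i) that $q_0$ is automatically a critical point of $F(p,\cdot)$, being a zero of a non-negative $\mathcal{C}^2$ function, so your curve-wise second derivative is the intrinsic Hessian independently of $q''(0)$, and (ii) that the interchange of $d^2/dt^2$ with $\E$ is licensed by $C\in\mathcal{C}^{2,2}(M\times M)$ --- or can be bypassed entirely by differentiating the closed form $\E[Z_q^2]=C(q,q)-C(p,q)^2/C(p,p)$ directly.
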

\begin{proof}
It is equivalent to prove the statement for the normalized field $\E\{|X|\}^{-1}X$, so we may assume that $\E |X(p)|^2=1$ for all $p$. 
In this case, we have 
\be 
M_p^X=\kop q\in M: X(p)=\pm X(q) \text{ a.s.}\pok.
    \ee
Let $C$ be the covariance function of $X$ and let $h_p:=C(p,\cdot).$ Then $M_p^X=h_p^{-1}(\kop -1,1\pok),$ moreover $|h_p(q)|\le 1$ for all $q\in M,$ by Cauchy-Schwartz, so it is sufficient to show that $h_p$ has non-degenerate second derivative at the points $q\in M_p^X.$ This follows from:
    \bega 
d^2_qh_p=\E\{X(p)d^2_qX\}=\E\{X(q)d^2_qX\}=-\E\{d_qX (d_qX)^T\}.
    \eega
We prove the last part by contradiction: we assume that the conclusion is false, that is, we assume that there exists a sequence $(p_n,q_n)\in \Delta^X\smallsetminus \Delta$ converging to a point $(p,p)$ in the true diagonal. Notice that this means that $X(p_n)=X(q_n)$ almost surely. Then, passing to a subsequence we can assume that in a local chart around $p$ we have 
\be 
\frac{p_n-q_n}{|p_n-q_n|}\to v \in T_pM\smallsetminus \{0\}.
\quad \text{ hence }\quad 
0=\frac{X(p_n)-X(q_n)}{|p_n-q_n|}\xrightarrow{a.s.}  d_pX(v).
\ee
This contradicts the nondegeneracy of $j^1_pX$, so we conclude that such a sequence does not exist.
\end{proof}
\subsubsection{The double field is z-KROK}
Taking up the language of \cite{MathiStec}, a random field is said to be \zkrok if, roughly speaking, the Kac-Rice formula (see \cite{AzaisWscheborbook,AdlerTaylor}) for its zero set behaves well. Such concept is analogous to the hypotheses of \cite[Theorem 6.10]{AzaisWscheborbook}
\begin{lemma}\label{lem:krAlpha}
    %Let $M$ be a $\mC^2$ compact Riemannian manifold of dimension $m\ge 2$. Let $X\randin \mC^2(M,\R)$ be a Gaussian field, such that: for every point $p\in M,$ the Gaussian vector $(X(p),d_pX)\randin \R\times T_pM$ is non-degenerate. Then 
The mapping $(p,q)\mapsto (X(p),X(q))$ defines a $\mC^2$ Gaussian random map $X^{\times 2}\colon M\times M \to \R^2$ for which $(0,0)$ is a regular value almost surely. Moreover, its restriction to $M\times M\smallsetminus \Delta^X$ is \zkrok in the sense of \cite{MathiStec} and we have the following formula: let $Z=X^{-1}(0)$ and $\a\colon \mC^1(M)\times M\times M\to \R$ be a Borel function, then 
\bega\label{eq:krkrAlpha}
%\begin{multline}
\E & \kop  \int_Z\int_Z\a(X,p,q)dZ(p)dZ(q)\pok 
=\dots 
\\
\dots &=\int_{M\times M\smallsetminus \Delta^X}\E\kop
\a(X,p,q)\|d_pX\|\|d_qX\|
\Bigg|
\begin{aligned}
X(p)&=0 \\ X(q)&=0
\end{aligned}
\pok \frac{dM(p)dM(q)}{2\pi\sqrt{C(p,p)C(q,q)-C(p,q)^2}}.
%\end{multline}
\eega
\end{lemma}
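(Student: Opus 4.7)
\textbf{Proof plan for \cref{lem:krAlpha}.} The plan is to reduce the statement to a direct application of the Kac-Rice formula (in the \zkrok framework of \cite{MathiStec}, or equivalently \cite[Theorem 6.10]{AzaisWscheborbook}) to the product Gaussian field $X^{\times 2}\colon M\times M\to\R^2$, restricted to the open subset $\WW:=M\times M\smallsetminus \Delta^X$ on which the Gaussian vector $(X(p),X(q))$ is non-degenerate. Observe that by \cref{lem:Mp1}, for every $p\in M$ the slice $M_p^X$ is finite and a neighborhood of the diagonal $\Delta$ (minus $\Delta$ itself) lies in $\WW$; in particular $\Delta^X\smallsetminus\Delta$ has $2m$-dimensional measure zero in $M\times M$, so integrating over $\WW$ on the right-hand side of \eqref{eq:krkrAlpha} is the same as integrating over $M\times M$.

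First I would verify that $(0,0)$ is almost surely a regular value of $X^{\times 2}$. At any $(p,q)\in (X^{\times 2})^{-1}(0,0)$ the differential has the block structure $(v_1,v_2)\mapsto (d_pX(v_1),d_qX(v_2))$, hence is surjective onto $\R^2$ as soon as $d_pX\neq 0$ and $d_qX\neq 0$. By \cref{ass:1} and Bulinskaya's \cref{lem:bulinskaya} applied to $X$, the event $\{d_pX\neq 0 \text{ for every }p\in Z\}$ has probability one, so $(0,0)$ is a.s. a regular value and $(X^{\times 2})^{-1}(0,0)$ is a.s. a $\mC^2$ submanifold of $M\times M$ of codimension $2$.

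Next I would check the \zkrok hypotheses for the restriction of $X^{\times 2}$ to $\WW$. The field is Gaussian of class $\mC^2$, and on $\WW$ the law of $(X(p),X(q))$ is a non-degenerate centered Gaussian on $\R^2$ with covariance matrix $\Sigma(p,q)$ whose determinant is $C(p,p)C(q,q)-C(p,q)^2>0$. These are precisely the hypotheses required to invoke the Kac-Rice formula. Its output yields
\[
\E\kop \int_{(X^{\times 2})^{-1}(0,0)\cap \WW}\a(X,p,q)\, d\vol^{2m-2}(p,q)\pok
= \int_{\WW} \E\kop \a(X,p,q)\, \delta_{X^{\times 2}}(p,q)\,\big|\, X(p)=X(q)=0\pok \rho_{p,q}(0,0)\, dM(p)\,dM(q),
\]
where $\delta_{X^{\times 2}}(p,q)$ is the normal Jacobian of $X^{\times 2}$ and $\rho_{p,q}$ is the density of $(X(p),X(q))$. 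The block-diagonal structure of the differential gives $\delta_{X^{\times 2}}(p,q)=\|d_pX\|\|d_qX\|$, and the Gaussian density at the origin is $\rho_{p,q}(0,0)= (2\pi)^{-1}(C(p,p)C(q,q)-C(p,q)^2)^{-1/2}$. Finally, the restriction of $\vol^{2m-2}$ to $(X^{\times 2})^{-1}(0,0)\cap \WW = (Z\times Z)\cap \WW$ coincides with the product measure $dZ\otimes dZ$ (since the tangent spaces split as $T_pZ\oplus T_qZ$), and the missing diagonal $\Delta\cap(Z\times Z)$ has dimension $m-1<2m-2$, hence $\vol^{2m-2}$-measure zero. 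Combining these ingredients gives \eqref{eq:krkrAlpha}.

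The main obstacle I anticipate is the careful verification of the \zkrok hypotheses of \cite{MathiStec} on the non-compact open set $\WW$: these require not just non-degeneracy of $(X(p),X(q))$ pointwise, but also that the conditional distributions and their associated Jacobian-type moments are sufficiently regular in $(p,q)$. The key input for this is the finiteness of $M_p^X$ and the tubular structure around $\Delta$ provided by \cref{lem:Mp1}, which ensure that $\WW$ is a "nice" open set on which all the required densities are continuous and bounded locally. A secondary subtlety is to justify the Borel measurability of $\a(X,p,q)$ as a function on the appropriate product space, which is routine once $\a$ is Borel on $\mC^1(M)\times M\times M$ but needs to be spelled out to invoke Fubini inside the Kac-Rice argument.
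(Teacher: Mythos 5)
Your proposal is correct and follows essentially the same route as the paper: Bulinskaya's lemma gives that $(0,0)$ is a.s.\ a regular value of $X^{\times 2}$, non-degeneracy of $(X(p),X(q))$ off $\Delta^X$ gives the \zkrok property, the Kac--Rice formula of \cite{MathiStec} produces the right-hand side, and \cref{lem:Mp1} makes the excluded set negligible. The only cosmetic slip is that the set missing from $(Z\times Z)\cap\WW$ is $(Z\times Z)\cap\Delta^X$ rather than just $(Z\times Z)\cap\Delta$, but the finiteness of the slices $M_p^X$ from \cref{lem:Mp1} (which you already invoke) gives the same dimension count, so the argument stands.
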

\begin{proof}
First, observe that since the Gaussian field $X$ satisfies the hypotheses of Bulynskaya \cref{lem:bulinskaya}, thus, with probability one, $0$ is regular value of $X$ and $Z\subset M$ is a $\mC^2$ hypersurface. This implies that $(0,0)$ is almost surely a regular value for the map $X^{\times 2},$ as well. 
Moreover, for any $(p,q)\in U^X:=M\times M \smallsetminus\Delta^X$ we have that $X^{\times 2}(p,q)$ has a density in $\R^2$.
 From this, we see that random function $X^{\times 2}|_{U^X}$ satisfies the hypotheses of \cite[Prop. 4.9]{MathiStec}, thus it is a \zkrok random field.

Observe that if $m\ge 2,$ then $Z\times Z\cap \Delta^X$ has zero $2(m-1)$-volume, because of \cref{lem:Mp1}, so that the left-hand-side of \cref{eq:krkrAlpha} can be seen as an integral over the set $\{X^{\times 2}=(0,0)\}.$ Therefore, by \cite[Thm. 6.2]{MathiStec} we obtain the wanted formula.
\end{proof}
Note that if $\a$ is defined only on the subset $\m{Z}=\{(f,p,q)\in \mC^2(M)\times M\times M: f\transv 0, f(p)=f(q)=0 \},$ then the formula still holds. To see this, observe that $\m{Z}$ is a Borel subset, so $\a$ can be extended to a global Borel $\a'$ function, for which we can apply the theorem. In the end, however, the formula depends only on the restriction $\a'|_{\m{Z}}=\a.$
\subsection{Uniform integrability of rational functions of Gaussian vectors }\label{sec:boringauss}
%\input{BoringGaussianIntegrals}
%\subsection{Some boring Gaussian integrals}\label{sec:boringauss}
In this subsection we will consider the space $\m{X}$ of Gaussian vectors in $\R^{N_0}\times \R^{N_1}\times \R^{N_2}$. Elements of this space are random vectors $X=(X_0,X_1,X_2)\sim N(0,K)$, identified by their covariance matrix:
\be 
K=\begin{pmatrix}
K_{00} & K_{01} & K_{02} \\
{ } & K_{11} & K_{12} \\
{ } & { } & K_{22}
\end{pmatrix}.
\ee
Hence, $\m{X}$ can be identified with the space of symmetric semi-positive definite matrices $K$ of size $N=N_0+N_1+N_2$. For $A,B\in \N$, let $\m{X}_{A,B}\subset \m{X}$ be the open subset such that $\mathrm{rank}(K_{11})\ge A$ and $\mathrm{rank}(K_{22})\ge B.$
\begin{remark}
    For any $A\in \N$ the condition $\mathrm{rank}(K_{11})\ge A$ is equivalent to have that
    \be 
\E\kop \frac{1}{\|X_1\|^{A-1}}\pok<\infty.%, \quad \text{for all $a<A.$}
    \ee
\end{remark}
At the same time we will consider the space of sub-polynomial functions $\m{F}$. An element $F\in \m{F}$ is a measurable function $F\colon \R^N\to \R$ such that there exists a constant $D>0$ such that
\be\label{eq:D}
|F(x)|\le D(1+|x|^D), \text{ for all $x\in \R^N.$}
\ee
We will call $\m{F}_D$ the subset of $\m{F}$ on which \eqref{eq:D} holds with the constant $D$. We put on $\m{F}_D$ the topology of point-wise convergence.

We need some criteria for the convergence of integrals of the form 
\be 
I_{a,b}(F,K)=\E F(X)\|X_1\|^{-a}\|X_2\|^{-b},
\ee 
with $a,b\in \N.$ 

A first idea is that $I_{a,0}$ should be continuous on $\m{F}_D\times \m{X}_{A,0}$ if $a<A$. Indeed, when $\|X_1\|=\chi_A$ is a chi random variable of parameter $A$, we have that $\E\{\chi_A^{-a}\}<\infty$ if and only if $a<A.$

A second idea is that $I_{a,b}$ should be continuous on $\m{F}_D\times \m{X}_{A,B}$ if $2a<A$ and $2b<B$. Indeed, by Cauchy-Schwartz, we have that $\E\{\chi_A^{-a}\chi_B^{-b}\}^2<\E\{\chi_A^{-2a}\}\E\{\chi_B^{-2b}\}.$

A third idea (which may be overkill) is that $I_{a,b}$ should be continuous on $\m{F}_D\times \m{X}_{A,B}$ if $pa<A$ and $qb<B$ for some conjugated exponents $p$ and $q$, that is, $\frac{1}{p}+\frac{1}{q}=1.$ Indeed, by Holder inequality, we have that $\E\{\chi_A^{-a}\chi_B^{-b}\}<\E\{\chi_A^{-pa}\}^{\frac{1}{p}}\E\{\chi_B^{-qb}\}^{\frac{1}{q}}.$
The case $b=0$ can be interpreted with $p=1$ and $q=+\infty.$
\begin{theorem}\label{thm:overkill}
    $I_{a,b}$ is continuous on $\m{F}_D\times \m{X}_{A,B}$ if $pa<A$ and $qb<B$ for some conjugated exponents $p$ and $q$, that is, $\frac{1}{p}+\frac{1}{q}=1.$ Moreover, for all $(F,K)\in \m{F}_D\times \m{X}_{N_1,N_2}$ such that $\|K\|\le D$, we have 
    \be 
I_{a,b}(F,K)\le C(D)\tyu 1+\frac{1}{\sqrt{\det K_{11}}}+\frac{1}{\sqrt{\det K_{22}}}\uyt
    \ee
\end{theorem}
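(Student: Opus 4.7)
The plan is to decouple $F(X)$, $\|X_1\|^{-a}$, and $\|X_2\|^{-b}$ through two successive applications of H\"older's inequality. The hypothesis that conjugate exponents $p,q>1$ with $pa<A$ and $qb<B$ exist is equivalent (by short algebra: parameterise $p=1+t$, $q=1+1/t$ to reduce the two inequalities to a single interval condition on $t$) to $\tfrac{a}{A}+\tfrac{b}{B}<1$. First I would apply H\"older to the decomposition
\[F(X)\|X_1\|^{-a}\|X_2\|^{-b}=\bigl(|F(X)|^{1/p}\|X_1\|^{-a}\bigr)\bigl(|F(X)|^{1/q}\|X_2\|^{-b}\bigr),\]
obtaining $|I_{a,b}(F,K)|\le(\E[|F|\,\|X_1\|^{-ap}])^{1/p}(\E[|F|\,\|X_2\|^{-bq}])^{1/q}$. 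To pass from this multiplicative bound to the \emph{additive} form stated in the theorem, rather than a product bound involving $1/\sqrt{\det K_{11}\det K_{22}}$, the crucial device is Young's inequality $uv\le u^p/p+v^q/q$, giving
\[|I_{a,b}(F,K)|\le \tfrac{1}{p}\E[|F|\,\|X_1\|^{-ap}]+\tfrac{1}{q}\E[|F|\,\|X_2\|^{-bq}].\]

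Each summand is then estimated by a further H\"older, separating $F$ from the inverse-norm factor. Since $ap<A\le N_1$, I pick $s>1$ close enough to $1$ that $aps<A$ and, with $r$ conjugate to $s$, bound $\E[|F|\,\|X_1\|^{-ap}]\le(\E|F|^r)^{1/r}(\E\|X_1\|^{-aps})^{1/s}$. The first factor is controlled by a constant depending only on $D$: the sub-polynomial growth $|F|\le D(1+|x|^D)$ combined with $\|K\|\le D$ gives uniform Gaussian-moment control on $|F(X)|^r$. For the second factor I use the explicit Gaussian density of $X_1$: splitting $\int\|x\|^{-aps}p_{K_{11}}(x)\,dx$ into $\{\|x\|\le 1\}$ (where the exponential is bounded by $1$ and the normalisation $(\det K_{11})^{-1/2}$ is exposed, the resulting Euclidean integral being finite precisely because $aps<N_1$) and $\{\|x\|>1\}$ (where $\|x\|^{-aps}\le 1$ and the density integrates to at most $1$) yields $\E\|X_1\|^{-aps}\le C_{aps}/\sqrt{\det K_{11}}+1$. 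Since $(1+t)^{1/s}\le 1+t^{1/s}$ and $(\det K_{11})^{-1/(2s)}\le 1+(\det K_{11})^{-1/2}$ for $s>1$, the exponent $1/s$ costs nothing. The parallel analysis for $X_2$, combined with the Young step, then delivers the uniform bound $C(D)(1+1/\sqrt{\det K_{11}}+1/\sqrt{\det K_{22}})$.

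For continuity on $\m F_D\times\m X_{A,B}$, the estimate above serves as an $L^1$-dominator. The rank conditions $\mathrm{rank}(K_{11})\ge A$, $\mathrm{rank}(K_{22})\ge B$ define an open subset of covariance matrices, so the bound persists uniformly in a neighbourhood of any $K\in \m X_{A,B}$. I would couple $X^{(n)}\sim N(0,K_n)$ with $X\sim N(0,K)$ via continuously varying factors $L_n\to L$ satisfying $L_nL_n^T=K_n$ (possible since the rank is locally constant), obtaining $X^{(n)}\to X$ almost surely and $X_1,X_2\ne 0$ almost surely. Separate continuity in $K$ (Skorohod coupling plus dominated convergence) and in $F$ (dominated convergence against the fixed finite measure $\|x_1\|^{-a}\|x_2\|^{-b}\,d\mu_K$, using only pointwise convergence of $F_n$) then combine to give joint continuity on the product topology. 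The main obstacle is structural rather than computational: extracting the additive form $1/\sqrt{\det K_{11}}+1/\sqrt{\det K_{22}}$ from a two-variable H\"older bound is not automatic and forces the Young step above; the pointwise topology on $\m F_D$ is a secondary technical point, handled by the separate-continuity reduction.
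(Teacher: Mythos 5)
Your argument follows the same skeleton as the paper's proof: the pointwise Young inequality producing the additive majorant $\frac{1}{p}|F|\,\|X_1\|^{-ap}+\frac{1}{q}|F|\,\|X_2\|^{-bq}$, the ball/complement split of the Gaussian integral exposing the factor $(\det K_{11})^{-1/2}$, and a domination argument along an almost surely convergent coupling $X^{(n)}\to X$. Your extra H\"older step with an exponent $s>1$ close to $1$, decoupling $F$ from $\|X_1\|^{-ap}$ at the harmless cost of requiring $aps<A$, is a clean (in fact slightly more careful) variant of the paper's direct splitting, and your derivation of the explicit bound for full-rank $K$ is correct.

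The gap is in the continuity claim at rank-deficient covariances. On $\m{X}_{A,B}$ one only has $\mathrm{rank}(K_{11})\ge A$, and $A$ may be strictly smaller than $N_1$; at such points $\det K_{11}=0$, your proposed $L^1$-dominator $C(D)\big(1+(\det K_{11})^{-1/2}+(\det K_{22})^{-1/2}\big)$ is infinite, and the assertion that ``the bound persists uniformly in a neighbourhood of any $K\in\m{X}_{A,B}$'' fails: along $K_n\to K$ the determinants $\det K_{11}(n)$ may tend to $0$. This degenerate case is not a corner case --- it is exactly the one needed downstream, since \cref{cor:IX} is invoked in \cref{lem:Expbound} at limit covariances of rank $m-1<m=N_1$. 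The missing ingredient is the paper's reduction via $\|x+y\|^{-1}\le\|x\|^{-1}$: because $\mathrm{rank}(K_{11})\ge A$, one can select an $A$-dimensional sub-vector $X_1'$ of $X_1$ whose covariance is nondegenerate at $K$ (hence has determinant bounded below for all covariances near $K$), bound $\|X_1\|^{-aps}\le\|X_1'\|^{-aps}$, and run your ball/complement estimate in dimension $A$, where the condition $aps<A$ is precisely what makes $\int_{B^A}\|x\|^{-aps}\,dx$ finite. A further minor point: rank is only lower semicontinuous, not locally constant, on $\m{X}_{A,B}$, so your justification of the coupling should be replaced by simply taking $L_n=K_n^{1/2}$, which depends continuously on $K_n$ with no rank assumption.
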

\begin{corollary}\label{cor:IX}
    $I_{1,1}$ is continuous on $\m{F}_D\times \m{X}_{3,3}$.
\end{corollary}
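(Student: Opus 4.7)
The strategy is to establish the uniform bound via a three-term Hölder inequality combined with a polar-coordinate integral estimate, and then deduce continuity by dominated convergence. Since $pa<A$ and $qb<B$ are strict, fix $\eta>0$ small enough that $(1+\eta)pa<A$ and $(1+\eta)qb<B$; setting $s:=p(1+\eta)$, $t:=q(1+\eta)$, $r:=(1+\eta)/\eta$, one checks $1/r+1/s+1/t=1$, so Hölder's inequality yields
\begin{equation*}
I_{a,b}(F,K)\le\E\kop|F(X)|^r\pok^{1/r}\E\kop\|X_1\|^{-as}\pok^{1/s}\E\kop\|X_2\|^{-bt}\pok^{1/t}.
\end{equation*}

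The three factors are handled separately. For the first, the sub-polynomial bound $|F(x)|\le D(1+|x|^D)$ together with standard Gaussian moment estimates and the hypothesis $\|K\|\le D$ give $\E\kop|F(X)|^r\pok^{1/r}\le C(D)$. The heart of the argument is the inverse-moment estimate
\begin{equation*}
\E\kop\|X_i\|^{-\alpha}\pok\le C(\alpha,N_i,D)(\det K_{ii})^{-1/2},\qquad\text{for }\alpha<N_i,\ \|K_{ii}\|\le D,
\end{equation*}
which I prove by writing the $\m{N}(0,K_{ii})$ density in polar coordinates $x=\rho\theta$ on $\R^{N_i}$: the radial integral converges at $\rho=0$ thanks to $\alpha<N_i$, the angular factor $(\theta^T K_{ii}^{-1}\theta)^{-(N_i-\alpha)/2}$ is dominated by $\|K_{ii}\|^{(N_i-\alpha)/2}\le D^{(N_i-\alpha)/2}$ uniformly in $\theta$, and the prefactor $(\det K_{ii})^{-1/2}$ emerges from the Gaussian normalization constant. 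Plugging in $\alpha=as$ and $\alpha=bt$ (both admissible by construction) then produces
\begin{equation*}
I_{a,b}(F,K)\le C(D)(\det K_{11})^{-1/(2s)}(\det K_{22})^{-1/(2t)},
\end{equation*}
and since the two exponents sum to $1/(2(1+\eta))<1/2$, the elementary inequality $u^\alpha v^\beta\le\max(u,v)^{\alpha+\beta}\le 1+u^{1/2}+v^{1/2}$ (valid for $u,v\ge 0$, $\alpha,\beta\ge 0$, $\alpha+\beta\le 1/2$) applied with $u=(\det K_{11})^{-1}$ and $v=(\det K_{22})^{-1}$ converts this product bound into the stated additive form.

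Continuity on $\m{F}_D\times\m{X}_{A,B}$ then follows by dominated convergence: given $(F_n,K_n)\to(F,K)$, realize the Gaussian vectors jointly as $X^{(n)}=A_nZ$ and $X=AZ$ with a common standard normal $Z$ and a choice of square roots $A_n\to A$ (obtained from the symmetric square root on the support subspace, which depends continuously on $K$ in a neighborhood). Then the integrands converge pointwise $\PP$-almost surely, while the uniform bound just established supplies an integrable dominator on the neighborhood, thanks to the joint continuity of $\det K_{11}$ and $\det K_{22}$ and the openness of $\m{X}_{A,B}$. Corollary \ref{cor:IX} is the specialization $a=b=1$, $A=B=N_1=N_2=3$, $p=q=2$, where indeed $pa=2<3=A$. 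The main technical obstacle I expect, and the point requiring the most care, is the rank-deficient regime $A<N_1$ (or $B<N_2$): there $\det K_{11}=0$ and the determinant must be replaced by a pseudo-determinant attached to the actual $A$-dimensional support of $X_1$, with the continuity of that support needing to be tracked as $K$ varies. In the applications of the present paper, however, only the full-rank case $A=N_1$, $B=N_2$ is used, and the argument above applies directly.
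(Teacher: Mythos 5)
Your proposal does not merely apply \cref{thm:overkill} (which is all the paper's one-line proof of this corollary does, taking $p=q=2$ so that $p\cdot 1=2<3$); it re-proves that theorem, and by a genuinely different splitting. Where the paper uses Young's inequality to dominate $\|X_1\|^{-a}\|X_2\|^{-b}$ by the \emph{sum} $\frac1p\|X_1\|^{-ap}+\frac1q\|X_2\|^{-bq}$ and then bounds each term by cutting at the unit ball and using the supremum of the Gaussian density, you use a three-term H\"older inequality to factor the expectation into a \emph{product} of an $L^r$ norm of $F$ and two inverse moments. Both routes funnel into the same estimate $\E\kop\|X_i\|^{-\alpha}\pok\le C(\det K_{ii})^{-1/2}$ for $\alpha<N_i$, and your polar-coordinate derivation of it is correct. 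The slack $\eta$ in your exponents is actually an asset: it yields $\sup_n\E\kop|f_n|^{1+\eta'}\pok<\infty$ for some $\eta'>0$, i.e.\ uniform integrability, which is the clean way to pass to the limit. You should invoke that explicitly, because as written your final step is not quite right: a uniform bound on the \emph{expectations} $I_{a,b}(F_n,K_n)$ is not ``an integrable dominator'' in the sense required by dominated convergence. (The paper's own Fatou-type phrasing has a comparable looseness.)

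The genuine gap is your closing claim that ``only the full-rank case $A=N_1$, $B=N_2$ is used'' in the paper. That is false, and the rank-deficient regime you set aside is exactly the one that matters. In \cref{lem:Expbound} the corollary is applied along a sequence $K_{p_n,q_n}$ converging to $K_{p,v}$, whose block $K_{11}$ is the covariance of $[\,d_pX\mid X(p)=0,\ d_pX(v)=0\,]$; conditioning $d_pX$ on the vanishing of its own component along $v$ forces this vector onto a hyperplane of $T_p^*M$, so $\mathrm{rank}(K_{11})=m-1<m=N_1$ and $\det K_{11}=0$ at the limit point (for $m=4$ the rank is exactly $A=3$). Continuity of $I_{1,1}$ at such points is precisely what \cref{lem:Expbound} needs, so the degenerate case cannot be discharged. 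The repair is either the one you sketch (a pseudo-determinant on the support, with its continuity tracked) or, more simply, the paper's device: since $\mathrm{rank}(K_{11})\ge A$ at the limit, select $A$ coordinates of $X_1$ whose covariance sub-block is nondegenerate there (hence for all nearby $K$), and use $\|X_1\|^{-\alpha}\le\|X_1'\|^{-\alpha}$ for the corresponding sub-vector $X_1'$ to reduce to your full-rank estimate in dimension $A$. With that insertion and the uniform-integrability fix, your argument is complete.
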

\begin{proof}
    Set $p=q=2$, then $p\cdot 1=2<3.$
\end{proof}
\begin{proof}[Proof of \cref{thm:overkill}]
Let $K(n)\to K$ in $\m{X}_{A,B}$, as $n\to +\infty$. 
Let $X(n)\to X$ be the corresponding sequence of Gaussian vectors, which we can assume to be almost surely convergent. Let $L(n)\to L$ be a converging sequence of matrices such that $L(n)L(n)^T=K(n)$. Then\be 
I_{a,b}(F_n,K_n)=\frac{F_n(X(n)\gamma)}{|X_1(n)|^a|X_2(n)|^b}
\ee
To conclude, we will use Fatou Lemma in the form of the following argument. Young's inequality $xy\le \frac{a^p}{p}+\frac{y^q}{q}$ yields $|f_n|\le g_n,$ where:
\bega 
f_n:=\frac{F_n(X(n))}{|X_1(n)|^a|X_2(n)|^b}\quad  g_n:= |F_n(X(n))|\tyu\frac{1}{p|X_1(n)|^{ap}}+\frac{1}{q|X_2(n)|^{bq}}\uyt.
\eega 
We have that $f_n\to f=\frac{F(X)}{|X_1|^a|X_2|^b}$ and $g_n\to g=|F(X)|\tyu\frac{1}{p|X_1|^{ap}}+\frac{1}{q|X_2|^{bq}}\uyt\ge |f|$ a.s. Thus, in order to prove that $\E f_n\to \E f$, it is sufficient to show that  $\E\kop g_n\pok<+\infty$ uniformly. We can reorder the variables and change the numbers $N_0,N_1,N_2$ to have $N_1=A,$ $N_2=B$, using the inequality $\frac{1}{\|x+y\|}\le \frac{1}{\|x\|}$. 
Let $B_A\subset \R^A$ be the unit ball. Observe that 
\bega 
\E\kop\frac{|F_n(X(n))|}{|X_1(n)|^{ap}}\pok
&\le 2D\E\kop\frac{1_{B_A}(X_1(n)}{|X_1(n)|^{ap}}\pok
+\E\kop D(1+\|X_1(n)\|^D)\pok
\\
&\le \frac{2D}{(2\pi)^{\frac{A}{2}}} \int_{B^A}\frac{1}{|x|}\frac{dx}{\sqrt{\det K_{11}(n)}}+\E\kop D(1+(k \|\gamma_1\|)^D)\pok,
\eega
where $k=\sup_n \|K(n)\|<\infty$ and $\gamma_1$ is a standard Gaussian vector in $\R^A$. Thus, for some constant $C$, we have an inequality
\be 
\E\kop\frac{|F_n(X(n))|}{|X_1(n)|^{ap}}\pok\le \frac{C}{\sqrt{\det K_{11}(n)}}\int_{B^A}\frac{1}{|x|^{ap}}dx+C.
\ee
If the limit $K\in \chi_{A,B}$, then $\det K_{11}\neq 0$ and the integral is finite because $A>ap$. Repeating the same for the other term in $g_n$, we conclude.
\end{proof}
\subsection{The conditional expectation is bounded}
\subsubsection{Differentiation of the conditioning}
% \begin{definition}
% %Let $\pi:E\to M$ be a vector bundle
% Let $r\in \N$. We say that a function $\a\colon \mC^r(M)\times M\times M\to \R$ is \emph{$j^r$-subpolynomial} if there exists a (and hence, for any) norm on the jet bundle $J^r(M,\R)\to M$ and a constant
% $D\in \R$ such that $\a(f,p,q)\le D(1+\|j^r_pf\|+\|j^r_qf\|)^D$ for all $(p,q)\in M\times M.$
% \end{definition}
    
\begin{lemma}\label{lem:dd}
%Let $X\randin \mC^1(M,\R^k)$ be a Gaussian field that satisfies hypotheses (1) and (2) of \cref{thm:D12}. 
Let $(p_n,q_n)\in M\times M\smallsetminus \Delta$ be a sequence converging to a double point $(p,p)$. Assume that in some coordinate chart around $p$, we have
\be 
\lim_{n\to \infty}\frac{p_n-q_n}{|p_n-q_n|}=v\in T_pM
\ee 
Let $H_n\to Y$ be a converging sequence of Gaussian vectors in $\R^k$ such that $(H_n,X(p_n),X(q_n))$ are jointly Gaussian.
%Then, for any continuous $j^2$-subpolynomial functional $\a\colon \mC^2(M)\times M\times M\to \R^k$, we have that 
Then, we have the following convergence in law
\be 
\qwe H_n\ \Big|X(p_n)=X(q_n)=0\ewq \implies \qwe
H\ \Big|X(p)=d_pX(v)=0\ewq.
\ee
\end{lemma}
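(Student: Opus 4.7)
The plan is to rewrite the conditioning event so that the conditioning Gaussian vector has a non-degenerate limit, and then to appeal to the continuity of Gaussian regression formulas with respect to the covariance. Since $(p_n,q_n)\notin \Delta$, one has the identity of events
\[
\{X(p_n)=X(q_n)=0\}=\{X(p_n)=0,\ Y_n=0\},\qquad Y_n:=\frac{X(q_n)-X(p_n)}{|q_n-p_n|},
\]
where the norm is taken in the fixed chart around $p$. Setting $B_n:=(X(p_n),Y_n)$ and $B:=(X(p),d_pX(v))$, the statement becomes the weak convergence of the centered Gaussian laws $[H_n\mid B_n=0]$ to $[H\mid B=0]$.

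First I would show that $(H_n,B_n)\to (H,B)$ in $L^2$, so that the joint covariance matrices converge. The convergence $H_n\to H$ is given, while $X(p_n)\to X(p)$ in $L^2$ follows from continuity of $X$. For the increment, a first-order Taylor expansion in the chart yields
\[
Y_n = d_{p_n}X\!\tyu\tfrac{q_n-p_n}{|q_n-p_n|}\uyt + R_n,
\]
and the $\m C^2$ regularity of $X$ (equivalently, $\m C^{2,2}$ regularity of its covariance function) gives $\E[R_n^2]\to 0$; combined with $\tfrac{q_n-p_n}{|q_n-p_n|}\to v$ and the $L^2$-continuity of $p\mapsto d_pX$, this yields $Y_n\to d_pX(v)$ in $L^2$. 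Since $v$ is a limit of unit vectors, $|v|=1\neq 0$, so \cref{ass:1} guarantees that the covariance of $B$ is invertible.

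The second step is the standard continuity property of Gaussian conditioning: if $(A_n,B_n)$ is a sequence of centered jointly Gaussian vectors converging in $L^2$ to a centered $(A,B)$ with $\mathrm{Cov}(B)$ invertible, then $[A_n\mid B_n=0]$ converges weakly to $[A\mid B=0]$. This follows from the explicit regression formulas, which express the conditional mean (zero, by centering) and the conditional covariance
\[
\mathrm{Cov}(A_n\mid B_n=0)=\mathrm{Cov}(A_n)-\mathrm{Cov}(A_n,B_n)\mathrm{Cov}(B_n)^{-1}\mathrm{Cov}(B_n,A_n)
\]
as continuous functions of the joint covariance on the open set where $\det \mathrm{Cov}(B_n)$ is bounded away from zero; convergence of means and covariances of centered Gaussian vectors implies weak convergence. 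Applying this with $A_n=H_n$ and the above $B_n$ concludes the proof.

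The only nontrivial ingredient is the $L^2$ control of $Y_n$, which is where the $\m C^2$ hypothesis on $X$ is genuinely used; all subsequent steps are routine manipulations with Gaussian regression and require no further information about $X$ as a random field on $M$.
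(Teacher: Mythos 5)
Your proposal is correct and follows essentially the same route as the paper: both rewrite the conditioning event in terms of the pair $(X(p_n),\,(X(q_n)-X(p_n))/|q_n-p_n|)$, identify its limit $(X(p),d_pX(v))$ (non-degenerate by Assumption \ref{ass:1}), and conclude via the continuity of the Gaussian regression formula. The only cosmetic difference is that the paper phrases the last step as almost sure convergence of the regression representation $H_n-C_nK_n^{-1}Y_n$, whereas you argue directly with convergence of the conditional covariance matrices; these are interchangeable.
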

\begin{remark}
The above statement means that for any $\a\colon \R^N\to \R$ continuous and bounded, we have
\be 
\lim_{t\to 0}\E\{\a(H_n)|X(p_n)=X(q_n)=0\}=\E\{\a(H)|X(p)=d_pX(v)=0\}.
\ee
\end{remark}
\begin{proof}
Consider the family of non-degenerate Gaussian vectors $Y_n\randin \R^{2}$ such that 
    \bega
Y_n:= \tyu X(p_n),\tyu X(q_n)-X(p_n)\uyt \frac{1}{|q_n-p_n|}\uyt \xrightarrow[n\to \infty]{a.s.} Y:= \tyu X(p),d_pX(v) \uyt
    \eega
Notice that in particular the inverse of the covariance matrix $K_n^{-1}=\E Y_nY_n^{-T}$ is a convergent.
Moreover, the sequence of matrices defined as $C_n:=\E\kop  H_nY_n^T\pok$ converges to the matrix $C=\E\kop  HY^T\pok$.
    
Interpreting the conditioning as a projection, we can observe that for all $n\neq 0$, we have $[H_n|X(p_n)=X(q_n)=0]=[H_n|Y_n=0]$, because the $Y_n=0$ if and only if $X_n=X_n=0$. % and, in the Gaussian case, conditioning can be seen as a projection. 
Furthermore, using Gaussian regression formula, we can write explicitely the sequence of conditioned Gaussian vectors (defined up to equivalence of their law, i.e., their covariance matrices) as follows
\be 
\tyu H_n \ \Big|X(p_n)=X(q_n)=0\uyt =
H_n-C_nK_n^{-1} Y_n \xrightarrow[n\to \infty]{a.s.} H-CK^{-1}Y.
%=\tyu H \ \Big|X(p)=X(q)=0\uyt
\ee
The above almost sure convergence implies the convergence in law, which is what we wanted to show.
\end{proof}
\subsubsection{The conditional expectation is bounded}
\begin{lemma}\label{lem:Expbound}
%Let $X\randin \mC^1(M,\R)$ be a Gaussian field that satisfies hypotheses (1) and (2) of \cref{thm:D12}, with 
Assume $\dim M\ge 4$. 
There is a constant $E>0$ and a neighborhood $B\supset \Delta$ of the diagonal, such that for all $(p,q)\in B\smallsetminus \Delta$, we have
\be 
I(p,q)=\E\kop\frac{|\Tilde{\Delta}X(p)|}{\|d_pX\|}\frac{|\Tilde{\Delta}X(q)|}{\|d_qX\|}\Bigg|
\begin{aligned}
X(p)&=0 \\ X(q)&=0
\end{aligned}
\pok\le E.
\ee
\end{lemma}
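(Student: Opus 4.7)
The plan is to show that $I(\cdot,\cdot)$ extends to a continuous function on a compact neighborhood of $\Delta$ in $M\times M$, and then invoke compactness of $\Delta$ to produce the neighborhood $B$ and the uniform bound $E$. By \cref{lem:Mp1}, one may choose $B$ so that $B\cap \Delta^X=\Delta$, so that it suffices to work with $(p,q)\in B\smallsetminus \Delta$, where the Gaussian vector $(X(p),X(q))$ is non-degenerate.

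First I would establish continuity of $I$ on $M\times M\smallsetminus \Delta^X$. Away from $\Delta^X$, Gaussian regression describes the conditional law of $(d_pX,d_qX,\tilde{\Delta}X(p),\tilde{\Delta}X(q))$ given $X(p)=X(q)=0$ via a covariance matrix $K(p,q)$ depending continuously on $(p,q)$, whose blocks corresponding to $d_pX$ and $d_qX$ have rank $m$ thanks to \cref{ass:1}. Since the integrand has the form $|y_1|\,|y_2|\,\|x_1\|^{-1}\|x_2\|^{-1}$ with $y_1,y_2$ polynomial in the Hessian components, one has $I(p,q)=I_{1,1}(F,K(p,q))$ in the notation of \cref{sec:boringauss}, with $F\in \m{F}_D$ for some $D$ independent of $(p,q)$. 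As $m\ge 4>3$, \cref{cor:IX} then yields the continuity of $I$ on $M\times M\smallsetminus \Delta^X$.

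The crux is the continuous extension of $I$ across $\Delta$. Fix a sequence $(p_n,q_n)\to (p,p)\in \Delta$ with $p_n\neq q_n$ and, after passing to a subsequence in a local chart (by compactness of the unit sphere), assume $(q_n-p_n)/|q_n-p_n|\to v\in T_pM$ with $\|v\|=1$. By \cref{lem:dd}, the conditioned vectors $[(d_{p_n}X,d_{q_n}X,\tilde{\Delta}X(p_n),\tilde{\Delta}X(q_n))\mid X(p_n)=X(q_n)=0]$ converge in law to $[(d_pX,d_pX,\tilde{\Delta}X(p),\tilde{\Delta}X(p))\mid X(p)=0,\,d_pX(v)=0]$. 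A Schur-complement computation based on \cref{ass:1} shows that the conditional covariance of $d_pX$ given $(X(p),d_pX(v))=0$ has rank exactly $m-1$, supported on $\{w\in T_pM\colon \langle v,w\rangle_g=0\}$; the same holds for $d_qX$ in the limit. Since $m-1\ge 3$, the limiting Gaussian vector belongs to $\m{X}_{3,3}$, and \cref{thm:overkill} (applied with $p=q=2$ as in \cref{cor:IX}) yields the uniform integrability along the sequence and hence the convergence of $I(p_n,q_n)$ to a finite limit depending continuously on $(p,v)$.

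Since the unit sphere bundle of $TM|_\Delta$ is compact, this gives a continuous extension of $I$ to a compact neighborhood of $\Delta$ in $M\times M$, from which the bound $E$ follows. The principal obstacle, and the reason the dimensional hypothesis $m\ge 4$ enters, is the unavoidable rank-drop in the gradient block as $(p,q)\to \Delta$: the limiting conditioning $d_pX(v)=0$ removes one dimension from the Gaussian gradient, so the remaining rank is $m-1$, and the application of \cref{cor:IX} requires this rank to be at least $3$.
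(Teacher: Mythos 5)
Your proposal is correct and follows essentially the same route as the paper: both reduce $I(p,q)$ to $I_{1,1}(F_{p,q},K_{p,q})$, use \cref{lem:dd} to identify the limit of the conditioned vectors along a sequence approaching the diagonal with prescribed direction $v$, observe that the gradient blocks have rank $m-1\ge 3$ (forcing $m\ge 4$), and conclude via \cref{cor:IX}. The only cosmetic difference is that the paper works directly with a subsequence approaching the supremum of $I$ rather than phrasing the conclusion as a continuous extension (which, strictly, lives on the spherical blow-up of the diagonal rather than on $M\times M$ itself, as your dependence on $(p,v)$ already indicates).
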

\begin{proof}
We will reduce the proof to the convergence of integrals treated in \cref{cor:IX}.
Indeed $I(p,q)$ is an integral of the form $I_{1,1}(F_{p,q},K_{p,q}),$ where $K_{p,q}$ is the covariance function of the conditioned Gaussian vector $X_{p,q}=(X_0,X_1,X_2),$ defined as
\be 
X_{p,q}=\begin{pmatrix}
    X_0
    \\
    X_1
    \\
    X_2
\end{pmatrix}
=
\qwe\begin{pmatrix}
    (\hess_pX,\hess_qX)
    \\
    d_pX
    \\
    d_qX
\end{pmatrix}
\Bigg|
\begin{aligned}
X(p)&=0 \\ X(q)&=0
\end{aligned}\ewq
=\qwe Y(X,p,q)\Bigg|
\begin{aligned}
X(p)&=0 \\ X(q)&=0
\end{aligned}
\ewq
\ee
The function $F_{p,q}$ satisfies the inequality:
\bega
F_{p,q}((H_1,H_2&),u,v):=
\\
&=|\tyu\mathrm{tr}(g_p^{-1}H_1)-\frac{1}{u^Tg_p^{-1}u}u^TH_1u\uyt \cdot \tyu \mathrm{tr}(g_q^{-1}H_2)-\frac{1}{v^Tg_q^{-1}v}v^TH_2v\uyt|
\\
&\le D|H_1|\cdot |H_2|
\eega
for some $D>2$, hence $F_{p,q}\in \m{F}_{D}$ for all $p,q\in M\times M.$ Moreover, $F_{p,q}$ depends continuously on $p,q$, in the sense of $\m{F}_{D}.$ 

Consider a sequence of pairs $(p_n,q_n)$ that approaches the supremum of $I(p,q)$. Since $M\times M$ is compact, by passing to a subsequence, we can assume that $(p_n,q_n)\to (p,q)$ and that $(q_n-p_n)\|p_n-q_n\|^{-1}\to v$, for some $v\in T_pM$. By \cref{lem:dd}, the sequence of Gaussian vectors $X_{p_n,q_n}$ converges in law to:
    \be 
X_{p,v}=
\qwe\begin{pmatrix}
    (\hess_pX,\hess_pX)
    \\
    d_pX
    \\
    d_pX
\end{pmatrix}
\Bigg|
\begin{aligned}
X(p)&=0 \\ d_pX(v)&=0
\end{aligned}
\ewq
\ee
Notice that the covariance $K_{p,v}$ of $X_{p,v}$ belongs to $\m{X}_{m-1,m-1},$ because of the nondegeneracy of $j^1_pX$. This is why we need that $m\ge 4$. Then, $K_{p,q}\in \m{X}_{3,3}$ for all $(p,q)\in M\times M$.  It follows that $(F_{p_n,q_n},K_{p_n,q_n})$ is a convergent sequence in $\m{F}_D\times \m{X}_{3,3},$ so we conclude that $\sup I_{p,q}=\lim_n I_{1,1}(F_{p_n,q_n},K_{p_n,q_n})$ must be finite, by \cref{cor:IX}.
\end{proof}
\begin{lemma}\label{lem:3DExpbound}
Assume $\dim M\ge 3$ %and that $\E|X(p)|^2=1$. 
Assume that for every $p\in M$ and $v\in T_pM$, we have that $X(p),d_pX(v),H_pX(v,v)$ form a non-degenerate Gaussian vector. %we have that either $\E|H_pX(v,v)|^2> \|v\|^2$ 
There is a constant $r>0$ such that for all $q=\exp_p(v)$ with $v\in T_pM$ such that $\|v\|\le r$, we have
\be 
I(p,q)=\E\kop\frac{|\Tilde{\Delta}X(p)|}{\|d_pX\|}\frac{|\Tilde{\Delta}X(q)|}{\|d_qX\|}\Bigg|
\begin{aligned}
X(p)&=0 \\ X(q)&=0
\end{aligned}
\pok\le \frac{1}{r}\frac{1}{\|v\|}.
\ee
\end{lemma}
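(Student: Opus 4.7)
The idea is to adapt the argument of \cref{lem:Expbound} to dimension $m=3$, quantifying how fast the determinants of the diagonal blocks of the conditional covariance degenerate as $\|v\|\to 0$; the extra non-degeneracy of the jet $(X(p),d_pX(v),\H_pX(v,v))$ supplies the crucial estimate.

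As in \cref{lem:Expbound}, write $I(p,q)=I_{1,1}(F_{p,q},K_{p,q})$, where $F_{p,q}\in\m F_D$ uniformly and $K_{p,q}$ is the conditional covariance of $((\hess_pX,\hess_qX),d_pX,d_qX)$ given $(X(p),X(q))=(0,0)$. Letting $t:=\|v\|$ and $\hat v:=v/t$, replace this by the equivalent conditioning $(X(p),Y_t)=(0,0)$, where $Y_t:=t^{-1}(X(q)-X(p))$; by \cref{ass:1} the covariance of $(X(p),Y_t)$ has a non-degenerate limit as $t\to 0$. Let $K_{11},K_{22}$ denote the two $3\times 3$ diagonal blocks of $K_{p,q}$ corresponding to $d_pX$ and $d_qX$. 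The key step is to show that, uniformly in $(p,\hat v)$ in the compact unit sphere bundle of $M$, there exist $c>0$ and $r_0>0$ such that $\det K_{11}\ge ct^2$ and $\det K_{22}\ge ct^2$ for all $t\in(0,r_0]$. Granted this, $K_{p,q}\in\m X_{3,3}$ and \cref{thm:overkill} applied with $a=b=1$ and conjugate pair $p=q=2$ (admissible since $pa=qb=2<3$) gives
\be
I(p,q)\le C(D)\tyu 1+\tfrac{1}{\sqrt{\det K_{11}}}+\tfrac{1}{\sqrt{\det K_{22}}}\uyt\le \tfrac{\tilde C}{t},
\ee
and choosing $r:=\min(r_0,1/\tilde C)$ concludes.

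To prove the lower bound on $\det K_{11}$, split $d_pX=\alpha\hat v+\xi$, with $\alpha:=d_pX(\hat v)$ and $\xi:=d_pX|_{\hat v^\perp}\in\R^2$, and apply the Schur decomposition
\be
\det K_{11}=\mathrm{Var}(\alpha\mid X(p)=0,Y_t=0)\cdot \det\mathrm{Cov}(\xi\mid X(p)=0,Y_t=0,\alpha).
\ee
A $\mC^2$ Taylor expansion, uniform on $M$, gives $Y_t=\alpha+\tfrac{t}{2}\H_pX(\hat v,\hat v)+r_t$ with $\|r_t\|_{L^2(\PP)}=o(t)$ uniformly in $(p,\hat v)$. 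Since $\alpha=Y_t-\tfrac{t}{2}\H_pX(\hat v,\hat v)-r_t$, conditioning on $\{X(p)=0,Y_t=0\}$ yields
\be
\mathrm{Var}(\alpha\mid X(p)=0,Y_t=0)=\tfrac{t^2}{4}\mathrm{Var}(\H_pX(\hat v,\hat v)\mid X(p)=0,Y_t=0)+o(t^2).
\ee
The conditional variance on the right converges as $t\to 0$ to $\sigma^2(p,\hat v):=\mathrm{Var}(\H_pX(\hat v,\hat v)\mid X(p)=0,d_pX(\hat v)=0)$, which is strictly positive by the hypothesis of the lemma and continuous in $(p,\hat v)$ on the compact unit sphere bundle of $M$; hence $\sigma^2(p,\hat v)\ge c_0>0$ uniformly, and the first Schur factor is $\ge c_0t^2/8$ for $t$ below a uniform threshold $r_0$. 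The second Schur factor has a strictly positive limit (a $2\times 2$ conditional covariance of $d_pX|_{\hat v^\perp}$ whose non-degeneracy follows from that of $j^1_pX$ in \cref{ass:1}), uniformly bounded below by continuity and compactness. Combining these bounds, $\det K_{11}\ge ct^2$ uniformly; an identical argument using $d_qX=d_pX+t\nabla_{\hat v}(dX)|_p+O(t^2)$ delivers the same estimate for $\det K_{22}$.

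\textbf{Main obstacle.} The crux is the uniform lower bound $\det K_{ii}\ge ct^2$: one must choose a Schur decomposition isolating the nearly-degenerate direction $\alpha=d_pX(\hat v)$, where the new non-degeneracy of the $3$-jet $(X(p),d_pX(v),\H_pX(v,v))$ provides quantitative $t^2$ control on the residual variance, while the non-degeneracy of $j^1_pX$ from \cref{ass:1} ensures non-degeneracy of the transverse $2\times 2$ block in the limit; uniformity in $(p,\hat v)$ then follows from continuity of all data and compactness of the unit sphere bundle of $M$.
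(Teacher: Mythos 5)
Your proposal is correct and follows essentially the same route as the paper's proof: both identify $d_pX(\hat v)$ as the nearly degenerate direction, show its conditional variance given $\{X(p)=X(q)=0\}$ is of order $t^2$ via the rescaled increment $t^{-1}(X(q)-X(p))\to d_pX(\hat v)$ and the non-degeneracy of the jet $(X(p),d_pX(v),\H_pX(v,v))$, use the non-degeneracy of $j^1_pX$ for the transverse block, and conclude with Theorem \ref{thm:overkill}. The paper's version is terser (a sequential compactness argument rather than your explicit Schur decomposition and uniform Taylor expansion), but the content is the same.
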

\begin{proof}
Let $p_n\to p$ and $q_n=\exp_{p_n}(t_nv)\to p$. We have that 
\be 
\frac{1}{t_n}[d_{p_n}X(v)|X(p_n)=X(q_n)=0]\to [H_pX(v,v)|X(p)=d_pX(v)=0].
\ee
Since the limit is non-degenerate by hypothesys, this implies that the variance of 
\be 
[ d_{p_n}X(v)|X(p_n) =X(q_n) = 0 ]
\ee 
has order at least $t_n^2$. From this and the non-degeneracy of $j^1_pX$, we deduce that the covariance matrix $K_{11}(n)$ of the Gaussian vector $[d_{p_n}X|X(p_n)=0,X(q_n)=0]$ satisfies
\be 
\frac{1}{\sqrt{\det K_{11}(n)}}=O\tyu\frac{1}{t_n}\uyt.
\ee
The same can be said for $[d_{q_n}X|X(p_n)=X(q_n)=0]$. Now, the proof can be concluded by \cref{thm:overkill}.
\end{proof}
 \begin{remark}
 The assumption essentially prevents the field to be  \emph{flat} in some but not all directions. To convince the reader that such assumption is necessary to have \cref{lem:3DExpbound}, consider a field $X\colon \R^3\to \R$ such that
 \be 
 X=\gamma_0+\gamma^Tp+\gamma_{11}\frac12(p_1^2+p_2^2)
 \ee 
 is determined by $5$ (not necessarily independent) Gaussian random variables $\gamma_0,\gamma=(\gamma_1,\gamma_2,\gamma_3),\gamma_{11}$. Then, for all $p,q$ such that $p_1=q_1=0$ and $p_2=q_2=0$, we have that  $V=[d_pX|X(p)=X(q)=0]$ is a Gaussian with a two dimensional support, at most, hence $\E\{|V|^{-2}\}=+\infty$. Moreover, we can easily compute that $[\tilde\Delta X(p)|X(p)=X(q)=0]=[\gamma_{11}|X(p)=X(q)=0]$. Thus, for all such pairs of distinct points $p,q$, we have 
 \be 
 I(p,q)=
 \E\kop\frac{|\gamma_{1,1}|^2}{\|V\|^2}\Bigg|
 \begin{aligned}
 X(p)&=0 \\ X(q)&=0
 \end{aligned}
 \pok=+\infty.
 \ee
 On the other hand, this assumption is not completely necessary, indeed if $X$ is an affine field (as above, but with $\gamma_{11}=0$), then $I(p,q)=0$.
 \end{remark}
\subsection{Estimating the Density}
\subsubsection{Pointwise value}
\begin{lemma}\label{lem:densitybound}
%Let $X\randin \mC^1(M,\R^k)$ be a Gaussian field that satisfies hypotheses (1) and (2) of \cref{thm:D12}. Let $C(p,q)=\E\kop X(p)X(q)\pok$ be its covariance function. 
%Assume that $\Delta^X=\Delta$.
%For any pair of distinct points $p,q\in M$, the Gaussian vector $\tyu X(p),X(q)\uyt\randin \R^2$ is non-degenerate.
Then there is a constant $r>0$ such that for %for every function $C'\in\mC^2(M\times M)$ such that $\|C-C'\|_{\mC^2}\le r;$ 
every $q=\exp_p(v)$ with $v\in T_pM$ such that $\|v\|\le r;$ we have that
\be
 r\frac{1}{\|v\|}\le \frac{1}{\sqrt{C(p,p)C(q,q)-C(p,q)^2}}\le \frac{1}{r}\frac{1}{\|v\|}.
\ee
\end{lemma}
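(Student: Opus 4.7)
The plan is to pass to normal coordinates at $p$, Taylor-expand the covariance $C$ to second order near the diagonal, and recognize the leading term of $D(p,q) := C(p,p)C(q,q)-C(p,q)^2$ as the Gram determinant of the non-degenerate Gaussian pair $(X(p), d_pX(v))$; uniformity in $p$ will follow from the compactness of $M$ and the $\mC^{2,2}$ regularity of $C$.

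For $q = \exp_p(v)$ with $\|v\|$ small, set $C_p(v,w) := C(\exp_p(v), \exp_p(w))$. Using that $\partial_v^\alpha \partial_w^\beta C_p(0,0) = \E[\partial^\alpha X(p) \cdot \partial^\beta X(p)]$ (which in normal coordinates exploits the vanishing of covariant acceleration of geodesics at $p$), and writing $a := C(p,p)$, $L(v) := \E[X(p) d_pX(v)]$, $M(v) := \E[X(p)\, \H_p X(v,v)]$, and $Q(v) := \E[(d_pX(v))^2]$, second-order Taylor expansions give
\begin{align*}
C_p(v, 0) &= a + L(v) + \tfrac{1}{2} M(v) + o(\|v\|^2), \\
C_p(v, v) &= a + 2L(v) + M(v) + Q(v) + o(\|v\|^2).
\end{align*}
Substituting into $D(p,q) = a \cdot C_p(v,v) - C_p(v,0)^2$ and simplifying, the terms of orders $0$ and $\|v\|$, and those involving $M(v)$, cancel, leaving
\be
D(p, q) = a\,Q(v) - L(v)^2 + o(\|v\|^2) = \det \mathrm{Cov}\bigl(X(p),\, d_pX(v)\bigr) + o(\|v\|^2).
\ee

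The leading term is $2$-homogeneous in $v$ and jointly continuous in $(p,v)\in TM$, and by \cref{ass:1} it is strictly positive whenever $v \neq 0$. Compactness of the unit sphere bundle $\{(p,v)\in TM : \|v\|=1\}$ therefore produces constants $0 < c_1 \le c_2 < \infty$ such that $c_1 \|v\|^2 \le \det \mathrm{Cov}(X(p), d_pX(v)) \le c_2 \|v\|^2$ for all $(p, v)\in TM$. The most delicate step is ensuring that the $o(\|v\|^2)$ remainder above is uniform in $p \in M$; this is precisely where the hypothesis $C \in \mC^{2,2}(M\times M)$ is used, together with compactness of $M \times M$, to obtain a modulus of continuity $\omega$ --- independent of $p$, with $\omega(t) \to 0$ as $t \to 0$ --- controlling the Taylor remainder by $\omega(\|v\|)\, \|v\|^2$. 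Combining these estimates yields $\tfrac{c_1}{2} \|v\|^2 \le D(p, q) \le 2 c_2 \|v\|^2$ for $\|v\|$ smaller than some uniform $r_0 > 0$, and taking $r := \min\bigl(r_0,\, \sqrt{c_1/2},\, 1/\sqrt{2 c_2}\bigr)$ yields the claimed double inequality.
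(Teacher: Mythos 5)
Your proof is correct and follows essentially the same route as the paper's: a second-order Taylor expansion of the covariance along geodesics emanating from $p$, with the leading quadratic coefficient bounded above and below by combining the non-degeneracy in \cref{ass:1} with compactness of the unit sphere bundle. The only difference is packaging --- the paper first normalizes to the unit-variance field $Y=C(\cdot,\cdot)^{-1/2}X$ and expands its correlation $K(p,q)$, identifying the leading term as $g^Y_p(v,v)$, whereas you expand the Gram determinant $C(p,p)C(q,q)-C(p,q)^2$ directly and identify the leading term as $\det\mathrm{Cov}\bigl(X(p),d_pX(v)\bigr)$; since $g^Y_p(v,v)=C(p,p)^{-2}\det\mathrm{Cov}\bigl(X(p),d_pX(v)\bigr)$, the two computations are the same.
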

\begin{proof}
First observe that as soon as $r$ is small eonugh, we have that $p\neq q$ belong to some neighborhood $B_r(\Delta)$ of the diagonal like that of \cref{lem:Mp1}, for which the denominator of the above expression does not vanish.
Let us consider the normalized field $p\mapsto Y(p)=C(p,p)^{-\frac12}X(p)\sim \m{N}(0,1)$, whose covariance function $K(p,q)$ satisfies the identity:
\be\label{eq:CK}
\frac{C(p,q)}{\sqrt{C(p,p)C(q,q)-C(p,q)^2}}=
\frac{K(p,q)}{\sqrt{1-K(p,q)^2}}.
\ee
Since $q$ and $p$ are assumed to be in a small enough neighborhood of the diagonal, we can assume that $C(p,q)$ and $K(p,q)$ are both bounded from below by a positive constant. Then, it is sufficient to bound the quantity $\frac{1}{\sqrt{1-K(p,q)^2}}.$
Notice also that $Y$ satisfies the hypotheses of the Lemma, in particular, the random vector $d_pY$ is non-degenerate, hence the bilinear form:
\be 
g^Y_p:=\E\kop d_pY (d_pY)^T\pok=d^2_{1,1} K(p,p)
\ee
is non-degenerate and it defines a Riemannian metric on $M$. Moreover, since $Y$ has constant variance, it follows that $d_2 K(p,p)=\E\{Y(p)d_pY\}=0,$ from which we deduce that $d^2_2 K(p,p)=-g_p^Y.$

Now, let $q(t)=\exp_p(tv),$ for some $v\in T_pM$ with $\|v\|=1.$ We have the following Taylor expansion as $t\to 0$:
\be 
K(p,q(t))=1-g_p^Y(v,v)\frac{t^2}{2}+O(t^3).
\ee
Plugging in into \eqref{eq:CK} we have
\bega
\frac{K(p,q)}{\sqrt{1-K(p,q)^2}}&=\frac{1-g_p^Y(v,v)\frac{t^2}{2}+O(t^3)}{\sqrt{1-(1-g_p^Y(v,v)\frac{t^2}{2}+O(t^3))^2}}
\\
&=
\frac{1+O(t^2)}{\sqrt{g_p^Y(v,v)t^2+O(t^3)}}
\\
&=
\frac{1}{|t|}\frac{1}{\sqrt{g_p^Y(v,v)}}(1+O(t))
\eega
\end{proof}
\subsubsection{Integral}
\begin{lemma}\label{lem:intcov}
Let $m\ge 2$. Let $B\subset M$ be such that $(B\times B)\cap \Delta^X=(B\times B)\cap \Delta$. Then, the integral below is finite:
    \be 
\int_{B\times B\smallsetminus\Delta}
\frac{1}{\mathrm{dist}(p,q)^{m-2}}\frac{dM(p)dM(q)}{\sqrt{C(p,p)C(q,q)-C(p,q)^2}}<\infty.
\ee
\end{lemma}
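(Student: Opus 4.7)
The plan is to split the domain of integration into a region near the diagonal, where the singularity of $1/\sqrt{C(p,p)C(q,q)-C(p,q)^2}$ must be controlled, and a region bounded away from the diagonal, where the integrand is manifestly bounded and continuous.

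First, I would fix a radius $r>0$ such that the geodesic exponential map $\exp_p\colon T_pM\to M$ is a diffeomorphism on the ball of radius $r$ for every $p\in B$ (possible since $B\subset M$ is compact and $M$ is $\mC^2$), and moreover such that the conclusion of \cref{lem:densitybound} holds on this scale. Let $U_r:=\{(p,q)\in B\times B:\mathrm{dist}(p,q)<r\}$ and split the integral as
\begin{equation*}
\int_{B\times B\smallsetminus\Delta}(\cdots)
=\int_{U_r\smallsetminus \Delta}(\cdots)+\int_{(B\times B)\smallsetminus U_r}(\cdots).
\end{equation*}

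On the complement $(B\times B)\smallsetminus U_r$, the denominator $\sqrt{C(p,p)C(q,q)-C(p,q)^2}$ is a continuous function of $(p,q)$ that, by the assumption $(B\times B)\cap\Delta^X=(B\times B)\cap\Delta$, is strictly positive on the compact set $(B\times B)\smallsetminus U_r$. Hence its reciprocal is bounded there, $\mathrm{dist}(p,q)^{-(m-2)}$ is also bounded (away from zero), and the integral over this region is finite since $B\times B$ has finite Riemannian measure.

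On the near-diagonal piece $U_r\smallsetminus\Delta$, I would apply \cref{lem:densitybound}, which yields, possibly after shrinking $r$, a constant $C>0$ such that
\begin{equation*}
\frac{1}{\sqrt{C(p,p)C(q,q)-C(p,q)^2}}\le \frac{C}{\mathrm{dist}(p,q)},
\qquad (p,q)\in U_r\smallsetminus \Delta.
\end{equation*}
Combined with the factor $\mathrm{dist}(p,q)^{-(m-2)}$, the integrand is bounded by $C\,\mathrm{dist}(p,q)^{-(m-1)}$. I would then invoke Fubini and, for each fixed $p\in B$, pass to geodesic polar coordinates $q=\exp_p(v)$, $v=\rho\,\omega$ with $\rho\in(0,r)$ and $\omega\in S^{m-1}\subset T_pM$, under which the Riemannian volume element satisfies $dM(q)=J_p(\rho,\omega)\,\rho^{m-1}d\rho\,dS^{m-1}(\omega)$ with $J_p$ continuous and bounded on $B\times[0,r]\times S^{m-1}$. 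Thus the inner integral is dominated by
\begin{equation*}
\int_{S^{m-1}}\int_0^r \frac{C\,J_p(\rho,\omega)}{\rho^{m-1}}\,\rho^{m-1}\,d\rho\,dS^{m-1}(\omega)\le C',
\end{equation*}
a bound uniform in $p\in B$. A final integration over the compact set $B$ yields a finite value for the near-diagonal piece, completing the proof.

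The only subtle step is the use of \cref{lem:densitybound}: this is exactly where the non-degeneracy of $j^1_pX$ from \cref{ass:1} enters, giving the sharp $\mathrm{dist}(p,q)^{-1}$ blow-up of the density factor, which perfectly cancels (up to one integrable power of $\rho$) against the $\rho^{m-1}$ volume element. All other steps are standard reductions based on compactness and continuity.
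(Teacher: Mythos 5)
Your proposal is correct and follows essentially the same route as the paper: split off a tubular neighborhood of the diagonal, bound the far part by compactness and Cauchy--Schwarz (using $(B\times B)\cap\Delta^X=(B\times B)\cap\Delta$), and on the near part use \cref{lem:densitybound} to get the $\mathrm{dist}(p,q)^{-1}$ bound, which in geodesic polar coordinates is cancelled by the $\rho^{m-1}$ volume factor. The paper phrases the near-diagonal computation via the exponential map on the normal bundle of the diagonal together with the coarea formula rather than Fubini plus polar coordinates for fixed $p$, but this is the same calculation.
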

\begin{proof}
%By \cref{lem:Mp1}, the cardinality of $M_p^X$ is a constant $k\in\N.$ 
Let us consider the ball bundle $B_r(N\Delta):=\{(p,v): v\in T_qM, \|v\|\le r\},$ with $r>0$ given by \cref{lem:densitybound}. %The fiber of $B_r(N\Delta^X)$ is isometric to a tangent balls.
Taking a smaller $r>0$ if needed, we can assume that  the exponential map $\exp:B_r(N\Delta)\to M\times M,$ defined as $(p,v)\mapsto \exp_p(v)$, is a diffeomorphism onto its image, which we denote as $B_r(\Delta),$ that is thus a tubular neighborhood of $\Delta.$ By \cref{lem:Mp1}, we can moreover assume that $B_r(\Delta)$ does not contain elements of $\Delta^X$ other than the diagonal ones.

Let $s\colon M\times M\smallsetminus\Delta\to \R$ be the integrand function. The integral can be split into two parts: $\int_{B_r(\Delta)\smallsetminus\Delta}s+\int_{M\times M\smallsetminus B_r(\Delta)}s.$ By Cauchy-Schwartz, we have that $C(p,q)^2<C(p,p)C(q,q)$ for all $(p,q)\notin \Delta^X,$ so that the integrand is bounded on $B\times B\smallsetminus B_r(\Delta)$. %Moreover, the bound is uniform for all $K\in\mC^2(M\times M,\R^{2\times 2})$ close to $K_X$.

This leaves us with the term $I_r:=\int_{B_r(\Delta)}s.$ Since the restriction of exponential map to a small enough neighborhood of the diagonal is a diffeomorphism, we can use it as a change of variables and write 
\be 
I_r=\int_{B_r(\Delta)}s(p,q) dM^2(p,q)=\int_{B_r(N\Delta)}s(p,\exp_p(v)) J(p,v) dTM(p,v),
\ee
where $J(p,v)$ is the Jacobian determinant of the exponential map, that is a $\mC^1$ function. Here we are integrating with respect to the Riemannian volume density of the canonical Riemannian metric on $TM,$ defined by the parallel transport. Notice that such metric induces the flat (i.e. constant) metric $g_p$ on each fiber $T_pM.$ Let us denote by $dT_pM$ the volume density of $T_pM,$ for any $p\in M,$ i.e., the Lebesgue measure determined by an orthonormal frame.

Denote by $J_{p,v}\pi>0$ the Jacobian of the projection map $\pi:B(N\Delta)\to M$ (it might not be $1$ when $M$ is not flat) and using the coarea formula, we obtain that
\be 
I_r= \int_{M}\int_{B_r(T_pM)}s(p,\exp_p(v))\frac{J(p,v)}{J_{p,v}\pi}dT_pM(v) dM(p).
\ee
It is easy to see that both Jacobians are bounded (because $M$ is compact) non-vanishing functions, so their ratio is bounded by a constant $b>0$, Moreover, we can write the inner-most integral in standard polar coordinates, since $T_pM$ is flat. Let $S(T_pM)$ denote the set of unit vectors in $T_pM.$ We obtain
\be\label{eq:here}
I_r\le b \int_{M}\int_{S(T_pM)}\tyu \int_{0}^r s(p,\exp_p(tv))t^{m-1}dt \uyt dS(T_pM)(v)dM(p).
\ee
Here is where we use \cref{lem:densitybound} above, which says that, for $r$ small enough, we have $s(p,\exp_p(tv)) \le \frac{1}{rt^{m-1}}$. %$s(p,\exp_p(tv)) \le \frac{1}{rt}$. 
Thus,
\bega 
I_r&\le \frac{b}{r} \int_{M}\int_{S(T_pM)}\tyu \int_{0}^r 1dt \uyt dS(T_pM)(v)dM(p)
\\
&\le%\frac{bk}{r} 
bk\vol(M)\vol(S^{m-1}) %\tyu \int_{0}^r t^{m-2}dt \uyt,
\eega
so that $I_r$ is finite when $m\ge 2.$
\end{proof}
\section{Malliavin-Sobolev regularity of the volume}\label{s:mallsob}
\subsection{Square-integrability of the volume}\label{s:L2}
%\input{Is_the_volume_in_L2}
%\subsection{Yes. the volume is in $L^2$}
In \cite{PolyAngst}, it is shown that the nodal volume of a stationary Gaussian field on $M=\mathbb{T}^m$ is in $L^{\frac{m+1}{2}-}(\PP)$. The following result ensures that the volume $V(X)=\vol_{m-1}(X^{-1}(0))\randin \R$ is a square-integrable random variable, also in the cases $m=2,3.$ 
\begin{corollary}\label{cor:volL2}
%Let $M$ be a $\mC^2$ compact Riemannian manifold of dimension $m\ge 2$, possibly with boundary. Let $X\randin \mC^2(M)$ be a Gaussian field, such that: for every point $p\in M,$ the Gaussian vector $(X(p),d_pX)\randin \R\times T^*_pM$ is non-degenerate. 
Let \cref{ass:1} prevail. Then $V(X)\in L^2(\PP)$ and
% for every $A,B\subset M$ Borel, we have
% \be 
% \E\{\vol_{m-1}(A\cap Z)\vol_{m-1}(B\cap Z)\}=\int_{A\times B\smallsetminus \Delta}\E\kop
% \|d_pX\|\|d_qX\|
% \Bigg|
% \begin{aligned}
% X(p)&=0 \\ X(q)&=0
% \end{aligned}
% \pok \frac{dM(p)dM(q)}{2\pi\sqrt{C(p,p)C(q,q)-C(p,q)^2}}.
% \ee
\be 
\E\kop V(X)^2\pok=\E\kop
\|d_pX\|\|d_qX\|
\Bigg|
\begin{aligned}
X(p)&=0 \\ X(q)&=0
\end{aligned}
 \pok 
 \frac{dM(p)dM(q)}{2\pi\sqrt{C(p,p)C(q,q)-C(p,q)^2}} <+\infty.
\ee
%Moreover, let $C$ be the covariance function of $X$. If $X_n\in \mC^2(M)$ have convergent covariance function $C_n\to C$ in $\mC^2(M\times M),$ then $\E\kop V(X_n)^2\pok\to \E\kop V(X)^2\pok.$
\end{corollary}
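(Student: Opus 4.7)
The plan is to invoke the Kac--Rice formula of \cref{lem:krAlpha} with $\a \equiv 1$. Writing $Z := X^{-1}(0)$, one has the identity $V(X)^2 = \int_Z \int_Z 1\, dZ(p)\, dZ(q)$, so that an application of \cref{lem:krAlpha} yields precisely the claimed formula for $\E[V(X)^2]$ (the integration domain $M \times M \setminus \Delta^X$ agrees with $M \times M$ up to a Lebesgue-null subset, by \cref{lem:Mp1}). The remaining substantive task is to prove that the right-hand side is finite. For this, using \cref{lem:Mp1} I would fix a tubular neighborhood $B \supset \Delta$ of the true diagonal such that $B \cap \Delta^X = \Delta$, and split the integral into contributions from $(M \times M) \setminus B$ and from $B \setminus \Delta$.

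On $(M \times M) \setminus B$, the Gaussian vector $(X(p), X(q))$ is non-degenerate and depends continuously on $(p,q)$, so both the density $(C(p,p)C(q,q)-C(p,q)^2)^{-1/2}$ and the conditional expectation $\E[\|d_pX\|\|d_qX\| \mid X(p)=X(q)=0]$ are continuous and bounded on this compact set, contributing a finite amount.

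The near-diagonal contribution over $B \setminus \Delta$ is controlled via two inputs. First, the Gaussian regression identity yields the operator inequality $\mathrm{Var}(d_pX \mid X(p)=X(q)=0) \preceq \mathrm{Var}(d_pX)$ uniformly in $(p,q) \in B \setminus \Delta$, so Cauchy--Schwarz gives
\[
\E\!\left[\|d_pX\|\|d_qX\| \,\Big|\, X(p)=X(q)=0\right] \le \E[\|d_pX\|^2]^{1/2}\, \E[\|d_qX\|^2]^{1/2},
\]
which is uniformly bounded on compact $M$. Second, \cref{lem:densitybound} supplies $(C(p,p)C(q,q)-C(p,q)^2)^{-1/2} \le C_0/\mathrm{dist}(p,q)$ for $(p,q)$ sufficiently close to $\Delta$. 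Combining these two estimates and switching to geodesic polar coordinates in the normal bundle of $\Delta$ (exactly as in the proof of \cref{lem:intcov}), the diagonal contribution is bounded by a constant times
\[
\int_M \int_{S(T_pM)} \int_0^r \frac{t^{m-1}}{t}\, dt\, dS(v)\, dM(p),
\]
which is finite for $m \ge 2$.

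The main obstacle is the treatment of the near-diagonal singularity, where both the conditional expectation and the Gaussian density could a priori blow up as $q \to p$. The clean resolution is that the conditional second moment $\E[\|d_pX\|^2 \mid X(p)=X(q)=0]$ is automatically dominated by its unconditional counterpart — a general fact for centered Gaussian vectors, since the conditional variance is the unconditional one minus a positive semi-definite regression term — so that the only genuine singularity is the $\mathrm{dist}(p,q)^{-1}$ blow-up of the density, which is integrable against $dM(p)\,dM(q)$ precisely when $m \ge 2$.
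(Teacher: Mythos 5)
Your derivation of the identity is the same as the paper's (apply \cref{lem:krAlpha} with $\a\equiv 1$, discarding the $\mathcal H^{2(m-1)}$-null set $Z\times Z\cap\Delta^X$ via \cref{lem:Mp1}). For the finiteness, however, the paper simply invokes \cite[Theorem 1.5]{gasstec2023}, whereas you give a direct argument; your near-diagonal analysis (conditional Cauchy--Schwarz plus the Gaussian-regression domination $\mathrm{Var}(d_pX\mid X(p)=X(q)=0)\preceq\mathrm{Var}(d_pX)$, combined with \cref{lem:densitybound} and the polar-coordinate computation of \cref{lem:intcov}) is correct and is in fact more self-contained than the paper's citation.

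There is, however, one genuine gap in the off-diagonal part. You assert that on $(M\times M)\smallsetminus B$ the vector $(X(p),X(q))$ is non-degenerate, so that the density factor $\bigl(C(p,p)C(q,q)-C(p,q)^2\bigr)^{-1/2}$ is bounded there. This is false under \cref{ass:1} alone: \cref{lem:Mp1} only guarantees that $\Delta^X$ meets a \emph{neighborhood of the diagonal} in $\Delta$ itself; it does not rule out off-diagonal components of $\Delta^X$. For instance, a periodic field restricted to a rectangle larger than its period (a situation the paper explicitly cares about, cf.\ the discussion around \cref{thm:bestia}) satisfies \cref{ass:1} and has $X(p)=X(q)$ a.s.\ for suitable $p\neq q$, so the density factor blows up at points of $(M\times M)\smallsetminus B$. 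Your conditional-expectation bound survives unchanged near such points (it is uniform over all non-degenerate pairs), so what is missing is only the integrability of the density factor near $\Delta^X\smallsetminus\Delta$. This can be repaired using the ingredient already established in the proof of \cref{lem:Mp1}: for the normalized field, $d^2_q K(p,\cdot)=-\E\{d_qX(d_qX)^T\}$ is negative definite at each $q\in M_p^X$, whence $1-|K(p,q)|\gtrsim \mathrm{dist}(q,M_p^X)^2$ and so $\bigl(1-K(p,q)^2\bigr)^{-1/2}\lesssim \mathrm{dist}(q,M_p^X)^{-1}$ locally; since $M_p^X$ is finite and $m\ge 2$, the slice integrals $\int_M \mathrm{dist}(q,M_p^X)^{-1}\,dM(q)$ are finite, and a compactness argument gives the required uniformity in $p$. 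Without this extra step the proof as written does not cover all fields satisfying \cref{ass:1}.
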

\begin{proof}
The first identity is the standard Kac-Rice formula for the second moment, see \cite[Theorem 6.9]{AzaisWscheborbook}, whose validity under \cref{ass:1} is granted by \cref{lem:krAlpha}. A direct application of \cite[Theorem 1.5]{gasstec2023}, with $p=2$, gives the boundedness. Indeed, the hypothesis of \cite[Theorem 1.5]{gasstec2023}, in such case, are equivalent to \cref{ass:1}.
\end{proof}
\subsection{Main Result}
In \cite{PolyAngst}, it is proven that the nodal volume of a stationary Gaussian field on $M=(S^1)^m$ or $M=[0,1]^m$ is in $\mathbb{D}^{\frac{m+1}{3}-}$, for $m\ge 3$. The following result is stronger in the cases $m=3,4,5$. In the other cases, the result reported in \cite{PolyAngst} should be considered stronger, in that a direct generalizaton of the proof of \cite{PolyAngst} to non-stationary Gaussian fields on an arbitrary compact manifold, with or without boundary, is relatively easy. However, the methods employed here are completely different. Let $V(X)=\vol_{m-1}(X^{-1}(0))\randin \R.$
The following result immediately implies point (ii) of \cref{t:mainintro}, when $m\ge 4$, and point (iii) of \cref{t:mainintro}, when $m=3$.

We recall the notation introduced in the statement of \cref{thm:firstvar}. Let $M$ be a $\mC^2$ compact Riemannian manifold with boundary $\de M$. Let $\n:\de M\to TM$ be the normal vector to the boundary, pointing outside the manifold. For any $f\in \mC^2(M)$ and we denote $\nu_f=\frac{\g f}{\|\g f\|}$ and
\be 
\Tilde{\Delta} f=\Delta f-\H f\tyu \frac{\g f}{\|\g f\|},\frac{\g f}{\|\g f\|}\uyt,
\ee
as functions, defined on $\{\g f\neq 0\}$. %Let $\theta X(p):=g_p(b,\nu_X)$ 
\begin{theorem}\label{thm:D12}
Let \cref{ass:1} prevail, with $M$ having dimension $m\ge 3$. %Moreover, If $m=3$, we also assume that for every $(p,v)\in TM$, we have that $\tyu X(p),d_pX(v),\H_pX(v,v)\uyt $ form a non-degenerate Gaussian vector.
\begin{enumerate}[$\bullet$]
%     %\item For any pair of distinct points $p,q\in M$, the Gaussian vector $\tyu X(p),X(q)\uyt\randin \R^2$ is non-degenerate.
%     %\item For any point $p\in M$ and any tangent vector $v\in T_pM$, the Gaussian vector $j_p^1X:=\tyu X(p),d_pX(v)\uyt\randin \R^2$ is non-degenerate.
\item Moreover, if $m=3$, we also assume that for every $(p,v)\in TM$, the three random variables $ X(p),d_pX(v),\H_pX(v,v)$ form a non-degenerate Gaussian vector.
\end{enumerate}
Then, the nodal volume $V(X) = \vol_{m-1}(X^{-1}(0))$ is in $\mathbb{D}^{1,2}.$ 
In particular, the stochastic differential $\dmlv V\randin \HX^*$ coincides almost surely with the Fr\'{e}chet differential:
\bega\label{eq:mainfrechet} 
\dmlv V=d_XV\big|_{\HX}(\cdot)&=\int_{X^{-1}(0)}\tyu\frac{\tilde\Delta X(p)}{\|d_pX\|^2}\uyt \delta_p(\cdot) \mathcal{H}^{m-1}(dp)
\\
&+
\int_{X^{-1}(0)\cap \de M}\tyu
\frac{ g_p\tyu \n,\nu_X\uyt }{\|d_p(X|_{\de M})\|}
\uyt \delta_p(\cdot) \mathcal{H}^{m-2}(dp),
\eega
and is in $L^2(\HX^*)$ with finite square norm
$\|\dmlv V\|^2_{L^2}=\E\kop \|d_XV\|^2_{\HX}\pok=\dots$%\E\kop \|d_XV\|^2_{\HX}\pok<+\infty$ and
\bega\label{eq:KRDerivative}
\dots
&=\int_{M\times M}\E\kop\frac{\Tilde{\Delta}X(p)}{\|d_pX\|}\frac{\Tilde{\Delta}X(q)}{\|d_qX\|}\Bigg|
\begin{aligned}
X(p)&=0 \\ X(q)&=0
\end{aligned}
\pok
\frac{C(p,q)dM(p)dM(q)}{2\pi\sqrt{C(p,p)C(q,q)-C(p,q)^2}}
\\
&=
\int_{\de M\times \de M}\E\kop
g_p( \n,\nu_X) \cdot g_q( \n,\nu_X)
\Bigg|
\begin{aligned}
X(p)&=0 \\ X(q)&=0
\end{aligned}
\pok
\frac{C(p,q)d\de M(p)d\de M(q)}{2\pi\sqrt{C(p,p)C(q,q)-C(p,q)^2}}
\eega
Where the integrand in the latter expression is continuous almost everywhere.
\end{theorem}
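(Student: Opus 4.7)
The plan is to verify the three defining conditions of $\mathbb{D}^{1,2}$ in \cref{def:Dspace} for $V(X)$. Square-integrability is immediate from \cref{cor:volL2}. For ray absolute continuity, fix $h\in \HX$: \cref{thm:transcurve} applied to the $\mC^2$ curve $c(t)=th$ guarantees that the random segment $X+\R h$ is almost surely a transverse curve, and then \cref{thm:DeterministPicture} gives that $t\mapsto V(X+th)$ lies in $W^{1,k}$ on every bounded interval for every $k\in \N$ (since $m\ge 3$); in particular it admits an absolutely continuous modification.

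Next I would identify the stochastic Fr\'echet derivative. By Bulinskaya (\cref{lem:bulinskaya}) we have $X\in \m U$ almost surely, and by \cref{thm:firstvar} $V$ is of class $\mC^1$ on $\m U$, so one sets $\dmlv V:=d_XV|_{\HX}$ (with zero extension off $\m U$). The explicit representation \eqref{eq:mainfrechet} then follows from the Riesz-type identification $\HX^*\cong \HX$ recalled in \cref{r:cmspace}(b), applied to the signed measure
\be
\theta(dp)=\|d_pX\|^{-2}1_{Z}(p)\,dZ+g_p(\n,\nu_X)\,\|d_p(X|_{\de M})\|^{-1}1_{\de Z}(p)\,d\de Z
\ee
coming from formula \eqref{eq:firstvar1}. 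The convergence in probability of the divided difference $t^{-1}(V(X+th)-V(X))\to \langle \dmlv V(X),h\rangle$ is then a consequence of the openness of $\m U$ and of the classical Fr\'echet differentiability of $V$ on $\m U$ (the exceptional event that $X+th\notin \m U$ for small $|t|$ has null probability by the openness of $\m U$ combined with \cref{lem:bulinskaya}).

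The main work is to show $\E\|\dmlv V\|^2_{\HX}<\infty$ and to establish \eqref{eq:KRDerivative}. Starting from the explicit formula for $\|d_XV\|^2_{\HX}$ in \cref{cor:expDV}, taking expectation and applying the Kac-Rice formula of \cref{lem:krAlpha} (to the field $(X(p),X(q))$ on $M\times M$ for the bulk term, and analogously for the restriction to $\de M\times \de M$) produces \eqref{eq:KRDerivative}. Finiteness of each integral reduces to a local estimate near the diagonal $\Delta\subset M\times M$: away from $\Delta^X$ the integrand is continuous and bounded on the compact set $M\times M$, and by \cref{lem:densitybound} the Gaussian density $(C(p,p)C(q,q)-C(p,q)^2)^{-1/2}$ is of order $\mathrm{dist}(p,q)^{-1}$ on a tubular neighborhood of $\Delta$. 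For the conditional-expectation factor, the case $m\ge 4$ is handled by the uniform bound of \cref{lem:Expbound}, reducing the problem to the integrability statement of \cref{lem:intcov}; almost-everywhere continuity of the integrand follows from \cref{lem:dd} combined with the continuity results of \cref{sec:boringauss}.

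The main obstacle is precisely the case $m=3$, where \cref{lem:Expbound} fails because conditioning on $X(p)=X(q)=0$ with $q\to p$ degenerates the residual covariance of $d_pX$ along the limiting direction $v$. The extra non-degeneracy hypothesis on the triple $(X(p),d_pX(v),\H_pX(v,v))$ is exactly what \cref{lem:3DExpbound} requires, producing a conditional-expectation bound of order $\mathrm{dist}(p,q)^{-1}$ in a tubular neighborhood of $\Delta$; combined with the density bound this gives an integrand of order $\mathrm{dist}(p,q)^{-2}$, which is balanced by the polar-coordinate Jacobian $r^{m-1}=r^2$ on the $m$-dimensional tube, and thus integrable by the argument proving \cref{lem:intcov}. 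The boundary term is treated analogously, using that $X|_{\de M}$ is a $\mC^2$ Gaussian field on $\de M$ for which the analogue of \cref{ass:1} is inherited from the interior hypothesis (once one notes that the non-degeneracy of $(X(p),d_pX(v))$ for $v\in T_p\de M$ is included in the general assumption).
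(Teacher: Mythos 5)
Your overall architecture matches the paper's proof step for step: verify the three conditions of \cref{def:Dspace}, using \cref{cor:volL2} for $L^2$-integrability, \cref{thm:transcurve} plus \cref{thm:DeterministPicture} for ray absolute continuity, \cref{thm:firstvar} together with \cref{lem:bulinskaya} and the identification of \cref{r:cmspace}(b) (i.e.\ \cref{cor:expDV}) for the stochastic derivative, and then \cref{lem:krAlpha}, \cref{lem:Expbound} (resp.\ \cref{lem:3DExpbound} when $m=3$), \cref{lem:densitybound} and \cref{lem:intcov} for the $L^2$ bound. The treatment of the boundary term (bounded integrand, $\dim\de M\ge2$, \cref{lem:intcov} applied to $X|_{\de M}$) is also the paper's.

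There is, however, one concrete gap in your finiteness argument. You reduce the convergence of the Kac--Rice integral to a tubular neighborhood of the diagonal $\Delta$ by asserting that ``away from $\Delta^X$ the integrand is continuous and bounded on the compact set $M\times M$.'' This is not justified as stated: $M\times M\smallsetminus\Delta^X$ is open, not compact, and the density factor $\bigl(C(p,p)C(q,q)-C(p,q)^2\bigr)^{-1/2}$ blows up near \emph{every} point of $\Delta^X$, including the off-diagonal degenerate pairs $(p_0,q_0)$ with $p_0\neq q_0$ whose existence \cref{lem:Mp1} does not exclude (it only shows each fiber $M_p^X$ is finite and that $\Delta^X$ coincides with $\Delta$ near the true diagonal). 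So your reduction silently discards a region where the integrand is unbounded. The paper's device for this is a covering trick: choose finitely many open sets $B_1,\dots,B_N$ covering $M$ with $\Delta^X\cap(B_i\times B_i)=\Delta\cap(B_i\times B_i)$ (possible by \cref{lem:Mp1}), bound
\begin{equation}
\|d_XV\|^2_{\HX}\le \|C\|_{\mC^0}\Bigl(\sum_{i}\int_{Z\cap B_i}\Bigl|\tfrac{\tilde\Delta X(p)}{\|d_pX\|^2}\Bigr|\,dZ(p)\Bigr)^{2},
\end{equation}
and prove each summand is in $L^2$ via Kac--Rice on $B_i\times B_i$, where the only degeneracy is diagonal. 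This step is what legitimately reduces the problem to a neighborhood of $\Delta$; without it you would instead have to estimate the conditional expectation and the density near the off-diagonal points of $\Delta^X$ separately, which you do not do. Everything else in your proposal is sound and coincides with the paper's argument.
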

\begin{proof}
We divide the proof in three steps, corresponding to the condtions ($i$),($ii$) and ($iii$) of \cref{def:Dspace}.
\subsubsection*{Step $(i)$}
The fact that $V$ is ray absolutely continuous is a direct consequence of \cref{thm:transcurve} and \cref{thm:DeterministPicture}.
\subsubsection*{Step $(ii)$}
This follows from the Fr\'{e}chet differentiability almost everywhere. Indeed, let us define the candidate stochastic differential $\dmlv V\colon E\to \HX^*$ to be the Fr\'{e}chet differential:
\bega 
\dmlv V(f):=d_fV\big|_{\HX}(\cdot)&=\int_{f^{-1}(0)}\tyu\frac{\tilde\Delta f(p)}{\|d_pf\|^2}\uyt \delta_x(\cdot) \mathcal{H}^{m-1}(dp)
\\
&+\int_{f^{-1}(0)\cap \de M}\tyu
\frac{ g_p( \n,\nu_f) }{\|d_p(f|_{\de M})\|}
\uyt \delta_p(\cdot) \mathcal{H}^{m-2}(dp),
\eega
for all $f\in \m{U}.$ Moreover, $\dmlv V$ is a continuous function on $\m{U}.$ By \cref{lem:bulinskaya}, we have that $\m{U}$ has full measure, hence the above assignement defines a measurable function $\dmlv V$ almost everywhere and thus it can be extended to a measurable function on $E$, so that it defines a random variable. The choice of the extension on $\m{W}=E\smallsetminus\m{U}$ is irrelevant. \cref{thm:firstvar} implies that we have 
\be 
\left|\frac{V(X+th)-V(X)}{t}-d_XV(h) \right|\to 0
\ee
almost surely, thus the convergence in probability holds and $\dmlv V\randin \HX^*$ is indeed the stochastic derivative of $V(X).$
\subsubsection*{Step (iii)}
We have to show that the random variable $\|\dmlv V(X)\|_{\HX}=\|d_XV\|_{\HX}$ is in $L^2$. 
%Let us first assume that the field $X$ is $\mC^2$ almost surely.
By \cref{cor:expDV}, we have that, if $Z=X^{-1}(0),$ then
\bega 
 \|d_XV\|^2_{\HX}
&=\int_{Z\times Z}\frac{\Tilde{\Delta}X(p)}{\|d_pX\|^2}\frac{\Tilde{\Delta}X(q)}{\|d_qX\|^2}
C(p,q)dZ(p)dZ(q)
\\
 &+
 \int_{\de Z}\int_{\de Z} \frac{g_p(\n,\nu)g_q(\n,\nu)  }{\|d_p(X|_{\de M})\|\|d_q(X|_{\de M})\|}C(p,q)d\de Z(p)d\de Z(q)
\eega
%Observe that if $\dim M\ge 2,$ then $Z\times Z\cap \Delta$ has zero $2(m-1)$-volume, so that the above integral can be seen as an integral over the set $\{X^{\times 2}=(0,0)\},$ so that, 
By applying \cref{lem:krAlpha} to $X$ and to $X|_{\de M}$, we deduce a Kac-Rice formula for computing $\E\kop \|d_XV\|^2_{\HX}\pok$, corresponding exactly to that in the statement of the theorem.
% \be 
% \dots=\int_{M\times M\smallsetminus\Delta^X}\E\kop\frac{\Tilde{\Delta}X(p)}{\|d_pX\|}\frac{\Tilde{\Delta}X(q)}{\|d_qX\|}\Bigg|
% \begin{aligned}
% X(p)&=0 \\ X(q)&=0
% \end{aligned}
% \pok
% \frac{C(p,q)}{\sqrt{C(p,p)C(q,q)-C(p,q)^2}}\frac{dM(p)dM(q)}{2\pi},
% \ee 
The integral can be restricted to $M\times M\smallsetminus \Delta^X$ , where  $\Delta^X=\kop(p,q):(X(p),X(q)) \text{ is degenerate}\pok$, which we know to have measure zero by \cref{lem:Mp1}. 

Now, we will show that such formula is also bounded. To do so, we need two Lemmas, that are proved above: \cref{lem:Expbound} to bound the conditional expectation and \cref{lem:intcov} to bound the rest. In particular, the former is the most demanding one and its proof depends on \cref{cor:IX}, which is proved in \cref{sec:boringauss}.

Let us first consider the case when $\de M=\emptyset$.
We want to reduce to a situation such that $\Delta^X=\Delta$. To this end, we cover $M$ with a finite number of open subsets $B_1,\dots, B_N$ such that for all $i=1,\dots, N$ we have $\Delta^X\cap (B_i\times B_i)=\Delta \cap (B_i\times B_i)$. This is possible because of \cref{lem:Mp1}.
Then, we have almost surely
\bega 
\|d_XV\|^2_{\HX}
&\le 
\|C\|_{\mC^0}\sum_{i,j}\int_{Z\cap B_i\times Z\cap B_j}\left|\frac{\Tilde{\Delta}X(p)}{\|d_pX\|^2}\right|\left|\frac{\Tilde{\Delta}X(q)}{\|d_qX\|^2}
\right|dZ(p)dZ(q)
\\
&= 
\|C\|_{\mC^0}\tyu \sum_{i}\int_{Z\cap B_i}\left|\frac{\Tilde{\Delta}X(p)}{\|d_pX\|^2}
\right|dZ(p)
\uyt^2
\eega
From this we see that it is sufficient to show that the random variables defined as $\a_i={\int_{Z\cap B_i}\left|\frac{\Tilde{\Delta}X(p)}{\|d_pX\|^2}
\right|dZ(p)}$ are in $L^2$, for all $i$. Using again \cref{lem:krAlpha}, we are reduced to show the finiteness of the following integral:
\be 
\E\qwe \a_i^2%\tyu\int_{Z\cap B_i}\left|\frac{\Tilde{\Delta}X(p)}{\|d_pX\|^2}\right|dZ(p)\uyt^2
\ewq
=\int_{B\times B\smallsetminus \Delta}\E\kop\left|\frac{\Tilde{\Delta}X(p)}{\|d_pX\|}\frac{\Tilde{\Delta}X(q)}{\|d_qX\|}\right|\ \Bigg|
\begin{aligned}
X(p)&=0 \\ X(q)&=0
\end{aligned}
\pok
\frac{(2\pi)^{-1}dM(p)dM(q)}{\sqrt{C(p,p)C(q,q)-C(p,q)^2}},
\ee

When $m\ge 4$, by \cref{lem:Expbound}, we have that the conditional expectation that appears in the formula is bounded, while if $m=3$, \cref{lem:3DExpbound} implies that the expectation is bounded by $\frac{1}{\mathrm{dist}(p,q)}$. Thus, we are reduced to prove finiteness of 
\be 
I=\int_{B_i\times B_i\smallsetminus\Delta}
\frac{1}{\mathrm{dist}(p,q)}s(p,q)dM(p)dM(q), %\text{ where } 
\quad s(p,q)=\frac{1}{\sqrt{C(p,p)C(q,q)-C(p,q)^2}}
\ee
\cref{lem:intcov} shows that $I$ is finite when $m\ge 3.$
This proves that $\E\{\|d_XV\|^2_{\HX}\}<+\infty.$

% By \cref{lem:Expbound} and since $m\ge 3$, we have that the conditional expectation that appears in the formula is bounded, so we are reduced to prove finiteness of 
% \be 
% I=\int_{B_i\times B_i\smallsetminus\Delta}
% s(p,q)dM(p)dM(q),\quad \text{where} \quad s(p,q)=\frac{1}{\sqrt{C(p,p)C(q,q)-C(p,q)^2}}
% \ee
% \cref{lem:intcov} shows that $I$ is finite when $m\ge 2.$

This proves that $\E\{\|d_XV\|^2_{\HX}\}<+\infty$ when $\de M=\emptyset$, but in fact, we proved that, denoting by $d_X^{\de}V$ the boundary term in \cref{eq:mainfrechet}, $\|d_XV-d_X^{\de}V\|_{\HX}\in L^2$. To prove the general case, we only have to show that $\|d_X^{\de}V\|_{\HX}\in L^2$. This can be done, using the same argument: the expectation term in the boundary integral is clearly bounded (indeed, $g_p(\n,\nu_X)\le 1$), so we can conclude, as before, with \cref{lem:intcov}, applied to $X|_{\de M}$, since $\dim \de M\ge 2$.  
\end{proof}
\begin{corollary}\label{thm:D12corner}
Let $Q$ be a $\mC^2$ manifold with corners of dimension $m\ge 3$ (for instance, $Q=[0,1]^m$). Let $X\randin \mC^2(Q)$ be Gaussian, with the property that:\begin{enumerate}[$\bullet$]
    %\item For any pair of distinct points $p,q\in M$, the Gaussian vector $\tyu X(p),X(q)\uyt\randin \R^2$ is non-degenerate.
    \item For any point $p\in Q$ and any tangent vector $v\in T_pQ$, the Gaussian vector $j_p^1X:=\tyu X(p),d_pX(v)\uyt\randin \R^2$ is non-degenerate.
    \item If $m=3$, we also assume that for every $(p,v)\in TM$, we have that $X(p),d_pX(v),H_p(v,v)$ form a non-degenerate Gaussian vector.
\end{enumerate}
Then, the nodal volume $V(X)=\vol^{m-1}(X^{-1}(0))$ is in $\mathbb{D}^{1,2}.$ 
\end{corollary}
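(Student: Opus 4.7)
\textbf{Proof proposal for \cref{thm:D12corner}.} The plan is to reduce the corner case to the smooth-boundary case of \cref{thm:D12} by a monotone approximation of $Q$ by compact $\mathcal{C}^2$ submanifolds-with-boundary, and to conclude by the closedness of the gradient in $\mathbb{D}^{1,2}$ (see \cite[Thm.\ 5.7.2]{bogachev}). Concretely, one can construct a decreasing family of neighborhoods $U_\epsilon$ of the union of corner strata of $Q$ (i.e.\ the strata of codimension $\ge 2$) and set $Q_\epsilon := \overline{Q \setminus U_\epsilon}$, smoothing the defining function if necessary to ensure that $\partial Q_\epsilon$ is of class $\mathcal{C}^2$ and consists of an inward-pushed copy of the smooth part of $\partial Q$, glued to rounded-off replacements for the corner strata. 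In this way $Q_\epsilon \uparrow Q$ with $\mathcal{H}^m(Q \setminus Q_\epsilon) \to 0$ and $\mathcal{H}^{m-1}(\partial Q_\epsilon \triangle \partial Q) \to 0$ once both are restricted to a fixed neighborhood of the smooth locus of $\partial Q$.

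For each $\epsilon>0$ the restriction $X|_{Q_\epsilon}$ satisfies the non-degeneracy hypotheses of \cref{thm:D12}, so $V_\epsilon := \mathcal{H}^{m-1}(X^{-1}(0)\cap Q_\epsilon) \in \mathbb{D}^{1,2}$, with stochastic derivative $\dmlv V_\epsilon$ given explicitly by the formula \eqref{eq:mainfrechet} on $Q_\epsilon$, and with the square-norm $\mathbb{E}\|\dmlv V_\epsilon\|_{\mathcal{H}_X}^2$ given by the Kac-Rice expression \eqref{eq:KRDerivative} integrated over $Q_\epsilon\times Q_\epsilon$ and $\partial Q_\epsilon\times \partial Q_\epsilon$. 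The key point is that the bounds of \cref{lem:Expbound} (when $m\ge 4$), \cref{lem:3DExpbound} (when $m=3$) and \cref{lem:intcov} are local and intrinsic to the field $X$ on a fixed ambient manifold containing all the $Q_\epsilon$: they do not depend on the choice of integration domain, and therefore the Kac-Rice integrands on $Q_\epsilon\times Q_\epsilon$ and on $\partial Q_\epsilon\times \partial Q_\epsilon$ are dominated uniformly in $\epsilon$ by integrable functions on $Q\times Q$ and on $\partial Q\times\partial Q$ respectively.

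With this uniform integrability in hand, one then uses dominated convergence to pass to the limit in the Kac-Rice formula: $V_\epsilon \to V(X)$ in $L^2(\mathbb{P})$ (the bulk integrand is unchanged and only the domain increases to $Q$), and $\mathbb{E}\|\dmlv V_\epsilon - \dmlv V_{\epsilon'}\|_{\mathcal{H}_X}^2 \to 0$ as $\epsilon,\epsilon'\to 0$ (this reduces to convergence of two-fold integrals over symmetric differences of the domains, which are dominated by integrable functions and whose domains shrink to sets of zero measure, since the corner strata of $Q$ are $\mathcal{H}^m$-negligible and their codimension-one neighborhoods in $\partial Q$ are $\mathcal{H}^{m-1}$-negligible). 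Consequently $\dmlv V_\epsilon$ is Cauchy in $L^2(\Omega;\mathcal{H}_X^*)$, with limit the natural analogue of \eqref{eq:mainfrechet} in which the boundary integral is taken over $X^{-1}(0)\cap \partial Q_{\text{sm}}$, the intersection of the nodal set with the smooth part of $\partial Q$ (the corner strata being $\mathcal{H}^{m-2}$-null in $\partial Q$ for $m\ge 3$). The closedness of the Malliavin derivative then yields $V(X)\in\mathbb{D}^{1,2}$.

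The main obstacle is step 3, namely choosing the approximation $\{Q_\epsilon\}$ carefully enough so that $\partial Q_\epsilon$ converges to the smooth part of $\partial Q$ in a manner strong enough to pass the Kac-Rice boundary integral to the limit, while keeping each $\partial Q_\epsilon$ of class $\mathcal{C}^2$ so that \cref{thm:D12} applies verbatim. The standard way to achieve this is via an $\epsilon$-smoothing of the distance function to the corner stratification: the resulting $\partial Q_\epsilon$ agrees with $\partial Q$ outside an $\epsilon$-tube around the corners and is $\mathcal{C}^2$ everywhere, which is what is needed to dominate both the conditional-expectation factors (by \cref{lem:Expbound}/\cref{lem:3DExpbound}) and the two-point density factors (by \cref{lem:intcov}) uniformly in $\epsilon$.
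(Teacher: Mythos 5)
Your strategy is essentially the one the paper uses: exhaust $Q$ from the inside by compact $\mC^2$ manifolds with boundary, apply \cref{thm:D12} to each approximant, show the derivatives form a Cauchy sequence in $L^2(\Omega;\mathcal{H}_X^*)$, and conclude by the completeness/closedness of $\mathbb{D}^{1,2}$. The one place where the implementations diverge is the limit of the boundary Kac--Rice term. The paper parametrizes all the approximants $M_n$ by a \emph{fixed} compact manifold $M$ via embeddings $\Phi_n\to\Phi$, with a fixed piece $B\subset\de M$ mapped onto the codimension-one stratum of $\de Q$; the boundary integrals are then all pulled back to the fixed domain $B\times B$, and convergence follows simply from convergence of the embeddings in the integrand. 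Your version keeps the domains $\de Q_\epsilon$ varying, and here your phrasing has a loose end: $\de Q_\epsilon$ is \emph{not} a subset of $\de Q$ (it contains the rounded-off caps near the corner strata), so "dominated uniformly in $\epsilon$ by an integrable function on $\de Q\times\de Q$" does not literally parse. To repair it you would need to (a) handle the part of $\de Q_\epsilon$ that coincides with $\de Q$ by dominated convergence on the fixed set $\de Q\times\de Q$, and (b) show separately that the cap contributions vanish, using that the caps have $\mathcal{H}^{m-1}$-measure $O(\epsilon)$, that the conditional expectation in the boundary term is bounded by $1$ (since $|g_p(\n,\nu_X)|\le 1$), and that the two-point density is uniformly integrable on $(m-1)$-dimensional sets by \cref{lem:intcov} (which needs dimension $\ge 2$, i.e.\ $m\ge 3$, exactly as in the paper). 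This is fixable, but the fixed-parametrization device of the paper avoids the issue entirely and is the cleaner route.
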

\begin{remark}
An immediate consequence is that the same statement holds true for random fields defined on any geometric object that can be written as a finite union of manifolds with corners. In particular, let $Q\subset B^m$ be a semialgebraic subset (see \cite{BCR:98}) of the unit ball $B^m$, entirely contained in its interior and let $X\randin \mC^2(B^m)$ satisfy the hypotheses of the above theorems. Then $\vol_{m-1}(X^{-1}(0)\cap Q)$ is in $\mathbb D^{1,2}$.
\end{remark}
\begin{proof}
It is possible to construct an increasing sequence of $\mC^2$ manifolds with boundary $M_n\uparrow Q$, all parametrized by a fixed compact manifold with boundary $M$, via a family of $\mC^2$ embeddings $\Phi_n\colon M\to Q$, such that $\Phi_n(M)=M_n$ and such that $\Phi_n\to \Phi$ in $\mC^\infty(M,Q)$, where $\Phi(M)=Q$. Moreover, the construction can be made in such a way that there is a partition of the boundary $\de M=N\sqcup B$, such that $\phi:=\Phi|_B$ is an embedding with image $\de Q$, the $m-1$ dimensional stratum of $Q$ and $N$ has zero measure. To see that this construction is possible, observe that it is enough to prove when $Q=[0,1]^m$, 
 in which case it can be done explicitely. Denote $\phi_n:=\Phi_n|_B$. 
 Let $V_n(f):=\vol_{m-1}(f^{-1}(0)\cap M_n)$. By dominated convergence, we have that $V_n(X)\to V(X)$ almost surely. By \cref{cor:volL2}, it follows that the convergence holds in $L^2$ as well. Exploiting the completeness of the space $\mathbb D^{1,2}$, to show that $V(X)\in \mathbb D^{1,2}$ it is thus sufficient to prove that the sequence of derivatives $d_XV_n$ is convergent in $L^2$. This is true for the non-boundary term. Let us denote by $d_X^\de V$ the boundary term:
 \bega 
d_XV_n^\de
&=%\int_{X^{-1}(0)\cap V_n}\tyu\frac{\tilde\Delta X(p)}{\|d_pX\|^2}\uyt \delta_p(\cdot) \mathcal{H}^{m-1}(dp)
%\\
%&+
\int_{X^{-1}(0)\cap \de M_n}\tyu
\frac{ g_p\tyu \n,\nu_X\uyt }{\|d_p(X|_{\de M})\|}
\uyt \delta_p(\cdot) \mathcal{H}^{m-2}(dp)
\\
&=
\int_{(X\circ \phi_n)^{-1}(0))}\tyu
\frac{ g_{\phi_n(x)}\tyu \n,\nu_X\uyt }{\|d_x(X\circ \phi_n)\|}
\uyt\Big|_{p=\phi_n(x)} \delta_{\phi_n(x)}(\cdot) J_x\phi_n\mathcal{H}^{m-2}(dx)
 \eega
 So that applying the Kac-Rice formula to the field $(X\circ \phi,X\circ \phi_n)\colon B\times B\to \R^2$, we get
\bega 
\E \langle d_XV_n^\de - d_XV^\de\rangle_{\HX}^2&=
\\
&=
\int_{B\times B}\E\kop
g_p( \n,\nu_X) \cdot g_q( \n,\nu_X)
\Bigg|
\begin{aligned}
X(p)&=0 \\ X(q)&=0
\end{aligned}
\pok\times 
\\
&\times
\frac{C(p,q)}{2\pi\sqrt{C(p,p)C(q,q)-C(p,q)^2}}\Bigg|_{p=\phi_n(x),q=\phi(y)}\!\!\!\!\!\!\!\!\!\!\!\!\!\!\!\!\!\!dB(x)dB(y).
\eega
Since $\phi_n\to \phi$ converges to an embedding, the formula above is convergent, thus we conclude.
 \end{proof}
\subsection{The two-dimensional case}
In this section, we will prove the points (ii) and (iv) of \cref{t:mainintro}. The former, was partly proven in \cref{thm:D12}, and it only remains to show that in dimension $m\in \{2,3\}$ we have $V(X)\in \mathbb{D}^{1,1}$ assuming only \cref{ass:1}, which is the content of \cref{thm:D11} below.
The second, point (iv), is equivalent to \cref{thm:D12boundary} below, which establishes that a necessary condition for $V(X)$ to be in $\mathbb{D}^{1,2}$ is that the random curve $Z=X^{-1}(0)$ is almost surely $\mC^1$-isotopic (see \cref{def:c1iso}) to a fixed deterministic one.
\subsubsection{In dimensions $2$ and $3$, $V$ is in $\mathbb{D}^{1,1}$}
The following theorem completes the proof of point (ii) of \cref{t:mainintro}. 
\begin{theorem}\label{thm:D11}
% Let $m\ge 2$. Let $X\randin \mC^2(M)$ be Gaussian, with the property that:\begin{enumerate}[$\bullet$]
%     %\item For any pair of distinct points $p,q\in M$, the Gaussian vector $\tyu X(p),X(q)\uyt\randin \R^2$ is non-degenerate.
%     \item For any point $p\in M$ and any tangent vector $v\in T_pM$, the Gaussian vector $j_p^1X:=\tyu X(p),d_pX(v)\uyt\randin \R^2$ is non-degenerate.
% \end{enumerate}
Let \cref{ass:1} prevail. Then, $V(X)=\vol_{m-1}(X^{-1}(0))$ is in $\mathbb D^{1,1}$.
\end{theorem}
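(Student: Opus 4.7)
The plan is to verify the three conditions of \cref{def:Dspace} for $V(X)$ with $p=1$. Since $\mathbb{D}^{1,2}\subset \mathbb{D}^{1,1}$, \cref{thm:D12} already covers $m\geq 4$, so it is enough to handle $m\in\{2,3\}$. For conditions (i) and (ii) I would copy verbatim the corresponding steps from the proof of \cref{thm:D12}: ray absolute continuity follows from \cref{thm:transcurve} together with the $W^{1,1}$-regularity along transverse curves given by \cref{thm:DeterministPicture}, while stochastic Fr\'echet differentiability with the explicit representation \eqref{eq:mainfrechet} follows from \cref{thm:firstvar} combined with \cref{lem:bulinskaya}.

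The substantial step is condition (iii) in $L^1$, namely $\mathbb{E}\|\dmlv V\|_{\HX}<\infty$. The key idea is to bypass the second-moment identity \eqref{eq:KRDerivative}---whose diagonal singularity was handled in \cref{thm:D12} only via the extra nondegeneracy of \cref{t:mainintro}(iii) or via $m\geq 4$---and to replace it by a cheap total-variation bound at the level of the Cameron--Martin dual. By Remark \ref{r:cmspace}(b), $d_X V = G_{\theta_X}$, where $\theta_X$ is the signed Borel measure
\begin{equation*}
\theta_X = \frac{\tDelta X}{\|dX\|^{2}}\,\mathbf{1}_{Z}\,\mathcal{H}^{m-1}|_{M} + \frac{g(\n,\nu)}{\|d(X|_{\partial M})\|}\,\mathbf{1}_{\partial Z}\,\mathcal{H}^{m-2}|_{\partial M}.
\end{equation*}
Decomposing $\theta_X=\theta_X^{+}-\theta_X^{-}$ into its positive and negative parts and using that, for any finite positive Borel measure $\mu$ on $M$, $\|G_\mu\|_{\HX}^{2}=\int\!\!\int K_X\,d\mu\,d\mu\leq \|K_X\|_\infty\,\mu(M)^{2}$ (with $\|K_X\|_\infty<\infty$ by compactness of $M$ and continuity of $K_X\in \mC^{2,2}(M\times M)$), the triangle inequality yields the crude bound
\begin{equation*}
\|d_X V\|_{\HX} \leq \|K_X\|_\infty^{1/2}\, |\theta_X|(M).
\end{equation*}
It therefore suffices to check $\mathbb{E}\,|\theta_X|(M)<\infty$.

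This last estimate would be obtained via two applications of Kac--Rice. For the interior piece, \cref{lem:krAlpha} applied to the single field $X$ gives
\begin{equation*}
\mathbb{E}\int_{Z}\frac{|\tDelta X|}{\|dX\|^{2}}\,dZ = \int_{M}\mathbb{E}\!\left[\frac{|\tDelta X(p)|}{\|d_p X\|}\,\Big|\,X(p)=0\right]\frac{dM(p)}{\sqrt{2\pi\,C(p,p)}}.
\end{equation*}
Under \cref{ass:1}, $d_p X$ remains a non-degenerate Gaussian in the $m$-dimensional fibre $T_p^*M$ after conditioning on $X(p)=0$, and a standard Gaussian integration shows $\mathbb{E}[\|d_p X\|^{-q}\mid X(p)=0]<\infty$ for every $q<m$. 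Since $m\geq 2$, picking any $q\in(1,m)$ and combining with the fact that $\tDelta X(p)$ has all conditional Gaussian moments, H\"older's inequality yields a bound on the integrand that is continuous in $p$, hence uniformly bounded on the compact manifold $M$. For the boundary piece, I would apply Kac--Rice to the zero set of $X|_{\partial M}$ (which is $(m-2)$-dimensional, i.e.\ a finite set when $m=2$); the Kac--Rice Jacobian $\|d_p(X|_{\partial M})\|$ exactly cancels the singular denominator, leaving
\begin{equation*}
\mathbb{E}\int_{\partial Z}\frac{|g(\n,\nu)|}{\|d(X|_{\partial M})\|}\,d\partial Z = \int_{\partial M}\mathbb{E}\!\left[\,|g(\n,\nu)|\,\big|\,X(p)=0\,\right]\frac{d\partial M(p)}{\sqrt{2\pi\,C(p,p)}},
\end{equation*}
which is finite because $|g(\n,\nu)|\leq 1$ and $\partial M$ is compact.

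The step I expect to require the most care is the uniform-in-$p$ control of the interior conditional expectation: for $m=2$ one only has $\|d_p X\|^{-1}\in L^{q}$ conditionally for $q<2$, so the H\"older exponent must be chosen strictly less than $2$ and compensated by a large Gaussian moment of $\tDelta X(p)$. Continuity in $p$ of the conditional law of $(d_p X,\tDelta X(p))$ together with the compactness of $M$ then upgrade the pointwise finiteness to a uniform bound. Crucially, no two-point analysis near the diagonal of $M\times M$---the source of the additional assumptions in \cref{thm:D12}---enters here, and this is precisely the gain of working in $\mathbb{D}^{1,1}$ rather than in $\mathbb{D}^{1,2}$.
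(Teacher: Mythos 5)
Your proposal is correct and follows essentially the same route as the paper: reduce to condition (iii) of Definition \ref{def:Dspace}, bound $\|d_XV\|_{\HX}$ by a constant times the total variation of the representing measure $\theta_X$ (the paper does this via $\sup_{\|h\|_{\HX}\le 1}\langle d_XV,h\rangle$ and the continuous embedding $\HX\hookrightarrow \mC^0(M)$, which is the same estimate as your $\iint K_X\,d\mu\,d\mu\le \|K_X\|_\infty\,\mu(M)^2$ bound), and then apply the one-point Kac--Rice formula to the interior and boundary terms, controlling $\mathbb{E}[\,|\tDelta X(p)|\,\|d_pX\|^{-1}\mid X(p)=0]$ uniformly via the conditional non-degeneracy of $d_pX$ in the $m$-dimensional fibre. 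Your H\"older argument with $q\in(1,m)$ is exactly what the paper packages as the continuity of $I_{1,0}$ on $\m{F}_D\times\m{X}_{m,0}$ in Theorem \ref{thm:overkill}.
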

\begin{proof}
The first two steps $(i)$ and $(ii)$ in the proof of \cref{thm:D12} remain true in this setting, so it is enough to prove $(iii)$. 
Observe that since the inclusion $\HX\subset \mC^0(M)$ is continuous, it follows that 
\be 
\|h\|_{\HX}\le 1 \implies \max_{p\in M}|h(p)|\le 1.
\ee 
To prove the integrability of $d_XV$, we proceed as follows:
    \bega 
&\E\kop \|d_XV\|_{\HX}\pok \\
&=
\E\kop \sup_{\|h\|_{\HX}\le 1}\langle d_XV,h\rangle\pok
\\
&=
\E\kop \sup_{\|h\|_{\HX}\le 1}\int_{Z}\frac{\Tilde{\Delta}X(p)}{\|d_pX\|^2} h(p) dZ(p)\pok 
+
\int_{\de Z}\tyu
\frac{ g_p( \n
,\nu_X) }{\|d_p(X|_{\de M})\|}
\uyt h(p) d\de Z (p)%\mathcal{H}^{m-2}(dp),
\\
&\le
\E\int_{Z}\frac{|\Tilde{\Delta}X(p)|}{\|d_pX\|^2}dZ(p)
+
\E\int_{\de Z}\tyu
\frac{ 1 }{\|d_p(X|_{\de M})\|}
\uyt  d\de Z (p)
\\
&=
\int_{M}\E\kop\frac{|\Tilde{\Delta}X(p)|}{\|d_pX\|}\Bigg|
X(p)=0
\pok
\frac{1}{\sqrt{2\pi C(p,p)}} dM(p)
+
\int_{\de M}
\frac{1}{\sqrt{2\pi C(p,p)}} d\de M(p).
    \eega
    By \cref{thm:overkill}, we know that $I_{1,0}$ is continuous on $\mathcal{F}_D\times \m{X}_{m,0}$, whenever $m\ge 2$. We deduce that the integral is bounded.
\end{proof}
\subsubsection{When is $V$ in $\mathbb{D}^{1,2}$?}
The idea for proving \cref{thm:D12boundary} starts from the observation that \cref{def:Dspace} implies a certain Sobolev regularity on almost every line in the space $E$. Then, we will exploit our study, done in \cref{sec:transverse}, of the behavior of $V$ along transverse curves to conclude.
\begin{lemma}\label{lem:ray}
Let $E$ be a Banach space equipped with a centered Gaussian measure $\mu=[X]$ having full-support and let $\HX\subset E$ be its Cameron-Martin Hilbert space. Let $p\in [1,+\infty]$. 
    If $V\in \mathbb{D}^{1,p}(\mu)$, then $V$ is \emph{ray-$W^{1,p}$}, that is: for every $h\in \HX$, there is a subset $N_h\subset E$, with $\mu(N_h)=1$, such that for all $x\in N_h$, the function $t\mapsto V(x+th)$ belongs to the Sobolev space $W^{1,p}(I,\R)$, for any bounded interval $I\subset \R$.
\end{lemma}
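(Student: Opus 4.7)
The plan is to exploit the Cameron--Martin decomposition $E = E_0 \oplus \R h$ with $E_0 := \overline{\{h\}^\perp}$, which splits $X = Y + \gamma h$ into independent Gaussian components $Y \randin E_0$ and $\gamma \sim \m N(0,1)$, so that $\mu = \PP_Y \otimes \rho_\gamma$, where $\rho_\gamma$ is the standard Gaussian density on $\R h$. Assuming without loss of generality that $\|h\|_{\HX}=1$, the fundamental comparison is that, for each $R>0$ and each non-negative measurable $f : E \to \R$,
\[
\int_{E_0} \int_{-R}^{R} f(y+th)\,dt\,d\PP_Y(y) \;\le\; c_R^{-1}\int_E f\, d\mu, \qquad c_R := \tfrac{1}{\sqrt{2\pi}}e^{-R^2/2},
\]
since $\rho_\gamma(t) \ge c_R$ on $[-R, R]$. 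Applying this estimate to $f = \|\dmlv V\|_{\HX}^p$ and invoking condition (iii) of \cref{def:Dspace}, Fubini's theorem yields that for $\PP_Y$-a.e.\ $y$ one has $\int_{-R}^R \|\dmlv V(y+th)\|_{\HX}^p\, dt < \infty$ for every $R \in \N$.

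By condition (i) of \cref{def:Dspace}, combined with a routine null-set bookkeeping across the decomposition $E = E_0 \oplus \R h$, for $\PP_Y$-a.e.\ $y$ the function $t \mapsto V(y+th)$ admits an absolutely continuous modification $\psi_y$ on all of $\R$. It then remains to identify the derivative $\psi_y'(t)$ with $\langle \dmlv V(y+th), h\rangle$ for a.e.\ $t$: once this identification is secured, the previous step upgrades the local $L^1$ differentiability of $\psi_y$ to the local $L^p$ control required for $\psi_y \in W^{1,p}(I)$ on every bounded interval $I$, and translation via $x = y + \gamma h$ delivers the statement of the lemma.

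To carry out the identification I will approximate $V$ by a sequence $V_n \to V$ of smooth cylindrical functions in $\mathbb{D}^{1,p}(\mu)$, whose existence is recalled in the remark following \cref{def:Dspace}. For each $V_n$ the classical chain rule gives $\psi_{n,y}'(t) = \langle \dmlv V_n(y+th), h\rangle$ pointwise. Applying the same integral comparison both to $|V_n - V|^p$ and to $\|\dmlv V_n - \dmlv V\|_{\HX}^p$ and extracting a suitable subsequence, I will obtain, for $\PP_Y$-a.e.\ $y$ and every $R \in \N$, the two $L^p(-R,R)$-convergences $\psi_{n,y} \to \psi_y$ and $\langle \dmlv V_n(y+\cdot h), h\rangle \to \langle \dmlv V(y+\cdot h), h\rangle$. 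The standard closedness of the distributional derivative under such joint $L^p$-convergence then forces $\psi_y'(t) = \langle \dmlv V(y+th), h\rangle$ almost everywhere, closing the argument.

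The main obstacle is precisely this identification step: one must extract a diagonal subsequence along which both the cylindrical approximants and their Malliavin derivatives converge $\mu$-a.e., while simultaneously managing the $\PP_Y$-null exceptional sets that arise when applying the ray absolute continuity hypothesis to each $V_n$. All remaining ingredients---the one-line Gaussian density bound at the start and the standard closure of $W^{1,p}$ under joint $L^p$-convergence of function and weak derivative---are essentially routine.
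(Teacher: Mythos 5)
Your proposal is correct and rests on the same skeleton as the paper's proof: the Cameron--Martin splitting $E=E_0\oplus\R h$ with $\mu=\mu_0\otimes\mu_h$, followed by the observation that the one-dimensional Gaussian density is bounded below on compact intervals, so that Fubini transfers the $p$-integrability of $\|\dmlv V\|_{\HX}$ into $L^p(I)$-integrability of $t\mapsto\langle \dmlv V(x+th),h\rangle$ along $\mu_0$-almost every line. Where you diverge is the identification $\psi_y'(t)=\langle \dmlv V(y+th),h\rangle$, which you rightly single out as the crux: the paper disposes of it in one line by asserting that $t\mapsto V(x+th)$ belongs to the one-dimensional space $\mathbb{D}^{1,p}(\mu_h)$ with exactly that Sobolev derivative. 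The quickest justification of that assertion is not an approximation argument but the remark that, for a ray absolutely continuous $V$, the difference quotients $t^{-1}(V(x+th)-V(x))$ converge for $\mu$-a.e.\ $x$ (namely, wherever the absolutely continuous modification is differentiable along the ray) while by \cref{def:Dspace}-(ii) they also converge in $\mu$-probability to $\langle \dmlv V(x),h\rangle$; an a.e.\ limit and a limit in probability of the same family must agree $\mu$-a.e. Your cylindrical-approximation route to the same identification is valid --- the closedness of the distributional derivative under joint $L^1_{\mathrm{loc}}$ convergence of functions and derivatives does close the argument, and the diagonal extraction over $R\in\N$ is routine --- but it is heavier than needed, and it quietly invokes the density of smooth cylindrical functions in $\mathbb{D}^{1,p}(\mu)$, which the paper records only for $p\in[1,\infty)$ while the statement allows $p=+\infty$; for that endpoint you should either run the approximation in $\mathbb{D}^{1,1}$ (which suffices for the identification, the $L^\infty$ bound on the derivative being handled separately by your Gaussian comparison) or simply use the difference-quotient argument above.
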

\begin{proof}
   Let $h\in \HX$. Then, we can split the space $E$ and the measure $\mu$ into a direct product $E=\R h\times E_0$ and the measure $\mu=\mu_h\otimes \mu_0$. \cref{def:Dspace} implies that for all $x$ in a full measure subset $N_h\subset E$, the function $\f_x:t\mapsto V(x+th)$ belongs to $\mathbb{D}^{1,p}(\mu_h)$. Thus, we have that $\f_x$ and its Sobolev derivative $\f_x'(t)=\langle \dmlv V(x+th),h\rangle$ are in $L^p(\mu_h)$. Here, $\mu_h$ is a Gaussian measure, hence both functions are also in $L^p(I,\R)$ for any bounded interval $I\subset \R$. 
\end{proof}

The following theorem is a reformulation of point (iv) of \cref{t:mainintro}.
\begin{theorem}\label{thm:D12boundary}
%Let $M$ be a compact $\mC^2$ Riemannian surface ($m=2$), possibly with boundary $\de M$. Let $X\randin \mC^2(M)$ be Gaussian, with the property that:
Let \cref{ass:1} prevail, with $\dim M=2$ and assume the following: 
\begin{enumerate}[$\bullet$]
    %\item For any pair of distinct points $p,q\in M$, the Gaussian vector $\tyu X(p),X(q)\uyt\randin \R^2$ is non-degenerate.
    %\item For any point $p\in M$ and any tangent vector $v\in T_pM$, the Gaussian vector $j_p^1X:=\tyu X(p),d_pX(v)\uyt\randin \R^2$ is non-degenerate.
    \item there are no pairs of distinct boundary points $p,q\in \de M$, such that the Gaussian vectors $ j_p^1X,j_q^1X$ are fully correlated.
\end{enumerate}
Then, if the volume random variable $V(X)=\vol_{m-1}(X^{-1}(0))$ is ray-$W^{1,2}$ then there exists a deterministic curve $Z\subset M$, such that 
\be 
\PP\kop X^{-1}(0) \text{ is $\mC^1$-isotopic to } Z\text{ in $M$}\pok =1.
\ee
If this does not hold, then $V(X)\notin \mathbb{D}^{1,2}$. 
\end{theorem}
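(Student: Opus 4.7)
The plan is to prove: if $V(X)$ is ray-$W^{1,2}$, then the $\mC^1$-isotopy class of $X^{-1}(0)$ in $M$ is almost surely equal to a fixed class. By \cref{lem:ray}, $V(X)\in \mathbb{D}^{1,2}$ implies that $V(X)$ is ray-$W^{1,2}$, so the two assertions of the theorem are contrapositives of each other, and the existence of the deterministic curve $Z$ is extracted as a representative of the a.s.\ isotopy class.

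First, fix $h\in \HX\smallsetminus\{0\}$. The ray-$W^{1,2}$ property provides, outside a $\mu$-null set, that $\f_X(t):=V(X+th)$ lies in $W^{1,2}(I)$ for every bounded interval $I\subset \R$, so in particular $\f_X'\in L^2_{\mathrm{loc}}(\R)$. On the other hand, \cref{cor:notD12} applied with $e=h$ asserts, outside another null set, that either $X+\R h\subset \m U$ or $\f_X\notin W^{1,2}(\R)$; the proof of that corollary, via \cref{prop:3pt} and \cref{lem:notd12bd}, actually establishes that whenever $X+\R h\not\subset \m U$, the derivative $\f_X'$ blows up like $|t-t_0|^{-1/2}$ near every critical value $t_0$, which contradicts $L^2_{\mathrm{loc}}$-integrability. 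Hence for every $h\in \HX$, almost surely $X+\R h\subset \m U$, and \cref{lem:c1iso} applied to the $\mC^2$ path $t\mapsto X+th$ on $[0,1]$ yields that $X^{-1}(0)$ and $(X+h)^{-1}(0)$ are $\mC^1$-isotopic in $M$. Writing $[f]$ for the $\mC^1$-isotopy class of $f^{-1}(0)$ in $M$, we obtain
\begin{equation}\label{eq:step1pf}
\mu\bigl\{X\in \m U\cap E:[X]=[X+h]\bigr\}=1\qquad\text{for every }h\in \HX.
\end{equation}

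Next, since $[\cdot]$ is locally constant on $\m U$, the sets $E_C:=\{f\in \m U\cap E:[f]=C\}$ are measurable and only countably many of them have positive $\mu$-mass. For each such $E_C$, \eqref{eq:step1pf} yields $\mu(E_C\bigtriangleup (E_C-h))=0$ for every $h\in \HX$. Using the series expansion $X=\sum_n \gamma_n e_n$ from \eqref{e:seriexp} associated with a fixed orthonormal basis of $\HX$, a Fubini argument shows that invariance modulo null sets under arbitrary translations along the first $N$ Gaussian coordinates forces $1_{E_C}$ to be a.s.\ independent of $(\gamma_1,\ldots,\gamma_N)$ for every $N$; hence $E_C$ lies in the tail $\sigma$-algebra of the i.i.d.\ sequence $(\gamma_n)$, and Kolmogorov's zero-one law gives $\mu(E_C)\in \{0,1\}$. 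Exactly one class $C_0$ then satisfies $\mu(E_{C_0})=1$, and any representative $Z$ of $C_0$ gives the desired deterministic isotopy type.

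The main obstacle is extracting the stronger, \emph{local} $L^2$ non-integrability of $\f_X'$ from \cref{cor:notD12}, whose statement only rules out global $W^{1,2}(\R)$ regularity; this requires inspecting that proof and relying on the two-dimensional compensation analysis of \cref{sec:lowd2}, in particular the three-critical-point obstruction of \cref{prop:3pt} and its boundary analog \cref{lem:notd12bd}. It is precisely at this point that the no-pair-of-boundary-points full correlation hypothesis enters, through \cref{thm:bestia}, to ensure that the $|t-t_0|^{-1/2}$ singularity does not cancel out.
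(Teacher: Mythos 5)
Your proposal is correct, and its first half (the reduction via \cref{lem:ray}, the combination of ray-$W^{1,2}$ with \cref{cor:notD12}, and the honest acknowledgment that one must use the \emph{local} $L^2$ blow-up of $\f_X'$ near a critical value coming from \cref{prop:3pt} and \cref{lem:notd12bd} rather than the global $W^{1,2}(\R)$ statement of the corollary) coincides with what the paper does; the paper's own proof glosses over the local-versus-global point in exactly the same way you make explicit. Where you genuinely diverge is in how you pass from ``for every $h\in \HX$, a.s.\ $[X]=[X+h]$'' to the almost sure constancy of the isotopy class. The paper argues \emph{deterministically on the state space}: it uses the density of $\HX$ in $E$ and the openness of $\m U$ to connect any two points $f_0,f_1\in\m U\cap E$ through a segment lying entirely in $\m U$ (after perturbing the endpoints inside small balls of positive measure), concludes that $\m U\cap E$ is connected, and then applies \cref{lem:c1iso} once along a connecting path. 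You instead keep the statement probabilistic: the class function $[\cdot]$ is locally constant on the open set $\m U\cap E$, so the level sets $E_C$ are a countable disjoint family of open sets, each invariant modulo $\mu$-null sets under all Cameron--Martin shifts, and the Gaussian zero-one law (via the series expansion \eqref{e:seriexp}, Fubini, and Kolmogorov) forces each $\mu(E_C)\in\{0,1\}$. Both routes are sound and consume the same key input; the paper's connectedness argument is slightly more elementary and yields the extra geometric fact that $\m U\cap E$ is connected, while your zero-one-law argument avoids the small density/perturbation bookkeeping needed to join two arbitrary points of $\m U\cap E$ by an $\HX$-direction segment through a generic starting point. The only place where your write-up is thinner than it should be is the Fubini step establishing tail-measurability of $1_{E_C}$ from invariance under translations in finitely many coordinates; this is standard for Gaussian measures but deserves either a full sentence or a citation (e.g.\ to the zero-one law in \cite{bogachev}).
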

\begin{proof}
By \cref{lem:ray} if $V(X)$ is not $ray-W^{1,2}$, then it is not in $\mathbb{D}^{1,2}$. Let us assume that $V(X)$ is $ray-W^{1,2}$. Let $f_0,f_1\in \m U$, then there exists a curve $t\mapsto \tilde f_t=f_0+th\in E$, %such that $=\tilde{f_0}$ and $c(1)=\tilde{f_1}$, 
for some $\tilde f_i$ close to $f_i$. 
Then, for almost every such curves we must have that the function $t\mapsto V(c(t))$ is in $W^{1,2}$. By \cref{cor:notD12}, this implies that $f_t\subset \m U$. This proves that $\m U$ is connected. Moreover, for any continuous curve $f_t\in \m U$, we have that $f_t^{-1}(0)$ is $\mC^1$-isotopic to $Z:=f_0^{-1}(0)$ in $M$ for all $t$, by \cref{lem:c1iso}. Since $\m U$ has probability one, this concludes the proof.
% By \cref{prop: transvprop} we have that $V(c(t))=V(T=t)=\f(t)$ for some Morse function $T$, so that we are in the hypotheses of \cref{lem:notd12}, which means that the only way in which the function $\f$ could be in $W^{1,2}$ is if there is no critical value of $T$ in the interval $[0,1]$. This means that $f_t^{-1}(0)$ is $\mC^1$ diffeotopic to $Z:=f_0^{-1}(0)$ for all $t$, by a standard Morse theoretic argument. For the same reason, $\tilde{f_i}^{-1}(0)$ is $\mC^1$ diffeotopic to ${f_i}^{-1}(0)$. This shows that $\m U$ is connected. The same reasoning applies to every curve $t\mapsto c(t)$ in $\m U$ (which is automatically transverse), so that we conclude that $f_0^{-1}(0)$ and $f_1^{-1}(0)$ are $\mC^1$ isotopic. Since $\m U$ has probability one, this concludes the proof.
\end{proof}
\section{On the singular part of the law}\label{s:singular}
%\input{minimalsurf}
%\subsection{What if $d_XV=0$?}
As announced in Section \ref{ss:motivintro}, the forthcoming Theorem \ref{thm:sing} provides a characterization of the law of random nodal volumes whenever the underlying Gaussian field has full support. We will see in Remark \ref{r:bestremarkever} that our strategy for proving such a statement answers some questions left open in \cite{PolyAngst}.

\begin{theorem}\label{thm:sing}
% Let $M$ be a compact smooth Riemannian manifold of dimension $m\ge 2$, without boundary. Let $X\randin \mC^2(M)$ be Gaussian, with the property that:\begin{enumerate}[$\bullet$]
%     %\item For any pair of distinct points $p,q\in M$, the Gaussian vector $\tyu X(p),X(q)\uyt\randin \R^2$ is non-degenerate.
%     \item For any point $p\in M$ and any tangent vector $v\in T_pM$, the Gaussian vector $j_p^1X:=\tyu X(p),d_pX(v)\uyt\randin \R^2$ is non-degenerate.
%      %\item If $m=3$, we also assume that for every $(p,v)\in TM$, we have that $X(p),d_pX(v),\H_pX(v,v)$ form a non-degenerate Gaussian vector.
% \end{enumerate}
Let \cref{ass:1} prevail, and assume that the topological support of $X$ is $E=\mC^2(M)$. Then, the law  of the nodal volume $V(X) = \mathcal{H}^{m-1}(X^{-1}(0))$ takes the form 
\be 
\PP\kop V(X) \in I\pok=\int_I \rho(v)dv + P\cdot \delta_0(I),
\ee
for all $I\subset \R$ Borel subset, where $0<P=\PP\kop X^{-1}(0)=\emptyset \pok<1$, and $\rho:[0,+\infty[\to [0,+\infty[$ is an $L^1$ function whose topological support %is given by an interval of the type $[0,\beta]$ for some $\beta \in (0,\infty]$
is the set of nonnegative real numbers $[0,+\infty)$, and such that $\int_0^\infty \rho(x)dx = 1-P$. If $V(X)\in \mathbb{D}^{1,2}$, then formula \eqref{e:densitivan} holds for almost every $x\in [-c,+\infty]$, with $c = \mathbb{E}(V)$ 
 %\commik{Visto che $c$ non appare qui, serve richiamarlo?} 
 and
$$
{\pi}(x) = \frac{\rho(x)}{1-P},
$$
with a function $g$ satisfying the three properties {\rm (a)--(c)} in the statement of Theorem \ref{t:introdensitivan}.
\end{theorem}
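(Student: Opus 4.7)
The plan starts with the atom at $0$. Since $V(X)\geq 0$ always, and $V(X) = 0$ if and only if $X^{-1}(0) = \emptyset$, the mass $P := \mathbb{P}(V(X) = 0)$ equals $\mathbb{P}(X^{-1}(0)=\emptyset)$. The full-support hypothesis ensures $0 < P < 1$: the open set of everywhere-positive functions in $\mC^2(M)$ gives $P > 0$, while the open set of elements of $\m U$ with nonempty nodal set gives $1 - P > 0$. Combining \cref{thm:mainabs} and \cref{prop:interval}, the topological support of $[V(X)]$ is an interval containing $0$; to show it equals $[0, +\infty)$ I would exhibit, for each $R > 0$, an element $g \in \m U$ with $V(g) > R$ (e.g.\ a rapidly oscillating smooth function), which lies in the support of $X$.

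The heart of the argument is to show that the only singular part of $[V(X)]$ is $P\delta_0$. Writing the Lebesgue decomposition $[V(X)] = P\delta_0 + \mu_{\mathrm{ac}} + \mu_s$, where $\mu_s$ is supported on $(0,+\infty)$, the goal is $\mu_s = 0$. By \cref{thm:mainabs} the law is not purely singular, so $\mu_{\mathrm{ac}} \neq 0$. To eliminate $\mu_s$, I would combine the Cameron-Martin splitting $X = Y + \gamma h$ used in the proof of \cref{t:banach} with the deterministic regularity along transverse curves: by \cref{thm:transcurve} the segment $X + \R h$ is a.s.\ transverse to $\m W$, and by \cref{thm:DeterministPicture} the function $\f_y(t) := V(y + th)$ is continuous in $t$ and $\mC^1$ off a finite set. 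The crucial step is to choose $h \in \HX$ so that $\langle d_f V, h\rangle \neq 0$ on the event $\{f \in \m U, f^{-1}(0) \neq \emptyset\}$, which via the representation \eqref{eq:firstvar} amounts to a non-vanishing condition on a boundary/interior integral against $h$. Granting this, on $\{V(y+th) > 0\}$ the function $\f_y$ has nonzero derivative, and so $\f_y^{-1}(A)$ is countable for every Lebesgue-null $A \subset (0,+\infty)$ and $[Y]$-a.e.\ $y$; Fubini applied to $X = Y + \gamma h$ (with $\gamma$ standard Gaussian) then yields $\mu_s(A) = 0$, hence $\mu_s = 0$.

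The main obstacle is producing such an $h$, since the condition $\langle d_f V, h\rangle = 0$ with $V(f) > 0$ is a differential constraint whose vanishing set cannot be handled by standard transversality tools like \cref{lem:Ptransv} or Bulinskaya. This is precisely the scenario highlighted in the discussion of \cref{lem:blind}: I expect to apply that lemma to a dense family of $h \in \HX$ to conclude that the exceptional ``blind'' event, where $\f_y$ becomes constant on a positive-Lebesgue set at a positive value, has zero Gaussian probability. A secondary technical point is that, in dimension $m=2$, the $\mC^1$-regularity of $\f_y$ fails at finitely many $t$; however this does not affect the monotonicity argument, since the degeneracy set is countable.

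Finally, when $V(X) \in \mathbb{D}^{1,2}$, the density representation \eqref{e:densitivan} follows from the Nourdin-Viens formula (see \cite[Theorem 3.1]{nourdinviens} and \cite[Section 10]{nourdinpeccatibook}) applied to the centered random variable $\bar V = V(X) - c$. The previous two paragraphs show that the law of $\bar V$ has the form $P\delta_{-c} + (1-P)\tilde{\pi}(x)\,dx$ with $\tilde\pi$ supported on $[-c,+\infty)$; restricting to the absolutely continuous component and applying the Nourdin-Viens construction produces a version $g$ of the conditional expectation in \eqref{e:npkernel}, which is nonnegative and strictly positive a.e.\ on the interior of the support of the absolutely continuous part. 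Identifying $\pi(x+c) = \tilde\pi(x)/(1-P)$ then yields the stated formula together with properties (a)-(c), the integrability $\mathbb{E}[g(\bar V)]<\infty$ being a direct consequence of $V \in \mathbb{D}^{1,2}$.
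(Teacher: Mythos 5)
Your treatment of the atom at $0$, of the support being $[0,+\infty)$, and of the final Nourdin--Viens step is essentially the paper's, and the overall skeleton (reduce to showing that the law has no singular mass on $(0,+\infty)$, then exploit regularity along Cameron--Martin rays) is also the right one; the paper packages the ray/Fubini part via the Bouleau--Hirsch criterion applied with $p=1$, which reduces everything to the single claim $\PP\{V(X)>0,\ \dmlv V(X)=0\}=0$. But at exactly that claim your proposal has a genuine gap. You correctly identify that the obstacle is the event where the derivative of $V$ vanishes at a function with nonempty nodal set, and you correctly guess that \cref{lem:blind} is the tool, but \cref{lem:blind} has a nontrivial hypothesis to verify: near each $f_0$ in the bad set $\m V=\{f\in\m U\colon V(f)>0,\ d_fV|_{\HX}=0\}$ one must exhibit a direction $h$ along which the set $\{s\colon f+sh\in\m V\}$ is finite, uniformly for $f$ in a neighborhood of $f_0$. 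Knowing only that the \emph{first} variation vanishes at $f_0$ gives no control whatsoever on this set — a priori $s\mapsto \langle d_{f+sh}V,h\rangle$ could vanish on a whole interval — so "apply \cref{lem:blind} to a dense family of $h$" does not close the argument. There is also a misstep upstream: you ask for a single $h$ with $\langle d_fV,h\rangle\neq 0$ simultaneously for all $f$ with nonempty nodal set, which cannot exist; the relevant dichotomy is whether the full restriction $d_fV|_{\HX}$ vanishes, not its pairing with one fixed $h$.

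The missing idea is the \emph{second variation}. Since $\HX$ is dense in $\mC^2(M)$, the identity $d_fV|_{\HX}=0$ together with \eqref{eq:firstvar} forces $Z=f^{-1}(0)$ to be a minimal hypersurface ($H_Z\equiv 0$). One then invokes the classical second variation formula \eqref{eq:secondvar}: if $d^2_fV(h,h)=0$ for every $h\in\HX$, density of the induced test functions $\psi=-\|df\|^{-1}h|_Z$ in $\mC^1(Z)$ would yield the pointwise identity $\Delta^{(Z)}\psi+\bigl(\|\mathrm{II}\|^2+\mathrm{Ric}(\nu,\nu)\bigr)\psi=0$ for \emph{all} $\psi\in\mC^2(Z)$, which is absurd unless $Z=\emptyset$. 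Hence some $h$ satisfies $d^2_fV(h,h)\neq 0$; by continuity of the second differential, $s\mapsto\frac{d}{ds}V(f+sh)$ is strictly monotone on a uniform neighborhood, so it has at most one zero, and only then does \cref{lem:blind} (applied with $e=0$) apply to kill $\m V$. Without this minimal-surface/Jacobi-operator input your proof of $\mu_s=0$ does not go through. A secondary caveat: in the last step the standard Nourdin--Viens formula is stated for laws with a density, so its application to $\bar V$, whose law has an atom at $-c$, requires the restriction to test functions supported in $(-c,+\infty)$ and the positivity analysis of $\varphi(x)=\int_x^\infty y\,\bar\pi(y)\,dy$ carried out in the paper; your sketch glosses over this but it is repairable.
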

\begin{remark}\label{r:bestremarkever}
Roughly speaking, the conclusion of \cref{thm:sing} implies that the nodal volume $V(X)$ has an absolutely continuous distribution, conditionally on the event that $X$ vanishes somewhere in $M$.
A parallel study of the singular part of the law was made also in \cite[Section 4.4]{PolyAngst}. The authors prove \cite[Corollary 3]{PolyAngst} that, under the hypothesis that $M=\mathbb{T}^m$, that the field $X$ is smooth, stationary and with full support, if the kernel of a certain operator $\mathcal{L}^*$ has dimension $1$, then:
\begin{eqnarray*}
&& \PP\kop \dmlv V(X)=0 \implies \text{$X$ has constant sign (i.e., $V(X)=0$)}
\pok\\
&& = \PP\kop \{ \dmlv V(X)\neq 0\} \cup \{ V(X)=0\} \pok=1.
\end{eqnarray*}
This is equivalent to Claim \eqref{eq:finalclaim} below, which we prove here in more generality. In particular, there is no need to check if $\dim( \ker (\mathcal{L}^*))=1$, thus confirming the conjecture (formulated in \cite[end of Section 4.4]{PolyAngst}) that such a condition is indeed not necessary.
\end{remark}
\begin{proof}
The strategy of the proof is to use the Bouleau-Hirsch criterion \cite[Theorem 4]{PolyAngst}, in combination to a study of the zeroes of the Malliavin derivative $\dmlv V(X)$, which we know to exist, at least in the $L^1$ sense, for all $m\ge 2$ by \cref{thm:D11}. The key idea is to look at the second variation of the volume, which for minimal hypersurfaces is expressed by a standard differential geometric formula, see \cref{eq:secondvar} below. Then, we will apply \cref{lem:blind} to the set $\m V\subset E$ of zeroes of the Malliavin derivative.

To improve its readability, the proof is divided into eight steps.
\begin{enumerate}[wide]
\item Given that $X$ has full support, there is a positive probability that $X>1$ on the whole manifold and a positive probability that $X(p)\cdot X(q)<-1$ for a pair of distinct points $p,q$. It follows that $0<P=\PP\kop X^{-1}(0)=\emptyset \pok<1$. Therefore the law $\mu$ of the nodal volume certainly is a sum $\mu=\mu_0+P\delta_0$, for a positive measure $\mu_0$, that can be defined as follows
\be 
\mu_0(I)=\E\qwe  1_I\tyu V(X) \uyt\cdot 1_{V^{-1}]0,+\infty[}(X)\ewq.
\ee
\item Consider the Bouleau-Hirsch criterion as reported in \cite[Theorem 4]{PolyAngst}. In order to include the two dimensional case, we will apply the theorem with $p=1$, relying on the fact that $V(X)\in \mathbb D^{1,1}$ by \cref{thm:D11}. In our setting, \cite[Theorem 4]{PolyAngst} states that there exists a density function $\rho_0\colon [0,+\infty[\to [0,+\infty[$, integrable and such that for every $I\subset \R$ Borel subset,
\be 
\E\qwe 1_I\tyu V(X)\uyt\cdot \|\dmlv V(X)\|_{\HX}\ewq=\int_I\rho_0(t)dt.
\ee
An immediate consequence is that the measure $\nu$, defined as
\be 
\nu(I)=\E\qwe 1_I\tyu V(X)\uyt\cdot  1_{\|\dmlv V(X)\|_{\HX}>0}\ewq,
\ee
is absolutely continuous with respect to the Lebesgue measure. Let $\rho$ be its density function. We will conclude the proof by showing that $\nu=\mu_0$, that is equivalent to prove that:
\be\label{eq:finalclaim}
\textbf{Claim:}\quad \PP\kop V(X)> 0,\ \dmlv V(X)=0\pok =0.
\ee
The rest of the proof is devoted to prove Claim \eqref{eq:finalclaim}.
\item Since $X$ has full support $\mathcal{C}^2(M)=\overline{\HX}$, the almost sure identity $\dmlv V(X)= d_XV|_{\HX}=0$ implies that $Z=X^{-1}(0)$ is either empty or a minimal hypersurface, that is, a hypersurface whose mean curvature $H_Z$, defined as in \eqref{eq:mean}, vanishes identically. 
\item The second order differential of $V$ in the direction $h\in \mC^2(M)$, computed at point $f\in \m U$, such that $Z=f^{-1}(0)$ is a $\mC^2$ minimal hypersurface, can be obtained from the general formula for the second variation of the volume \cite[Equation at the bottom of page 8]{li2012geometric}, reasoning as in \cref{prop:boundarypuff}:
\be\label{eq:secondvar}
d^2_fV(h,h):=\frac{d^2}{ds^2}\Big|_{s=0} V(f+sh)=\int_Z \|d \psi\|^2-\psi^2\tyu\|\mathrm{II}\|^2+\mathrm{Ric}(\nu,\nu)\uyt dZ,
\ee
where $\psi=-\|df\|^{-1}h|_Z%-\frac{h}{\|df\|}\big|_Z
\in \mC^1(Z)$ is determined by the change of variation formula given in \cref{eq:changeofvariation}; $\mathrm{II}=\|df\|^{-1}\H f%\frac{\H f}{\|df\|}
$ is the second fundamental form of $Z$ (\cref{eq:II}); $\mathrm{Ric}$ is the \emph{Ricci curvature} (see \cite{leeriemann}) of the ambient manifold $M$ and $\nu =\|d f\|^{-1}\g f$. 
\item Fix $f$ as above, let $Z=f^{-1}(0)$ be minimal, and consider the subspace $\Psi_Z\subset \mC^1(Z)$ defined as
\be 
\Psi_Z:=\kop \psi=-\|df\|^{-1}h|_Z: h\in \HX \pok.
\ee
Recall that $f\in \m U$ if and only if $\|df\|$ has no zeroes on $Z$, thus $\|df\||_Z$ is a $\mC^1$ function on $Z$.
Since $\HX$ is dense in $\mC^2(M)$, we have that $\Psi_Z$ is dense in $\mC^1(Z)$.
\item Assume that $d^2_fV(h,h)=0$ for all $h\in \HX$, then due to the density of $\Psi_Z$ in $\mC^1(Z)$, we would have that the right hand side of \cref{eq:secondvar} vanishes for all $\psi\in \mC^1(Z)$, indeed, since $\|\mathrm{II}\|^2$ and $\mathrm{Ric}(\nu,\nu)$ are continuous functions on $Z$, the latter expression is continuous with respect to $\psi\in \mC^1(Z)$. In particular, by Stokes-Green divergence formula, we have that 
\be \label{eq:green}
\int_Z\psi\tyu \Delta^{(Z)}\psi +
\tyu\|\mathrm{II}\|^2+\mathrm{Ric}(\nu,\nu)\uyt\psi
\uyt
dZ=0, \quad \forall \psi \in \mC^2(Z)
\ee
where $\Delta^{(Z)}$ is the Laplace-Beltrami operator of the $\mC^2$ compact Riemannian manifold $Z$. Set $C:=\tyu\|\mathrm{II}\|^2+\mathrm{Ric}(\nu,\nu)\uyt$, a continuous function on $Z$, and notice that \cref{eq:green} is an identity of the form $A(\psi,\psi)=0$ for a symmetric bilinear form $A$, thus it implies that $A=0$, that is,
\be\label{eq:absurd} 
\Delta^{(Z)}\psi +
C\psi=0, \quad \forall \psi \in \mC^2(Z).
\ee
Clearly this is an absurd statement, unless $Z$ is empty. This proves that if $f\in \m U$ is such that $Z=f^{-1}(0)\neq \emptyset$ is minimal, then there exists at least one $h\in \HX$ (depending on $f$) such that $d^2_fV(h,h)\neq 0$.
\item Observe that $d^2_fV(h,h)$ is continuous with respect to $f,h\in \mC^2(M)$, so that if $d_{f_0}(h,h)\neq 0$, then there is a neighborhood $U_{f_0}$ of $f_0$ such that $d_{f}(h,h)\neq 0$ for all $f\in U_{f_0}$. Let $\delta>0$ and $O_{f_0}$ be a smaller neighborhood of $f_0$ in $U_{f_0}$, such that 
\be 
f\in O_{f_0} 
\text{ and }|s|<\delta\implies f+sh\in U_{f_{0}}.
\ee 
Then for all $f\in O_{f_0}$, the function $s\mapsto V(f+sh)$ has no flexes in $(-\delta,\delta)$, meaning that its derivative is strictly monotone, so that it can have at most one zero. In particular, we proved the following:
``Define $\m V:=\kop f \in \m U\colon V(f)\neq 0, d_fV|_{\HX}=0\pok$. 
For every $f_0\in \m V$, there is an open set $O_{f_0}\subset \m U$, $\delta>0$ and $h\in \HX$ such that for every $f\in O_{f_0}$, the set
\be 
S(f,\delta,h)=\kop s\in (-\delta,\delta): f+sh \in \m V\pok
\ee
is finite.'' By \cref{lem:blind} applied with $e=0$, recalling that $\PP\kop X\in \m U\pok=1$, we conclude that Claim \eqref{eq:finalclaim} holds and the proof of the first part of the statement is concluded. In particular, the support of the law of $V(X)$ and thus of $\rho$, must necessarily contain the image $V(\m U)$, which is the the whole positive real line $[0,+\infty)$. 
\item Now assume that $V = V(X)\in \mathbb{D}^{1,2}$. To prove the final claim, we start by recalling the standard estimates: $\mathbb{E}\big[ \langle {\rm D}_{\mathscr{M}}V, - {\rm D}_{\mathscr{M}}L^{-1}V\rangle_{\mathcal{H}_X} \, |\, \bar{V}\big]\geq 0$, a.s.-$\mathbb{P}$, and
$$
\mathbb{E} \Big\{ \mathbb{E}\big[ \langle {\rm D}_{\mathscr{M}}V, - {\rm D}_{\mathscr{M}}L^{-1}V\rangle_{\mathcal{H}_X} \, |\, \bar{V}\big]  \Big\} \leq \mathbb{E} \|D_\mathscr{M} V\|^2_{\mathcal{H}_X}  <\infty,
$$
where the last relation follows from the fact that $V\in \mathbb{D}^{1,2}$, and we have used Cauchy-Schwarz and the bound $\mathbb{E} \|D_\mathscr{M}L^{-1} V\|^2_{\mathcal{H}_X} \leq \mathbb{E} \|D_\mathscr{M}  V\|^2_{\mathcal{H}_X} $; see  \cite[Proposition 2.9.4 and Lemma 5.3.7]{nourdinpeccatibook}. Now denote by $g$ a version (with respect to the law of $\bar{V}$) of the mapping appearing in \eqref{e:npkernel} and observe that the law of $\bar{V}$ is characterized as follows: for every Borel set $I$,
$$
\mathbb{P}[\bar{V}\in I] = P \delta_{-c}(I) + (1-P) \int_I \bar{\pi}(x) \,dx,
$$
where $\bar{\pi}(x) := \pi(x+c)$, in such a way that the support of $\bar{\pi}$ coincides with the interval $[-c ,+ \infty]$ (note that, necessarily, $0\in (-c, +\infty)$). Considering smooth test functions with support contained in $(-c, +\infty)$, and reasoning as in \cite[Proof of (3.17)]{nourdinviens}, one deduces that, necessarily, for $dx$-almost every $x\in (-c, +\infty)$,
$$
g(x) = \frac{\int_x^{\infty} x\, \bar{\pi}(x) dx }{\bar{\pi}(x)} :=  \frac{\varphi(x) }{\bar{\pi}(x)},
$$
and the conclusion is deduced as in \cite[pp. 2294-2295]{nourdinviens}, by exploiting the fact that the function $\varphi(x) $ defined above is strictly positive and continuous on the interval $(-c, \infty)$ (this follows from the following facts: (i) $(1-P)\varphi(-c) =cP>0 $, (ii) $\varphi$ is strictly increasing on $(-c, 0)$ and strictly decreasing on $(0,\infty)$, and (iii) as $x \to (\infty)$, $\varphi(x)$ converges to zero). 
\end{enumerate}
\end{proof}

% --- %
%
% APPENDIX %
%
% vvv%
\begin{appendix}
\section{Definition of ray absolutely continuous}\label{apx:ray}
%\input{appendices}
%\section{Definition of ray absolutely continuous}\label{apx:ray}
We will prove that the definition \emph{ray absolute continuity} that we gave in \cref{def:Dspace} is equivalent to that of \cite[Def. 5.2.3]{bogachev}. 
\begin{lemma}
\begin{enumerate}[$(i)$]
The following statements are equivalent:
    \item For every $h\in \HX,$ there is a set $N_h\subset E,$ with $\mu(N_h)=0,$ such that for all $x\in E\smallsetminus N_h,$ the function $t\mapsto V(x+th)$ coincides $dt$-almost everywhere with an absolutely continuous function $t\mapsto \f_{x,h}(t)$.
    \item For every $h\in \HX,$ there exists a function $V_h:E\to \R$ such that $V=V_h$ $\mu$-a.e. and, for every $x\in E,$ the mapping $t\mapsto V_h(x+th)$ is absolutely continuous.
\end{enumerate}
\end{lemma}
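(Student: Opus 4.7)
The plan is to reduce the equivalence to a one-dimensional problem by decomposing $E$ along the line spanned by $h$ and analyzing $V$ coset by coset. Fix $h \in \HX$; when $h = 0$ the equivalence is immediate, so I assume $h \neq 0$ and apply the splitting from the proof of Theorem \ref{t:banach} to write $E = E_0 \oplus \R h$ with product structure $\mu = \mu_0 \otimes \rho(t)\, dt$, where $\rho > 0$ is a centered one-dimensional Gaussian density. Each $x \in E$ is uniquely $x = y + s h$ with $(y, s) \in E_0 \times \R$, and the key observation is that the property ``$t \mapsto V(x + t h)$ admits an absolutely continuous modification on $\R$'' depends only on the coset $y + \R h$ (by a time shift), so both (i) and (ii) are statements about the behavior of $V$ along these cosets.

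For $(ii) \Rightarrow (i)$, my plan is a direct Fubini application. Since $\mu\{V \neq V_h\} = 0$ and $\rho > 0$ everywhere, there is a Borel $\mu_0$-null set $N'_h \subset E_0$ such that for every $y \notin N'_h$ the slice $\{s : V(y + s h) \neq V_h(y + s h)\}$ has Lebesgue measure zero. Setting $N_h := N'_h + \R h$, which satisfies $\mu(N_h) = 0$, and taking $\varphi_{x, h}(t) := V_h(x + t h)$, which is AC by (ii), immediately yields (i).

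For $(i) \Rightarrow (ii)$, I will construct $V_h$ coset by coset. Call $y \in E_0$ \emph{good} if $s \mapsto V(y + s h)$ agrees $ds$-a.e.\ with an absolutely continuous function $\bar v_y$ on $\R$ (necessarily unique), and let $G$ be the set of good $y$'s. Define $V_h(y + s h) := \bar v_y(s)$ for $y \in G$ and $V_h(y + s h) := 0$ for $y \notin G$; then $s \mapsto V_h(y + s h)$ is absolutely continuous on every coset. To obtain $V = V_h$ $\mu$-a.e., I will observe first that if $y \notin G$, then by a time shift $y + s h \in N_h$ for every $s$, so $(E_0 \setminus G) \times \R \subset N_h$ and hence $\mu_0(E_0 \setminus G) \le \mu(N_h) = 0$; a Fubini application on the good cosets then yields the desired $\mu$-a.e.\ equality.

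The main obstacle will be measurability of $V_h$, since the coset-wise definition via $\bar v_y$ is not manifestly Borel (the set $G$ may not be Borel, and the assignment $y \mapsto \bar v_y$ has no explicit Borel parametrization). My plan is to bypass this issue with an explicit Lebesgue-differentiation formula: set
\[
\widetilde V(y, s) := \lim_{\varepsilon \to 0^+} \frac{1}{2\varepsilon}\int_{s - \varepsilon}^{s + \varepsilon} V(y + u h)\, du,
\]
defined on the Borel set $A \subset E_0 \times \R$ where the limit exists, and Borel on $A$. For every good $y$, the Lebesgue differentiation theorem applied to the continuous representative $\bar v_y$ guarantees that $(y, s) \in A$ for all $s$ and that $\widetilde V(y, s) = \bar v_y(s)$, so $V_h$ agrees with the Borel function $\widetilde V \cdot 1_A$ on the set $G \times \R$. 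Since $E_0 \setminus G$ is contained in the Borel $\mu_0$-null set $\{y : \mathcal{H}^1(N_h \cap (y + \R h)) > 0\}$, the function $V_h$ differs from $\widetilde V \cdot 1_A$ only on a $\mu$-null set, hence is automatically measurable with respect to the completion of $\mu$. This completes the proof.
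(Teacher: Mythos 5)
Your proof is correct and follows essentially the same route as the paper's: both directions rest on the splitting $E = E_0\oplus\R h$ with product Gaussian measure and a Fubini/Tonelli argument along the cosets $y+\R h$ (the paper phrases $(ii)\Rightarrow(i)$ via Cameron--Martin quasi-invariance of $\mu$ under translation by $th$, which amounts to the same slicing). The one place you go beyond the paper is the averaging representation $\widetilde V$ in $(i)\Rightarrow(ii)$: the paper simply asserts that the coset-wise defined $V_h$ is measurable, whereas your Lebesgue-differentiation construction supplies an explicit Borel version agreeing with $V_h$ off a $\mu$-null set, which cleanly justifies that assertion.
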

\begin{proof}
Since $h$ is fixed in both statement, we can assume that $E=\R h \oplus E_0,$ and that $\mu=[\gamma h+X_0]$ where $X_0\randin E_0$ is a full-support Gaussian random element with Cameron-Martin space $H_0=h^\perp$ dense in $E_0,$ and $\gamma\sim N(0,\sigma^2)$ is an independent nondegenerate Gaussian random variable.

$(i)\implies (ii).$ By Tonelli Theorem, if $\mu(N_h)=0,$ there must be some $t_0\in \R$ such that $\mu((t_0h+E_0)\cap N_h)=0.$ Without loss of generality, we can assume that $t_0=0,$ for simplicity. 

Now, we define a function $V_h\colon E \to \R$ as follows. If $x\in E_0\cap N_h,$ then $V_h(x+th):=0$ for all $t\in \R;$ if $x\in E_0\smallsetminus N_h,$ then $V_h(x+th):=\f_{x,h}(t)$. The function $V_h$ is thus a well defined measurable function and it has the property that $t\mapsto V_h(x+th)$ is always absolutely continuous, being either equal to $\f_{x,h}$ or to the zero function. Finally, we have that for $x$ in the full $[X_0]$-measure set $E_0\smallsetminus N_h$, then $V_h(x+th)=\f_{x,h}(t)=V(x+th)$ for almost every $t\in \R,$ therefore the two functions $V$ and $V_h$ must coincide on a full measure subset of $E$.

$(ii)\implies (i).$ Let us denote by $N\subset E,$ the set of all $x$ such that $V(x)\neq V_h(x)$. By hypotheses, we know that $\mu(N)=0.$
By the Cameron-Martin theorem, the measures $[X]$ and $[X+th]$ are absolutely continuous one with respect to the other, therefore we have that $\mu(N+th)=0$ for all $t\in \R.$ Using Tonelli theorem to exchange the order of integration, we have that
\be 
0=\int_\R \mu(N+th) dt=\int_E \tyu\int_\R 1_N(x+th)dt\uyt d\mu(x).
\ee
This identity says that the set $N_h=\{\int_\R 1_N(x+th)dt\neq 0\}$ has measure zero for $\mu.$ Moreover, if $x\in E\smallsetminus N_h,$ then $1_N(x+th)=0$ for almost every $t\in \R,$ which means that the function $t\mapsto V(x+th)$ coincides almost everywhere with the absolutely continuous function $t\mapsto \f(t)=V_h(x+th).$
\end{proof}
\section{Proof of Theorem \ref{thm:bestia}}\label{apx:compensation}
Let the partitions $E=\m U\sqcup \m W$ and $\m W=\m U_1 \sqcup \m W_1$ be defined as in \cref{sec:degMorse}. 
Recall that $C=\CZ_f\neq \emptyset$ if and only if $X+t_0e\in \m W$.
Notice that, the points \ref{bestia:im} and \ref{ie} of \cref{thm:bestia} together are equivalent to say that $X+t_0e\in \m U_1$ and that $t\mapsto X+t_0e$ is a transverse curve around $t_0$. As a consequence, we have that the set
\be 
\{t\in\R\colon X+te \notin \m U\}=\{t\in\R\colon X+te \in \m U_1\}
\ee 
is locally finite.
We will see (\cref{prop:3pt}) that the behavior of the nodal volume $V(X+te)$ around $t_0$ is dictated by the index $\lambda$ and all the signs $e(p)$ and, when $S=\de M$, also by the sign of $b(p)(X+t_0e)$.
%\end{remark}

Define for $\ell=1,\dots,5$ a subset $\m W_\ell\subset E$ as follows:
\be 
\m W_\ell:=\kop f\in E\colon C:=\CZ_f\neq \emptyset \text{ and ($\ell$) in Theorem \ref{thm:bestia} is false }
\pok.
\ee
The subset $\m W_1$ is indeed the one that was defined above.
To be precise, the subsets $\m W_2,\m W_4\subset E$ depend on $e$, while the others are well defined subsets of $E$. Even $\m W_5$ is the same for all Gaussian measures with support $E$. Define $\m W_\times(e):=\cup_{\ell=\in 1}^4\m W_\ell$, for every $e\neq 0\in E$. Then we have a new partition $E=\m U\sqcup \m W$ and $\m W=\m U_\times (e) \sqcup \m W_\times(e) $, where $W_\times(e)\subset \m W_1$.

For the proof of \cref{thm:bestia}, we will need some lemmas. 
\begin{lemma}
Let $V$ be a finite dimensional vector space and let $A\colon E\to V$ be a continuous linear map. Then the Gaussian vector $A X\randin V$ is non-degenerate if and only if $A$ is surjective. Moreover, given a family $A_i\colon E\to V_i$ of surjective continuous linear maps, then the Gaussian vectors $A_iX$ are fully correlated if and only if  the kernels $\ker(A_i)$ are the same for all $i$.
\end{lemma}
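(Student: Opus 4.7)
The plan is to reduce both statements to the standard dictionary between Gaussian covariance operators, Cameron--Martin spaces, and topological supports in the finite-dimensional image, using only the fact that $A\colon E\to V$ and each $A_i\colon E\to V_i$ are continuous linear maps into finite-dimensional targets and that $X$ has full support $E$ (so that its Cameron--Martin space $\mathcal{H}_X$ is dense in $E$).

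For the first part, I will observe that $AX$ is a centered Gaussian vector in the finite-dimensional space $V$, and its topological support is the closure in $V$ of $A(\mathrm{supp}(X))=A(E)$. Because $A(E)$ is a vector subspace of a finite-dimensional space, it is automatically closed, so the support of $AX$ is exactly $A(E)$. A centered Gaussian measure on a finite-dimensional vector space is non-degenerate (equivalently, absolutely continuous with respect to Lebesgue measure) if and only if its topological support is the whole space. Combining these observations, $AX$ is non-degenerate if and only if $A(E)=V$, i.e.\ if and only if $A$ is surjective.

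For the second part, I interpret ``fully correlated'' as the condition that for each pair of indices $i,j$ there exists a linear bijection $L_{ij}\colon V_i\to V_j$ with $A_jX = L_{ij}A_iX$ almost surely. For the implication $(\Leftarrow)$, assume all $\ker A_i$ equal a common closed subspace $K\subset E$. Since each $A_i$ is continuous and surjective with kernel $K$, it factors as $A_i=\tilde{A}_i\circ\pi$, where $\pi\colon E\to E/K$ is the quotient map and $\tilde{A}_i\colon E/K\to V_i$ is a continuous linear bijection (in particular, $\dim(E/K)=\dim V_i$ for all $i$). Setting $L_{ij}:=\tilde{A}_j\circ\tilde{A}_i^{-1}$ produces the desired invertible linear maps, with $L_{ij}A_i=A_j$ as operators on $E$, hence pointwise and in particular almost surely.

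For the converse $(\Rightarrow)$, suppose $A_jX=L_{ij}A_iX$ a.s. with $L_{ij}$ bijective. Then the continuous linear map $B:=A_j-L_{ij}A_i\colon E\to V_j$ satisfies $BX=0$ almost surely. Being a centered Gaussian vector in a finite-dimensional space that vanishes almost surely, $BX$ has zero covariance, i.e.\ $\mathrm{Var}\langle \ell,BX\rangle=\|B^*\ell\|^2_{\mathcal{H}_X}=0$ for every $\ell\in V_j^*$. This forces $B$ to vanish on $\mathcal{H}_X$, and by density of $\mathcal{H}_X$ in $E$ together with continuity of $B$, we conclude $B\equiv 0$ on $E$. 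Therefore $A_j=L_{ij}A_i$, which immediately gives $\ker A_i\subseteq \ker A_j$; swapping the roles of $i$ and $j$ (using bijectivity of $L_{ij}$) yields the reverse inclusion, and the kernels coincide. The only delicate point in the argument is the passage ``$BX=0$ a.s.\ $\Rightarrow$ $B\equiv 0$ on $E$'', which is precisely the content of the Cameron--Martin/covariance dictionary and poses no real obstacle under the full-support hypothesis on $X$.
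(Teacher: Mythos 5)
Your proof is correct, and in fact more complete than the one in the paper, which dismisses the first claim as ``straightforward'' and only spells out the implication (equal kernels $\Rightarrow$ fully correlated) of the second. The mechanics also differ. For the first claim you identify the support of $AX$ with $A(E)$ via the pushforward of supports; the more common dual argument (which the paper presumably has in mind) is that $\mathrm{Var}\langle \ell, AX\rangle=0$ forces $A^*\ell$ to vanish on $\HX$ and hence, by density and continuity, on $E$ — both are fine. For the forward implication of the second claim, the paper works inside the Cameron--Martin space: it takes the orthogonal complement $E_0=(K\cap\HX)^\perp$ of the common kernel $K$ in $\HX$, notes that each $a_i:=A_i|_{E_0}$ is an isomorphism onto $V_i$ (surjectivity again via density of $\HX$ in $E$), and reads off $A_iX=a_i\circ a_{i_0}^{-1}(A_{i_0}X)$ from the series expansion of $X$; you instead factor each $A_i$ through the finite-dimensional quotient $E/K$, which avoids the Hilbert structure altogether and is arguably cleaner. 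For the converse, which the paper leaves implicit, your argument ($BX=0$ a.s.\ $\Rightarrow$ $B$ annihilates $\HX$ $\Rightarrow$ $B\equiv 0$ by density and continuity, whence $\ker A_i=\ker A_j$) is exactly right. The one point worth a sentence in your write-up is definitional: ``fully correlated'' is not formally defined in the paper, and you build bijectivity of the intertwining maps $L_{ij}$ into your reading of it; if one instead only assumes that each $A_iX$ is a.s.\ a measurable function of each $A_jX$, joint Gaussianity together with the non-degeneracy of every $A_iX$ (from the surjectivity hypothesis and the first part) recovers linear bijections, so the two readings agree.
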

\begin{proof}
The first part is straightforward. For the second observe that the all the kernels coincide with a space $K$ if an only if there is a common orthonormal basis $h_1,\dots, h_d\in \HX$ of $E_0=(K\cap \HX)^\perp$, so that $a_i:=A_i|_{E_0}\colon E_0\to V_i$ is an isomorphism for all $i$ and thus the Gaussian vector $A_iX=a_i\circ a_{i_0}^{-1} \tyu A_{i_0}X\uyt$
is completely determined by $A_{i_0}X$, for fixed $i_0$.
%By identifying $E_0\cong \R^d$, and $A_i|_{E_0}$ with a matrix $a_i$, we get that 
\end{proof}
\begin{lemma}
Let $V_1,V_2$ be two finite dimensional vector spaces, let $H_1,H_2$ be two nondegenerate quadratic forms on $V_1,V_2$, respectively. Let $A_i\colon E\to V_i$, for $i=1,2$, be two surjective continuous linear maps. Assume that for all $h\in E$, we have that 
\be 
H_1(A_1(h)^{\otimes 2})=H_2(A_2(h)^{\otimes 2}).
\ee
Then $\ker{A_1}=\ker{A_2}$, $\dim V_1=\dim V_2$ and the quadratic forms $H_1,H_2$ have the same index.
\end{lemma}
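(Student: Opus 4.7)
The plan is threefold: polarize the quadratic hypothesis into a bilinear identity, exploit surjectivity of $A_2$ together with nondegeneracy of $H_2$ (and symmetrically) to force $\ker A_1 = \ker A_2$, and then transfer everything through the induced linear isomorphism $V_1 \to V_2$, closing with Sylvester's law of inertia.

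First I would polarize. Writing $Q(h) := H_1(A_1(h), A_1(h)) = H_2(A_2(h), A_2(h))$ and expanding $Q(h+h') - Q(h) - Q(h')$ via the bilinearity of each $H_i \circ (A_i \otimes A_i)$, one immediately obtains
\[
H_1(A_1(h), A_1(h')) = H_2(A_2(h), A_2(h')), \qquad h, h' \in E.
\]

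Next I would identify the kernels. If $h \in \ker A_1$, then the left-hand side above vanishes for every $h' \in E$, so $A_2(h)$ is $H_2$-orthogonal to every vector in the image of $A_2$. Surjectivity of $A_2$ shows this image is all of $V_2$, and nondegeneracy of $H_2$ then forces $A_2(h) = 0$, i.e., $h \in \ker A_2$. The symmetric argument (using surjectivity of $A_1$ and nondegeneracy of $H_1$) gives the reverse inclusion, so $K := \ker A_1 = \ker A_2$.

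Finally, since each $A_i$ is surjective with kernel $K$, it descends to a linear isomorphism $\bar A_i : E/K \to V_i$, and hence $\phi := \bar A_2 \circ \bar A_1^{-1}$ is a linear isomorphism $V_1 \to V_2$; in particular $\dim V_1 = \dim V_2$. The bilinear identity of the first step rephrases as $H_1(v, w) = H_2(\phi(v), \phi(w))$ for every $v, w \in V_1$, so $\phi$ is an isometry of the nondegenerate quadratic spaces $(V_1, H_1)$ and $(V_2, H_2)$, and Sylvester's law of inertia then yields that $H_1$ and $H_2$ share the same index. There is no real obstacle in this argument: the one delicate point is that the combination of surjectivity of $A_2$ and nondegeneracy of $H_2$ is exactly the lever needed in the second step to convert the purely quadratic hypothesis into a statement about kernels.
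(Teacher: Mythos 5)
Your proof is correct and follows essentially the same route as the paper's: identify the kernels, pass to the common quotient (the paper uses a complement $E_0$ instead of $E/K$, which is the same device), and conclude via the uniqueness of the signature. The only difference is that you supply the polarization argument for $\ker A_1=\ker A_2$, a step the paper dismisses as obvious.
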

\begin{proof}
That the kernels must coincide, it is obvious.
Arguing as in the previous proof, we can show that there is a finite dimensional space $E_0$ such that $a_i:=A_i|_{E_0}\colon E_0\to V_i$ is an isomorphism. From this we deduce immediately that $\dim V_1=\dim V_2$ and that the matrices of $H_i$ with respect to the bases $a_i(h_1),\dots, a_i(h_d)$ are the the same matrix for $i=1,2$. Therefore, their canonical diagonal form is the same and the lemma is proved.
\end{proof}
The following technical lemma is a very efficient tool to prove that certain events have zero probability when more standard transversality arguments (like those on which \cref{lem:Ptransv}, \cref{lem:bulinskaya} and  \cref{thm:transcurve} rely on) are not available. It is a key ingredient in the proof of \cref{thm:bestia} and we will employ it again in the proof of \cref{thm:sing}, to prove the validity of \cref{e:magnificentlaw} in point (i) of \cref{t:mainintro}.
\begin{lemma}%[The Blind Samurai Lemma]
\label{lem:blind}%\footnote{\emph{There is a blind samurai $X$ moving randomly in the forest $E$, and from time to  time he cuts through the forest with his katana, always in the direction $e$. A society of pandas live in $\m V$. If every panda $f_0$ can escape at least in two directions $e,h$, even of a tiny step of size $\delta$, together with his closest friends  $f\in O_{f_0}$, without running too much (at most a finite amount of times) into each other, then the blind samurai almost never kills any panda.}}
Let $\m V\subset E$ and fix $e\in E$. Assume that for every $f_0\in \m V$ there is a neighborhood  $O_{f_0}$ of $f_0$ in $E$, $\delta>0$ and $h\in \HX\smallsetminus \R e$ such that for all $f\in O_{f_0}$, the set
\be 
T(f,\delta,h):=\kop(s,t)\in (-\delta,\delta)^2\colon f+sh+te \in \m V\pok
\ee
has the property that $T(f,\delta,h)\cap \{s\}\times \R $ is empty for almost every $s\in(-\e,\e)$. 
Then, $\PP\tyu\exists t\in \R: X+te\in \m V\uyt=0$.
\end{lemma}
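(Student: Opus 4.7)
The plan is to combine a separability argument with a two-dimensional Fubini reasoning adapted to the Cameron-Martin decomposition of $X$.

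\textbf{Step 1 (Countable reduction).} Because $E$ is a separable Banach space, $\mathcal{V}$ is Lindelöf, so the open cover $\{O_{f_0}\}_{f_0\in\mathcal{V}}$ furnished by the hypothesis admits a countable subcover $\{O_n\}_n$ with associated parameters $\delta_n>0$ and $h_n\in\mathcal{H}_X\setminus\mathbb{R} e$. Since
$$
\{\exists t\in\mathbb{R}:X+te\in\mathcal{V}\}=\bigcup_n\{\exists t\in\mathbb{R}:X+te\in\mathcal{V}\cap O_n\},
$$
it suffices to prove that each event on the right has probability zero. Fix $n$ and write $O:=O_n$, $\delta:=\delta_n$, $h:=h_n$, $\mathcal{V}_0:=\mathcal{V}\cap O$.

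\textbf{Step 2 (Cameron-Martin splitting and Tonelli).} Exploiting the fact that $h\in\mathcal{H}_X\setminus\{0\}$, split $E=\mathbb{R} h\oplus E_0$ in the style of the first Lemma of \cite[Section 2]{enchevstroock}, writing $X=\gamma h+X_0$ with $\gamma\sim\mathcal{N}(0,\sigma^2)$ independent of the $E_0$-valued centered Gaussian $X_0$. Since $\gamma$ admits a Lebesgue density, Tonelli's theorem reduces the desired claim to showing that, for $[X_0]$-almost every $x_0\in E_0$, the set
$$
S(x_0):=\bigl\{\gamma\in\mathbb{R}:\exists t\in\mathbb{R},\ \gamma h+x_0+te\in\mathcal{V}_0\bigr\}
$$
has Lebesgue measure zero.

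\textbf{Step 3 (Planar covering argument).} Fix such an $x_0$ and consider the planar set
$$
R(x_0):=\bigl\{(\gamma,t)\in\mathbb{R}^2:\gamma h+x_0+te\in\mathcal{V}_0\bigr\},\qquad S(x_0)=\pi_1(R(x_0)).
$$
The fact that $h\notin\mathbb{R} e$ guarantees that $(\gamma,t)\mapsto\gamma h+te$ is injective, so $R(x_0)$ is a genuinely two-dimensional object. For each $(\gamma_0,t_0)\in R(x_0)$ the point $f_0:=\gamma_0 h+x_0+t_0 e$ lies in $\mathcal{V}_0\subset O$, and the hypothesis (with the common parameters $\delta,h$) yields a Lebesgue-null set $N_{(\gamma_0,t_0)}\subset(-\delta,\delta)$ such that for every $s\in(-\delta,\delta)\setminus N_{(\gamma_0,t_0)}$ and every $\tau\in(-\delta,\delta)$ one has $f_0+sh+\tau e\notin\mathcal{V}$. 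Translating to the $(\gamma,t)$-plane gives
$$
\pi_1\!\bigl(R(x_0)\cap Q_{(\gamma_0,t_0)}\bigr)\subset \gamma_0+N_{(\gamma_0,t_0)},\qquad Q_{(\gamma_0,t_0)}:=(\gamma_0-\delta,\gamma_0+\delta)\times(t_0-\delta,t_0+\delta).
$$
Using the second-countability of $\mathbb{R}^2$, extract from $\{Q_{(\gamma_0,t_0)}\}_{(\gamma_0,t_0)\in R(x_0)}$ a countable subcover $\{Q_k\}_k$ of $R(x_0)$; then
$$
S(x_0)\subset \bigcup_k\pi_1\!\bigl(R(x_0)\cap Q_k\bigr),
$$
a countable union of Lebesgue-null sets, so $|S(x_0)|=0$, as required.

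\textbf{Expected obstacle.} The main subtlety will be tracking measurability across the Tonelli step: when $\mathcal{V}$ is Borel (as in all intended applications), $R(x_0)$ is Borel, but its projection $S(x_0)$ is in general only analytic. This is harmless for our purposes, since analytic sets are universally measurable and the covering argument above bounds $S(x_0)$ by a genuine Lebesgue-null Borel set; consequently the Tonelli integration against the product law of $(\gamma,X_0)$ is unproblematic in the outer-measure formulation.
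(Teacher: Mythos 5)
Your proof is correct. It rests on the same two pillars as the paper's argument --- the Cameron--Martin splitting $E=\mathbb{R}h\oplus E_0$, $X=\gamma h+X_0$ with $\gamma$ independent of $X_0$, followed by Fubini against the density of $\gamma$ and the a.e.-empty-slice hypothesis --- but the way you dispose of the quantifier ``$\exists t$'' is genuinely different. The paper keeps the covering in the infinite-dimensional space: it covers $\Theta_{f_0}=\{(t,f):f+te\in O_{f_0}\}\subset\mathbb{R}\times E$ by a \emph{fixed} countable family of product rectangles $\beta_n$ and introduces the shifted fields $X_n=X_h+s_nh+t_ne$, which are independent of $\gamma$, so that a single application of the hypothesis at the (random but $\gamma$-independent) point $X_n$ controls an entire rectangle at once. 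You instead freeze $X_0=x_0$ first and then solve a purely deterministic planar problem, covering $R(x_0)\subset\mathbb{R}^2$ by squares centred at its own points via Lindel\"of. Your route is shorter and more transparent; its price is exactly the measurability issue you flag, since your countable cover (hence the null Borel set dominating $S(x_0)$) depends on $x_0$, whereas the paper's fixed cover $\{\beta_n\}$ makes every term of the final union bound manifestly measurable. Your resolution via analyticity and universal measurability is adequate for all uses of the lemma in the paper, where $\mathcal{V}$ is Borel (proofs of Theorems \ref{thm:bestia} and \ref{thm:sing}). Two small remarks: the sentence asserting that $(\gamma,t)\mapsto\gamma h+te$ is injective is never used in your argument and is false when $e=0$ --- which is precisely the case invoked in the proof of Theorem \ref{thm:sing} --- but nothing downstream depends on it; and the interval $(-\e,\e)$ in the statement is a typo for $(-\delta,\delta)$, which you have silently and correctly repaired.
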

\begin{remark}
In particular, if $T(f,\delta,h)$ is finite, then clearly $T(f,\delta,h)\cap \{s\}\times \R $ is empty for almost every $s\in(-\e,\e)$. Notice that, when $e=0$, we have that \be 
T(f,\delta,h)=\kop s\in (-\delta,\delta)\colon f+sh\in \m V\pok\times (-\delta,\delta)
\ee 
and the conclusion is that $\PP\kop X\in \m V\pok=0$.
\end{remark}
\begin{proof}
The space $E$ is a closed subspace of $\mC^2(M)$, thus it has a countable basis $\m B$ of open sets, for this reason, it is enough to prove the statement for $\m V\cap O_{f_0}$.

It is not restrictive to assume from the beginning that $\|h\|_{\HX}=1$ and that $c:=\langle h, e \rangle_{\HX}\le 1$. Let $e=e_0+ch$.
Using the splitting $\HX=h^\perp\oplus_\perp \R h$ we have that $X=X_h+\gamma h$ for some $\gamma\sim \m N(0,1)$ and some $X_h$ Gaussian vector supported on $h^\perp$, independent from $\gamma$.

Observe that the set $\Theta_{f_0}:=\{(t,f)\in \R\times E : f+te\in O_{f_0}\}$ is an open subset of $\R\times E$, therefore it can be covered by a countable family of open subsets $(\beta_{n})_{n\in \N}$  of rectangular form: 
\be 
\beta_n=(t_n-\e_n,t_n+\e_n)\times \hat B_n, \text{ where } \hat B_n=B_n\oplus (s_n-\e_n,s_n+\e_n)h
\ee 
for some $t_n, s_n\in \R$, $0<\e_n<\frac 12 \delta$, where $\hat B_n$ is a an open subset of $E$ and $B_n$ is an open subset of $h^\perp$ such that $\hat B_n+(-\e_n,\e_n)h+t_ne%(t_n-\e_n,t_n+\e_n)e 
\subset O_{f_0}$.  Define $X_n:=%X_h+s_n h+t_ne
X+(s_n-\gamma)h+t_n e$, so that 
\bega 
X+te %&=
%(X_h+te_0)\oplus (\gamma + ct)h
%\\
 %&
 =X_n+(\gamma-s_n)h+(t-t_n) e.
 \eega
% \bega 
% X_n=X-(\gamma-s_n)h+t_n e.
% \eega
Observe that, if $X\in \hat B_n$, then $|\gamma-s_n|<\e_n$ and hence $X_n\in O_{f_0}$.
%Therefore, if $|t-t_n|<\e_n$ and $X\in \hat B_n$, we also have that $X_n\in O_{f_0}$ and that $|\gamma - s_n|<\e_n<\delta$.
Therefore, we have 
\begin{multline}
\PP\tyu\exists t\in \R: X+te\in \m V\cap O_{f_0} \uyt \le \sum_{n\in \N}\PP\tyu\exists t\in (t_n-\e_n,t_n+\e_n): X\in \hat B_n, \ X+te\in \m V\uyt 
\\ 
\le\sum_{n\in \N}\PP\tyu\exists t\in (-\delta,\delta): X_n\in O_{f_0};\ |\gamma-s_n|<\delta; \ X_n+(\gamma -s_n) h +t e \in \m V \uyt
\\
\le
\sum_{n\in \N} \E\tyu 1_{O_{f_0}}(X_n)\int_{-\delta}^{\delta}
\# \Big( T(X_n,\delta,h)\cap \{s\}\times \R\Big)  \cdot 
\frac{e^{-\frac{(s+s_n)^2}{2}}}{\sqrt{2\pi}} ds \uyt=0.
\end{multline}
indeed, the hypothesis implies that if $f\in O_{f_0}$ and $|s|<\delta$, then $T(f,\delta,h)\cap \{s\}\times \R $ is empty for almost every $s\in(-\e,\e)$. 
\end{proof}
\begin{proof}[Proof of \cref{thm:bestia}]
Define for $\ell=1,\dots,5$ a subset $\m W_\ell\subset E$ as follows:
\be 
\m W_\ell:=\kop f\in E\colon C:=\CZ_f\neq \emptyset \text{ and ($\ell$) is false }
\pok.
\ee
The subset $\m W_1$ is indeed the one that was defined above.
To be precise, the subsets $\m W_2,\m W_4\subset E$ depend on $e$, while the others are well defined subsets of $E$. Even $\m W_5$ is the same for all Gaussian measures with support $E$. We must prove that $\PP\{\exists t \in\R : X+te\in \cup_{\ell=1}^5\m W_\ell\}=0$.

We already know  by \cref{thm:transcurve} that, almost surely, the curve $t\mapsto X+te$ is transverse to $\m W$, which means  (%see \cref{rem:UU}
as e pointed out in the discussion at the beginning of this appendix) that $\PP\{\exists t \in\R : X+te\in \m W_1\cup \m W_2\}=0$. Moreover observe that, by definition, $\m W_\ell\subset \m W$ for all $\ell$. So, we have $\m V_\ell(e):=\m W_\ell\smallsetminus  \m (W_1\cup \m W_2)$, $\m V(e):=\m V_3(e)\cup\m V_4(e) \cup \m V_5 (e)\subset \m U_1\smallsetminus \m W_2(e)$.
To conclude, we must prove that
\be 
\PP\{\exists t \in\R : X+te\in \m V(e) \}=0.
\ee

We will apply \cref{lem:blind}
Let $f_0\in \m V(e)$. Then, $\CZ_{f_0}\subset M$ is a non-empty finite subset and the conditions \ref{bestia:im}, \ref{ie} hold true. %Let $B:=B_1\sqcup \dots \sqcup B_{k+h}\subset M$ be a neighborhood of $C_f$ such that $B_\ell\cap C_f=\kop p_\ell\pok$. 
Assume that $\CZ_{f_0}\cap \inter M=\{P_1(f_0),\dots,P_k(f_0)\}$ and $\CZ_{f_0}\cap  \de M=\{y_{1},\dots,y_{h}\}$. For any elements $\seg p=(p_1,\dots,p_k)$ of $M^k$, we denote $\delta_{\seg p}f:=\tyu f(p_1),\dots,f(p_k)\uyt \in \R^k$ and $d_{\seg p}f:=(d_{p_1}f,\dots, d_{p_k}f)\in (T^* M)^k$. We will use analogous notations whenever we deal with tuples of points.
Consider the function 
\bega 
\psi:E\times M^{k}\times (\de M)^{h} &\to (T^*M)^k\times (T^*\de M)^h
\\
\psi(f,\seg p,\seg q)&:=\tyu d_{\seg p}f,d_{\seg q} f |_{\de M}\uyt.
\eega
It is clear that $\psi$ is a $\mC^1$ function in the Banach sense, where $E$ is naturally endowed with the $\mC^2$ Banach structure. Let us compute the differential of $\psi$ at the point $(f_0,\seg P,\seg Q)$ in the direction $(0,\seg{\dot p}, \seg{\dot q})\in E\times (T_{\seg P}M)^k\times (T_{\seg Q}\de M)^h$. Using the Levi-Civita connection, we can represent it as 
\bega 
\langle d_{(f_0,\seg P,\seg Q,0)}\psi, (0,\seg{\dot p}, \seg{\dot q},0)\rangle=\tyu \H_{\seg P}f(\seg{\dot p},\cdot ), \H_{\seg Q}f|_{\de M}(\seg{\dot q},\cdot )\uyt\in (T_{\seg P}^*M)^k\times (T_{\seg Q}^*\de M)^h.
\eega
By \ref{bestia:im}, the above linear map is surjective, so that by the Implicit Function Theorem, we have the following. There is a neighborhood $O\subset E$ of $f_0$ and a $\mC^1$ function
\bega 
\tyu \seg P (\cdot),\seg  Q(\cdot) \uyt: O &\to M^k\times \de M^h, %\quad \theta(h)=\tyu \seg P (h),\seg Q (h) \uyt 
\quad \text{ s.t. }\\
\psi^{-1}(\seg 0)\cap \tyu O\times M^k\times \de M^h\uyt
&=\kop\tyu f, \seg P (f),\seg Q (f) \uyt :f\in O\pok.
\eega

In other words, every $f\in O$ has exactly one critical point in a neighborhood $B_i$ of $P_i(f_0)$ in $\inter{M}$ and exactly one in an neighborhood $\de B_{k+j}$ of $Q_j(f_0)$ in $\de M$ and they are $P_i(f)$ and $Q_j(f)$, respectively, for every $i=1,\dots,k$ and $j=1,\dots,h$. However, each of them might or might not be a critical zero. Let $\seg x(f):=(\seg P(f),\seg Q(f))\in M^{k+h}$ and define $\phi_a\colon O\to \R$ such that $\phi_a(f):=f(x_a(f))$, for $a\in \{1,\dots, k+h\}$.  %let us define a function
%\bega 
%\phi :O &\to \R^k\times \R^h=\R^{k+h}
%\\
%\phi(f)&=\tyu \delta_{\seg P(f)}f, \delta_{\seg Q(f)}f\uyt 
%\eega
Observe that if $f \in \m V(e)$, then $f$ must have two or more critical zeroes. This means that 
\bega 
\m V(e) \cap O =\bigcup_{\ell=3,4,5}\bigcup_{%\tiny{\begin{aligned}1\le &i\le k, \\ 1\le &j\le h\end{aligned}
\ 1\le a<b\le k+h} \m V_{a,b}^\ell(O,e), \ \text{where }
\\
 \m V_{a,b}^\ell(O,e):=\kop f\in O\colon \phi_a(f)=\phi_b(f)=0\text{ and } C=\{x_a(f),x_b(f)\} \text{ violates } (\ell)  \pok%\m V(e)\cap O\cap \kop \phi_a(f)=\phi_b(f)=0
%\phi^{a}(f)=\phi^b(f)=0 \pok.
\eega
Now, we fix $a,b$ and focus our attention on $\m V_{a,b}(O,e)=\cup_{\ell=3,4,5}\m V_{a,b}^\ell(O,e)$. Assume that $f_0\in \m V_{a,b}(O,e)$.
We want to show the following claim: 

\begin{claim}\label{claim}
There exist $O_{f_0}$ small enough neighborhood of $f_0$ in $E$, $\delta>0$ and $h\in \HX$ such that for all $f\in O_{f_0}$, the set below is finite:
\bega 
T(f,\delta , h):&=\kop (s,t) \in  (-\delta,\delta)^2: f+s h+te\in V_{a,b}(O,e)\pok
%\\&=\kop (s,t) \in  (-\e,\e)^2: \phi_h(s,t)=0\pok
\eega
\end{claim}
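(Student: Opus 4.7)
The plan is to reduce the claim to an application of \cref{lem:blind}. I introduce the $\mC^1$ map
\begin{equation*}
\Phi_f \colon (-\delta,\delta)^2 \to \R^2, \qquad \Phi_f(s,t) := \tyu \phi_a(f+sh+te),\ \phi_b(f+sh+te)\uyt,
\end{equation*}
where $h \in \HX$ will be selected below. Since every $g \in \m V_{a,b}(O,e)$ satisfies $\phi_a(g) = \phi_b(g) = 0$, one has $T(f,\delta,h) \subset \Phi_f^{-1}(0) \cap (-\delta,\delta)^2$, so it is enough to ensure that the latter set is finite.

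The key computation is the Jacobian of $\Phi_f$. Setting $g = f+sh+te$ and using that $x_a(g)$ is a critical point of $g$, i.e.\ $d_{x_a(g)}g = 0$, the chain rule gives $\partial_s \phi_a(f+sh+te) = h(x_a(g)) + d_{x_a(g)}g\cdot \frac{\partial x_a(g)}{\partial s} = h(x_a(g))$, and analogously for the other three entries. Hence
\begin{equation*}
J_{\Phi_f}(s,t) = \begin{pmatrix} h(x_a(g)) & e(x_a(g)) \\ h(x_b(g)) & e(x_b(g)) \end{pmatrix}.
\end{equation*}
Since $x_a(f_0) \neq x_b(f_0)$ are distinct points of $M$ and $\HX$ is dense in $\mC^2(M)$, I select $h \in \HX$ with $h(x_a(f_0)) = 1$ and $h(x_b(f_0)) = 0$; then $\det J_{\Phi_{f_0}}(0,0) = -e(x_b(f_0)) \neq 0$, the non-vanishing stemming from the hypothesis $f_0 \notin \m W_2$ (property (2) of \cref{thm:bestia}). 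The same choice forces $h \notin \R e$: a relation $h = \lambda e$ would give $\lambda e(x_b(f_0)) = h(x_b(f_0)) = 0$ and hence $\lambda = 0$, contradicting $h(x_a(f_0)) = 1$.

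With $h$ fixed, the remainder is a continuity–compactness argument. The assignments $g \mapsto x_a(g), x_b(g)$ are $\mC^1$ by the implicit function theorem applied above, and pointwise evaluation is continuous on $\mC^2(M)$, so $(f,s,t) \mapsto J_{\Phi_f}(s,t)$ is continuous. I then pick a neighborhood $O_{f_0}\subset O$ of $f_0$ and $\delta > 0$ so small that $J_{\Phi_f}(s,t)$ is invertible for every $(f,s,t) \in O_{f_0}\times[-\delta,\delta]^2$ (shrinking further so that $f+sh+te \in O$ throughout). On this domain $\Phi_f$ is a local $\mC^1$ diffeomorphism, so $\Phi_f^{-1}(0)\cap[-\delta/2,\delta/2]^2$ is a discrete closed subset of a compact set and therefore finite. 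Relabelling $\delta/2$ as $\delta$ proves the claim, and an application of \cref{lem:blind} to each of the (countably many) sets $\m V_{a,b}(O,e)$ that cover $\m V(e)$ then yields $\PP\{\exists t\in\R\colon X+te\in \m V(e)\}=0$, closing the proof of \cref{thm:bestia}.

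The main obstacle is engineering a non-degenerate Jacobian: it is here, and only here, that I genuinely need the density of $\HX$ in $\mC^2(M)$ (to prescribe $h$ independently at the two distinct points $x_a(f_0)$ and $x_b(f_0)$) together with the preliminary reduction to $\m U_1\smallsetminus \m W_2$ (which guarantees $e(x_b(f_0))\neq 0$). It is worth stressing that the defining conditions (3), (4), (5) of $\m V_3, \m V_4, \m V_5$ play no direct role in the argument — their sole purpose is to place $\m V(e)$ inside $\m W_1\smallsetminus \m W_2$, which is precisely the regime in which the above construction operates.
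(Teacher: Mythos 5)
There is a genuine gap, and it sits exactly where your argument claims to be self-contained: the choice of $h$. You pick $h\in\HX$ with $h(x_a(f_0))=1$ and $h(x_b(f_0))=0$ by invoking density of $\HX$ in $\mC^2(M)$. But under \cref{ass:1} alone, $\HX$ is dense only in the topological support $E\subset\mC^2(M)$ of $X$, which may be a proper closed subspace; nothing in the hypotheses prevents the evaluation functionals $\delta_{x_a(f_0)}$ and $\delta_{x_b(f_0)}$ from being linearly dependent on $E$ (equivalently, $X(x_a(f_0))$ and $X(x_b(f_0))$ fully correlated). In that case no such $h$ exists, the two rows of your Jacobian are proportional for \emph{every} $h\in\HX$, and $\Phi_f^{-1}(0)$ is generically a curve rather than a finite set. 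This is not a pathological corner case but precisely the situation the theorem is designed for: for a periodic field, critical zeroes come in translated pairs $p$, $p+v$ with $f(p)=f(p+v)$ for all $f\in E$, so $\phi_a\equiv\phi_b$ on $E$ and $T(f,\delta,h)$ is infinite for every admissible $h$. Your closing remark that conditions (3)--(5) ``play no direct role'' is the symptom rather than a feature: if the argument were correct it would yield, via \cref{lem:blind}, that $\PP\{\exists t\in\R\colon X+te \text{ has two simultaneous critical zeroes}\}=0$ for any field satisfying \cref{ass:1}, contradicting the discussion in the introduction (and the raison d'\^etre of \cref{thm:bestia}), namely that this event can have positive probability.

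What your first-order (Jacobian) argument does establish is the claim in the sub-case where $\delta_{x_a(f_0)}$ and $\delta_{x_b(f_0)}$ are linearly independent on $\HX$. The paper must therefore go one order further: it reduces to the scalar function $s\mapsto\f(f+sh)=\phi_b\tyu f+sh+\tau(f+sh)e\uyt$ and observes that failure of the claim forces $\f(f_0)=d_{f_0}\f(h)=\H_{f_0}\f(h,h)=0$ for all $h$ with $h(p_0)=0$. The vanishing of the first-order term is exactly your degenerate case and gives $\ker\delta_{p_0}=\ker\delta_{q_0}$; the vanishing of the second-order term produces the Hessian identity \cref{eq:finalmix}/\cref{eq:finalint}, which either contradicts the pointwise non-degeneracy of $j^1_pX$ (the mixed interior/boundary case, i.e.\ a violation of (3)) or forces the two Hessians to have the same index and the two jets to be fully correlated, i.e.\ forces (4) and (5) to hold --- contradicting $f_0\in\m V_{a,b}(O,e)$. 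That second-order step is where (3)--(5) enter, and it cannot be bypassed.
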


Notice that there are three cases: when $b\le k$, $a\ge k+1$ and $a\le k<b $. We start by considering the latter situation, corresponding to having both a critical zero $P(f):=x_a(f)\in \inter{M}$ in the interior and one  $Q(f):=x_b(f)\in \de M$ in the boundary. 
By construction, for every $f\in O$ and $\dot f\in E$ we have
\bega\label{eq:second} 
0=\langle d_{f}\tyu d_{P(f)}f \uyt , \dot f \rangle &=
 \H_{P(f)}f \tyu d_fP(\dot f),\cdot \uyt 
+ d_{P(f)}\dot f 
\\
0=\langle d_{f}\tyu d_{Q(f)}f|_{\de M} \uyt , \dot f \rangle &=
 \H_{Q(f)}f|_{\de M} \tyu d_fQ(\dot f),\cdot \uyt 
+ d_{Q(f)}\dot f|_{\de M}.
\eega 
Keeping in mind the above identities, we compute the first and second differentials of $\phi_a$ and $\phi_b$ at $f\in O$, along any curve $s\mapsto f(s)\in E$ such that $f(0)=f$.

\bega\label{eq:dfpf1}
d_{f}\phi_a(\dot f)&=\langle d_{P(f)}f , d_fP(\dot f)\rangle + \dot f \tyu P(f) \uyt = \dot f \tyu P(f) \uyt
\\
d_{f}\phi_b(\dot f)&=\langle d_{Q(f)}f |_{\de M}, d_fQ(\dot f)\rangle + \dot f \tyu Q(f) \uyt = \dot f \tyu Q(f) \uyt
\eega
From this we see that , $\phi_a$ and $\phi_b$ are of class $\mC^2$ and
\bega\label{eq:dfpf2}
\H_{f}\phi_a(\dot f^{\otimes 2})&=\langle d_{P(f)}\dot f, d_fP(\dot f)\rangle =
\ddot{f}(P(f))
- \qwe\H_{P(f)}f \ewq^{-1}\tyu  d_{P(f)}\dot f ^{\otimes 2}\uyt
\\
\H_{f}\phi_b(\dot f^{\otimes 2})&=\langle d_{Q(f)}\dot f, d_fQ(\dot f)\rangle =
\ddot{f}(Q(f))
- \qwe\H_{Q(f)}f|_{\de M} \ewq^{-1}\tyu  d_{Q(f)}\dot f|_{\de M} ^{\otimes 2}\uyt
\eega

Denote $p_0:=P(f_0)\in \inter{M}$ and $q_0=Q(f_0)\in\de M$, that, we recall, are two of the critical zeroes of $f_0$. Let $O'\times (-\e,\e)\subset E\times \R$ and be an open neighborhood of $(f_0,0)$ such that $(f,t)\in O'\times (-\e,\e)\implies f+te\in O$. 

Since $\frac{\de \phi_{a}}{\de t}|_{t=0}(f_0+te)=e(p_0)\neq 0$, by \ref{ie}, we can apply the Implicit Function Theorem to deduce that if we take $O'$ and $\e>0$ small enough, there exists a $\mC^2$ function $\tau \colon O'\to (-\e,\e)$ such that $\phi_{a}^{-1}(0)\cap \kop f+te: f\in O', t\in (-\e,\e)
 \pok=\kop f+\tau(f)e\colon f\in O'\pok$. Define a real valued function $\f(f):=\phi_{b}(f+\tau(f)e)$, for all $f\in O'$, then 
\bega 
T(f,\delta,h)&\subset  \kop (s,t)\in (\delta,\delta)^2\colon \phi_a(f+sh+te)=0=\phi_b(f+sh+te)\pok
\\
&= \kop (s,\tau(f+sh))\in (\delta,\delta)^2\colon \f(f+sh)=0\pok.
\eega
By construction, $\f$ is of class $\mC^2$, hence, given any $h\in \HX$, we have a Taylor expansion 
\be 
\f(f+sh)=\f(f)+d_f\f(h)s+\frac 12\H_f\f(h,h)s^2+o(s^2),
\ee 
uniformly for all $f\in O'$ (after replacing $O'$ with a smaller neighborhood, if needed),
hence if \cref{claim} doesn't hold, we must have that $\f(f_0)=d_{f_0}\f(h)=\H_{f_0}\f(h,h)=0$. We already know that $\f(f_0)=f_0(p_0)=0$ because $p_0\in \CZ_{f_0}$. By differentiating the identity $\phi_a(f_0+sh+\tau(f_0+sh)e)=0$ with respect to $s$, using the formulas \cref{eq:dfpf1} and \cref{eq:dfpf2}, and denoting $\dot{\tau}=d_{f_0}\tau(h)$ and $\ddot{\tau}=\H_{f_0}\tau(h^{\otimes 2})$, we get the following two identities:
\bega 
0&=h(p_0)+\dot{\tau} e(p_0)
\\
0&=e(p_0)\ddot{\tau} - \qwe\H_{p_0}f \ewq^{-1}\tyu  (d_{p_0}h +\dot{\tau} d_{p_0}e)^{\otimes 2}\uyt
\eega 
Therefore, for all $h$ such that $h(p_0)=0$, the following two equations are satisfied:
\bega
d_{f_0}\f(h)&=h(q_0)%+\dot{\tau} e(q_0)
\\
\H_{f_0}\f(h,h)&=e(q_0)\ddot\tau  - \qwe\H_{q_0}f|_{\de M} \ewq^{-1}\tyu  (d_{q_0}h|_{\de M}) ^{\otimes 2}\uyt,
\eega
From $d_{f_0}\f(h)=0$ for all $h\in\HX$, since also $e(q_0)\neq 0$ by \ref{ie}, we conclude that $ K:=\ker(\delta_{p_0})=\ker(\delta_{q_0})$ as subspaces of $E$. From $\H_{f_0}\f(h,h)=0$, we obtain the following for all $h\in K$:
\be\label{eq:finalmix} 
\frac{1}{e(p_0)}\qwe\H_{p_0}f \ewq^{-1}\tyu  (d_{p_0}h )^{\otimes 2}\uyt
=
\frac{1}{e(q_0)} \qwe\H_{q_0}f|_{\de M} \ewq^{-1}\tyu  (d_{q_0}h|_{\de M}) ^{\otimes 2}\uyt
\ee
This is possible only if $\ker(d_{q_0}|_{\de M})\cap \ker(\delta_{q_0})=\ker(d_{p_0})\cap \ker(\delta_{p_0})$, but that is impossible because those spaces have different codimensions, due to the assumption that $j^1_pX$ is non-degenerate for every $p\in M$. This is the contradiction. 
So \cref{claim} holds in the case $a\le k<b$. 

Let us consider the case $a<b\le k$, thus $f_0\notin \m V_{a,b}^3(O,e)$. Then $q_0=x_b(f)\in \inter{M}$ is a critical point of $f|_{\inter{M}}$. Arguing exactly as in the previous case, we arrive in the end to deduce that $K:=\ker(\delta_{p_0})=\ker(\delta_{q_0})$ and that for every $h\in K$ the identity  \cref{eq:finalmix} is replaced by 
\be\label{eq:finalint} 
\frac{1}{e(p_0)}\qwe\H_{p_0}f \ewq^{-1}\tyu  (d_{p_0}h )^{\otimes 2}\uyt
=
\frac{1}{e(q_0)} \qwe\H_{q_0}f \ewq^{-1}\tyu  (d_{q_0}h) ^{\otimes 2}\uyt
\ee
Again, this implies that \be 
\ker (j^1_{p_0})=\ker(d_{p_0})\cap \ker(\delta_{p_0})=\ker(d_{q_0})\cap \ker(\delta_{q_0})=\ker (j^1_{q_0}),
\ee
that is equivalent to say that $j^1_{p_0}X$ and $j^1_{q_0}X$ are completely correlated random vectors, which means that $f_0\notin \m V_{a,b}^5(e)$. Moreover, \cref{eq:finalint} also implies that the two bilinear forms $\frac{1}{e(p_0)}\qwe\H_{p_0}f \ewq^{-1}$ and $\frac{1}{e(q_0)}\qwe\H_{q_0}f \ewq^{-1}$ have the same index, that is equivalent (by inverting the corresponding matrices) to say that $\ref{bestia:ii}$ holds for $C=\{p_0,q_0\}$ and function $f_0$, thus $f_0\notin \m V_{a,b}^4(e)$. We showed that $f_0\notin \m V_{a,b}(e)=\cup_{\ell=3,4,5}\m V_{a,b}^\ell(e)$, which is a contradiction since we started by assuming that $f_0\in \m V_{a,b}(e)$. Thus, we proved \cref{claim} in the case $a<b\le k$.
The case $k<a<b$ is completely analogous to the previous case, indeed we can apply the previous argument to $X|_{\de M}$ on the closed manifold $\de M$. Finally,  \cref{claim} is proven.
\end{proof}
\end{appendix}

%\section*{Acknowledgments}
%The authors are thankful to J. Angst and G. Poly for useful discussions.
%
%This research is supported by the Luxembourg National Research Fund (Grant: 021/16236290/HDSA).

% --- %

\bibliographystyle{abbrv}
\bibliography{Nodal_Volumes_as_differentiable_functionals_of_Gaussian_fields}       

\begin{thebibliography}{10}

\bibitem{Abert2018}
M.~Abert, N.~Bergeron, and E.~L. Masson.
\newblock Eigenfunctions and random waves in the benjamini-schramm limit.
\newblock {\em Preprint Arxiv:1810.05601}, Oct. 2018.

\bibitem{AdlerTaylor}
R.~J. Adler and J.~E. Taylor.
\newblock {\em Random fields and geometry}.
\newblock Springer Monographs in Mathematics. Springer, New York, 2007.

\bibitem{anantharaman:hal-03527166}
N.~Anantharaman.
\newblock {Topologie des Hypersurfaces Nodales de Fonctions Al{\'e}atoires
  Gaussiennes}.
\newblock {\em {Asterisque} - S{\'e}minaire Bourbaki}, 2015-2016:1116, 2017.

\bibitem{2023Ancona_completeint}
M.~Ancona.
\newblock On the topology of random real complete intersections.
\newblock {\em J. Geom. Anal.}, 33(1):Paper No. 32, 26, 2023.

\bibitem{2020_Ancona_JEMS_Exprar}
M.~Ancona.
\newblock Exponential rarefaction of maximal real algebraic hypersurfaces.
\newblock {\em J. Eur. Math. Soc. (JEMS)}, 26(4):1423--1444, 2024.

\bibitem{RootsKost_ancona_letendre}
M.~Ancona and T.~Letendre.
\newblock Roots of {Kostlan} polynomials: moments, strong {Law} of {Large}
  {Numbers} and {Central} {Limit} {Theorem}.
\newblock {\em Annales Henri Lebesgue}, 4:1659--1703, 2021.

\bibitem{apv}
J.~Angst, V.-H. Pham, and G.~Poly.
\newblock Universality of the nodal length of bivariate random trigonometric
  polynomials.
\newblock {\em Trans. Am. Math. Soc.}, 370(12):8331--8357, 2018.

\bibitem{PolyAngst}
J.~Angst and G.~Poly.
\newblock {On the absolute continuity of random nodal volumes}.
\newblock {\em The Annals of Probability}, 48(5):2145 -- 2175, 2020.

\bibitem{audin2013morse}
M.~Audin, M.~Damian, and R.~Ern{\'e}.
\newblock {\em Morse Theory and Floer Homology}.
\newblock Universitext. Springer London, 2013.

\bibitem{AzaisWscheborbook}
J.-M. Azais and M.~Wschebor.
\newblock {\em Level sets and extrema of random processes and fields}.
\newblock John Wiley \& Sons, Inc., Hoboken, NJ, 2009.

\bibitem{banyaga2004lectures}
A.~Banyaga and D.~Hurtubise.
\newblock {\em Lectures on Morse Homology}.
\newblock Texts in the Mathematical Sciences. Springer Netherlands, 2004.

\bibitem{2017_Beffara_Gayet}
V.~Beffara and D.~Gayet.
\newblock Percolation of random nodal lines.
\newblock {\em Publ. Math. Inst. Hautes \'{E}tudes Sci.}, 126:131--176, 2017.

\bibitem{beliaev2022central}
D.~Beliaev, M.~McAuley, and S.~Muirhead.
\newblock A central limit theorem for the number of excursion set components of
  gaussian fields, 2022.

\bibitem{2022Beliaev_fluctuations}
D.~Beliaev, M.~McAuley, and S.~Muirhead.
\newblock Fluctuations of the number of excursion sets of planar {G}aussian
  fields.
\newblock {\em Probab. Math. Phys.}, 3(1):105--144, 2022.

\bibitem{beliaev2023covariance}
D.~Beliaev, M.~McAuley, and S.~Muirhead.
\newblock A covariance formula for the number of excursion set components of
  gaussian fields and applications, 2023.

\bibitem{benatarMW}
J.~Benatar, D.~Marinucci, and I.~Wigman.
\newblock Planck-scale distribution of nodal length of arithmetic random waves.
\newblock {\em J. Anal. Math.}, 141(2):707--749, 2020.

\bibitem{Berry1977}
M.~V. Berry.
\newblock Regular and irregular semiclassical wavefunctions.
\newblock {\em Journal of Physics. A. Mathematical and General},
  10(12):2083--2091, 1977.

\bibitem{Berry2002a}
M.~V. Berry.
\newblock Statistics of nodal lines and points in chaotic quantum billiards:
  perimeter corrections, fluctuations, curvature.
\newblock {\em Journal of Physics. A. Mathematical and General},
  35(13):3025--3038, 2002.

\bibitem{berzin2022kacrice}
C.~Berzin, A.~Latour, and J.~León.
\newblock Kac-rice formula: A contemporary overview of the main results and
  applications, 2022.

\bibitem{Billingsley}
P.~Billingsley.
\newblock {\em Convergence of probability measures}.
\newblock Wiley Series in Probability and Statistics: Probability and
  Statistics. John Wiley \& Sons, Inc., New York, second edition, 1999.
\newblock A Wiley-Interscience Publication.

\bibitem{BCR:98}
J.~Bochnak, M.~Coste, and M.-F. Roy.
\newblock {\em Real algebraic geometry}, volume~36 of {\em Ergebnisse der
  Mathematik und ihrer Grenzgebiete (3)}.
\newblock Springer-Verlag, Berlin, 1998.

\bibitem{bogachev}
V.~Bogachev.
\newblock {\em Gaussian Measures}.
\newblock Mathematical surveys and monographs. American Mathematical Society,
  1998.

\bibitem{MorseBoundary}
M.~Borodzik, N.~Andras, and A.~Ranicki.
\newblock Morse theory for manifolds with boundary.
\newblock {\em Algebraic \& Geometric Topology}, 16, 07 2012.

\bibitem{2022BreKeneLer}
P.~Breiding, H.~Keneshlou, and A.~Lerario.
\newblock Quantitative singularity theory for random polynomials.
\newblock {\em Int. Math. Res. Not. IMRN}, (8):5685--5719, 2022.

\bibitem{bromberg1993lemme}
S.~Bromberg and S.~Lopez~de Medrano.
\newblock Sur le lemme de morse et le lemme de s{\'e}paration dans l'espace de
  hilbert.
\newblock {\em Comptes rendus de l'Acad{\'e}mie des sciences. S{\'e}rie 1,
  Math{\'e}matique}, 316(9):909--912, 1993.

\bibitem{buerg:07}
P.~B{\"u}rgisser.
\newblock Average {E}uler characteristic of random real algebraic varieties.
\newblock {\em C. R. Math. Acad. Sci. Paris}, 345(9):507--512, 2007.

\bibitem{CammarotaM2018}
V.~Cammarota and D.~Marinucci.
\newblock A quantitative central limit theorem for the {E}uler-{P}oincar\'{e}
  characteristic of random spherical eigenfunctions.
\newblock {\em Ann. Probab.}, 46(6):3188--3228, 2018.

\bibitem{Canzani2020}
Y.~Canzani and B.~Hanin.
\newblock Local universality for zeros and critical points of monochromatic
  random waves.
\newblock {\em Communications in Mathematical Physics}, 378(3):1677--1712,
  2020.

\bibitem{Dalmao2021}
F.~Dalmao, A.~Estrade, and J.~R. Le\'{o}n.
\newblock On 3-dimensional {B}erry's model.
\newblock {\em ALEA Lat. Am. J. Probab. Math. Stat.}, 18(1):379--399, 2021.

\bibitem{DNPR}
G.~Dierickx, I.~Nourdin, M.~Rossi, and G.~Peccati.
\newblock Small scale clts for the nodal length of monochromatic waves.
\newblock {\em Communications in Mathematical Physics}, 2022.

\bibitem{DuminilVanneauville_2023}
H.~Duminil-Copin, A.~Rivera, P.-F. Rodriguez, and H.~Vanneuville.
\newblock {Existence of an unbounded nodal hypersurface for smooth Gaussian
  fields in dimension $d\ge 3$}.
\newblock {\em The Annals of Probability}, 51(1):228 -- 276, 2023.

\bibitem{enchevstroock}
O.~Enchev and D.~W. Stroock.
\newblock Rademacher's theorem for {W}iener functionals.
\newblock {\em Ann. Probab.}, 21(1):25--33, 1993.

\bibitem{FyLeLu}
Y.~V. Fyodorov, A.~Lerario, and E.~Lundberg.
\newblock On the number of connected components of random algebraic
  hypersurfaces.
\newblock {\em J. Geom. Phys.}, 95:1--20, 2015.

\bibitem{gasstec2023}
L.~Gass and M.~Stecconi.
\newblock The number of critical points of a gaussian field: finiteness of
  moments, 2023.

\bibitem{GaWe1}
D.~Gayet and J.-Y. Welschinger.
\newblock Lower estimates for the expected {B}etti numbers of random real
  hypersurfaces.
\newblock {\em J. Lond. Math. Soc. (2)}, 90(1):105--120, 2014.

\bibitem{GaWe3}
D.~Gayet and J.-Y. Welschinger.
\newblock Expected topology of random real algebraic submanifolds.
\newblock {\em J. Inst. Math. Jussieu}, 14(4):673--702, 2015.

\bibitem{GaWe2}
D.~Gayet and J.-Y. Welschinger.
\newblock Betti numbers of random real hypersurfaces and determinants of random
  symmetric matrices.
\newblock {\em J. Eur. Math. Soc. (JEMS)}, 18(4):733--772, 2016.

\bibitem{GoreskyMacPherson}
M.~Goresky and R.~MacPherson.
\newblock {\em Stratified Morse Theory}.
\newblock Ergebnisse der Mathematik und ihrer Grenzgebiete. Springer-Verlag,
  1988.

\bibitem{hairerSPDES}
M.~Hairer.
\newblock An introduction to stoichastic pdes.
\newblock {\em https://arxiv.org/abs/0907.4178v2}, 2023.

\bibitem{Hirsch}
M.~W. Hirsch.
\newblock {\em Differential topology}, volume~33 of {\em Graduate Texts in
  Mathematics}.
\newblock Springer-Verlag, New York, 1994.
\newblock Corrected reprint of the 1976 original.

\bibitem{ingremeauBerry}
M.~Ingremeau.
\newblock Local weak limits of {L}aplace eigenfunctions.
\newblock {\em Tunis. J. Math.}, 3(3):481--515, 2021.

\bibitem{Jubin}
B.~Jubin.
\newblock Intrinsic volumes of sublevel sets, 2020.

\bibitem{kostlan:93}
E.~Kostlan.
\newblock On the distribution of roots of random polynomials.
\newblock In {\em From {T}opology to {C}omputation: {P}roceedings of the
  {S}malefest ({B}erkeley, {CA}, 1990)}, pages 419--431. Springer, New York,
  1993.

\bibitem{kratzleon}
M.~F. Kratz and J.~R. Le\'{o}n.
\newblock Central limit theorems for level functionals of stationary {G}aussian
  processes and fields.
\newblock {\em J. Theoret. Probab.}, 14(3):639--672, 2001.

\bibitem{Wig2013arith}
M.~Krishnapur, P.~Kurlberg, and I.~Wigman.
\newblock Nodal length fluctuations for arithmetic random waves.
\newblock {\em Annals of Mathematics}, 177(2):699--737, 2013.

\bibitem{kuiper1970cr}
N.~H. Kuiper.
\newblock {\em Cr-functions near non-degenerate critical points}.
\newblock Universiteit van Amsterdam, 1970.

\bibitem{laudenbach:morseBoundary}
F.~Laudenbach.
\newblock {A Morse complex on manifolds with boundary}.
\newblock {\em {Geometriae Dedicata}}, 1(153):47 -- 57, 2011.

\bibitem{leesmooth}
J.~M. Lee.
\newblock {\em Introduction to smooth manifolds}, volume 218 of {\em Graduate
  Texts in Mathematics}.
\newblock Springer, New York, second edition, 2013.

\bibitem{leeriemann}
J.~M. Lee.
\newblock {\em Introduction to {R}iemannian manifolds}, volume 176 of {\em
  Graduate Texts in Mathematics}.
\newblock Springer, Cham, second edition, 2018.

\bibitem{leonibook}
G.~Leoni.
\newblock {\em A first course in {S}obolev spaces}, volume 181 of {\em Graduate
  Studies in Mathematics}.
\newblock American Mathematical Society, Providence, RI, second edition, 2017.

\bibitem{LeLuStat16}
A.~Lerario and E.~Lundberg.
\newblock {Statistics on Hilbert's 16th Problem}.
\newblock {\em International Mathematics Research Notices},
  2015(12):4293--4321, 05 2014.

\bibitem{stec2022GeometrySpin}
A.~Lerario, D.~Marinucci, M.~Rossi, and M.~Stecconi.
\newblock Geometry and topology of spin random fields.
\newblock {\em Preprint ArXiv:2207.08413}, 2022.

\bibitem{dtgrf}
A.~Lerario and M.~Stecconi.
\newblock Differential topology of {G}aussian random fields.
\newblock {\em Preprint ArXiv:1902.03805}, 2019.

\bibitem{MTTRPS_AIF_stec}
A.~Lerario and M.~Stecconi.
\newblock Maximal and typical topology of real polynomial singularities.
\newblock {\em Annales de l'Institut Fourier}, 2023.
\newblock Online first.

\bibitem{LETENDRE2016}
T.~Letendre.
\newblock Expected volume and euler characteristic of random submanifolds.
\newblock {\em Journal of Functional Analysis}, 270(8):3047--3110, 2016.

\bibitem{letendre2023multijet}
T.~Letendre and M.~Ancona.
\newblock Multijet bundles and application to the finiteness of moments for
  zeros of gaussian fields, 2023.

\bibitem{li2012geometric}
P.~Li.
\newblock {\em Geometric Analysis}.
\newblock Cambridge Studies in Advanced Mathematics. Cambridge University
  Press, 2012.

\bibitem{logunovsurvey}
A.~Logunov and E.~Malinnikova.
\newblock Review of {Y}au's conjecture on zero sets of {L}aplace
  eigenfunctions.
\newblock In {\em Current developments in mathematics 2018}, pages 179--212.
  Int. Press, Somerville, MA, [2020] \copyright 2020.

\bibitem{marinucci_peccati_2011}
D.~Marinucci and G.~Peccati.
\newblock {\em Random Fields on the Sphere: Representation, Limit Theorems and
  Cosmological Applications}.
\newblock London Mathematical Society Lecture Note Series. Cambridge University
  Press, 2011.

\bibitem{Marinucci2016}
D.~Marinucci, G.~Peccati, M.~Rossi, and I.~Wigman.
\newblock Non-universality of nodal length distribution for arithmetic random
  waves.
\newblock {\em Geometric and Functional Analysis}, 26(3):926--960, 2016.

\bibitem{marinucci2023laguerre}
D.~Marinucci, M.~Rossi, and A.~P. Todino.
\newblock Laguerre expansion for nodal volumes and applications, 2023.

\bibitem{MRossiWigman2020}
D.~Marinucci, M.~Rossi, and I.~Wigman.
\newblock The asymptotic equivalence of the sample trispectrum and the nodal
  length for random spherical harmonics.
\newblock {\em Ann. Inst. Henri Poincar\'{e} Probab. Stat.}, 56(1):374--390,
  2020.

\bibitem{MW1}
D.~Marinucci and I.~Wigman.
\newblock On the area of excursion sets of spherical {G}aussian eigenfunctions.
\newblock {\em J. Math. Phys.}, 52(9):093301, 21, 2011.

\bibitem{MW2}
D.~Marinucci and I.~Wigman.
\newblock On nonlinear functionals of random spherical eigenfunctions.
\newblock {\em Comm. Math. Phys.}, 327(3):849--872, 2014.

\bibitem{Mathernotes}
J.~Mather.
\newblock Notes on topological stability john mather, 1970.

\bibitem{MathiStec}
L.~Mathis and M.~Stecconi.
\newblock Expectation of a random submanifold: the zonoid section.
\newblock {\em Preprint ArXiv:2210.11214}, 2022.

\bibitem{MilnorMorse}
J.~Milnor, M.~SPIVAK, and R.~WELLS.
\newblock {\em Morse Theory. (AM-51), Volume 51}.
\newblock Princeton University Press, 1969.

\bibitem{NazarovSodin2009}
F.~Nazarov and M.~Sodin.
\newblock On the number of nodal domains of random spherical harmonics.
\newblock {\em Amer. J. Math.}, 131(5):1337--1357, 2009.

\bibitem{nicolaescu2011Morse}
L.~Nicolaescu.
\newblock {\em An Invitation to Morse Theory}.
\newblock Universitext. Springer New York, 2011.

\bibitem{notarnicolaAHL}
M.~Notarnicola.
\newblock Matrix {Hermite} polynomials, {Random} determinants and the geometry
  of {Gaussian} fields.
\newblock {\em Annales Henri Lebesgue}, 6:975--1030, 2023.

\bibitem{NPV}
M.~Notarnicola, G.~Peccati, and A.~Vidotto.
\newblock Functional convergence of {B}erry's nodal lengths: approximate
  tightness and total disorder.
\newblock {\em J. Stat. Phys.}, 190(5):Paper No. 97, 41, 2023.

\bibitem{nourdinpeccatibook}
I.~Nourdin and G.~Peccati.
\newblock {\em Normal approximations with {M}alliavin calculus}, volume 192 of
  {\em Cambridge Tracts in Mathematics}.
\newblock Cambridge University Press, Cambridge, 2012.
\newblock From Stein's method to universality.

\bibitem{NourdinPeccatiRossi2019}
I.~Nourdin, G.~Peccati, and M.~Rossi.
\newblock Nodal statistics of planar random waves.
\newblock {\em Comm. Math. Phys.}, 369(1):99--151, 2019.

\bibitem{nourdinviens}
I.~Nourdin and F.~G. Viens.
\newblock Density formula and concentration inequalities with {M}alliavin
  calculus.
\newblock {\em Electron. J. Probab.}, 14:no. 78, 2287--2309, 2009.

\bibitem{nualartbook}
D.~Nualart.
\newblock {\em Malliavin calculus and its applications}, volume 110 of {\em
  CBMS Regional Conference Series in Mathematics}.
\newblock Conference Board of the Mathematical Sciences, Washington, DC; by the
  American Mathematical Society, Providence, RI, 2009.

\bibitem{ORW2008}
F.~Oravzcek, Z.~Rudnick, and I.~Wigman.
\newblock The leray measure of nodal sets for random eigenfunctions on the
  torus.
\newblock {\em Journal de l'Institut Fourier}, 1(58):299--335, 2008.

\bibitem{PV}
G.~Peccati and A.~Vidotto.
\newblock Gaussian random measures generated by {B}erry's nodal sets.
\newblock {\em J. Stat. Phys.}, 178(4):996--1027, 2020.

\bibitem{rivera:hal-03320870}
A.~Rivera.
\newblock {High-dimensional Monochromatic Random Waves approximate the
  Bargmann-Fock field}.
\newblock working paper or preprint, Aug. 2021.

\bibitem{rossisurvey}
M.~Rossi.
\newblock Random nodal lengths and {W}iener chaos.
\newblock In {\em Probabilistic methods in geometry, topology and spectral
  theory}, volume 739 of {\em Contemp. Math.}, pages 155--169. Amer. Math.
  Soc., [Providence], RI, [2019] \copyright 2019.

\bibitem{rozensheinIMRN}
Y.~Rozenshein.
\newblock The number of nodal components of arithmetic random waves.
\newblock {\em Int. Math. Res. Not. IMRN}, (22):6990--7027, 2017.

\bibitem{Rudnick2008}
Z.~Rudnick and I.~Wigman.
\newblock On the volume of nodal sets for eigenfunctions of the {L}aplacian on
  the torus.
\newblock {\em Ann. Henri Poincar\'{e}}, 9(1):109--130, 2008.

\bibitem{sard}
A.~Sard.
\newblock {The measure of the critical values of differentiable maps}.
\newblock {\em Bulletin of the American Mathematical Society}, 48(12):883 --
  890, 1942.

\bibitem{KRStec}
M.~Stecconi.
\newblock Kac-{R}ice formula for transverse intersections.
\newblock {\em Analysis and Mathematical Physics}, 12(2):44, 2022.

\bibitem{WhitneyExtension}
H.~Whitney.
\newblock Analytic extensions of differentiable functions defined in closed
  sets.
\newblock {\em Trans. Amer. Math. Soc.}, 36(1):63--89, 1934.

\bibitem{Wigman_2010}
I.~Wigman.
\newblock Fluctuations of the nodal length of random spherical harmonics.
\newblock {\em Communications in Mathematical Physics}, 298(3):787–831, Jun
  2010.

\bibitem{Wigman2022}
I.~Wigman.
\newblock On the nodal structures of random fields -- a decade of results.
\newblock {\em Preprint Arxive:2206.10020}, June 2022.

\bibitem{yauconjecture}
S.~T. Yau, editor.
\newblock {\em Seminar on {D}ifferential {G}eometry}, volume No. 102 of {\em
  Annals of Mathematics Studies}.
\newblock Princeton University Press, Princeton, NJ; University of Tokyo Press,
  Tokyo, 1982.
\newblock Papers presented at seminars held during the academic year
  1979--1980.

\bibitem{zelditchmonochromatic}
S.~Zelditch.
\newblock Real and complex zeros of {R}iemannian random waves.
\newblock In {\em Spectral analysis in geometry and number theory}, volume 484
  of {\em Contemp. Math.}, pages 321--342. Amer. Math. Soc., Providence, RI,
  2009.

\end{thebibliography}

\end{document}